\newtheorem {theorem}    {Theorem}[section]
\newtheorem {lemma}      [theorem]    {Lemma}
\newtheorem {corollary}  [theorem]    {Corollary}
\renewcommand{\rm}{\mathrm}
\newcommand{\GGL}{\mathrm{GL}}
\theoremstyle{definition}
\newtheorem{remark}[theorem]{Remark}
\newcommand{\R}{\mathrm{R}}
\numberwithin{equation}{section}
\begin{document}

\title{The coarse trace formula of $\rm{GL}(4)$}

\date{\today}

\author{Haoyang Wang,\ \ Xinghua Cui,\ \ Zhifeng Peng*}

\address{School of Mathematical Science, Soochow University, Suzhou 310027, Jiangsu, P.R. China}

\maketitle

\begin{abstract}
Trace formula is an important method to study the Langlands program.  Arthur obtains the existence of stable trace formula for connected reductive group. In this paper, we will give the explicit coarse trace formula of GL(4). In general case, Arthur applies the truncation operator on the two sides of trace formula, which is convergent. In our case, we will prove that the divergent terms of the two sides of the trace formula of $\rm{GL}(4)$ are equal. We also obtain the explicit formula for ramified orbits of the geometric side of trace formula of GL(4).

\end{abstract}

\section{introduction}
 Arthur-Selberg trace formula is the important tool to study the automorphic representation of the connected reductive group $G$. Selberg \cite{S1},\cite{S2} gave a formula for the trace of 
a certain operator associated with a compact quotient of a semisimple Lie group and a discrete subgroup. Assume that $\mathbb{A}$ is the adeles ring, then $G_\mathbb{A}$ is a locally compact topological group and $G_\mathbb{Q}$ is a discrete subgroup. An automorphic representation is an irreducible representation of the decomposition of the right regular representation $\R$ which is the right action of $G_\mathbb{A}$ on the space $L^2(G_\mathbb{Q} \backslash G_\mathbb{A})$. If $\phi\in L^2(G_\mathbb{Q} \backslash G_\mathbb{A})$, we have 
\[(\R(y)\phi)(x)=\phi(xy),\qquad x,y\in G_\mathbb{A}.\]
Then $\R$ is a unitary representation of $G_\mathbb{A}$. The trace formula is to consider the involution operator 
\[\R(f) = \int_{G(\mathbb{A})}f(y)\R(y)dy\] 
for $f$ in $C_c^\infty(G(\mathbb{A}))$.  
 Then 
\begin{equation*}
    \begin{split}
     (\R(f)\phi)(x)&=\int_{G_{\mathbb{A}}}f(y)\phi(xy)dy       \\
                 &=\int_{G_{\mathbb{Q}}\backslash G_\mathbb{A}} \{\sum_{\gamma\in G_{\mathbb{Q}}}f(x^{-1}\gamma y)\}\phi(y)dy . 
    \end{split}
\end{equation*}
  We denote its kernel by 
  \[K(x,y)=\sum_{\gamma\in G_{\mathbb{Q}}}f(x^{-1}\gamma y)\]
 If the quotient $G_{\mathbb{Q}}\backslash G_{\mathbb{A}}$ is compact, then we have two natural ways to expand the kernel
 \[ K(x,y)=\sum_{\mathfrak{o}\in\mathcal{O}}K_{\mathfrak{o}}(x,y)\]
   and 
   \[K(x,y)=\sum_{\chi\in \mathscr{X}}K_{\chi}(x,y).\]
  Where $\mathcal{O}$ is the set of conjugacy classes in the group $G_\mathbb{Q}$, $\mathscr{X}$ is the set of unitary equivalence classes of irreducible representations of $G_\mathbb{A}$, and the restriction of the regular representation $\R$ to the subspace $(L^{2}(G_{\mathbb{Q}}\backslash G_{\mathbb{A}}))_{\chi}$ is equivalent to a finite number of copies of $\chi$. We denote $\mathscr{B}_{\chi}$ by an orthonormal basis of $(L^{2}(G_{\mathbb{Q}}\backslash G_{\mathbb{A}}))_{\chi}$ for each $\chi\in\mathscr{X}$. Then
   \[K_{\mathfrak{o}}(x,y)=\sum_{\gamma\in\mathfrak{o}}f(x^{-1}\gamma y),\qquad \mathfrak{o}\in\mathcal{O}\]
   and 
   \[ K_{\chi}(x,y)=\sum_{\phi\in\mathscr{B}_{\chi}}(\R(f)\phi)(x)\cdot\overline{\phi(y)}.\]
The Arthur-Selberg trace formula comes 
from integrating both formulas for the kernel over the diagonal. Thus we obtain the trace formula
\[\sum_{\mathfrak{o}\in\mathcal{O}}J_{\mathfrak{o}}(f)=\sum_{\chi\in\mathscr{X}}J_{\chi}(f),\]
where $J_{\mathfrak{o}}(f)$ is the integral over $x$ in $G_{\mathbb{Q}}\backslash G_{\mathbb{A}}$ of $K_{\mathfrak{o}}$, and 
$J_{\mathfrak{\chi}}(f)$ is the integral over $x$ in $G_{\mathbb{Q}}\backslash G_{\mathbb{A}}$ of $K_{\mathfrak{\chi}}(x,x)$.

  If the quotient $G_{\mathbb{Q}}\backslash G_{\mathbb{A}}$ is non-compact, then $\R$ contains continuous representations for any parabolic subgroup $P$ of $G$ over $\mathbb{Q}$, the intertwining operators are provided by Eisenstein series, thus $\mathscr{X}$ must be defined as $\mathscr{T}(G)$ in $\S 4$, in terms of cuspidal automorphic representations of Levi components of parabolic subgroup
 of $G$. However the definition of $\mathcal{O}$ is the set of the 
equivalence classes composed of those elements in $G_\mathbb{Q}$ whose semisimple component are $G_{\mathbb{Q}}$-conjugate. Then we still have an identity
\begin{eqnarray}\label{formal formula 1}
    \sum_{\mathfrak{o}\in\mathcal{O}}K_{o}(x,y)=\sum_{\chi\in\mathscr{X}}K_{\chi}(x,y)
\end{eqnarray}
by equating two different formulas for the kernel of $\R(f)$. 

   However, if $\chi$ stands for a continuous representation, then the integration of $K_{\chi}(x,x)$ over $x$ in $G_{\mathbb{Q}}\backslash G_{\mathbb{A}}$ is divergent.  If $\mathfrak{o}$ meets a group $P_{\mathbb{Q}}$ which is a proper parabolic subgroup of $G$ over $\mathbb{Q}$, then the integration of $K_{\mathfrak{o}}(x,x)$ over $x$ in $G_{\mathbb{Q}}\backslash G_{\mathbb{A}}$ is also divergent. For such parabolic $P$, Arthur defined an Arthur truncation operator $\Lambda^T$ on $K_{P,\mathfrak{o}}(x,x)$ and $K_{P,\chi}(x,x)$, we denote the truncation functions by $K^{T}_{\mathfrak{o}}(x,f)$ and $K^{T}_{\chi}(x,f)$, then we have an identity
   \begin{equation} \label{Truncation function equation}
    \sum_{\mathfrak{o}\in\mathcal{O}} K^{T}_{\mathfrak{o}}(x,f)=\sum_{\chi\in\mathscr{X}}K^{T}_{\chi}(x,f).
   \end{equation}
   Arthur \cite{A3} showed that each side of the identity (\ref{Truncation function equation}) is integrable and the integrals can be taken inside the sums. If $J^{T}_{\mathfrak{o}}(f)$ and $J^{T}_{\chi}(f)$ stand for the integrals of the summations, we then have the coarse trace formula
   \begin{eqnarray}\label{weight trace formula}
    \sum_{\mathfrak{o}\in\mathcal{O}} J^{T}_{\mathfrak{o}}(f)=\sum_{\chi\in\mathscr{X}}J^{T}_{\chi}(f).
   \end{eqnarray}
   However a natural question is whether the truncation operator retains all the information of automorphic representations? For $\GGL(4)$ case,  we show that the remainder terms of the geometric side and the spectral side will cancel each other, the following theorem is the first main result.

\begin{theorem}(Theorem \ref{thm1})
    For any $f\in C_c^\infty(Z_\infty^+\backslash G_\mathbb{A})$,\[J_{\rm{geo}}^{\rm{d}}(f)=J_{\rm{spec}}^{\rm{d}}(f).\]
\end{theorem}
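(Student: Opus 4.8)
The plan is to build everything on the exact pointwise kernel identity (\ref{formal formula 1}) together with Arthur's integrated truncated identity (\ref{weight trace formula}). On the diagonal one has $\sum_{\mathfrak{o}}K_{\mathfrak{o}}(x,x)=\sum_{\chi}K_{\chi}(x,x)$, and applying the linear operator $\Lambda^{T}$ and subtracting gives the exact pointwise identity $\sum_{\mathfrak{o}}(K_{\mathfrak{o}}-\Lambda^{T}K_{\mathfrak{o}})(x,x)=\sum_{\chi}(K_{\chi}-\Lambda^{T}K_{\chi})(x,x)$. Here $J^{\rm d}_{\rm geo}(f)$ (resp.\ $J^{\rm d}_{\rm spec}(f)$) denotes the remainder left over after truncation, i.e.\ the a priori divergent but explicitly regularisable integral over $G_{\mathbb{Q}}\backslash G_{\mathbb{A}}$ of the left- (resp.\ right-) hand side of this identity; so the theorem amounts to showing that these two integrals may be computed term by term and that the resulting explicit expressions agree. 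The first step is therefore to fix the finite lists $\mathcal{O}$ and $\mathscr{X}$ for $\GGL(4)$ modulo $Z_{\infty}^{+}$: only the proper Levi subgroups $\GGL(3)\times\GGL(1)$, $\GGL(2)\times\GGL(2)$, $\GGL(2)\times\GGL(1)^{2}$, $\GGL(1)^{4}$ can contribute, since the anisotropic (elliptic) classes and the cuspidal data of $\GGL(4)$ itself produce rapidly decreasing kernels that are unaffected by $\Lambda^{T}$ and cancel out of the difference.

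On the geometric side I would treat each ramified class $\mathfrak{o}$ by Arthur's descent: $\mathfrak{o}$ meets a smallest Levi $M_{P}$, and modulo $\Lambda^{T}$ the integral of $K_{\mathfrak{o}}(x,x)$ over the non-cuspidal region unfolds into (weighted) orbital integrals on $M_{P,\mathbb{A}}$ multiplied by the divergent volume factors coming from $\mathfrak{a}_{M_{P}}/\mathfrak{a}_{G}$. For $\GGL(4)$ these classes and their centralisers can be listed by hand --- split tori of ranks $1$, $2$ and $3$ together with the relevant unipotent data --- so collecting all the contributions gives a completely explicit formula for $J^{\rm d}_{\rm geo}(f)$; this is the explicit formula for ramified orbits promised in the abstract.

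On the spectral side the analogous analysis is governed by Eisenstein series. For $\chi$ attached to a proper Levi $M$, $K_{\chi}(x,x)$ is a sum over associated parabolics of integrals of $|E(x,\Phi,\lambda)|^{2}$-type expressions over $i\mathfrak{a}_{M}^{*}$, and the truncated inner product $\int_{G_{\mathbb{Q}}\backslash G_{\mathbb{A}}}\Lambda^{T}E\cdot\overline{\Lambda^{T}E}$ is evaluated via the Maass--Selberg relations; subtracting it from the formal inner product isolates the divergent remainder in terms of the global intertwining operators $M(w,\lambda)$ and the volume of the relevant chamber in $\mathfrak{a}_{M}$. Since for $\GGL(4)$ every $M(w,\lambda)$ factors into rank-one operators whose normalising factors are Rankin--Selberg $L$-functions attached to the $\GGL(1)$- and $\GGL(2)$-data, $J^{\rm d}_{\rm spec}(f)$ can be written out just as explicitly.

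The final step is to match the two explicit expressions parabolic by parabolic. The key input is the relation, special to $\GGL(n)$, between the poles and residues of the normalised intertwining operators and the split-torus volume factors occurring geometrically --- i.e.\ the fact that the constant terms of the degenerate Eisenstein series attached to $M$ see exactly the split subtori that cut the ramified orbits meeting $M$ --- which forces the divergent remainders on the two sides to be the same finite linear combination of local orbital integrals and local characters. The hard part will be the intermediate Levi $M=\GGL(2)\times\GGL(2)$: there the Eisenstein series are induced from genuine cusp forms on $\GGL(2)$, the associated $2\times 2$ system of intertwining operators must be controlled at the relevant spectral point, and one must verify that the divergent pieces are organised so that the term-by-term subtraction of the truncated identity (\ref{Truncation function equation}) from (\ref{formal formula 1}) is legitimate, with no conditional-convergence obstruction. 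Once that case is settled the Borel case $M=\GGL(1)^{4}$ follows by iterating the rank-one calculus, and the remaining parabolics are strictly simpler, which completes the argument.
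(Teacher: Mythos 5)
Your overall architecture --- isolate the $T$-dependent remainders after truncation, compute the geometric ones by descent to Levi subgroups with volume factors, compute the spectral ones from the Maass--Selberg (Langlands inner product) formula for truncated Eisenstein series, and match parabolic by parabolic --- is the same skeleton the paper follows. The gap is in the step where you actually force the two divergent remainders to agree. You attribute the matching to ``the relation between the poles and residues of the normalised intertwining operators and the split-torus volume factors,'' with the rank-one Rankin--Selberg normalising factors as the key input. That is not the mechanism, and it would not produce the cancellation: in the inner product formulas the $T$-dependent terms are of the form $2a_P\hat{\alpha}(T)(\Phi_\alpha,\Phi_\beta)$ (see (\ref{E1}), (\ref{1011}), (\ref{1221})), i.e.\ their coefficients involve no intertwining operators at all; summing over the orthonormal basis and over $\chi$ and integrating over $i\mathfrak{a}_G\backslash i\mathfrak{a}$ gives $\int \rm{tr}\,\pi_P(\lambda,f)\,d\lambda$, and the identification with the geometric coefficient is then a Fourier-inversion identity via the explicit kernel of Lemma \ref{lemma4.4}, yielding $\sum_{\gamma\in M_\mathbb{Q}}\int_K\int_{A_\infty^+M_\mathbb{Q}\backslash M_\mathbb{A}}\int_{N_\mathbb{A}} f(k^{-1}m^{-1}\gamma nmk)\,\rm{exp}(-<2\rho_P,H_0(m)>)\,dn\,dm\,dk$. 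The terms that do involve the fine structure of $M_P(s,\lambda)$ --- the logarithmic-derivative terms (\ref{E2}), (\ref{1012}), (\ref{1222}) and the $\iota_P(s,t)$ terms --- are precisely $T$-independent and survive into the final trace formula; residues of intertwining operators govern the residual spectrum, which is handled separately and contributes nothing divergent. So the analytic input you propose to lean on (poles of normalized operators, $L$-functions) is orthogonal to the cancellation you need to prove.

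Two further points would have to be repaired even with the correct mechanism. First, the matching is not orbit-by-orbit against $\chi$-by-$\chi$: the spectral coefficient attached to $P$ carries the sum over \emph{all} of $M_\mathbb{Q}$, so on the geometric side you must aggregate the $T$-dependent pieces of every orbit meeting $M_P$ --- this requires the unfolding of Lemma \ref{l1}, Poisson summation on the unipotent radical, the regularization of the ramified orbits in Lemma \ref{8.1}, the vanishing of non-elliptic regular semisimple orbital integrals on the Levi, and Lemma \ref{l82}; the cross terms left over at one parabolic (e.g.\ (\ref{yyy}), (\ref{p}), (\ref{211})) are only cancelled by contributions arising at other orbits/parabolics such as (\ref{1213})--(\ref{1219}). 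Your ``parabolic by parabolic'' plan glosses over this rearrangement, which is where most of the bookkeeping lies. Second, the genuinely delicate spectral issue at $P_{22}$ (and whenever a Weyl element sends $\lambda$ to $-\lambda$) is not control of a $2\times 2$ system of intertwining operators at a spectral point but the extra Maass--Selberg term (\ref{1013}) with denominator $<2\lambda,\alpha_2>$, which is disposed of by the Riemann--Lebesgue lemma and leaves the finite term $-\frac{1}{4}\rm{tr}\{M_{P_{22}}(s,0)\pi_{P_{22}}(0,f)\}$; without this step the term-by-term subtraction you invoke is not justified.
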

Where the left hand side is the divergent terms of the geometric side and the right hand side is the divergent terms of the spectral side.  

In general case, Arthur did not give the concrete distributions of ramified orbits. In this paper, the other main result is to obtain the explicit formula for distributions of ramified orbits of $\GGL(4)$.

\begin{theorem}(Theorem \ref{thm2})
    For ramified orbits, the integrals of the kernel over $Z_\infty^+G_\mathbb{Q}\backslash G_\mathbb{A}$ is the sum
    \begin{eqnarray*}
        &\rm{lim}_{\lambda\rightarrow 0}\int_{Z_\infty^+G_\mathbb{Q}\backslash G_\mathbb{A}}D_\lambda\{\lambda\mu_{\mathfrak{o}_{1111}^4}(\lambda,f,x)\}dx
    \end{eqnarray*}
    \begin{eqnarray*}
        &+\rm{lim}_{\lambda\rightarrow 0}\int_{Z_\infty^+G_\mathbb{Q}\backslash G_\mathbb{A}}D\lambda\{\lambda\mu_{\mathfrak{o}_{22}^2}(\lambda,f,x)\}dx
    \end{eqnarray*}
    \begin{eqnarray*}
        &+\sum_{\substack{\rm{ramified}\ \mathfrak{o},\\\mathfrak{o}\neq\mathfrak{o}_{1111}^4,\mathfrak{o}_{22}^2}}c_{P_{\{\mathfrak{o}\}}}a_{P_{\{\mathfrak{o}\}}}\sum_{\gamma\in M_{t,\{\mathfrak{o}\}}^\mathfrak{o}}\tilde{\tau}(\gamma,M)\int_K\int_{N_{\{\mathfrak{o}\},\mathbb{A}}}\int_{M_{\{\mathfrak{o}\}}(\gamma)_{\mathbb{A}}\backslash M_{\{\mathfrak{o}\},\mathbb{A}}}\\\notag
        &f(k^{-1}n^{-1}m^{-1}\gamma mnk)v_{M_{\{\mathfrak{o}\}}}(m)dm\ dn\ dk.
    \end{eqnarray*}
\end{theorem}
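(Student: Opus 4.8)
The plan is to run Arthur's truncation machinery from \cite{A3} for the specific group $\GGL(4)$, keeping track of exactly which orbits $\mathfrak{o}$ produce a divergent integral and computing the resulting weighted orbital integrals explicitly. First I would recall the classification of conjugacy classes of $\GGL(4)_{\mathbb{Q}}$ by Jordan type and fix, for each semisimple class, the associated standard parabolic $P_{\{\mathfrak{o}\}} = M_{\{\mathfrak{o}\}} N_{\{\mathfrak{o}\}}$ whose Levi $M_{\{\mathfrak{o}\}}$ is the centralizer datum of $\mathfrak{o}$ (in the notation of the theorem); the orbit is \emph{ramified} precisely when $\mathfrak{o}$ meets a proper parabolic, i.e.\ when $P_{\{\mathfrak{o}\}} \neq G$. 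For $\GGL(4)$ the relevant Levi types are indexed by the partitions $1111$, $22$, $211$, $31$, $4$, with $\mathfrak{o}_{1111}^4$ (the class of the identity / split central-type orbit relative to the Borel) and $\mathfrak{o}_{22}^2$ singled out because their centralizers have the largest split rank, hence the slowest decay, so the integral over $Z_\infty^+ G_{\mathbb{Q}}\backslash G_{\mathbb{A}}$ genuinely diverges and must be regularized by the $\lambda$-deformation $D_\lambda$ rather than merely reorganized.

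Next I would, following Arthur, write $K_{\mathfrak{o}}(x,x)$ via its parabolic expansion $\sum_{P \supseteq P_{\{\mathfrak{o}\}}} (-1)^{\dim A_P/A_G} \sum_{\delta \in P_{\mathbb{Q}}\backslash G_{\mathbb{Q}}} \hat\tau_P(H_P(\delta x) - T) K_{P,\mathfrak{o}}(\delta x, \delta x)$, substitute the Iwasawa decomposition $x = nmk$ on each parabolic term, and integrate over $Z_\infty^+ G_{\mathbb{Q}}\backslash G_{\mathbb{A}}$. For the orbits other than $\mathfrak{o}_{1111}^4, \mathfrak{o}_{22}^2$ the characteristic-function truncation collapses after the usual unfolding into a single weighted orbital integral: the integral over the $N$-part becomes the full unipotent radical integral $\int_{N_{\{\mathfrak{o}\},\mathbb{A}}}$, the sum over $P_{\{\mathfrak{o}\},\mathbb{Q}}\backslash G_{\mathbb{Q}}$ splits as the rational-Levi sum $\sum_{\gamma \in M_{t,\{\mathfrak{o}\}}^{\mathfrak{o}}}$ against an integral over $M_{\{\mathfrak{o}\}}(\gamma)_{\mathbb{A}}\backslash M_{\{\mathfrak{o}\},\mathbb{A}}$ and over $K$, and the truncation data condenses into the constant $c_{P_{\{\mathfrak{o}\}}} a_{P_{\{\mathfrak{o}\}}}$, the Tamagawa-type factor $\tilde\tau(\gamma,M)$, and the Arthur weight function $v_{M_{\{\mathfrak{o}\}}}(m)$ coming from the volume of the truncated cone cut out by the $\hat\tau_P$'s. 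For $\mathfrak{o}_{1111}^4$ and $\mathfrak{o}_{22}^2$, the same unfolding leaves a residual divergence; inserting $|\det|^{\lambda}$-type perturbations $\lambda \mu_{\mathfrak{o}}(\lambda,f,x)$, applying $D_\lambda = \frac{d}{d\lambda}\big|$ (the derivative picking out the finite part), integrating, and then taking $\lim_{\lambda\to 0}$ produces exactly the first two displayed terms — this is the $\GGL(4)$ incarnation of Arthur's regularization of the non-absolutely-convergent orbits, and the bookkeeping must match the cancellation already established in Theorem~1.

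The main obstacle I expect is the combinatorial control of the truncation cone for the two exceptional orbits: showing that after unfolding all the parabolic terms $P \supseteq P_{\{\mathfrak{o}\}}$ the only surviving divergence is a simple pole in $\lambda$ (so that $D_\lambda$ of $\lambda\mu_{\mathfrak{o}}$ has a finite limit), and verifying that the geometric constants $c_{P}, a_{P}, \tilde\tau(\gamma,M)$ are the correct normalizations rather than off by volume factors. A secondary difficulty is justifying that the integral can be taken inside the (finite, for $\GGL(4)$) sum over ramified $\mathfrak{o}$ and inside the limit in $\lambda$, which rests on the convergence estimates of \cite{A3} specialized to rank $3$; I would cite those estimates directly rather than reprove them. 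The identification of $v_{M_{\{\mathfrak{o}\}}}(m)$ with Arthur's $(G,M)$-family weight is then a routine matching of the $\hat\tau$-combinatorics, carried out case by case over the finitely many partition types of $4$.
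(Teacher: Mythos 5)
There are genuine gaps here, and two of your framing claims are actually wrong relative to the paper. First, "ramified" in this paper does not mean "meets a proper parabolic" (every orbit except $\mathfrak{o}_G$ does that, including the unramified ones $\mathfrak{o}_{31}^0,\mathfrak{o}_{22}^0,\dots$); it means $G(\gamma)\neq M_{\mathfrak{o}}(\gamma)$, i.e.\ the orbit carries nontrivial unipotent parts ($N_{\mathfrak{o}}(\gamma)\neq\{e\}$). Likewise $\mathfrak{o}_{1111}^4$ and $\mathfrak{o}_{22}^2$ are not singled out because their centralizers have "the largest split rank"; it is the opposite: their semisimple parts are elliptic in $G$ (the split component of $G(\gamma_s)$ is $Z_G$), so there is no proper Levi $M_{\{\mathfrak{o}\}}$ to which a weight function $v_{M_{\{\mathfrak{o}\}}}$ can be attached, and that is exactly why these two orbits require the $D_\lambda$-regularization while the remaining ramified orbits do not.

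Second, the analytic core of the theorem is missing from your sketch. In the paper the divergent $A_\infty^+$-integral is deformed by $\lambda$ through the modular factor $\exp(-\langle 2\rho_P(\gamma_s),\sum_k(1+\langle\lambda,\alpha_{i_k}\rangle)a_k\hat\alpha_{i_k}\rangle)$ (not a $|\det|^{\lambda}$ twist), Poisson summation is applied over $N(\gamma_s)_{\mathbb{Q}}$ after decomposing the non-abelian unipotent radical into three abelian pieces, the nontrivial characters are shown to contribute terms that vanish as $T\to\infty$, and the zero character produces a multi-variable zeta-type integral whose value is the constant term of a Laurent expansion at $\lambda=0$; the pole there has order equal to the number of simple roots involved, so $D_\lambda$ is the iterated operator $\frac{d}{d\langle\lambda,\alpha_{i_j}\rangle}(\langle\lambda,\alpha_{i_j}\rangle)\cdots$, not a single derivative at a simple pole. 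You explicitly defer this point ("showing the only surviving divergence is a simple pole") as an expected obstacle, so the first two displayed terms are not actually derived. Finally, for the remaining ramified orbits the weight $v_{M_{\{\mathfrak{o}\}}}$ does not "collapse" out of one unfolding of a single truncated kernel: in the paper it is assembled by combining the $\hat\tau_P$-contributions attached to different parabolics via the conjugacy lemma (Lemma \ref{l82}) and the $(G,M)$-family identity, and the coefficients $c_{P_{\{\mathfrak{o}\}}}a_{P_{\{\mathfrak{o}\}}}\tilde{\tau}(\gamma,M)$ together with the restriction of the sum to $M^{\mathfrak{o}}_{t,\{\mathfrak{o}\}}$ only emerge after the cancellations against the spectral side and the vanishing of non-elliptic regular semisimple orbital integrals carried out in Sections 9--12. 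Without these steps the stated formula is asserted rather than proved.
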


 The limit terms in Theorem \ref{thm2} can be expressed as the unramified orbit integral, but we do not write down in this paper.
We will give the coarse trace formula of $\rm{GL}(4)$ in Theorem \ref{result}.

In general, the cuspidal part of the spectrum terms in (\ref{Truncation function equation}) is indeed of trace class, then we have 
\begin{eqnarray}\label{1.3}
\mathrm{Tr}(\R_\mathrm{cusp}(f))=\sum_{\mathfrak{o}}J_\mathfrak{o}^T(f)-\sum_{\chi\in\mathscr{X}-\mathscr{X}(G)}J_\chi^T(f),
\end{eqnarray}
(see \cite{A4}), where the index of the terms of spectrum means that are not cuspidal part. We called this formula for Arthur's coarse trace formula. 

The following is to introduce the sketch of each section:
In the section 3, we classify the orbits of $G$, according to the eigenpolynomial of a given element in $G_\mathbb{Q}$.  We then show that the orbits are correspond to the standard parabolic subgroups.
In the section 4, we recall the theory of Eisenstein series, which is  developed by Harish-Chandra, Langlands and so on. Then we can  decompose the spectrum of $G$. 

In the section 5, we prove that the discrete series is of trace class, associated with the test function. We shall find a correspondence between $\mathfrak{o}$ and $P$, then we shall give a formula of $K_\mathfrak{o}(x,x)$ associated to $P$, if an orbit $\mathfrak{o}$ is given. So that the integrals of geometric terms can connect with spectral terms. For ramified orbits, Arthur \cite{A15} gave the existence of the formula of them. But we shall give an explicit formula of the ramified orbits. In general, the integrals of both sides of (\ref{formal formula 1}) are divergent, we can write them into the terms which are convergent and not convergent. The key is how we can cancel the divergent terms. In this article, we shall explain how to cancel the divergent terms by the difference of geometric terms and spectral terms. We also introduce Arthur's truncation operator which is associated to a sum of characteristic functions, to control the convergence of the integrals of $K_\mathfrak{o}(x,x)$, and the integrals associated to truncation operator are the convergent terms which we shall obtain. In fact, the two sides of (\ref{weight trace formula}) is a polynomial associated to $T$. The equality (\ref{1.3}) is true for infinite number of $T$, thus we can obtain that the coefficients of $T^k$ vanish except $k=0$. So, we can take $T=0$ to obtain the trace formula.

In the section 6, we shall obtain an explicit formula of the integral of ramified orbits, which is one of our main results. In the section 7, we prove the convergence of some special cases, and give some lemmas which are the basis of all convergence of integral. In the section 8, we prove the convergence of integrals of ramified orbits. In Sections 9 to 12, we calculate the difference between the integrals of geometric and spectral terms associated to $P_{31}$,$P_{22}$,$P_{211}$ and $P_{1111}$ respectively. Then we can find that all the terms left are without the parameter $T$, that is, the terms divergent can all be canceled. Then we can obtain the coarse trace formula of $\rm{GL}(4)$.

Acknowledgement. I am deeply grateful to Arthur for his assistance and encouragement. We acknowledge generous support provided by National natural Science
Foundation of PR China (No. 12071326).

\section{Preliminaries}
\subsection{Some general definition}
For any place $v$ of $\mathbb{Q}$, we write $G_v$ for $G_{\mathbb{Q}_v}$, the group of $\mathbb{Q}_v$-rational points of $G$. Denote the adele of $\mathbb{Q}$ by $\mathbb{A}$, and we denote $G_{\mathbb{A}}$ for the corresponding adele group.

Let $C_c^\infty(G_\mathbb{A})$ represents the space of linear combination of functions $f=\prod_vf_v$ satisfing :
\begin{enumerate}
    \item If $v$ is infinite, $f_v\in C_c^\infty(G_v)$
    \item If $v$ is finite, $f_v$ is locally constant and has compact support
    \item For almost all finite places $v$, $f_v$ is the characteristic function of $G_{O_v}$, where $O_v$ is the algebraic integer ring of $\mathbb{Q}_v$.
\end{enumerate}
For connected reductive algebraic group $G$, denote $X(G)_\mathbb{Q}$ by the group of $\mathbb{Q}$-rational characters of $G$ and $A_G$ by the split component of $G$. $X(G)_\mathbb{Q}$ is a free abelian group, then we have the vector space \[\mathfrak{a}_G=\rm{Hom}_\mathbb{Z}(X(G)_\mathbb{Q}, \mathbb{R})\] and \[\mathfrak{a}^*=X(G)_\mathbb{Q}\otimes\mathbb{R}. \]
Then we define a map \[H_G:G_\mathbb{A}\rightarrow\mathfrak{a}_G\] by \[e^{<\chi, H_G(x)>}=\left| \chi(x) \right|,\quad x\in G_\mathbb{A},\, \chi\in X(G)_\mathbb{Q}. \]The kernel of $H_G$ will be denoted by $G_\mathbb{A}^1$. We can then decompose $G_\mathbb{A}$ into \[Z_{\infty}^+ \times G_\mathbb{A}^1,\] the group $Z_{\infty}^+$ is independent of the basis of $X(G)_\mathbb{Q}$.

We fix a minimal parabolic subgroup $P_0$ which equals the Borel subgroup of $G$ with a decomposition $P_0=M_0N_0$. We call a parabolic subgroup $P$ is standard, if $P\supset P_0$. For example, for $\rm{GL}(n)$, we take $P_0$ to be the upper triangular matrix. For any parabolic subgroup $P\supset P_0$, there exists a decomposition $P=M_PN_P$ such that $M_P\supset M_0, A_P\subset A_0$, $A_P$ is the split component of $P$. Unless otherwise specified, we only consider the standard parabolic subgroups.

Then by the Iwasawa decomposition,\[G=PK=M^1_\mathbb{A}N_\mathbb{A}AK,\]$P$ is a parabolic subgroup, $M$ is the Levi associated to $P$, $N$ is the unipotent radical of $P$.

Then function $H_P=H_M$ can be defined similarly as $H_G$.

We have to adopt some conventions for choices of Haar measures.  Write the Haar measure $dx$ on $G_\mathbb{A}$. We take the Haar measure $dk$ of $K$ such that \[\int_Kdk=1.\] Fix Haar measures on each of the vector spaces $\mathfrak{a}_P$.  We take the dual Haar measures on the spaces $\mathfrak{a}_P^*$ .

Any basis $\chi_1, . . . , \chi_r$ of $X(G)_\mathbb{Q}$ defines an isomorphism between $Z_{\infty}^+$ and $(\mathbb{R}_+^*)^r$. Take the measure on $Z_{\infty}^+$ which corresponds to the Euclidean measure on $(\mathbb{R}_+^*)^r$, this is independent of the choice of the basis $\chi_1, . . . , \chi_r$. Then we define a measure on $G_\mathbb{A}^1$ which we also denote by $dx$. In fact, the number \[\tau(G)=\int_{G_\mathbb{Q}\backslash G_\mathbb{A}^1}dx=\int_{Z_\infty^+G_\mathbb{Q}\backslash G_\mathbb{A}}dx\] is finite, and $\tau(G)$ is the Tamagama number of $G$.

We take $\mathfrak{a}_P=\mathfrak{a}_P^*$. Write $\Sigma_P$ the roots of $P$,  $\Phi_{P_0}=\Phi_0$ the set of all simple roots of $G$, $\hat{\Phi}_P=\{\hat{\alpha}_i:\alpha_i\in \Sigma_P,<\hat{\alpha}_i,\alpha_j>=\delta_{ij}\}$, $\Phi_P$ the projection of the simple roots of $P$ onto $\mathfrak{a}_P$.

Fix a parabolic subgroup $P$, then take the Iwasawa decomposition \[G=PK.\]There exists a constant $c_P$, such that for all $f\in C_c^\infty(G_\mathbb{A})$, 
\begin{eqnarray*}
    \int_{G_\mathbb{A}}f(x)dx=c_P\int_{K}\int_{P_\mathbb{A}}f(kp)d_lpdk=c_P\int_K\int_{P_\mathbb{A}}f(kp)\delta_P(p)d_rpdk. 
\end{eqnarray*}
For any $\gamma\in G_\mathbb{Q}$ and $H$ is a connected subgroup of $G$. We write $H^+(\gamma)$ the center of $\gamma$ in $H$. We write $H(\gamma)$ the identity component of $H^+(\gamma)$. It is a normal subgroup of finite index in $H^+(\gamma)$ by the properties of reductive group, we denote $n_{\gamma, H}$ for the finite index.

If $H$ is reductive and $\gamma$ is semisimple in $H_\mathbb{Q}$. $H(\gamma)$ \cite{B1} is reductive.

For any function $\phi\in C_c^\infty(Z_\infty^+G_\mathbb{Q}\backslash G_\mathbb{A})$, 
\begin{align*}
    \rm{R}(f)\phi(x)&=\int_{Z_\infty^+\backslash G_\mathbb{A}}f(y)\phi(xy)dy\\
    &=\int_{Z_\infty^+\backslash G_\mathbb{A}}f(x^{-1}y)\phi(y)dy\\
    &=\int_{Z_\infty^+G_\mathbb{Q}\backslash G_\mathbb{A}}\sum_{\gamma\in G_\mathbb{Q}}f(x^{-1}\gamma y)\phi(y)dy. 
\end{align*}
Then the kernel of $\rm{R}(f)$ is \[K(x, y)=\sum_{\gamma\in G_\mathbb{Q}}f(x^{-1}\gamma y). \]

And \[\rm{Tr(R(f))}=\int_{Z_\infty^+G_\mathbb{Q}\backslash G_\mathbb{A}}K(x,x)dx.\]

\subsection{Associated class}
For two standard parabolic subgroups $P_1$, $P_2$, write \[\mathfrak{a}_i=\mathfrak{a}_{P_i},\quad i=1,2.\] Let $\Omega(\mathfrak{a}_1, \mathfrak{a}_2)$ denote the set of distinct isomorphisms from $\mathfrak{a}_1$ to $\mathfrak{a}_2$ by restricting the elements of Weyl group $(G,A_0)$. We say $P_1$ and $P_2$ are associated if $\Omega(\mathfrak{a}_1, \mathfrak{a}_2)\neq\emptyset$. For a fixed parabolic subgroup $P$, we define an associated class which is associated to $P$ by
\[\mathfrak{P}=\{P_1:\Omega(\mathfrak{a},\mathfrak{a}_1)\neq \emptyset\}.\] 
For $G=\rm{GL}(4)$, we write the associated classes respectively:
\begin{eqnarray*}
\mathfrak{P}_G=\{G\},\, \mathfrak{P}_{31}=\{P_{31},P_{13}\}, \mathfrak{P}_{22}=\{P_{22}\},\, \mathfrak{P}_{211}=\{P_{211},P_{121},P_{112}\}, \,\mathfrak{P}_{1111}=\{P_{1111}\}.
\end{eqnarray*}
Where the subscript indicates the structure of the Levi component. Explicitly, we have

\begin{eqnarray}\label{equc1}
 P_{1111}=\begin{pmatrix}
     a_{11} & * &* &* \\
     &a_{22}&*&* \\
     &&a_{33}&* \\
     &&&a_{44}
\end{pmatrix} ,\quad P_{211}=\begin{pmatrix}
    A_{22}&*&* \\
    & a_1&* \\
    &&a_2
\end{pmatrix},\quad P_{31}=\begin{pmatrix}
    A_{33}&* \\
    &a 
\end{pmatrix}, \\\notag
P_{22}=\begin{pmatrix}
    A_{22}&* \\
    &A_{22}'
\end{pmatrix},\quad P_G=\begin{pmatrix}
    A_{44} \end{pmatrix},\quad A_{ii},A_{ii}'\in \rm{GL}(i, \mathbb{Q}).    
\end{eqnarray}

We also have
\begin{eqnarray*}
    &|\Omega(\mathfrak{a}_{G}, \mathfrak{a}_{G})|=|\Omega(\mathfrak{a}_{31}, \mathfrak{a}_{31})|=|\Omega(\mathfrak{a}_{31}, \mathfrak{a}_{13})|=1,\\
    &|\Omega(\mathfrak{a}_{22},\mathfrak{a}_{22})|=|\Omega(\mathfrak{a}_{211}, \mathfrak{a}_{211})|=|\Omega(\mathfrak{a}_{211}, \mathfrak{a}_{121})|=|\Omega(\mathfrak{a}_{211}, \mathfrak{a}_{112})|=2,\\
    &|\Omega(\mathfrak{a}_{1111}, \mathfrak{a}_{1111})|=|S_4|=24.
\end{eqnarray*}

\section{The orbit}
In this section, we give a way to classify the elements in $G_\mathbb{Q}$.

Recall that the kernel function is given by \[K(x, y)=\sum_{\gamma\in G_\mathbb{Q}}f(x^{-1}\gamma y), \]we generally consider the case that $y=x$. Thus we need to consider the conjugacy class of $G_\mathbb{Q}$.

We now classify the orbits by the eigenvalues of the elements in $G_\mathbb{Q}$.

The eigenpolynomial of any element in $\rm{GL}(4,\mathbb{Q})$ is a degree-four polynomial over $\mathbb{Q}$.

There are eleven orbits: \[\mathfrak{o}_{G}=\mathfrak{o}_4^0, \mathfrak{o}_{31}^0, \mathfrak{o}^0_{211}, \mathfrak{o}_{211}^{2}, \mathfrak{o}_{22}^0, \mathfrak{o}_{22}^{2}, \mathfrak{o}_{1111}^{0}, \mathfrak{o}_{1111}^{211}, \mathfrak{o}_{1111}^{22}, \mathfrak{o}_{1111}^{31}, \mathfrak{o}_{1111}^{4}, \]
where the subscript indicates the degrees of the polynomials that the quartic polynomial factors into over $\mathbb{Q}$. If the quartic polynomial can not be  reducible completely, the superscript denotes the multiplicity of a repeated root while it is zero if it has no repeat roots. Also, if the quartic polynomial is reducible completely, the superscript except $0$ indicates the number of the irreducible polynomials which are the same and the superscript $0$ means it has no repeat roots.

For example, the subscript of  $\mathfrak{o}_{211}^0$ or $\mathfrak{o}_{211}^2$ corresponds to an eigenpolynomial that factors as a product of one quadratic irreducible factor and two distinct linear irreducible factors over $\mathbb{Q}$. The superscript $0$ or $2$ indicates the multiplicity of eigenvalues. Two typical forms of such eigenpolynomial are  \[(a_1x^2+b_1x+c_1)(d_1x+e_1)(f_1x+g_1),\] and \[(a_2x^2+b_2x+c_2)(d_2x+e_2)^2. \]

We observe that the classification aligns with the standard parabolic subgroups of $G_\mathbb{Q}$. The subscript of each orbit corresponds to the type of a standard parabolic subgroup in (\ref{equc1}).

For example, $\mathfrak{o}_{211}$ corresponds to the parabolic subgroup $P_{211}$. That is, the elements in $G_\mathbb{Q}$ within the orbit $\mathfrak{o}_{211}$ are $G_\mathbb{Q}$-conjugate to elements in $P_{211}$.

Let $\mathfrak{o}$ be an orbit. Choose a parabolic subgroup $P$ and a semisimple element $\gamma\in M_\mathbb{Q}\cap \mathfrak{o}$ such that no $M_\mathbb{Q}$-conjugate of $\gamma$ lies in $P_{1,\mathbb{Q}}$, where $P_1$ is a parabolic subgroup properly contained in $P$. In this case, we say that $\gamma$ is $M$-elliptic.

Let $P\cap\mathfrak{o}=P^{\mathfrak{o}}$, and let $P_\mathfrak{o}$ denote the minimal parabolic subgroup: 
\[P_{i_1i_2. . . i_n},\quad i_1\ge i_2\ge. . . \ge i_n, \]
where elements in $\mathfrak{o}$ could lie.

For fixed orbit $\mathfrak{o}$ and parabolic subgroup $P_\mathfrak{o}$, the superscript $0$ of $\mathfrak{o}$ corresponds to the condition that the centralizer satisfies: \[G(\gamma)=M_\mathfrak{o}(\gamma),\quad \gamma\in M^{\mathfrak{o}}_\mathfrak{o}.\] Equivalently, this is the condition that the unipotent part is trivial: $N_\mathfrak{o}(\gamma)=\{e\}$.

Call the orbit satisfies $G(\gamma)=M_\mathfrak{o}(\gamma), \,\gamma\in M_\mathfrak{o}^{\mathfrak{o}}$ unramified, others ramified. If one orbit is unramified, the elements in it are totally semisimple.

If $H$ is a subgroup of $G$, then $H(\gamma)\subset H(\gamma_s)$, where \[\gamma=\gamma_s\cdot\gamma_u\] is the Jordan decomposition.

As a result, we write\[K_{\mathfrak{o}}(x, x)=\sum_{\gamma\in \mathfrak{o}}{f(x^{-1}\gamma x)}.\]Then \[K(x, x)=\sum_{\mathfrak{o}}{K_\mathfrak{o}}(x, x). \]

\section{The Eisenstein series}\label{section4}
In this section, we recall the theory of Eisenstein series. The spectral theory of Eisenstein series was begun by Selberg and completed by Langlands \cite{L1}. We will mainly state the key results without detailed proofs.

For any parabolic subgroup $P$ of $G$, denote $\mathfrak{a}=\mathfrak{a}_P=\mathfrak{a}_M$, and \[\mathfrak{a}^+=\{H\in\mathfrak{a}:\, <\alpha,H>>0,\,\alpha\in\Phi_P\}.\]We call $\mathfrak{a}^+$ the chamber of $P$ in $\mathfrak{a}$.

Fix $P$, let $\mathscr{H}_P^0$ denote the space of functions \[\Phi:A_\infty^+\cdot N_\mathbb{A}\cdot M_\mathbb{Q}\backslash G_\mathbb{A}\rightarrow \mathbb{C}\] satisfing
\begin{enumerate}
    \item for any $ x\in G_\mathbb{A}$, the function $m\rightarrow \Phi(mx), m\in M_\mathbb{A}$, is $\mathscr{Z}_{M_\mathbb{R}}$-finite, where $\mathscr{Z}_{M_\mathbb{R}}$ is the center of the universal enveloping algebra of $\mathfrak{m}_{\mathbb{C}}$. 
    \item the function $\Phi_k:x\rightarrow \Phi(xk),\, x\in G_\mathbb{A},\, k\in K$ is $K$-finite. 
    \item \[||\Phi||^2=\int_K\int_{A_\infty^+\cdot M_\mathbb{Q}\backslash M_\mathbb{A}}|\Phi(mk)|^2dm\ dk\le\infty. \]
\end{enumerate}
We complete the space $\mathscr{H}_P^0$ and denote it by $\mathscr{H}_P$. Define a representation on $G_\mathbb{A}$ by \[\pi_P(\lambda, y)\Phi(x)=\Phi(xy)\rm{exp}(<\lambda+\rho_P, H_P(xy)>)\rm{exp}(-<\lambda+\rho_P, H_P(x)>), \]
where $\lambda\in\mathfrak{a}_\mathbb{C}=\mathfrak{a}\bigotimes \mathbb{C}$, $\Phi\in \mathscr{H}_P, \, x, y\in G_\mathbb{A}$. 

It is induced from a representation of $P_\mathbb{A}$, which in turn is the pull-back of a certain representation  $\pi_M^M(\lambda)$ of $M_\mathbb{A}$. $I_M^M(0)$ is the subrepresentation of the regular representation of $M_\mathbb{A}$ on $L^2(A_\infty^+M_\mathbb{Q}\backslash M_\mathbb{A})$ which decomposes discretely. Write \[I_M^M(0)=\oplus_l\sigma^l,\]where $\sigma^l=\oplus_v\sigma_v^l$ is an irreducible representation of $M_\mathbb{A}$. Define \[\sigma_{v,\lambda}=\sigma_v(m)\rm{exp}(<\lambda,H_M(m)>),\quad \lambda\in\mathfrak{a}_\mathbb{C},\, m\in M_{\mathbb{Q}_v},\] if $v$ is prime and $\sigma_v$ is an irreducible unitary representation of $M_{\mathbb{Q}_v}$.

If $\sigma_{v, \lambda}$ is lifted to $P_{\mathbb{Q}_v}$ and then induced to $G_{\mathbb{Q}_v}$, we obtain a representation $\pi_P(\sigma_{v, \lambda})$ of $G_{\mathbb{Q}_v}$, acting on a Hilbert space $\mathscr{H}_P(\sigma_v)$. With this notation, we write
\[
\pi_P(\lambda) = \oplus_l \otimes_v \pi_P(\sigma^l_{v, \lambda}).
\]
$\rho_P$ is a vector in $\mathfrak{a}$, satisfing \[|\rm{det}(\rm{Ad}\ m)_{\mathscr{n}_\mathbb{A}}|=\rm{exp}(<2\rho_P, H_P(m)>), \quad m\in M_\mathbb{A}. \]

The following identities hold: \[\pi_P(\lambda, y)^*=\pi_P(-\overline{\lambda}, y^{-1}), \quad y\in G_\mathbb{A}, \]
and 
\[\pi_P(\lambda, f)^*=\pi_P(-\overline{\lambda}, f^*),\quad f\in C_c^\infty(G_\mathbb{A}), \]
where $f^*(y)=f(y^{-1}). $ In particular, $\pi_P(\lambda)$ is unitary if $\lambda$ is purely imaginary.

Let $P$, $P_1$ be associated parabolic subgroups, with $s\in\Omega(\mathfrak{a}, \mathfrak{a}_1)$. Fix $w_s$ a representative of $s$ in the intersection of $K\cap G_\mathbb{Q}$ with $N_G(A_0)$, the normalizer of $A_{0}$ in $G$. For $\Phi\in\mathscr{H}_P^0, \,\lambda\in\mathfrak{a}_\mathbb{C},$ and $x\in G_\mathbb{A}$, define the intertwining operator $M_P(s, \lambda)\Phi(x)$ to be
\[\int_{N_{1, \mathbb{A}}\cap w_sN_{2, \mathbb{A}}w_s^{-1}\backslash N_{1, \mathbb{A}}}\Phi(w_s^{-1}nx)\rm{exp}(<\lambda+\rho_{P_1}, H_P(w_s^{-1}nx)>)dn\ \rm{exp}(-<s\lambda+\rho_{P_2}, H_{P_2}(x)>). \]
This integral is absolutely convergant if $<\alpha, \rm{Re}\ \lambda-\rho_{P_1}>$ is positive, for each $\alpha\in\Sigma_{P_1}$, such that $s\alpha\in -\Sigma_{P_2}$(in \cite{A5}).  
It defines a linear operator from $\mathscr{H}_{P_1}^0$ to $\mathscr{H}_{P_2}^0$.

This operator satisfys:
\[M_P(s, \lambda)^*=M_P(s^{-1}, -s\overline{\lambda}). \]If $f\in C_c^\infty(G_\mathbb{A})^K$, the $K$-conjugate invariant functions in $C_c^\infty(G_\mathbb{A})$, we have \[M_P(s, \lambda)\pi_P(\lambda, f)=\pi_P(s\lambda, f)M_P(s, \lambda). \]

If $\Phi\in\mathscr{H}_P^0, \,x\in G_\mathbb{A}, \,\lambda\in\mathfrak{a}_\mathbb{C}$, then the Eisenstein series is \[E_P(\Phi, \lambda, x)=\sum_{\delta\in P_\mathbb{Q}\backslash G_\mathbb{Q}}\Phi(\delta x)\rm{exp}(<\lambda+\rho_P, H_P(\delta x)>).\]
This series is absolutely convergent if $\rm{Re}\ \lambda\in \rho_P+\mathfrak{a}^+$.

We now state Langlands' fundamental theorem on Eisenstein series: 
\begin{theorem}[Langlands\cite{L1}]
    \begin{enumerate}
        \item Suppose $\Phi\in\mathscr{H}_P^0$, $E_P(\Phi, \lambda, x)$ and $M_P(s, \lambda)\Phi$ can be analytically continued as meromorphic functions to $\mathfrak{a}_\mathbb{C}$. On $i\mathfrak{a}$, $E_p(\Phi, \lambda, x)$ is regular, and $M_P(s, \lambda)$ is unitary. For $f\in C_c^\infty(G_\mathbb{A})^K$ and $t\in \Omega(\mathfrak{a}_1, \mathfrak{a}_2)$, the following functional equation hold:
        \begin{enumerate}
            \item $E_P(\pi_P(\lambda, f)\Phi, \lambda, x)=\int_{G_\mathbb{A}}f(y)E_P(\Phi, \lambda, xy)dy, $
            \item $E_P(M_P(s, \lambda)\Phi, s\lambda, x)=E_P(\Phi, \lambda, x)$, 
            \item $M_P(ts, \lambda)\Phi=M_P(t, s\lambda)M_P(s, \lambda)\Phi. $
        \end{enumerate}
        \item Let $\mathfrak{P}$ be an associated class of parabolic subgroups. Let $\hat{L}_{\mathfrak{P}}$ be the set of collections \[F=\{F_P:P\in\mathfrak{P}\}\] of measurable functions $F_P:i\mathfrak{a}\rightarrow \mathscr{H}_P$ such that 
        \begin{enumerate}
            \item If $s\in \Omega(\mathfrak{a}, \mathfrak{a}_1)$, \[F_{P_1}(s\lambda)=M_P(s, \lambda)F_P(\lambda), \]
            \item \[||F||^2=\sum_{P\in\mathfrak{P}}n(A)^{-1}(\frac{1}{2\pi i})^{\rm{dim}\ A}\int_{i\mathfrak{a}}||F_P(\lambda)||^2\le\infty, \]where $n(A)$ is the number of chambers in $\mathfrak{a}$. Then the map which sends $F$ to the function \[\sum_{P\in\mathfrak{P}}n(A)^{-1}(\frac{1}{2\pi i})^{\rm{dim}\ A}\int_{i\mathfrak{a}}E_P(F_P(\lambda), \lambda, x)d\lambda, \]
            defined for $F$ in a dense subspace of $\hat{L}_{\mathfrak{P}}$, extends to a unitary map from $\hat{L}_{\mathfrak{P}}$ onto a closed $G_\mathbb{A}-$invariant subspace $L^2_\mathfrak{P}(G_\mathbb{Q}\backslash G_\mathbb{A})$ of $L^2(G_\mathbb{Q}\backslash G_\mathbb{A})$. Moreover, we have an orthogonal decomposition \[L^2(G_\mathbb{Q}\backslash G_\mathbb{A})=\oplus_{\mathfrak{P}}L^2_{\mathfrak{P}}(G_\mathbb{Q}\backslash G_\mathbb{A}). \]
        \end{enumerate}
    \end{enumerate}
\end{theorem}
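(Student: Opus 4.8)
This is Langlands' fundamental theorem, so the plan is to recall the classical strategy rather than to produce new arguments; everything below is in \cite{L1}, and for $\mathrm{GL}(4)$ it may be invoked as a black box. The whole proof is a double induction on $\dim\mathfrak{a}_P$ (equivalently on semisimple rank), the base case $P=G$ being the discreteness of the cuspidal spectrum, which is elementary.

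First I would establish absolute convergence: by reduction theory, on a Siegel set the series $E_P(\Phi,\lambda,x)$ is dominated by a convergent Epstein-type Dirichlet series whenever $\mathrm{Re}\,\lambda\in\rho_P+\mathfrak{a}^+$, using that $\Phi$ arises from the discrete (hence, after the decomposition $I_M^M(0)=\oplus_l\sigma^l$, cuspidal) spectrum of $M$ and is therefore rapidly decreasing; the same bound gives convergence of the integral defining $M_P(s,\lambda)$ on the stated cone. Next I would compute the constant term of $E_P(\Phi,\lambda,x)$ along an arbitrary standard parabolic $Q$: it equals $\sum_{s\in\Omega(\mathfrak{a},\mathfrak{a}_Q)}(M_P(s,\lambda)\Phi)(x)\exp(\langle s\lambda+\rho_Q,H_Q(x)\rangle)$, and vanishes if no such $s$ exists. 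Since an automorphic form of this kind is controlled by its constant terms, this reduces the meromorphic continuation and the functional equations (1)(a)--(c) of $E_P$ to the corresponding statements for the operators $M_P(s,\lambda)$.

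Using the cocycle relation $M_P(ts,\lambda)=M_P(t,s\lambda)M_P(s,\lambda)$ together with reduced decompositions in the Weyl group, one reduces the continuation of $M_P(s,\lambda)$ to the rank-one case $M_{P'}(s_\alpha,\mu)$ with $P'$ maximal. \emph{This rank-one continuation is the main obstacle and the real content of the theorem.} I would carry it out via the Maass--Selberg relations: compute the truncated inner product $\langle\Lambda^T E_P(\Phi,\lambda,\cdot),\Lambda^T E_P(\Phi',\mu,\cdot)\rangle$ over $G_\mathbb{Q}\backslash G_\mathbb{A}^1$ as an explicit finite sum of terms exponential in $T$ whose coefficients are built from the $M_P(s,\lambda)$; this identity, a priori valid only in the cone, propagates the analytic continuation step by step, and in the limit in $T$ it also delivers the unitarity of $M_P(s,\lambda)$ on $i\mathfrak{a}$ and hence the regularity of $E_P$ there. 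Where residues of Eisenstein series from proper Levi subgroups appear, I would feed them into the inductive hypothesis, in which they contribute to the discrete spectrum of the next group up.

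With $E_P$ and $M_P(s,\lambda)$ meromorphic on all of $\mathfrak{a}_\mathbb{C}$, the functional equations in part (1) follow by analytic continuation from the cone, where (a) is the convolution identity for the defining series, (b) is a rearrangement of the sum over $P_\mathbb{Q}\backslash G_\mathbb{Q}$ via the representative $w_s$, and (c) is the composition of the defining integrals. For part (2), I would define $L^2_\mathfrak{P}(G_\mathbb{Q}\backslash G_\mathbb{A})$ to be the closure of the image of the wave-packet map; the inner product of two wave packets is again read off from Maass--Selberg by letting $T\to\infty$, and the compatibility condition $F_{P_1}(s\lambda)=M_P(s,\lambda)F_P(\lambda)$ is exactly what makes this map, up to the normalizing factor $n(A)^{-1}(2\pi i)^{-\dim A}$, an isometry on $\hat{L}_\mathfrak{P}$. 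Finally, orthogonality of $L^2_\mathfrak{P}$ and $L^2_{\mathfrak{P}'}$ for $\mathfrak{P}\neq\mathfrak{P}'$, and the exhaustion $L^2(G_\mathbb{Q}\backslash G_\mathbb{A})=\oplus_\mathfrak{P}L^2_\mathfrak{P}$, follow from the induction on $\dim A_P$: the orthogonal complement of all wave packets coming from proper $P$ consists of the functions whose proper constant terms all vanish, i.e. cusp forms, which is the $\mathfrak{P}_G$-summand and decomposes discretely by the base case. For $G=\mathrm{GL}(4)$ every ingredient above is in the literature, and in the body of the paper we use only the resulting decomposition and functional equations.
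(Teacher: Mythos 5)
The paper does not prove this theorem at all: Section 4 explicitly says the key results of the spectral theory are stated ``without detailed proofs'' and attributes everything to Langlands \cite{L1}, so your decision to invoke \cite{L1} as a black box is exactly how the paper uses the statement, and nothing in the later sections depends on the internal details of the proof. Your sketch of the classical argument is broadly accurate as far as it goes: convergence of $E_P(\Phi,\lambda,x)$ and of the integral defining $M_P(s,\lambda)$ on the cone, the constant-term formula along an arbitrary standard parabolic, reduction of the functional equations (1)(a)--(c) to properties of the intertwining operators, and the wave-packet/Maass--Selberg mechanism for part (2), including the role of the compatibility condition $F_{P_1}(s\lambda)=M_P(s,\lambda)F_P(\lambda)$ and of the normalization $n(A)^{-1}(2\pi i)^{-\dim A}$. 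The one place where your outline understates the difficulty is the meromorphic continuation itself: in \cite{L1} the continuation of cuspidal Eisenstein series is not obtained simply by propagating rank-one Maass--Selberg identities through reduced decompositions; Langlands uses a spectral-theoretic (resolvent/Fredholm) argument, the cocycle relation only reduces matters to maximal parabolics for the functional equations once continuation is known, and the full decomposition of part (2) --- in particular the residual spectrum fed back through the induction, which is precisely the hard Chapter 7 of \cite{L1} --- is far more than the phrase ``feed them into the inductive hypothesis'' suggests. Since you, like the paper, ultimately cite Langlands for all of this, that imprecision is not a gap for the purposes of this paper, but it should not be mistaken for a self-contained proof strategy.
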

If parabolic subgroup $P\neq G$, let $\mathscr{H}_{P, \rm{cusp}}$ denote the Hilbert space of the measurable functions $\Phi$ on $A_\infty^+N_\mathbb{A}M_\mathbb{Q}\backslash G_\mathbb{A}$ satisfing
\begin{enumerate}
    \item $||\Phi||^2=\int_K\int_{M_\mathbb{Q}A_\infty^+\backslash M_\mathbb{A}}|\Phi(mk)|^2dm\ dk<\infty$, 
    \item for the parabolic subgroup $Q$, such that $G\supsetneqq Q\supsetneqq P$, and for $x\in G_\mathbb{A}$, the integral \[\int_{N_{Q, \mathbb{Q}}\backslash N_{Q, \mathbb{A}}}\Phi(nx)dn=0. \]
\end{enumerate}
And if $P=G$, let $\mathscr{H}_{G, \rm{cusp}}$ denote the Hilbert space of the measurable functions $\Phi$ on $Z_\infty^+G_\mathbb{Q}\backslash G_\mathbb{A}$ satisfing
\begin{enumerate}
    \item $||\Phi||^2=\int_{Z_\infty^+G_\mathbb{Q}\backslash G_\mathbb{A}}|\Phi(x)|^2dx<\infty$, 
    \item for the parabolic subgroup $Q$, such that $Q\supsetneq G$, and for $x\in G_\mathbb{A}$, the integral \[\int_{N_{Q, \mathbb{Q}}\backslash N_{Q, \mathbb{A}}}\Phi(nx)dn=0. \]
\end{enumerate}
This space is invariant under right $G_\mathbb{A}$-action.

\begin{lemma}\cite{GGPS}\label{hilbert}
         If $f\in C_c^N(G_\mathbb{A})$,  for $N$ large enough, then the map $\Phi\rightarrow \Phi*f,\, \Phi\in\mathscr{H}_{G,\rm{cusp}}$ is a Hilbert-Schmidt operator on $\mathscr{H}_{G,\rm{cusp}}$. 
\end{lemma}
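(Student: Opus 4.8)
The plan is to realize right convolution by $f$ on $\mathscr{H}_{G,\rm{cusp}}$ as an integral operator and to show that its kernel lies in $L^2$ of the product space, which is exactly the Hilbert--Schmidt condition. Using the decomposition $G_\mathbb{A}=Z_\infty^+\times G_\mathbb{A}^1$ and the $Z_\infty^+$-invariance of the elements of $\mathscr{H}_{G,\rm{cusp}}$, one may first average $f$ over $Z_\infty^+$ and reduce to convolution on $L^2(G_\mathbb{Q}\backslash G_\mathbb{A}^1)$, a space of finite volume by the finiteness of $\tau(G)$ recorded in \S2. On that space the operator $\Phi\mapsto\Phi*f$ has kernel $K(x,y)=\sum_{\gamma\in G_\mathbb{Q}}\bar f(x^{-1}\gamma y)$, and restriction to the cuspidal subspace yields an operator with a modified kernel $K_{\rm{cusp}}(x,y)$. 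The goal is then to prove $K_{\rm{cusp}}\in L^2((G_\mathbb{Q}\backslash G_\mathbb{A}^1)^2)$.

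Next I would bring in reduction theory: cover $G_\mathbb{Q}\backslash G_\mathbb{A}^1$ by a Siegel set $\mathfrak{S}=\omega\cdot A_0(t)^+\cdot K$, with $\omega\subset M_{0,\mathbb{A}}^1 N_{0,\mathbb{A}}$ compact, so that the natural projection $\mathfrak{S}\to G_\mathbb{Q}\backslash G_\mathbb{A}^1$ is finite-to-one and $\mathfrak{S}$ has finite volume. Since $f$ has compact support, for $x,y$ lying high in $\mathfrak{S}$ (torus component deep in the positive chamber) the condition $x^{-1}\gamma y\in\mathrm{supp}(\bar f)$ forces $\gamma$ into $P_{0,\mathbb{Q}}$, and after a further elementary reduction into $N_{0,\mathbb{Q}}$ up to a bounded perturbation, so that $K(x,y)$ agrees there with a purely unipotent sum. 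The cuspidality condition in the definition of $\mathscr{H}_{G,\rm{cusp}}$---vanishing of the constant terms $\int_{N_{Q,\mathbb{Q}}\backslash N_{Q,\mathbb{A}}}\Phi(nx)\,dn$ along all proper parabolics $Q$---then makes the pairing of this unipotent kernel against $\Phi$ vanish, and iterating the Fourier expansion of $\Phi$ along $N_0$ (for $\rm{GL}(4)$ this is the Piatetski-Shapiro--Shalika expansion in terms of a rapidly decreasing Whittaker function) shows that $(\Phi*f)(x)$ decays faster than any power of the height of $x$ as $x\to\infty$ in $\mathfrak{S}$, uniformly for $\|\Phi\|\le 1$. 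Hence $K_{\rm{cusp}}(x,y)$ is rapidly decreasing in each variable on $\mathfrak{S}$; being also continuous---here one uses $f\in C_c^N$ with $N$ large, so that a Sobolev inequality bounds the $\sup$-norm of $\Phi*f$ on compacta by $\|\Phi\|$ together with $L^2$-norms of finitely many $\Phi*(Xf)$, $X$ in the universal enveloping algebra---it is bounded on the finite-volume set $\mathfrak{S}\times\mathfrak{S}$ with integrable square. Therefore $K_{\rm{cusp}}\in L^2((G_\mathbb{Q}\backslash G_\mathbb{A}^1)^2)$ and $\Phi\mapsto\Phi*f$ is Hilbert--Schmidt on $\mathscr{H}_{G,\rm{cusp}}$.

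The main obstacle is the uniform rapid-decay estimate for $\Phi*f$: converting the vanishing of the constant terms of a \emph{fixed} cusp form into a bound on $(\Phi*f)(x)$ that is \emph{uniform over the unit ball} of $\mathscr{H}_{G,\rm{cusp}}$ is what turns pointwise behaviour of individual cusp forms into a property of the operator kernel. This rests on (i) a uniform form of reduction theory---deep in the Siegel set the geometric kernel collapses to its unipotent part with effective control on the remaining range of the second variable---and (ii) the Sobolev-type inequality above, which is exactly where the hypothesis ``$N$ large enough'' is consumed: one needs $f_\infty$ differentiable enough that $\Phi*f$ and its derivatives $R(X)(\Phi*f)=\Phi*(Xf)$ all make sense and are $L^2$-controlled by $\|\Phi\|$. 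With both ingredients in place, the square-integrability of $K_{\rm{cusp}}$, and hence the lemma, follows.
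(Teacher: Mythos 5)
You should first note that the paper itself contains no proof of this lemma: it is quoted directly from \cite{GGPS} (with the related Hilbert--Schmidt assertions later, in Section 5, attributed to Harish-Chandra \cite{H1}), so there is no in-paper argument to measure your proposal against. What you have written is, in outline, the standard Gelfand--Graev--Piatetski-Shapiro/Harish-Chandra proof, and the architecture is correct: realize $\Phi\mapsto\Phi*f$ as an integral operator with kernel $\sum_{\gamma\in G_\mathbb{Q}}f(x^{-1}\gamma y)$ on the finite-volume quotient, use reduction theory to force $\gamma$ into a proper parabolic subgroup when $x$ lies deep in a Siegel set, exploit cuspidality to remove the dominant part of the kernel, deduce a rapid-decay bound for $(\Phi*f)(x)$ that is linear in $\|\Phi\|$, and conclude that the projected kernel is square-integrable, hence the operator is Hilbert--Schmidt. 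Two points deserve tightening. First, the assertion that ``the pairing of this unipotent kernel against $\Phi$ vanishes'' is not literally what happens: cuspidality kills only the trivial character, i.e.\ the constant term $\int_{N_{Q,\mathbb{Q}}\backslash N_{Q,\mathbb{A}}}\Phi(ny)\,dn$, in the Fourier expansion of $\Phi$ along each proper parabolic; the nontrivial modes survive, and the rapid decay in the torus variable is extracted from them by repeated integration by parts against the archimedean component of $f$ after conjugating the character by the torus part of $x$ --- this, rather than the Sobolev inequality, is where the hypothesis that $N$ is large enough is really spent (and for $\mathrm{GL}(4)$, where $N_0$ is not abelian, the expansion must be run inductively through the maximal parabolics, or via the Piatetski-Shapiro--Shalika expansion as you indicate). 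Second, the uniformity over the unit ball that you single out as the main obstacle comes essentially for free: after the kernel manipulation, every term is estimated with $y$ ranging over the image of $x\cdot\mathrm{supp}(f)$ in $G_\mathbb{Q}\backslash G_\mathbb{A}^1$, a set whose volume is bounded independently of $x$, so Cauchy--Schwarz already yields bounds linear in $\|\Phi\|_{L^2}$ with constants depending only on $f$. With these adjustments your sketch is a faithful reconstruction of the argument in the cited source.
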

\begin{corollary}
    $\mathscr{H}_{G, \rm{cusp}}$ decomposes into a direct sum of irreducble representations of $G_\mathbb{A}$, each occuring with finite multiplicity. 
\end{corollary}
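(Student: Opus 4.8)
The plan is to deduce the decomposition from Lemma \ref{hilbert} together with a standard spectral-theoretic argument for unitary representations of $G_\mathbb{A}$ on a Hilbert space on which ``enough'' operators are Hilbert--Schmidt (equivalently, compact). First I would observe that $\mathscr{H}_{G,\mathrm{cusp}}$ is a closed $G_\mathbb{A}$-invariant subspace of $L^2(Z_\infty^+G_\mathbb{Q}\backslash G_\mathbb{A})$, so the right regular representation restricts to a unitary representation $\mathrm{R}_{\mathrm{cusp}}$ of $G_\mathbb{A}$ on it. The convolution operator $\Phi\mapsto \Phi * f = \mathrm{R}_{\mathrm{cusp}}(f^{*})\Phi$ (up to the usual $f\leftrightarrow f^*$ bookkeeping) is, by Lemma \ref{hilbert}, Hilbert--Schmidt for $f\in C_c^N(G_\mathbb{A})$ with $N$ large; in particular it is a compact operator, and by choosing $f = f^*$ one may also arrange it to be self-adjoint.

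The key steps, in order, are as follows. (1) Reduce to showing that every nonzero closed invariant subspace $V\subseteq\mathscr{H}_{G,\mathrm{cusp}}$ contains an irreducible closed invariant subspace: once this is known, a Zorn's-lemma / maximality argument produces a maximal orthogonal family of irreducible invariant subspaces whose closed span is all of $\mathscr{H}_{G,\mathrm{cusp}}$ (if the orthogonal complement of the span were nonzero it would itself contain an irreducible piece, contradicting maximality). (2) To produce an irreducible subspace inside a given nonzero closed invariant $V$: pick $\Phi\in V$ nonzero, and choose an approximate identity $\{f_j\}\subset C_c^N(G_\mathbb{A})$ (with each $f_j=f_j^*$ and $N$ large) so that $\mathrm{R}_{\mathrm{cusp}}(f_j)\Phi\to\Phi$; then some $\mathrm{R}_{\mathrm{cusp}}(f_j)$ is a nonzero compact self-adjoint operator on $V$, hence has a nonzero eigenvalue with finite-dimensional eigenspace $E$. (3) Among all nonzero closed invariant subspaces of $V$ whose intersection with $E$ is nonzero, choose one, say $W$, with $\dim(W\cap E)$ minimal; standard arguments show $W$ is then irreducible (if $W = W_1\oplus W_2$ properly, then $E\cap W = (E\cap W_1)\oplus(E\cap W_2)$ forces one summand to meet $E$ nontrivially with strictly smaller dimension). (4) Finally, finite multiplicity: if some irreducible $\pi$ occurred with infinite multiplicity, one could find infinitely many mutually orthogonal copies of $\pi$ in $\mathscr{H}_{G,\mathrm{cusp}}$; choosing $f$ with $\mathrm{R}_{\mathrm{cusp}}(f)$ acting as a nonzero scalar on $\pi$ (possible since $\pi$ is nontrivial, again via an approximate identity) would exhibit $\mathrm{R}_{\mathrm{cusp}}(f)$ as a compact operator with an infinite orthonormal set of eigenvectors sharing one nonzero eigenvalue, contradicting compactness.

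I expect the main obstacle to be step (2)--(3), namely extracting an \emph{irreducible} invariant subspace rather than merely a finite-dimensional eigenspace: the eigenspace $E$ of $\mathrm{R}_{\mathrm{cusp}}(f_j)$ is not $G_\mathbb{A}$-invariant, so one has to run the minimality argument carefully, and one must check that a closed invariant subspace of minimal nonzero intersection with $E$ really is topologically irreducible (ruling out both a nontrivial closed invariant subspace and the subtlety that an invariant subspace need not split off an invariant complement unless one works with the orthogonal complement, which is again invariant by unitarity). All of this is classical — it is exactly the Gelfand--Graev--Piatetski-Shapiro argument underlying Lemma \ref{hilbert} — so once the Hilbert--Schmidt input is in hand the corollary follows formally, and I would present it as such, citing \cite{GGPS} for the details of the spectral decomposition of a unitary representation with compact convolution operators.
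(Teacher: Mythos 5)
Your proposal is correct and follows essentially the same route as the paper, which simply notes that the corollary is obtained from Lemma \ref{hilbert} together with the spectral theorem for compact operators; you have merely written out the standard Gelfand--Graev--Piatetski-Shapiro argument (compact self-adjoint convolution operators, minimal-intersection-with-an-eigenspace extraction of irreducibles, and the finite-multiplicity contradiction) that this citation encodes. No discrepancy with the paper's approach.
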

This Corollary can be followed by Lemma \ref{hilbert} combined with the spectral theorem for compact operators.

The space $\mathscr{H}_{G, \rm{cusp}}$ is defined as  the space of cusp forms on $G_\mathbb{A}$. By the above corollary, any function in $\mathscr{H}_{G, \rm{cusp}}$ can be approximated by a limit of functions in $\mathscr{H}_P^0$. Consequently, $\mathscr{H}_{P, \rm{cusp}}$ is a subspace of $\mathscr{H}_P$.

Denote $\mathscr{T}(G)$ by the set of all triplets $\chi=(\mathfrak{P}, \mathfrak{V}, W)$, where $W$ is an irreducble representation of $K$, $\mathfrak{P}$ is an associated class of parabolic subgroups, $\mathfrak{V}$ is a family of subspaces
\[\{V_P\subset \mathscr{H}_{M, \rm{cusp}}^M, \rm{the\  space\  of\  cusp\  forms\  on}\  M_\mathbb{A}\}_{P\in\mathfrak{P}}, \] satisfing
\begin{enumerate}
    \item for $P\in\mathfrak{P}$, $V_P$ is the eignspace of $\mathscr{H}_{M, \rm{cusp}}^M$ associated to a complex homomorphism of $\mathscr{Z}_{M_\mathbb{R}}$, 
    \item for $P_1, P_2\in\mathfrak{P}$, $s\in\Omega(\mathfrak{a}_1, \mathfrak{a}_2)$, the space $V_{P_2}$ can be obtained by conjugating functions in $V_{P_1}$ by $w_s$. 
\end{enumerate}
For $P\in\mathfrak{P}$, the space $\mathscr{H}_{P, \chi}$ consists of the functions $\Phi\in \mathscr{H}^0_{P, \rm{cusp}}$ satisfing that for every $x\in G_\mathbb{A}$, 
\begin{enumerate}
    \item the function takes $k$ to $\Phi(xk),\, k\in K$, is a matrix coefficient of $W$, 
    \item the function takes $m$ to $\Phi(mx),\, m\in M_\mathbb{A}$, is contained in $V_P$. 
\end{enumerate}
The dimensional of the space $\mathscr{H}_{P, \chi}$ is finite and it is invariant under $\pi_P(\lambda, f), $ for any $f\in C_c^\infty(G_\mathbb{A})^K$.

We have the decomposition
\[\mathscr{H}_{P, \rm{cusp}}=\oplus_{\chi}\mathscr{H}_{P, \chi}, \]where $\chi=(\mathfrak{P}, \mathfrak{V}, W),\, P\in\mathfrak{P}$.

For any $\chi$, and any $ P\in\mathfrak{P}$, suppose that the analytic function \[\lambda\rightarrow \Phi(\lambda)=\Phi(\lambda,x),\quad \lambda\in\mathfrak{a}_\mathbb{C},\, x\in A_\infty^+N_\mathbb{A}M_\mathbb{Q}\backslash G_\mathbb{A}, \]is of Paley-Wiener type, it maps $\mathfrak{a}_\mathbb{C}$ to $\mathscr{H}_{P, \chi}$. Then
\[\phi(x)=(\frac{1}{2\pi i})^{\rm{dim}\ A}\int_{\rm{Re}\ \lambda=\lambda_0}\rm{exp}(<\lambda+\rho_P, H_P(x)>)\Phi(\lambda, x)d\lambda,\quad x\in A_\infty^+N_\mathbb{A}M_\mathbb{Q}\backslash G_\mathbb{A}, \] is a function on $N_\mathbb{A}M_\mathbb{Q}\backslash G_\mathbb{A}$, which is independent of $\lambda_0\in\mathfrak{a}$.

The series \[\hat{\phi}(x)=\sum_{\delta\in P_\mathbb{Q}\backslash G_\mathbb{Q}}\phi(\delta x)\] converges absolutely and it belongs to $L^2(G_\mathbb{Q}\backslash G_\mathbb{A})$. Denote the closed space generated by such $\hat{\phi}$ by $L^2_\chi(G_\mathbb{Q}\backslash G_\mathbb{A})$. We then have the orthogonal decomposition \[L^2(G_\mathbb{Q}\backslash G_\mathbb{A})=\oplus_{\chi}L^2_\chi(G_\mathbb{Q}\backslash G_\mathbb{A}). \]
The constant term of  Eisenstein series is denoted by $E_P^{c_1}(\Phi, \lambda, x)$, where $c_1$ indicates that it is the constant term associated to another parabolic subgroup $P_1$. $ E_P^{c_1}(\Phi, \lambda, x)$ is defined by
\begin{eqnarray*}
    &\int_{N_{1, \mathbb{Q}}\backslash N_{1, \mathbb{A}}}E_P(\Phi, \lambda, nx)dn, 
\end{eqnarray*}
which equals
\begin{eqnarray*}
    &\sum_{s\in\Omega(\mathfrak{a}, \mathfrak{a}_1)}(M_P(s, \lambda)\Phi)(x)\rm{exp}(<s\lambda+\rho_{P_1}, H_{P_1}(x)>). 
\end{eqnarray*}
For $\lambda_0\in \rho_P+\mathfrak{a}^+$, we have \[\hat{\phi}(x)=
(\frac{1}{2\pi i})^{\rm{dim}\ A}\int_{\rm{Re}\ \lambda=\lambda_0}E_P(\Phi(\lambda), \lambda, x)d\lambda. \]
For another $\Phi_1(\lambda_1, x)$ associated to parabolic subgroup  $P_1\in\mathfrak{P}$, the inner product \[\int_{G_\mathbb{Q}\backslash G_\mathbb{A}}\hat{\phi}(x)\overline{\hat{\phi}_1(x)}dx\] equals
\begin{eqnarray*}
    (\frac{1}{2\pi i})^{\rm{dim}\ A}\int_{\lambda_0+i\mathfrak{a}}\sum_{s\in\Omega(\mathfrak{a}, \mathfrak{a}_1)}(M_P(s, \lambda)\Phi(\lambda), \Phi_1(-s\overline{\lambda}))d\lambda,\quad \lambda_1\in\rho_P+\mathfrak{a}^+. 
\end{eqnarray*}
Fix $\chi=(\mathfrak{P}_\chi, \mathfrak{V}, W)$. For $\Phi\in\mathscr{H}_{P, \chi}, \,P\in\mathfrak{P}_{\chi}$, one shows that the singularities of the two functions $E_P(\Phi, \lambda, x)$ and $M_P(s, \lambda)\Phi$ are hyperplanes of the form \[\mathcal{\tau}=\{\lambda\in\mathfrak{a}_\mathbb{C}:<\alpha, \lambda>=\mu, \mu\in\mathbb{C}, \alpha\in\Sigma_{P}\},\] and only finitely many of them meet $\mathfrak{a}^++i\mathfrak{a}$, which equals to the set \[\{\lambda\in\mathfrak{a}_{\mathbb{C}}:<\alpha, \rm{Re}\ \lambda>>0, \alpha\in\Phi_P\}.\]

Write the space generated by function $\phi(x)$ as $L^2_{\mathfrak{P}_\chi, \chi}(G_\mathbb{Q}\backslash G_\mathbb{A})$, it is closed in $L^2_\chi(G_\mathbb{Q}\backslash G_\mathbb{A})$.

If $\phi_1(x)$ comes from $\Phi_1(\lambda_1)$, $P_1\in\mathfrak{P}_\chi$, then the inner product \[\int_{G_\mathbb{Q}\backslash G_\mathbb{A}}\hat{\phi}(x)\overline{\hat{\phi}_1(x)}dx\]equals
\[\sum_{P_2\in\mathfrak{P}_\chi}n(A)^{-1}(\frac{1}{2\pi i})^{\rm{dim}\ A}\int_{i\mathfrak{a}_2}(F_{P_2}(\lambda), F_{1, P_2}(\lambda))d\lambda, \]where $F_{1, P_2}(\lambda)=\sum_{r\in\Omega(\mathfrak{a}_2, \mathfrak{a}_1)}M_P(r, \lambda)^{-1}\Phi_1(r\lambda)$, and $F_{P_2}$ similarly.

Define $\hat{L}_{\mathfrak{P}_\chi, \chi}$ as the space consisting of the functions $\{F_{P_2}:P_2\in\mathfrak{P}_\chi, F_{P_2}$ takes values in $\mathscr{H}_{P_2, \chi}\}$. In fact, it is an isometric isomorphic from a dense subspace of $\hat{L}_{\mathfrak{P}_\chi, \chi}$ to a dense subspace of $L^2_{\mathfrak{P}_\chi, \chi}(G_\mathbb{Q}\backslash G_\mathbb{A})$.

Write $Q$ as the projection of $L^2_\chi(G_\mathbb{Q}\backslash G_\mathbb{A})$ onto the orthogonal complement of $L^2_{\mathfrak{P}_\chi, \chi}(G_\mathbb{Q}\backslash G_\mathbb{A})$, we denote this space by $L^2_{\chi, \rm{res}}(G_\mathbb{Q}\backslash G_\mathbb{A})$. Then for the functions $\hat{\phi}(x),\, \hat{\phi}_1(x)$ corresponding to $\Phi(\lambda), \Phi_1(\lambda_1)$, the inner product $(Q\hat{\phi}, \hat{\phi}_1)$, is given by the difference 
\begin{eqnarray*}
    (\frac{1}{2\pi i})^{\rm{dim}\ A}(\int_{\lambda_0+i\mathfrak{a}}\sum_{s\in\Omega(\mathfrak{a}, \mathfrak{a}_1)}(M_P(s, \lambda)\Phi(\lambda), \Phi_1(-s\overline{\lambda}))d\lambda\\
    -\int_{i\mathfrak{a}}\sum_{s\in\Omega(\mathfrak{a}, \mathfrak{a}_1)}(M_P(s, \lambda)\Phi(\lambda), \Phi_1(-s\overline{\lambda})d\lambda)). 
\end{eqnarray*}
Consider choosing a path in $\mathfrak{a}^+$ from $\lambda_1$ to $0$ whose intersection with any singular hyperplane $\tau$ of $\{M_P(s, \lambda):s\in\Omega(\mathfrak{a}, \mathfrak{a}_1)\}$ is at most one point, denote the set by $Z(\mathscr{\tau})$. We can write $\tau$ into the sum $X(\tau)+\check{\tau}_\mathbb{C}$, where $\check{\tau}$ is a subspace of $\mathfrak{a}$ of codimension one, $X(\tau)$ is a vector in $\mathfrak{a}$ orthogonal to $\check{\tau}$, and $Z(\tau)\in X(\tau)+\check{\tau}$. Then, by the residue theorem, the inner product $(Q\hat{\phi}, \hat{\phi}_1)$ becomes \[(\frac{1}{2\pi i})^{(\rm{dim}\ A)-1}\sum_\tau\int_{Z(\tau)+i\check{\tau}}\sum_{s\in\Omega(\mathfrak{a}, \mathfrak{a}_1)}\rm{Res}_\tau(M_P(s, \lambda)\Phi(\lambda), \Phi_1(-s\overline{\lambda}))d\lambda. \]
We have the following decompositions
\[L^2_\chi(G_\mathbb{Q}\backslash G_\mathbb{A})=\oplus_{\mathfrak{P}}L^2_{\mathfrak{P}, \chi}(G_\mathbb{Q}\backslash G_\mathbb{A}), \]
\[L^2_{\mathfrak{P}}(G_\mathbb{Q}\backslash G_\mathbb{A})=\oplus_\chi L^2_{\mathfrak{P}, \chi}(G_\mathbb{Q}\backslash G_\mathbb{A}), \]
and 
\[L^2(G_\mathbb{Q}\backslash G_\mathbb{A})=\oplus_{\mathfrak{P}, \chi}L^2_{\mathfrak{P}, \chi}(G_\mathbb{Q}\backslash G_\mathbb{A}). \]
We now replace $G_\mathbb{Q}\backslash G_\mathbb{A}$ by $Z_\infty^+G_\mathbb{Q}\backslash G_\mathbb{A}$. 
For any fixed parabolic subgroup $P$ and function $f\in C_c^\infty(Z_\infty^+\backslash G_\mathbb{A}),\, \lambda\in \mathfrak{a}_\mathbb{C}$, we define the function $P_P(\lambda, f, x, y)$ by the product of 
\begin{eqnarray*}
\rm{exp}(<\lambda+\rho_P, H_P(y)>)\rm{exp}(<-\lambda-\rho_P, H_P(x)>)
\end{eqnarray*}
and  
\begin{eqnarray*}
    \sum_{\gamma\in M_{\mathbb{Q}}}\int_{N_\mathbb{A}}\int_{\mathfrak{a}_G\backslash\mathfrak{a}}f(x^{-1}nh_a\gamma y)\rm{exp}(<-\lambda-\rho_P, a>)da\ dn. 
\end{eqnarray*}
This function is continuous on $N_\mathbb{A}M_\mathbb{Q}A_\infty^+\backslash G_\mathbb{A}\times N_\mathbb{A}M_\mathbb{Q}A_\infty^+\backslash G_\mathbb{A},$ and is a Schwartz function of $\lambda\in\mathfrak{a}$. 
\begin{lemma}\label{lemma4.4}
    Given $f\in C_c^\infty(Z_\infty^+\backslash G_\mathbb{A}), \lambda\in\mathfrak{a}_\mathbb{C}, \phi\in \mathscr{H}_P, \pi_P(\lambda, f)\phi(x)$ equals \[c_P\int_K\int_{A_\infty^+M_\mathbb{Q}\backslash M_\mathbb{A}}P_P(\lambda, f, x, mk)dm\ dk. \]
\end{lemma}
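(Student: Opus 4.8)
The right-hand side is the action of $\phi$ under an integral operator whose kernel, relative to the measure $dm\,dk$ on $A_\infty^+M_\mathbb{Q}\backslash M_\mathbb{A}\times K$ that defines $\mathscr{H}_P$, is $c_P\,P_P(\lambda,f,x,mk)$; so, read as an integral-kernel identity, the integrand on the right is $P_P(\lambda,f,x,mk)\,\phi(mk)$, and the plan is to establish this by directly unfolding $\pi_P(\lambda,f)\phi$. First I would take $\phi=\Phi$ in the dense subspace $\mathscr{H}_P^0$ and, using the definition of $\pi_P(\lambda,\cdot)$ together with $\pi_P(\lambda,f)=\int_{Z_\infty^+\backslash G_\mathbb{A}}f(y)\pi_P(\lambda,y)\,dy$ (which descends to $Z_\infty^+\backslash G_\mathbb{A}$ for $\lambda$ trivial on $\mathfrak{a}_G$, the general case reducing to this), write
\[
\pi_P(\lambda,f)\Phi(x)=\exp(<-\lambda-\rho_P,H_P(x)>)\int_{Z_\infty^+\backslash G_\mathbb{A}}f(x^{-1}y)\,\Phi(y)\,\exp(<\lambda+\rho_P,H_P(y)>)\,dy
\]
after the substitution $y\mapsto x^{-1}y$, which preserves the measure since $Z_\infty^+$ is central in $G$ and $G_\mathbb{A}$ is unimodular. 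The outer factor already matches the $\exp(<-\lambda-\rho_P,H_P(x)>)$ occurring in $P_P$.

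Next I would unfold the integral over $Z_\infty^+\backslash G_\mathbb{A}$ via the Iwasawa decomposition $G_\mathbb{A}=N_\mathbb{A}M_\mathbb{A}K$ attached to $P$. In the order $(K,M_\mathbb{A},N_\mathbb{A})$ the left Haar measure $d_lp$ on $P_\mathbb{A}$ in the paper's convention $\int_{G_\mathbb{A}}(\cdot)\,dx=c_P\int_K\int_{P_\mathbb{A}}(\cdot)\,d_lp\,dk$ is simply $dm\,dn$ — no modular factor appears, because $N_\mathbb{A}$ is unimodular and left translation within it is measure preserving — so $\int_{G_\mathbb{A}}g(x)\,dx=c_P\int_{N_\mathbb{A}}\int_{M_\mathbb{A}}\int_K g(nmk)\,dk\,dm\,dn$, and the same after removing $Z_\infty^+$. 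I would then decompose $Z_\infty^+\backslash M_\mathbb{A}$ as $(Z_\infty^+\backslash A_\infty^+)\times(A_\infty^+\backslash M_\mathbb{A})$, identify the first factor with $\mathfrak{a}_G\backslash\mathfrak{a}$ via $H_P$ (its elements being the $h_a$), and fold the discrete group $M_\mathbb{Q}$ into the second; $A_\infty^+$ is central in $M$, so again no modular factor enters, and
\[
\int_{Z_\infty^+\backslash G_\mathbb{A}}g(y)\,dy=c_P\int_K\int_{A_\infty^+M_\mathbb{Q}\backslash M_\mathbb{A}}\sum_{\gamma\in M_\mathbb{Q}}\int_{\mathfrak{a}_G\backslash\mathfrak{a}}\int_{N_\mathbb{A}}g(nh_a\gamma mk)\,dn\,da\,dm\,dk.
\]
Applying this to $g(y)=f(x^{-1}y)\Phi(y)\exp(<\lambda+\rho_P,H_P(y)>)$, and using that $\Phi$ is left invariant under $N_\mathbb{A}$, $A_\infty^+$ and $M_\mathbb{Q}$ while $H_P(nh_a\gamma mk)=H_P(h_a)+H_P(mk)$, I would pull $\Phi(mk)$ and $\exp(<\lambda+\rho_P,H_P(mk)>)$ outside the inner integrals; after the substitution $a\mapsto -a$, harmless because $da$ is even on $\mathfrak{a}_G\backslash\mathfrak{a}$, the remaining $\gamma$-sum and $n$-, $a$-integrals are exactly the second factor of $P_P(\lambda,f,x,mk)$ and the extracted exponentials form its first factor (with $h_a$ normalized by $H_P(h_a)=-a$, consistent with the paper's convention). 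This yields the identity for $\Phi\in\mathscr{H}_P^0$.

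To make this rigorous I would supply two convergence points. First, the interchange of $\sum_{\gamma\in M_\mathbb{Q}}$ with the integrals: since $f$ is compactly supported and $M_\mathbb{Q}$ is discrete in $M_\mathbb{A}$, for $x,m,k$ in compact sets only finitely many $\gamma$ give a nonzero term, so the sum is locally finite and Fubini applies throughout. Second, the convergence of the outer integral: because the class of $nh_a\gamma mk$ in $A_\infty^+N_\mathbb{A}M_\mathbb{Q}\backslash G_\mathbb{A}$ equals that of $mk$ and is confined to the image of $x\cdot\mathrm{supp}(f)$, the function $P_P(\lambda,f,x,\cdot)$ has compact support on the finite-volume quotient $A_\infty^+N_\mathbb{A}M_\mathbb{Q}\backslash G_\mathbb{A}$ and is continuous there — essentially the assertion recorded just before the lemma, which I would cite — so $\int_K\int_{A_\infty^+M_\mathbb{Q}\backslash M_\mathbb{A}}P_P(\lambda,f,x,mk)\Phi(mk)\,dm\,dk$ converges for every $\Phi\in\mathscr{H}_P$. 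Finally I would pass from $\mathscr{H}_P^0$ to $\mathscr{H}_P$ by continuity: the left side is continuous in $\phi$ since $\pi_P(\lambda,f)$ is bounded, the right side since it is integration against the fixed compactly supported continuous kernel $c_PP_P(\lambda,f,x,\cdot)$. The principal difficulty is bookkeeping rather than conceptual: one must pin down the measure decomposition of $Z_\infty^+\backslash G_\mathbb{A}$ exactly — in particular verify that the chosen Iwasawa ordering produces no modular factor — and reconcile the sign and normalization conventions for $h_a$ and for $\mathfrak{a}_G\backslash\mathfrak{a}\cong Z_\infty^+\backslash A_\infty^+$.
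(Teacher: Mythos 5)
Your proposal follows essentially the same route as the paper's own proof: unfold $\pi_P(\lambda,f)\phi(x)$ into $\int_{Z_\infty^+\backslash G_\mathbb{A}}f(x^{-1}y)\phi(y)\exp(<\lambda+\rho_P,H_P(y)>)dy$ times $\exp(-<\lambda+\rho_P,H_P(x)>)$, decompose via the Iwasawa decomposition, fold in the sum over $M_\mathbb{Q}$, and recognize the kernel $P_P(\lambda,f,x,mk)$ acting against $\phi(mk)$ (your reading of the missing $\phi(mk)$ in the statement agrees with the paper's proof). Your extra care about the measure normalization, the sign convention for $h_a$, local finiteness of the $M_\mathbb{Q}$-sum, and the density of $\mathscr{H}_P^0$ only supplies details the paper leaves implicit, so the argument is correct and essentially identical.
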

\begin{proof}
    \begin{align}
        \pi_P(\lambda, f)\phi(x)&=\int_{Z_\infty^+\backslash G_\mathbb{A}}f(y)\pi_P(\lambda, y)\phi(x)dy\notag\\
        &=\int_{Z_\infty^+\backslash G_\mathbb{A}}f(y)\phi(xy)\rm{exp}(<\lambda+\rho_P, H_P(xy)>)\rm{exp}(-<\lambda+\rho_P, H_P(x)>)dy\notag\\
        &=\int_{Z_\infty^+\backslash G_\mathbb{A}}f(x^{-1}y)\phi(y)\rm{exp}(<\lambda+\rho_P, H_P(y)>)\rm{exp}(-<\lambda+\rho_P, H_P(x)>)dy.\label{234}
    \end{align}
For $m\in A_\infty^+M_\mathbb{Q}\backslash M_\mathbb{A},\,k\in K$, define the function $O(m,k)$ equals 
\[\sum_{\gamma\in M_\mathbb{Q}}\int_{\mathfrak{a}_G\backslash \mathfrak{a}}\int_{N_\mathbb{A}}f(x^{-1}nh_a\gamma mk)\cdot \rm{exp}(<\lambda+\rho_P, a>)dn\ da\cdot\rm{exp}(-<\lambda+\rho_P, H_P(x)>).\]
By the Iwasawa decomposition, the term (\ref{234}) equals
    \begin{eqnarray*}
        &c_P\int_K\int_{A_\infty^+M_\mathbb{Q}\backslash M_\mathbb{A}}O(m,k)\phi(mk)dm\ dk,
    \end{eqnarray*}
    since in this case, \[\rm{exp}(<\lambda+\rho_P,H_P(mk)>)=1,\] (\ref{234}) is
    \begin{eqnarray*}
        &c_P\int_K\int_{A_\infty^+M_\mathbb{Q}\backslash M_\mathbb{A}}P_P(\lambda, f, x, mk)\phi(mk)dm\ dk. 
    \end{eqnarray*}
\end{proof}
We denote \[L_0^2(Z_\infty^+G_\mathbb{Q}\backslash G_\mathbb{A})=L^2_{\rm{cusp}}(Z_\infty^+G_\mathbb{Q}\backslash G_\mathbb{A})\oplus\oplus_{\chi}L^2_{\chi,\rm{res}}(Z_\infty^+G_\mathbb{Q}\backslash G_\mathbb{A}),\]
and \[L^2(Z_\infty^+G_\mathbb{Q}\backslash G_\mathbb{A})=L_0^2(Z_\infty^+G_\mathbb{Q}\backslash G_\mathbb{A})\oplus L_1^2(Z_\infty^+G_\mathbb{Q}\backslash G_\mathbb{A}).\]

Denote the restriction of $\rm{R}(f)$ on  the former space $L_0^2(Z_\infty^+G_\mathbb{Q}\backslash G_\mathbb{A})$ by $\rm{R}_0(f)$, and on the latter space $L_1^2(Z_\infty^+G_\mathbb{Q}\backslash G_\mathbb{A})$ by $\rm{R}_1(f)$.

Thus, \[\rm{R}(f)=\rm{R}_0(f)\oplus\rm{R}_1(f). \]

\section{The operator $\rm{R}_0(f)$}\label{sc5}
In this section, we shall show that the operator $\rm{R}_0(f)$ is of trace class.

By the decomposition in the last section, we have a similar decomposition \[L^2(Z_\infty^+G_\mathbb{Q}\backslash G_\mathbb{A})=\oplus_{\mathfrak{P}, \chi}L^2_{\mathfrak{P}, \chi}(Z_\infty^+G_\mathbb{Q}\backslash G_\mathbb{A}). \]
Thanks to Duflo and Labesse(\cite{D1}), we have the following Lemma
\begin{lemma}
    For every $N\ge0$, suppose $f$ belongs to $C_c^\infty(G_\mathbb{A})$, then $f$ equals a finite sum of functions of the form \[f^1*f^2, \]
    where $f^1, f^2\in C_c^N(Z_\infty^+\backslash G_\mathbb{A})^K$, the superscript $K$ indicates the function is $K$-finite. 
\end{lemma}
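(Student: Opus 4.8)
The plan is to reduce everything to a single smoothing problem at the archimedean place and to solve that problem by means of a compactly supported parametrix for a high power of an elliptic, bi-$K$-invariant differential operator; this is in essence the argument of Duflo--Labesse \cite{D1}. First I would dispose of the nonarchimedean places and of the central variable. Replacing $f$ by its average $\int_{Z_\infty^+}f(z\,\cdot)\,dz$ changes neither $\mathrm R(f)$ nor anything used below, so we may assume $f\in C_c^\infty(Z_\infty^+\backslash G_\mathbb{A})$; and by linearity it suffices to treat $f=f_\infty\otimes f_{\mathrm{fin}}$ with $f_{\mathrm{fin}}\in C_c^\infty(G_{\mathbb{A}_{\mathrm{fin}}})$. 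Such an $f_{\mathrm{fin}}$ is bi-invariant under some open compact $K'\subset K_{\mathrm{fin}}$ and supported on finitely many cosets, hence is $K_{\mathrm{fin}}$-finite and, after normalising $\mathrm{vol}(K')=1$, satisfies $f_{\mathrm{fin}}=1_{K'}*f_{\mathrm{fin}}$. Thus, if one can write $f_\infty=\sum_i g_i^1*g_i^2$ with $g_i^1,g_i^2\in C_c^N(Z_\infty^+\backslash G_\infty)^K$, then
\[
f=\sum_i\bigl(g_i^1\otimes 1_{K'}\bigr)*\bigl(g_i^2\otimes f_{\mathrm{fin}}\bigr),
\]
and every factor lies in $C_c^N(Z_\infty^+\backslash G_\mathbb{A})^K$ (at finite places $C^N$ just means locally constant). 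So everything comes down to the connected Lie group $Z_\infty^+\backslash G_\infty$, of some dimension $d$, on which $K$ acts.

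On $Z_\infty^+\backslash G_\infty$ I would fix an $\mathrm{Ad}(K)$-invariant inner product on the Lie algebra (possible since $K$ is compact), an orthonormal basis $X_1,\dots,X_d$, and set $\Delta=1-\sum_i X_i^2$, viewed as a left-invariant differential operator. It is second order, elliptic, and bi-$K$-invariant. For $m$ large, quantitatively $2m-d>N$, the operator $\Delta^m$ is elliptic of order $2m$ and admits a local parametrix whose germ at the identity is of class $C^N$; cutting it off by a bump function supported in a small relatively compact neighbourhood of $e$ produces $E\in C_c^N(Z_\infty^+\backslash G_\infty)$ with
\[
\Delta^m E=\delta_e+\psi,\qquad \psi\in C_c^\infty(Z_\infty^+\backslash G_\infty).
\]
Convolving with $f_\infty$ on the right and transferring the left-invariant operator $\Delta^m$ off the (only $C^N$) factor $E$ onto the (smooth) factor $f_\infty$ — where it becomes the right-invariant operator $\Delta^{\mathrm R}=1-\sum_i R_{X_i}^2$ — gives
\[
f_\infty=\delta_e*f_\infty=(\Delta^m E)*f_\infty+\psi*f_\infty=E*\bigl((\Delta^{\mathrm R})^m f_\infty\bigr)+(-\psi)*f_\infty .
\]
Since $(\Delta^{\mathrm R})^m f_\infty$ and $\psi$ lie in $C_c^\infty\subset C_c^N$ and $E\in C_c^N$, this exhibits $f_\infty$ as a sum of two convolutions of $C_c^N$ functions.

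It remains to make the factors $K$-finite in the sense of the superscript $^K$ used above. Because $\Delta$ and $\delta_e$ are $\mathrm{Ad}(K)$-invariant, the parametrix $E$ (hence $\psi$) may be produced $K$-equivariantly; alternatively one convolves each factor, on the appropriate single side, by a Peter--Weyl idempotent $e_F=\sum_{\tau\in F}(\dim\tau)\overline{\chi_\tau}$ over $K$, which renders it $K$-finite on that side while preserving membership in $C_c^N$ (convolution by the smooth-on-$K$ function $e_F$ does not lower regularity). One then checks directly that the relevant $K$-finiteness of the two factors propagates to their convolution. Assembling with the first paragraph finishes the proof.

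The main obstacle is the construction in the middle step: one must arrange simultaneously that (i) the smoothing kernel be compactly supported — which is exactly why a genuine fundamental solution is replaced by a localised parametrix, at the cost of the harmless smooth error term $\psi$; (ii) the kernel be of class $C^N$ — which forces $m$ to be taken large relative to $N+d$; and (iii) the $K$-finiteness be installed while keeping careful track of whether the $K$-action in question is by left translation, right translation, or conjugation, since the two factors of a convolution see different sides. Once the bookkeeping of these three requirements is organised, the remaining verifications are routine.
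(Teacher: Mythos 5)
The paper itself gives no proof of this lemma (it only cites Duflo--Labesse \cite{D1}), so the comparison is with that standard argument. Your analytic core is exactly that argument and is sound: reduction by linearity to $f=f_\infty\otimes f_{\mathrm{fin}}$, absorption of the finite places through $f_{\mathrm{fin}}=1_{K'}*f_{\mathrm{fin}}$, and at the archimedean place a compactly supported $C^N$ parametrix $E$ with $\Delta^mE=\delta_e+\psi$ for $2m-d>N$, followed by moving $\Delta^m$ across the convolution, $(\Delta^mE)*f_\infty=E*\bigl((\Delta^{\mathrm R})^mf_\infty\bigr)$ (your middle display has a sign slip, it should read $-\psi*f_\infty$, which you silently correct in the final formula).

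The genuine gap is the $K$-finiteness step, and it cannot be closed as written because the statement being proved is too strong. As you note yourself at the end, $K$-finiteness of the factors propagates to the convolution: if $u$ is left $K$-finite and $v$ is right $K$-finite then $L_k(u*v)=(L_ku)*v$ and $R_k(u*v)=u*(R_kv)$, so any finite sum $\sum_i f_i^1*f_i^2$ of $K$-finite functions is itself $K$-finite (the same propagation holds if ``$K$-finite'' is read as finiteness under conjugation, since $c_k(u*v)=(c_ku)*(c_kv)$). A general $f\in C_c^\infty(G_\mathbb{A})$ is not $K_\infty$-finite, so the lemma as printed --- arbitrary $f$, $K$-finite factors --- is false; the correct Duflo--Labesse statement, as Arthur uses it, assumes $f$ itself is $K$-finite. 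Your two proposed repairs do not evade this. Producing $E$ ``$K$-equivariantly'' can only mean averaging over conjugation (averaging over left and right translates replaces $\delta_e$ by Haar measure on $K$ and ruins the parametrix identity), which yields $\mathrm{Ad}(K)$-invariance, not finiteness of left/right translates, and in any case does nothing for the second factors $(\Delta^{\mathrm R})^mf_\infty$ and $f_\infty$. Inserting a Peter--Weyl idempotent $e_F$ on one side of a factor changes the product: $(E*e_F)*(e_F*g)=E*e_F*g\neq E*g$ unless the adjacent factor already satisfies $e_F*g=g$, i.e.\ is $K$-finite with types in $F$ --- which is exactly what the missing hypothesis would supply. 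Indeed, if $f$ is $K$-finite with types in $F$, then $g=(\Delta^{\mathrm R})^mf_\infty$ inherits this ($\Delta^{\mathrm R}$ commutes with right translations, and with left $K$-translations by $\mathrm{Ad}(K)$-invariance of the metric), and writing $f_\infty=e_F*f_\infty$, $g=e_F*g$ gives $f_\infty=(e_F*E*e_F)*g-(e_F*\psi*e_F)*f_\infty$ with all four factors $K$-finite and of class $C_c^N$. So you should either add the hypothesis that $f$ is $K$-finite (and flag that the lemma as stated in the paper needs the same correction), or drop the superscript $K$ from the conclusion; as it stands the $K$-finiteness claim is unprovable.
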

By this Lemma, we can assume that \[f=f^1*f^2.\]
For any parabolic subgroup $P$ and $\chi$, write $\mathscr{B}_{P, \chi}$ for the set of indices $\alpha$ corrssponding to an element of an orthonormal basis of the finite-dimensional space $\mathscr{H}_{P, \chi}$. 
Then, define \[I_P=\cup_{\chi} \mathscr{B}_{P, \chi}. \]
Fix an orthonormal basis \[\{\Phi_\beta:\beta\in I_P\}.\] Denote \[\Phi_\alpha=\pi_P(\lambda, f)\Phi_\beta.\]

Recall that $\mathscr{T}(G)$, the collection of $\chi$, can be considered as  a set of unitary equivalence classes of irreducible representations of $G_\mathbb{A}$(see \cite{A4}). For any representation $\sigma$ of $M_\mathbb{A}$, define the action of $s\in\Omega(\mathfrak{a},\mathfrak{a}')$ on another Levi subgroup $M'_\mathbb{A}$ by \[(s\sigma')(m')=\sigma(w_s^{-1}m'w_s),\quad m'\in M'_\mathbb{A}.\] We call a class $\chi$ is unramified if for every pair $(P,  \pi_P)$ in $\chi$,  the stabilizer of $\pi_P$ in $\Omega(\mathfrak{a}, \mathfrak{a})$ is the identity, otherwise $\chi$ is ramified.

For unramified $\chi$, suppose $\chi=(\mathfrak{P},\mathfrak{V},W),\,P\in\mathfrak{P},\,\Phi,\,\Phi'\in\mathscr{H}_{P,\chi},\,s,\,s'\in \Omega(\mathfrak{a},\mathfrak{a})$, then \[(M_P(s,\lambda)\Phi,M'_P(s',\lambda')\Phi')=0,\]unless $s=s'$(see\cite{A4}).

Recall that \[\pi_P(\lambda)=\oplus_l\otimes_v\pi_P(\sigma_{v,\lambda}^l),\]we can assume that 
\begin{eqnarray}\label{class}
    \pi_{P,\chi}(\lambda,f^1)=\pi_{P,\chi}(\lambda,f^2)=0,
\end{eqnarray}
for almost all ramified $\chi$.

The residual discrete spectrum associated to unramified $\chi$ is zero\cite{L1}.

For a parabolic subgroup $P$, denote the restriction of the operator $\rm{R}_1(f)$ on the space $L_{\mathfrak{P}}^2(Z_\infty^+G_\mathbb{Q}\backslash G_\mathbb{A})$ by $\rm{R}_{P,1}(f)$.
\begin{lemma}
Given $P\in\mathfrak{P}$. $\rm{R}_{P, 1}(f)$ is an integral operator with kernel $K_{P}(x, y)$, which is
\begin{eqnarray*}
\sum_{\chi}n(A)^{-1}(\frac{1}{2\pi i})^{\rm{dim}\ Z\backslash A}\int_{i\mathfrak{a}_G\backslash i\mathfrak{a}}\sum_{\alpha, \beta\in \mathscr{B}_{P, \chi}}E_{P}(\Phi_\alpha, \lambda, x)\overline{E_{P}(\Phi_\beta, \lambda, y)}d\lambda. 
\end{eqnarray*}
\end{lemma}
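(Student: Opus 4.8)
The plan is to compute $\mathrm{R}(f)\phi$ explicitly on the Eisenstein realization of $L^2_{\mathfrak P}(Z_\infty^+G_\mathbb{Q}\backslash G_\mathbb{A})$ and read off the kernel. Since $\mathrm R_1(f)$ is by construction the restriction of $\mathrm R(f)$ to $L_1^2$, and the orthogonal decompositions recalled in Section~\ref{section4} give $L^2_{\mathfrak P}\cap L_1^2=\bigoplus_{\chi:\,\mathfrak P_\chi=\mathfrak P}L^2_{\mathfrak P_\chi,\chi}$ (the residual parts $L^2_{\chi,\mathrm{res}}$ lying inside $L_0^2$), it suffices to work on one $\chi$-component at a time and then sum over $\chi$. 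On $L^2_{\mathfrak P_\chi,\chi}$ the $Z_\infty^+$-version of Langlands' theorem realizes each element as a wave packet attached to the fixed $P\in\mathfrak P_\chi$,
\[
\phi(x)=n(A)^{-1}\bigl(\tfrac{1}{2\pi i}\bigr)^{\dim Z\backslash A}\int_{i\mathfrak a_G\backslash i\mathfrak a}E_P\bigl(F(\lambda),\lambda,x\bigr)\,d\lambda,\qquad F(\lambda)\in\mathscr H_{P,\chi},
\]
the contributions of the other members $P_1\in\mathfrak P$ being moved onto $P$ using $F_{P_1}(s\lambda)=M_P(s,\lambda)F_P(\lambda)$ together with the functional equation $E_{P_1}(M_P(s,\lambda)\Phi,s\lambda,x)=E_P(\Phi,\lambda,x)$. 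Dually, the Plancherel (inner product) identity for such wave packets, also recorded in Section~\ref{section4}, recovers the datum from $\phi$: for the fixed orthonormal basis $\{\Phi_\beta:\beta\in\mathscr B_{P,\chi}\}$ of $\mathscr H_{P,\chi}$, the $\Phi_\beta$-component of $F(\lambda)$ is $\int_{Z_\infty^+G_\mathbb{Q}\backslash G_\mathbb{A}}\phi(y)\,\overline{E_P(\Phi_\beta,\lambda,y)}\,dy$ (in the regularized sense).

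Next I would apply $\mathrm R(f)$. Reducing first to $f=f^1*f^2$ with $f^i\in C_c^N(Z_\infty^+\backslash G_\mathbb{A})^K$ by the Duflo--Labesse lemma, functional equation~(1)(a) of Langlands' theorem — valid on the $Z_\infty^+$-quotient because $E_P(\,\cdot\,,\lambda,\,\cdot\,)$ is $Z_\infty^+$-invariant there — gives $\int_{Z_\infty^+\backslash G_\mathbb{A}}f(y)E_P(F(\lambda),\lambda,xy)\,dy=E_P(\pi_P(\lambda,f)F(\lambda),\lambda,x)$, so $\mathrm R(f)\phi$ is again a wave packet, now with datum $\pi_P(\lambda,f)F(\lambda)$; the intertwining relation $M_P(s,\lambda)\pi_P(\lambda,f)=\pi_P(s\lambda,f)M_P(s,\lambda)$ (where the $K$-finiteness of $f^1,f^2$ is used) shows this datum stays compatible across $\mathfrak P$. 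Since $\mathscr H_{P,\chi}$ is finite-dimensional, expanding $F(\lambda)=\sum_\beta(F(\lambda),\Phi_\beta)\Phi_\beta$ and writing $\Phi_\alpha=\pi_P(\lambda,f)\Phi_\beta$ as in the lines preceding the lemma, then inserting the integral expression for $(F(\lambda),\Phi_\beta)$ and interchanging the $y$-integration with the sum over $\chi$, the $\lambda$-integral and the finite sum over $\alpha,\beta$, one obtains $\mathrm R_{P,1}(f)\phi(x)=\int_{Z_\infty^+G_\mathbb{Q}\backslash G_\mathbb{A}}K_P(x,y)\phi(y)\,dy$ with $K_P(x,y)$ exactly the asserted expression, the normalizing constants from the Plancherel formula reassembling into the displayed factor $n(A)^{-1}(\tfrac{1}{2\pi i})^{\dim Z\backslash A}$.

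The main obstacle is the analytic justification of that interchange — equivalently, showing the series-and-integral defining $K_P(x,y)$ converges absolutely and locally uniformly in $(x,y)$. Indeed the Eisenstein series are only $L^2$ over the full quotient (not pointwise bounded), the sum over $\chi$ is a priori infinite, and $\mathrm R_{P,1}(f)$ is not of trace class, so genuine estimates are needed. The ingredients I would assemble are: the vanishing $\pi_{P,\chi}(\lambda,f^1)=\pi_{P,\chi}(\lambda,f^2)=0$ for almost all ramified $\chi$ (equation~(\ref{class})), which leaves only finitely many ramified $\chi$; the vanishing of the residual spectrum attached to unramified $\chi$, so that for those $\chi$ the $P$-part is all of $L^2_\chi$; the Hilbert--Schmidt property of convolution by $f^i$ on the cusp forms of the Levi $M$ (the analogue for $M$ of Lemma~\ref{hilbert}), which bounds the remaining sum over $\chi$; and the rapid decay in $\lambda$ of $\pi_P(\lambda,f)$ coming from the smoothness and compact support of $f$ (cf.\ Lemma~\ref{lemma4.4}), set against the at most polynomial growth in $\lambda$ of the Eisenstein series on $i\mathfrak a$, which makes each $\lambda$-integral converge. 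Assembling these estimates in the manner of Arthur~\cite{A3} finishes the proof.
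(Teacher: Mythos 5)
Your formal derivation of the kernel (realizing elements of $L^2_{\mathfrak{P}_\chi,\chi}$ as wave packets, applying $\mathrm{R}(f)$ through the functional equation $E_P(\pi_P(\lambda,f)\Phi,\lambda,x)=\int f(y)E_P(\Phi,\lambda,xy)\,dy$, and reading off the coefficients) matches the first line of the paper's proof, which simply says the shape of $K_P(x,y)$ follows from the spectral decomposition. The problem is the second step, which you rightly identify as the real content: your proposed justification of convergence has a genuine gap. You want to dominate $\sum_\chi\sum_{\alpha,\beta}E_P(\Phi_\alpha,\lambda,x)\overline{E_P(\Phi_\beta,\lambda,y)}$ by playing rapid decay of $\pi_P(\lambda,f)$ in $\lambda$ against ``at most polynomial growth in $\lambda$ of the Eisenstein series on $i\mathfrak a$''. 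No such pointwise bound is proved or cited in the paper, and even granted for an individual $E_P(\Phi,\lambda,x)$ it carries constants depending on $\chi$ and on the basis vector; what is actually needed is control of $\sum_{\beta\in\mathscr B_{P,\chi}}|E_P(\Phi_\beta,\lambda,x)|^2$, summed over the infinitely many unramified $\chi$ and integrated in $\lambda$. The square-summability over $\chi$ of Hilbert--Schmidt norms of $\pi_{P,\chi}(\lambda,f^i)$ on the Levi does not couple to pointwise Eisenstein values without exactly such a bound, so the crux of your argument is deferred to ``assembling these estimates in the manner of Arthur'' --- but Arthur's manner is not your decay-versus-growth route.

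What the paper (following Arthur) actually does is designed to avoid any growth estimate for Eisenstein series: write $f=f^1*f^2$, apply Cauchy--Schwarz in the $\beta$-sum to bound $|K_{P,\chi}(f,x,y)|$ by $K_{P,\chi}({}^1f,x,x)^{\frac12}\,K_{P,\chi}({}^2f,y,y)^{\frac12}$ with ${}^1f=f^1*(f^1)^*$ and ${}^2f=(f^2)^**f^2$. For these functions of positive type the diagonal integrands are non-negative, and since $\mathrm R_{P,1}({}^if)$ is the restriction of the positive semidefinite operator $\mathrm R({}^if)$ to an invariant subspace, the resulting sum-integral on the diagonal is dominated by the full kernel $K_{{}^if}(x,x)=\sum_{\gamma\in G_\mathbb Q}{}^if(x^{-1}\gamma x)$, which is locally bounded by Harish-Chandra. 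This positivity-plus-domination device is what replaces the estimates you invoke; to keep your route you would first have to prove the polynomial bounds for Eisenstein series uniformly enough in $\chi$ and in the basis, which is substantially harder than the lemma itself.
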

\begin{proof}
    The definition of the kernel $K_{P}(x,y)$ follows from the spectral decomposition.

    We now only  need to prove the convergence of the integral in $K_{P}(x,y)$ and the sum over $\chi$ converged and show that they are locally bounded.

    Write $f=f_1*f_2$.
    Define $K_{P, \chi}(f, x, y)$ to be
    \begin{eqnarray*}
       \sum_{\beta\in \mathscr{B}_{P, \chi}}E_{P}(\pi_{P}(\lambda, f)\Phi_\beta, \lambda, x)\overline{E_{P}(\Phi_\beta, \lambda, x)} 
    \end{eqnarray*} 
    Applying the Cauchy-Schwartz inequality, the absolute value of the function $K_{P, \chi}(f, x, y)$ is bounded by \[K_{{P}, \chi}(f^1*(f^1)^*, x, x)^{\frac{1}{2}}\cdot K_{P, \chi}((f^2)^**f^2, y, y)^{\frac{1}{2}}. \]
    However, the operetor $\rm{R}_{{P},1}(f)$ is the restriction of the positive semidefinite operator $\rm{R}(f)$ to an invariant subspace. The integrand in the expression for $K_{P}(x,x)$ is non-negative, and the integral is bounded by $K(x, x)$. 

    By \cite{H1}, $K(x,x)$ is bounded.
\end{proof}
The proof also shows that the kernel $K_{P}(x, y)$ is continuous in $x, y$.
\begin{theorem}
    Given function $f\in C_c^\infty(Z_\infty^+\backslash G_\mathbb{A})$, the operator $\rm{R}_0(f)$ is of trace class. 
\end{theorem}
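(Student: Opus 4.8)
The plan is to decompose the operator $\rm{R}_0(f)$ according to the orthogonal decomposition $L_0^2(Z_\infty^+G_\mathbb{Q}\backslash G_\mathbb{A})=L^2_{\rm{cusp}}\oplus\bigoplus_\chi L^2_{\chi,\rm{res}}$ and show that $\rm{R}(f)$ restricted to each summand is of trace class; since $\rm{R}_0(f)$ is the orthogonal direct sum of these restrictions, it will then be of trace class. First I would handle the cuspidal part. Writing $f=f^1*f^2$ with $f^1,f^2\in C_c^N(Z_\infty^+\backslash G_\mathbb{A})^K$ for $N$ large (Duflo--Labesse, as in the lemma above), the operator $\rm{R}_{\rm{cusp}}(f^i)$ acting on $\mathscr{H}_{G,\rm{cusp}}$ is, by Lemma \ref{hilbert}, Hilbert--Schmidt. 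Hence $\rm{R}_{\rm{cusp}}(f)=\rm{R}_{\rm{cusp}}(f^1)\rm{R}_{\rm{cusp}}(f^2)$ is a product of two Hilbert--Schmidt operators, therefore trace class.

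Next I would treat the residual part $\bigoplus_\chi L^2_{\chi,\rm{res}}$. The point is that this space is built out of residues of Eisenstein series attached to proper parabolics $P\neq G$, and by the lemma above the restriction $\rm{R}_{P,1}(f)$ is an integral operator with continuous, locally bounded kernel $K_P(x,y)$ whose diagonal is dominated by $K(x,x)$, which is bounded on the compact set $Z_\infty^+G_\mathbb{Q}\backslash G_\mathbb{A}$ by \cite{H1}. For a positive operator on an $L^2$-space of a space of finite measure with continuous kernel, finiteness of $\int K(x,x)\,dx$ gives trace class; applying this to $\rm{R}(f^1*(f^1)^*)$ and $(f^2)^**f^2$ and again factoring $f=f^1*f^2$ via Cauchy--Schwarz (exactly as in the proof of the preceding lemma) shows each $\rm{R}_{P,1}(f)$ is trace class. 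Since $L^2_{\chi,\rm{res}}$ for the various $\chi$ (with $\mathfrak{P}=\mathfrak{P}_\chi\neq\mathfrak{P}_G$) sits inside finitely many $L^2_{\mathfrak{P}}(Z_\infty^+G_\mathbb{Q}\backslash G_\mathbb{A})$ and, crucially, the residual spectrum for unramified $\chi$ vanishes (\cite{L1}, noted above) while $\pi_{P,\chi}(\lambda,f^1)=\pi_{P,\chi}(\lambda,f^2)=0$ for almost all ramified $\chi$ by (\ref{class}), only finitely many $\chi$ contribute a nonzero operator. A finite sum of trace-class operators is trace class.

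Finally I would assemble the pieces: $\rm{R}_0(f)=\rm{R}_{\rm{cusp}}(f)\oplus\bigoplus_\chi \rm{R}|_{L^2_{\chi,\rm{res}}}(f)$, each term trace class, the sum finite, so $\rm{R}_0(f)$ is of trace class, and $\mathrm{Tr}\,\rm{R}_0(f)=\int_{Z_\infty^+G_\mathbb{Q}\backslash G_\mathbb{A}}K_0(x,x)\,dx$ where $K_0$ is the corresponding kernel. I expect the main obstacle to be the bookkeeping that reduces the residual sum to finitely many $\chi$: one must verify carefully that $L^2_{\chi,\rm{res}}$ is genuinely spanned by residues of Eisenstein series from proper parabolics (so the kernel lemma applies), that the vanishing statements for unramified and almost-all-ramified $\chi$ indeed kill all but finitely many summands, and that the Cauchy--Schwarz factorization argument transfers cleanly from the $\rm{R}_{P,1}$ setting to the restriction to $L^2_{\chi,\rm{res}}$; the Hilbert--Schmidt argument for the cuspidal part is essentially immediate from Lemma \ref{hilbert}.
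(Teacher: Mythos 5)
Your treatment of the cuspidal part is exactly the paper's argument (factor $f=f^1*f^2$, each $\mathrm{R}_{0,\mathrm{cusp}}(f^i)$ Hilbert--Schmidt, product trace class), and that half is fine. The residual half, however, contains a genuine gap. First, $Z_\infty^+G_\mathbb{Q}\backslash G_\mathbb{A}$ is \emph{not} compact for $\mathrm{GL}(4)$; it only has finite volume, and $K(x,x)$ is only locally bounded, not bounded on the whole quotient. More seriously, the operator $\mathrm{R}_{P,1}(f)$ is \emph{not} of trace class: it is by definition the restriction of $\mathrm{R}_1(f)$ to the continuous (Eisenstein) part $L^2_{\mathfrak{P}}$, its kernel is a direct integral over $i\mathfrak{a}_G\backslash i\mathfrak{a}$ of contributions from induced representations, and the integral of $K_P(x,x)$ over the quotient diverges --- this divergence is precisely the reason Arthur introduces the truncation operator $\Lambda^T$, so no Cauchy--Schwarz/positivity argument can make $\int K_P(x,x)\,dx$ finite. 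Finally, even if $\mathrm{R}_{P,1}(f)$ were trace class it would say nothing about $\mathrm{R}_0(f)$ on $\oplus_\chi L^2_{\chi,\mathrm{res}}$, since the residual subspaces lie in $L^2_0$, orthogonal to the space on which $\mathrm{R}_{P,1}(f)$ acts; routing the residual spectrum through the kernel lemma for $\mathrm{R}_{P,1}(f)$ is a category error.

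The correct (and much shorter) argument for the residual part, which is what the paper does, is to show that $\mathrm{R}_{0,\mathrm{res}}(f^1)$ and $\mathrm{R}_{0,\mathrm{res}}(f^2)$ are of \emph{finite rank}: the residual spectrum attached to unramified $\chi$ vanishes by Langlands, and by the normalization (\ref{class}) one has $\pi_{P,\chi}(\lambda,f^i)=0$ for all but finitely many ramified $\chi$, so only finitely many $\chi$ contribute, each through a finite-dimensional space of residues built from the finite-dimensional $\mathscr{H}_{P,\chi}$. A product of finite-rank operators is trace class, and then $\mathrm{R}_0(f)=\mathrm{R}_{0,\mathrm{cusp}}(f)\oplus\mathrm{R}_{0,\mathrm{res}}(f)$ is trace class. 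You do quote the two vanishing statements, but you use them only to reduce to finitely many summands, each of which you then handle by the invalid $\mathrm{R}_{P,1}$ argument; replacing that step by the finite-rank observation repairs the proof.
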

\begin{proof}
    The operator $\rm{R}_0(f)$ is the sum of $\rm{R}_{0, \rm{cusp}}(f)$ and $\rm{R}_{0, \rm{res}}(f)$, these two are the restriction of $\rm{R}_0(f)$ to the space of cusp forms and the space $\oplus_\chi L^2_{\chi, \rm{res}}(G_\mathbb{Q}\backslash G_\mathbb{A})$. Now, \[\rm{R}_{0, \rm{cusp}}(f)=\rm{R}_{0, \rm{cusp}}(f^1)\cdot \rm{R}_{0, \rm{cusp}}(f^2),\] Harish-Chandra (\cite{H1}) has proved that these two operators $\rm{R}_{0, \rm{cusp}}(f^1), \rm{R}_{0, \rm{cusp}}(f^2)$ are of Hilbert-Schmidt class. Then $\rm{R}_{0, \rm{cusp}}(f)$ is of trace class. 

    By (\ref{class}) and the fact that the residual discrete spectrum associated to unramified $\chi$ is zero\cite{L1},  both $\rm{R}_{0, \rm{res}}(f^1)$ and $\rm{R}_{0, \rm{res}}(f^2)$ are of finite rank. Therefore $\rm{R}_{0, \rm{res}}(f)$ is of trace class. 
\end{proof}
We now express the trace of $\rm{R}_{0}(f)$ as \[\int_{Z_\infty^+G_\mathbb{Q}\backslash G_\mathbb{A}}K_0(x, x)dx. \]
\section{The calculation of the kernel}\label{sc6}
In this section, we shall give a explicit way to calculate the kernel $K_{\mathfrak{o}}(x,y)$.

Consider the geometric side of the trace formula of $\rm{GL(4)}$. Recall that \[K(x,x)=\sum_{\gamma\in G_\mathbb{Q}}f(x^{-1}\gamma x). \]
Now fix a parabolic subgroup $P$ and $\gamma\in M_\mathbb{Q}$. Recall $G^+(\gamma), P^+(\gamma), M^+(\gamma), N^+(\gamma)$ are the centralizer of $\gamma$ in $G, P, M, N$ respectively.

\begin{lemma}
    For any $\gamma\in M_\mathbb{Q}$,  \[P^+(\gamma)=M^+(\gamma)N^+(\gamma). \]
\end{lemma}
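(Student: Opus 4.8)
The plan is to exploit the semidirect product structure $P = M \ltimes N$ together with the fact that $N$ is a normal subgroup of $P$, so that conjugation by $\gamma$ respects the projection $\pi\colon P \to M$ with kernel $N$. First I would note the trivial inclusion $M^+(\gamma)N^+(\gamma)\subseteq P^+(\gamma)$: if $m\in M$ and $n\in N$ each commute with $\gamma$, then so does the product $mn$, using that $M$ normalizes $N$ is not even needed here — it is immediate. The substance is the reverse inclusion. Take $p\in P^+(\gamma)$ and write $p = mn$ with $m\in M_\mathbb{Q}$, $n\in N_\mathbb{Q}$ uniquely (Levi decomposition over $\mathbb{Q}$). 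Apply the projection $\pi\colon P\to M$: since $\gamma\in M_\mathbb{Q}$, we have $\pi(\gamma)=\gamma$, and $\pi$ is a group homomorphism, so $p\gamma p^{-1}=\gamma$ forces $\pi(p)\gamma\pi(p)^{-1}=\gamma$, i.e. $m\in M^+(\gamma)$.

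The second step is then to deduce $n\in N^+(\gamma)$. Since $m$ commutes with $\gamma$, from $p\gamma = \gamma p$ we get $mn\gamma = \gamma mn = m\gamma n$, and cancelling $m$ on the left yields $n\gamma = \gamma n$, so $n\in N^+(\gamma)$. Hence $p = mn\in M^+(\gamma)N^+(\gamma)$, which completes the argument. The whole proof is essentially the observation that the Levi decomposition $P=MN$ is compatible with centralizers of elements that already lie in $M$, because the projection $P\to P/N\cong M$ is a homomorphism fixing $\gamma$.

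I do not anticipate a serious obstacle here; the only point requiring a word of care is the uniqueness of the decomposition $p=mn$ (so that "$\pi(p)=m$" makes sense), which is standard for a parabolic subgroup with its fixed Levi decomposition $P=M_PN_P$, valid over $\mathbb{Q}$ and functorially over any $\mathbb{Q}$-algebra. One should also remark that the identity is understood at the level of $\mathbb{Q}$-points (or $R$-points for any ring $R$), since that is how $P^+(\gamma)$, $M^+(\gamma)$, $N^+(\gamma)$ are used in the sequel; the argument above is insensitive to this, being purely group-theoretic once the semidirect product structure is in hand. If one wants the scheme-theoretic statement, the same computation applied to $R$-points for all $R$ gives it.
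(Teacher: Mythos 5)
Your proof is correct and follows essentially the same route as the paper: decompose $p=mn$ via the Levi decomposition of $P$ and use its uniqueness (equivalently, the projection $P\to P/N\cong M$, which is just a repackaging of the paper's conjugation-and-uniqueness step) to conclude $m\in M^+(\gamma)$ and $n\in N^+(\gamma)$. No gaps.
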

\begin{proof}
    Since \[M^+(\gamma)\subset P^+(\gamma),\quad N^+(\gamma)\subset P^+(\gamma),\]
    we have \[M^+(\gamma)N^+(\gamma)\subset P^+(\gamma).\]
    Suppose $p\in P^+(\gamma)\subset P$, we can write  \[p=mn, \quad m\in M_\mathbb{Q}, \,n\in N_\mathbb{Q}. \]
    Then, \[p=\gamma^{-1}p\gamma=\gamma^{-1}m\gamma\cdot \gamma^{-1}n\gamma=mn. \]Since  $\gamma$ normalize $M$ and $N$, it follows that  
    \[m=\gamma^{-1}m\gamma, \quad n=\gamma^{-1}n\gamma. \]
    Therefore, \[m\in M^+(\gamma),\quad n\in N^+(\gamma).\]Then $p\in M^+(\gamma)N^+(\gamma),$ the lemma follows.
\end{proof}
Since $N^+(\gamma)$ is connected, the centralizer of $\gamma$ in $N$ is $N(\gamma)$. Write $\gamma=\gamma_s\gamma_u$ as the Jordan decomposition over $\mathbb{Q}$, where $\gamma_s$ is semisimple, $\gamma_u$ is unipotent.

The following Lemma is from \cite{A3}. For convenience, we present the proof.
\begin{lemma}\label{l1}
    Suppose that $P=NM$ is a parabolic subgroup, and $\gamma\in M_\mathbb{Q}$. Then for any $\phi\in C_c(N_\mathbb{A})$, 
    \begin{eqnarray}\label{Lemma6.2}
        \sum_{\delta\in N(\gamma_s)_\mathbb{Q}\backslash N_\mathbb{Q}}\sum_{\eta\in N(\gamma_s)_\mathbb{Q}}\phi(\gamma^{-1}\delta^{-1}\gamma\eta\delta)=\sum_{\eta\in N_\mathbb{Q}}\phi(\eta). 
    \end{eqnarray}
\end{lemma}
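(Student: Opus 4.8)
The plan is to prove this by a double-coset bookkeeping argument in the unipotent group $N_\mathbb{Q}$, using the structure provided by the decomposition $N(\gamma_s)_\mathbb{Q}\backslash N_\mathbb{Q}$. First I would observe that the map sending $(\delta,\eta)$ to the element $\delta^{-1}\cdot(\gamma^{-1}\delta\gamma)\cdot\eta$... actually, more precisely, for fixed $\delta\in N_\mathbb{Q}$, consider the change of variable in the inner sum. Write $\gamma^{-1}\delta^{-1}\gamma\eta\delta = (\gamma^{-1}\delta^{-1}\gamma\,\delta)\cdot(\delta^{-1}\eta\delta)$. Since $\gamma$ normalizes $N$, conjugation by $\gamma$ and by $\delta$ are automorphisms of $N_\mathbb{Q}$; in particular $\eta\mapsto \delta^{-1}\eta\delta$ is a bijection of $N(\gamma_s)_\mathbb{Q}$ onto its conjugate $\delta^{-1}N(\gamma_s)_\mathbb{Q}\delta$. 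The key structural fact I would invoke is that $N(\gamma_s)$ is the centralizer of $\gamma_s$ in $N$ (stated just before the lemma, using that $N^+(\gamma)$ is connected so $N(\gamma_s) = N^+(\gamma_s)$), hence for $\delta\in N_\mathbb{Q}$ the conjugate $\delta^{-1}N(\gamma_s)_\mathbb{Q}\delta$ is the centralizer of $\delta^{-1}\gamma_s\delta$ in $N_\mathbb{Q}$.

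The heart of the argument is to set up a bijection between the index set on the left, namely pairs $(\delta N(\gamma_s)_\mathbb{Q},\ \eta)$ with $\delta\in N(\gamma_s)_\mathbb{Q}\backslash N_\mathbb{Q}$ and $\eta\in N(\gamma_s)_\mathbb{Q}$, and the index set $N_\mathbb{Q}$ on the right, such that the matching pairs have equal $\phi$-values. I would define the map $\Psi(\delta,\eta) = \gamma^{-1}\delta^{-1}\gamma\eta\delta$ and check: (i) it is well-defined on cosets — replacing $\delta$ by $\zeta\delta$ with $\zeta\in N(\gamma_s)_\mathbb{Q}$, one computes $\gamma^{-1}\delta^{-1}\zeta^{-1}\gamma\eta\zeta\delta = \gamma^{-1}\delta^{-1}(\gamma^{-1}\zeta\gamma)^{-1}\gamma\,\eta\,\zeta\delta$... here I use $\gamma^{-1}\zeta\gamma = \zeta$ since $\zeta$ centralizes $\gamma_s$ (and commutes with $\gamma$; one must be slightly careful, as centralizing $\gamma_s$ versus $\gamma$ differ, but $N(\gamma_s) = N(\gamma)$ because $\gamma_u\in N$ and the Jordan decomposition is compatible — this identification is exactly what the sentence "the centralizer of $\gamma$ in $N$ is $N(\gamma)$" together with $H(\gamma)\subset H(\gamma_s)$ gives), so the substitution $\eta\mapsto \zeta\eta\zeta^{-1}$ absorbs the change and shows well-definedness after reindexing the inner sum; (ii) it is injective; (iii) it is surjective onto $N_\mathbb{Q}$. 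Surjectivity is where the real content sits: given $\eta_0\in N_\mathbb{Q}$, I must produce $\delta$ and $\eta\in N(\gamma_s)_\mathbb{Q}$ with $\gamma^{-1}\delta^{-1}\gamma\eta\delta = \eta_0$, i.e. $\eta = (\gamma^{-1}\delta\gamma)\eta_0\delta^{-1}$, and I need this $\eta$ to lie in $N(\gamma_s)_\mathbb{Q}$ for a suitable choice of $\delta$.

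I expect the main obstacle to be precisely this surjectivity step, which amounts to a Lang-type / cohomological triviality statement: the map $\delta\mapsto (\gamma^{-1}\delta\gamma)\delta^{-1}$ from $N_\mathbb{Q}$ to $N_\mathbb{Q}$ should have image a set of coset representatives so that modifying by it lands every $\eta_0$ in the centralizer coset. The clean way to see it is to filter $N$ by the descending central series and induct on the nilpotency length: on each graded piece the operator $\mathrm{Ad}(\gamma) - 1$ (equivalently $x\mapsto \gamma x\gamma^{-1}x^{-1}$ on the abelian quotient) has kernel the $\gamma_s$-fixed part and, since we work over $\mathbb{Q}$ with a semisimple (in the relevant sense) action induced by $\gamma_s$, one gets a direct-sum decomposition of each graded piece into the fixed part and a complement on which $\mathrm{Ad}(\gamma)-1$ is invertible. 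Lifting this splitting up the central series via the standard unipotent-group exponential-type bookkeeping converts the identity $\sum_{\delta}\sum_\eta \phi(\Psi(\delta,\eta)) = \sum_{\eta_0} \phi(\eta_0)$ into a statement about Fubini over $N_\mathbb{Q} = N(\gamma_s)_\mathbb{Q}\cdot(\text{complement})$. I would present the graded/central-series induction as the technical core, keeping the abelian base case explicit and treating the inductive step by pushing everything modulo a central subgroup, so that the displayed identity \eqref{Lemma6.2} follows by reassembling the pieces.
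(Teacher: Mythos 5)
Your overall skeleton (recast the identity as a counting/bijection statement and induct along a filtration of $N$ with abelian quotients, using on each graded piece a splitting into the $\gamma_s$-fixed part and a complement) is the same skeleton as the paper's proof, which follows Arthur. But the two claims on which your surjectivity step rests are false, and they fail at exactly the point that is the real content of the lemma, namely the presence of the unipotent part $\gamma_u$. First, $\gamma_u$ does not lie in $N$: since $\gamma\in M_\mathbb{Q}$, its Jordan components lie in $M_\mathbb{Q}$, and in general $N(\gamma)=N(\gamma_s)\cap N(\gamma_u)$ is a proper subgroup of $N(\gamma_s)$ (take $\gamma_s$ central, $\gamma_u\neq e$). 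So the identification $N(\gamma_s)=N(\gamma)$ you invoke is wrong; the well-definedness of the inner sum can still be rescued by your reindexing remark, because $\gamma^{-1}\zeta\gamma=\gamma_u^{-1}\zeta\gamma_u\in N(\gamma_s)$ for $\zeta\in N(\gamma_s)$, but the justification as written is incorrect. Second, and decisively: on a graded piece $V$ of the descending central series, the kernel of $\mathrm{Ad}(\gamma)-1$ is the $\gamma$-fixed part, not the $\gamma_s$-fixed part; $\mathrm{Ad}(\gamma_u)$ can act nontrivially on $V^{\gamma_s}$, so $\mathrm{Ad}(\gamma)-1$ is nilpotent but nonzero there. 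Consequently the splitting you describe does not by itself give the statement you need (exactly one coset of $N(\gamma_s)_\mathbb{Q}$, not of $N(\gamma)_\mathbb{Q}$, hitting each $\eta_0$), and the "Lang-type" image of $\delta\mapsto(\gamma^{-1}\delta\gamma)\delta^{-1}$ is not a set of coset representatives for $N(\gamma_s)_\mathbb{Q}\backslash N_\mathbb{Q}$.

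Concretely, if you run your induction with the plain lower central series, the inductive step does not reproduce the same statement: after solving the problem modulo $N_1=[N,N]$ you are led to the same question for $N_1$ with $\gamma$ replaced by a twisted element $\gamma\mu_s$, $\mu_s\in N(\gamma_s)_\mathbb{Q}$, so you must strengthen the induction hypothesis to allow an arbitrary normalizing element whose semisimple part is $\gamma_s$ (this can be made to work, since $\gamma_u\mu_s$ is again unipotent and commutes with $\gamma_s$, but none of this is in your sketch). The paper avoids this by the device you have omitted: after conjugating in $M_\mathbb{Q}$ one arranges $\gamma_s\in M_1$ and $\gamma_u\in M(\gamma_s)\cap N_1$ for a parabolic $P_1\subset P$, and then chooses the filtration $N=N_0\supset N_1\supset\dots\supset N_n=\{e\}$ from the $A_1$-eigenspace decomposition so that commutators $\gamma_u^{-1}\delta^{-1}\gamma_u\delta$ with $\delta\in N_k$ fall into $N_{k+1}$; these commutators are then absorbed into the sum over $\eta\in N(\gamma_s)N_{k+1}$, so only the semisimple conjugation survives, and the graded bijectivity needed is that of $y\mapsto\gamma_s^{-1}y^{-1}\gamma_s y$ on $N_k(\gamma_s)N_{k+1}\backslash N_k$, where the kernel statement you want is actually true. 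Either supply this choice of filtration (and the preliminary conjugation of $\gamma$), or reformulate your induction with the twisted element; as written, the proposal has a genuine gap.
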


\begin{proof}
    If $\gamma$ is replaced by an $M$-conjugate element,  $\gamma = \mu^{-1}m\mu$,  with $\mu,m \in M$. Then the term $\gamma^{-1}\delta^{-1}\gamma\eta\delta$ becomes
    $\mu^{-1} m^{-1} \mu \delta^{-1} \mu^{-1} m \mu \eta \delta$,  
    which is
    \begin{eqnarray*}
            \mu^{-1} m^{-1} \mu \delta^{-1} \mu^{-1} m \mu \mu^{-1} \mu \eta \mu^{-1} \mu \delta. 
    \end{eqnarray*}
    We write this as
    \begin{eqnarray*}
        \mu^{-1} \cdot m^{-1} \cdot \mu \delta^{-1}  \mu^{-1} \cdot m \cdot \mu \eta \mu^{-1} \cdot \mu \delta \mu^{-1} \cdot \mu
    \end{eqnarray*}
    where $ \mu \delta^{-1}  \mu^{-1} $,  $\mu \eta  \mu^{-1}$,  $\mu \delta  \mu^{-1}$ are belong to $N$,  since $N$ is normal in ${P}$. So the conjugate does not change the form. Then we can replace $\gamma$ by $\gamma_s$.

    Assume that there exists a parabolic subgroup $P_1 \subset P$,  such that \[\gamma_s \in M_1 ,\quad \gamma_u \in M(\gamma_s) \cap N_1.\]  
    The Lie algebra of $N$ can be decomposed into eigenspaces under the action of $A_1$, then there exists a sequence \[N=N_0\supset N_1 \supset . . . \supset N_n =\{e\}\] of normal $\gamma_s$-stable subgroups of $N$, with the properties that $N_{k+1}\setminus N_{k}$ is abelian for $ n-1\ge k\ge 0$,  and $\eta^{-1} \delta^{-1} \eta  \delta$ belongs to $N_{k+1}$ for any $\delta \in N_k$ and $\eta \in N$ or $\eta=\gamma_u$. 
       
    We claim that 
    \begin{equation}\label{61}
    \sum_{\delta \in N(\gamma_s)N_k \setminus N}  \sum_{\eta \in N(\gamma_s)N_k}\phi \left( \gamma^{-1}\delta^{-1}\gamma\eta\delta\right)
       \end{equation} 
       equals
       \begin{equation}\label{62}
           \sum_{\delta \in N(\gamma_s) \setminus N} \sum_{\eta \in N(\gamma_s)} \phi(\gamma^{-1}\delta^{-1}\gamma\eta\delta). 
       \end{equation}
It is easy to see that (\ref{Lemma6.2}) is the case of $k=0$.

The equality $(\ref{61})=(\ref{62})$ holds when $k=n-1$. Suppose
\begin{eqnarray*}
\sum_{\delta \in N(\gamma_s)N_{k+1} \setminus N} \sum_{\eta \in N(\gamma_s)N_{k+1}}\phi \left( \gamma^{-1}\delta^{-1}\gamma\eta\delta\right)
\end{eqnarray*}
is the sum over $\delta_1 \in N(\gamma_s)N_k \setminus N$ of
\begin{eqnarray*}
\sum_{\delta_2 \in N(\gamma_s)N_{k+1} \setminus N(\gamma_s)N_k} \sum_{\eta \in N(\gamma_s)N_{k+1}}\phi(\gamma^{-1}\delta_1^{-1}\delta_2^{-1}\gamma\eta\delta_2\delta_1), 
\end{eqnarray*}
which becomes
\begin{eqnarray*}
\sum_{\delta_2 \in N_k(\gamma_s)N_{k+1} \setminus N_k} \sum_{\eta} \phi(\gamma^{-1}\delta_1^{-1}\delta_2^{-1}\gamma\eta\delta_2\delta_1). 
\end{eqnarray*}
Fix $\delta_2 \in N_k$,  we change the variables in the inner sum over $\eta$.  We have
\begin{eqnarray*}
   \sum_{\eta \in N(\gamma_s)N_{k+1}} \phi(\gamma^{-1}\delta_1^{-1}\delta_2^{-1}\gamma\eta\delta_2\delta_1) = \sum_{\eta} \phi(\gamma^{-1}\delta_1^{-1}\gamma \cdot \gamma^{-1}\delta_2^{-1}\gamma\eta\delta_2 \cdot \delta_1), 
\end{eqnarray*}
by \[\gamma^{-1} \delta_{2}^{-1} \gamma = \gamma_{s}^{-1} \gamma_{u}^{-1} \delta_{2}^{-1} \gamma_u \delta_2 \cdot \delta_{2}^{-1} \gamma_s. \] 
Since $\delta_2^{-1} \in N_k(\gamma_s)N_{k+1}$, it becomes
\begin{eqnarray*}
\sum_{\eta} \phi(\gamma^{-1}\delta_1^{-1}\gamma \cdot \gamma_s^{-1}\delta_2^{-1}\gamma_s\eta\delta_2 \cdot \delta_1)=\psi(\gamma_s^{-1}\delta_2^{-1}\gamma_s\delta_2). 
\end{eqnarray*} 
where \[\psi(x) = \sum_{\eta \in N(\, \gamma_s) N_{k+1}} \phi(\gamma^{-1}\delta_1^{-1}\gamma\eta \cdot x \cdot \delta_1)\]is a compactly supported function on the discrete set
\(N_{k}(\gamma_s)N_{k+1}\setminus N_k\). 
 The map 
 \begin{eqnarray*}
     y\rightarrow N_{k}(\gamma_s)N_{k+1}\cdot \gamma_s^{-1}y^{-1}\gamma_s y,  \qquad y\in N_{k}(\gamma_s)N_{k+1}\setminus N_k
 \end{eqnarray*}
 is an isomorphism from \(N_{k}(\gamma_s)N_{k+1}\setminus N_k\) onto itself.  Therefore 
 \begin{eqnarray*}
     \sum_{\delta_2 \in N_k(\gamma_s) N_{k+1} \setminus N_{k}} \psi(\gamma_s^{-1}\delta_2^{-1}\gamma_s\delta_2)= \sum_{\eta \in N(\gamma_s) N_k} \phi(\gamma^{-1}\delta_1^{-1}\gamma\eta\delta_1).
 \end{eqnarray*}
which is the case of (\ref{61}) at $k$. 
\end{proof}
Similarly, we have
\begin{lemma}\label{l2}
    $\int_{N(\gamma_s)_\mathbb{A}\backslash N_\mathbb{A}}\int_{N(\gamma_s)_\mathbb{A}}\phi(\gamma^{-1}n_1^{-1}\gamma n_2n_1)dn_2dn_1=\int_{N_\mathbb{A}}\phi(n)dn. $
\end{lemma}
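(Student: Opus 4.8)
The plan is to prove Lemma \ref{l2} as the archimedean (adelic, integrated) analogue of Lemma \ref{l1}, following exactly the same filtration argument but replacing discrete sums over $N_\mathbb{Q}$-quotients by Haar integrals over $N_\mathbb{A}$-quotients. First I would reduce to the case $\gamma = \gamma_s$: if $\gamma = \mu^{-1}m\mu$ with $\mu, m \in M_\mathbb{A}$, then conjugation by $\mu$ carries $N_\mathbb{A}$ to itself and, since Haar measure on $N_\mathbb{A}$ is invariant under the automorphism $n \mapsto \mu n \mu^{-1}$ (this automorphism has modulus one on a unipotent group), the change of variables $n_i \mapsto \mu n_i \mu^{-1}$ shows the left side is unchanged; hence we may assume $\gamma$ is semisimple, and then further use the Jordan decomposition $\gamma = \gamma_s\gamma_u$ exactly as in the previous lemma, noting $\gamma_u \in M(\gamma_s)_\mathbb{A} \cap N_{1,\mathbb{A}}$ for a suitable parabolic $P_1 \subset P$.

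Next I would set up the same chain of normal $\gamma_s$-stable subgroups $N_\mathbb{A} = N_{0,\mathbb{A}} \supset N_{1,\mathbb{A}} \supset \cdots \supset N_{n,\mathbb{A}} = \{e\}$ with $N_{k+1}\backslash N_k$ abelian and all commutators $\eta^{-1}\delta^{-1}\eta\delta \in N_{k+1}$ for $\delta \in N_k$ and $\eta \in N$ or $\eta = \gamma_u$. The claim to prove by downward induction on $k$ is that
\[
\int_{N(\gamma_s)_\mathbb{A}N_{k,\mathbb{A}}\backslash N_\mathbb{A}}\int_{N(\gamma_s)_\mathbb{A}N_{k,\mathbb{A}}}\phi(\gamma^{-1}n_1^{-1}\gamma n_2 n_1)\,dn_2\,dn_1
\]
is independent of $k$; the case $k = n-1$ is the base (there $N(\gamma_s)_\mathbb{A}N_{n-1,\mathbb{A}} = N(\gamma_s)_\mathbb{A}$ modulo the abelian top piece), and $k = 0$ gives the statement since $N_{0,\mathbb{A}} = N_\mathbb{A}$ makes the outer quotient trivial, leaving $\int_{N(\gamma_s)_\mathbb{A}\backslash N_\mathbb{A}}\int_{N(\gamma_s)_\mathbb{A}}\phi(\cdots)$; wait — one must be careful with which end of the filtration gives which side, so I would match conventions with Lemma \ref{l1}: there $k=0$ is equation \eqref{Lemma6.2} and $k=n-1$ is trivial, and the induction runs downward from $n-1$ to $0$. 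The inductive step is the measure-theoretic mirror of the discrete one: write the integral over $N(\gamma_s)_\mathbb{A}N_{k+1,\mathbb{A}}\backslash N_\mathbb{A}$ as an iterated integral over $N(\gamma_s)_\mathbb{A}N_{k,\mathbb{A}}\backslash N_\mathbb{A}$ and $N(\gamma_s)_\mathbb{A}N_{k+1,\mathbb{A}}\backslash N(\gamma_s)_\mathbb{A}N_{k,\mathbb{A}}$, use the identity $\gamma^{-1}n_2^{-1}\gamma = \gamma_s^{-1}(\gamma_u^{-1}n_2^{-1}\gamma_u n_2)\cdot n_2^{-1}\gamma_s$ to rewrite the inner integrand, and then the crucial point: the map $y \mapsto N_{k}(\gamma_s)_\mathbb{A}N_{k+1,\mathbb{A}}\cdot \gamma_s^{-1}y^{-1}\gamma_s y$ is an automorphism (measure-preserving, since it is a polynomial bijection on a vector group $N_{k,\mathbb{A}}/N_{k}(\gamma_s)_\mathbb{A}N_{k+1,\mathbb{A}}$ with unipotent differential) of $N_{k}(\gamma_s)_\mathbb{A}N_{k+1,\mathbb{A}}\backslash N_{k,\mathbb{A}}$ onto itself. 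Applying this change of variables collapses the inner integral and advances the induction.

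The step I expect to be the main obstacle is justifying that the relevant maps on the adelic quotients $N_{k}(\gamma_s)_\mathbb{A}N_{k+1,\mathbb{A}}\backslash N_{k,\mathbb{A}}$ are genuine measure-preserving isomorphisms of locally compact groups. In the discrete setting of Lemma \ref{l1} one only needs these to be bijections of a countable set; here one additionally needs (i) that $N_{k}(\gamma_s)_\mathbb{A}N_{k+1,\mathbb{A}}$ is a closed subgroup so the quotient is a nice locally compact abelian group with its own Haar measure, (ii) that the "commutator with $\gamma_s$" map descends to this quotient and is surjective with trivial kernel (here the eigenvalue/semisimplicity structure of $\gamma_s$ acting on the relevant graded piece of $\mathfrak{n}$ is used: on the part where $\gamma_s$ acts without eigenvalue $1$ the map $y \mapsto \gamma_s^{-1}y^{-1}\gamma_s y$ is invertible, and the part with eigenvalue $1$ is precisely what gets quotiented out into $N(\gamma_s)$), and (iii) that this map preserves Haar measure — which follows because on a unipotent group any algebraic automorphism has Jacobian a single rational function whose value lies in $\prod_v \mathbb{Z}_v^\times$ up to the product formula, hence modulus one, or more simply because the map is unipotent (differential triangular with ones on the diagonal in suitable coordinates) so has determinant one at every place. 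A secondary nuisance is the absolute convergence / Fubini bookkeeping needed to interchange the iterated integrals, which is harmless since $\phi \in C_c(N_\mathbb{A})$ and all the relevant quotients carry the restricted-product structure making the integrals finite. Once these measure-theoretic facts are in place, the algebra is line-for-line identical to the proof of Lemma \ref{l1} with $\sum \leadsto \int$.
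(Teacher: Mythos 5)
Your overall strategy is exactly what the paper intends: it gives no separate proof of Lemma \ref{l2} (it only says ``Similarly''), and the intended argument is the filtration/induction of Lemma \ref{l1} with sums over rational quotients replaced by integrals over adelic quotients, the only genuinely new content being the measure-theoretic facts you isolate at the end. On the structure of the induction, the use of the identity $\gamma^{-1}n_2^{-1}\gamma=\gamma_s^{-1}(\gamma_u^{-1}n_2^{-1}\gamma_u n_2)n_2^{-1}\gamma_s$, and the observation that the commutator map is invertible on the graded quotient because the $\mathrm{Ad}(\gamma_s)$-fixed part has been divided out, you are in agreement with the paper.

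However, two of the justifications you give for the measure statements are wrong as written. First, in the reduction step you take $\mu,m\in M_\mathbb{A}$ and assert that $n\mapsto\mu n\mu^{-1}$ preserves Haar measure on $N_\mathbb{A}$ ``because the group is unipotent.'' That is false: the modulus of this automorphism is $\delta_P(\mu)=e^{\langle 2\rho_P,H_P(\mu)\rangle}$, not $1$ in general (conjugation by $\mathrm{diag}(a,1)$ on the unipotent radical of the Borel of $\mathrm{GL}(2)$ scales the measure by $|a|_\mathbb{A}$); unimodularity of $N_\mathbb{A}$ only controls inner automorphisms by elements of $N_\mathbb{A}$ itself. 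What is true, and all that is needed, is invariance under conjugation by rational $\mu\in M_\mathbb{Q}$ (the conjugation in Lemma \ref{l1} is by rational elements): then $\det(\mathrm{Ad}\,\mu|_{\mathfrak{n}})\in\mathbb{Q}^\times$ and the product formula gives adelic modulus one, and the same remark matches the measures on $N(\gamma_s)_\mathbb{A}=\mu^{-1}N(m_s)_\mathbb{A}\mu$. Second, for the key map $y\mapsto\gamma_s^{-1}y^{-1}\gamma_s y$ on $N_k(\gamma_s)_\mathbb{A}N_{k+1,\mathbb{A}}\backslash N_{k,\mathbb{A}}$ you offer the alternative justification that it is ``unipotent, with determinant one at every place.'' It is not: on the graded piece it is induced by the linear map $1-\mathrm{Ad}(\gamma_s^{-1})$, whose determinant is a nonzero rational number that is in general not a unit at any particular place, so the local moduli are not $1$. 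The correct mechanism is the one you also half-state: the map is defined over $\mathbb{Q}$ with nonzero rational Jacobian, hence its adelic modulus is $|\det(1-\mathrm{Ad}(\gamma_s^{-1}))|_\mathbb{A}=1$ by the product formula (not because the Jacobian lies in $\prod_v\mathbb{Z}_v^\times$). With these two corrections the argument goes through line for line as you describe.
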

\begin{lemma}
    For any fixed parabolic subgroup $P$, if $\gamma\in P_\mathbb{Q}$, then $\gamma$ is $P_\mathbb{Q}$-conjugate to an element $\gamma v$, where $\gamma\in M_\mathbb{Q}, v\in N(\gamma_s)_\mathbb{Q}$. 
\end{lemma}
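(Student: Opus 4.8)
The plan is to split the argument into two steps: first, conjugate $\gamma$ by an element of $N_\mathbb{Q}$ so that its semisimple part lies in $M_\mathbb{Q}$, and then read off the required form (an element of $M_\mathbb{Q}$ times an element of $N(\gamma_s)_\mathbb{Q}$) from the uniqueness of the decomposition $P=MN$, in the same spirit as the proof of the earlier lemma $P^+(\gamma)=M^+(\gamma)N^+(\gamma)$. Since the conclusion is stable under replacing $\gamma$ by a $P_\mathbb{Q}$-conjugate, such a preliminary conjugation is harmless.

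For the first step, write $\gamma=\gamma_s\gamma_u$ for the Jordan decomposition over $\mathbb{Q}$; as $P$ is closed, $\gamma_s,\gamma_u\in P_\mathbb{Q}$. Let $0=V_0\subset V_1\subset\dots\subset V_r=\mathbb{Q}^4$ be the flag stabilised by $P$. Because $\gamma_s$ is semisimple over the characteristic-zero field $\mathbb{Q}$, its minimal polynomial is squarefree, so $\mathbb{Q}[\gamma_s]$ is a finite product of fields; hence every $\gamma_s$-stable subspace is a $\mathbb{Q}[\gamma_s]$-submodule and admits a $\mathbb{Q}[\gamma_s]$-module complement. Choosing inductively a complement $W_i$ of $V_{i-1}$ inside $V_i$ gives a $\gamma_s$-stable decomposition $\mathbb{Q}^4=\bigoplus_i W_i$ refining the flag; the subgroup stabilising this decomposition is a Levi factor of $P$ defined over $\mathbb{Q}$ and contains $\gamma_s$. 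Since $N_\mathbb{Q}$ acts transitively on the Levi factors of $P$ defined over $\mathbb{Q}$, conjugating $\gamma$ by a suitable element of $N_\mathbb{Q}$ we may assume henceforth that $\gamma_s\in M_\mathbb{Q}$.

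With $\gamma_s\in M_\mathbb{Q}$, use $M\cap N=\{e\}$ to write $\gamma_u=m_0n_0$ uniquely with $m_0\in M_\mathbb{Q}$, $n_0\in N_\mathbb{Q}$; as the projection $P\to M$ is a homomorphism, $m_0$ is unipotent. Conjugating the identity $\gamma_u=m_0n_0$ by $\gamma_s$, which normalises $M$ and $N$ and commutes with $\gamma_u$, and invoking uniqueness of the decomposition forces $\gamma_s m_0\gamma_s^{-1}=m_0$ and $\gamma_s n_0\gamma_s^{-1}=n_0$, so $m_0\in M^+(\gamma_s)_\mathbb{Q}$ and $n_0\in N^+(\gamma_s)_\mathbb{Q}=N(\gamma_s)_\mathbb{Q}$. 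Put $\mu:=\gamma_s m_0\in M_\mathbb{Q}$; since $\gamma_s$ is semisimple, $m_0$ is unipotent and the two commute, this is the Jordan decomposition of $\mu$, so $\mu_s=\gamma_s$ and $N(\mu_s)=N(\gamma_s)$. Then $\mu n_0=\gamma_s\gamma_u=\gamma$, which exhibits $\gamma$ in the asserted form with $v:=n_0\in N(\mu_s)_\mathbb{Q}$. I expect the only genuine obstacle to be the first step, namely moving a semisimple element of $P_\mathbb{Q}$ into $M_\mathbb{Q}$ by an $N_\mathbb{Q}$-conjugation; the $\mathbb{Q}[\gamma_s]$-module argument above settles it cleanly thanks to the semisimplicity of $\gamma_s$, and everything afterwards is routine bookkeeping with the $MN$-decomposition.
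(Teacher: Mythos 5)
Your proof is correct, but it takes a genuinely different route from the paper. The paper deduces the lemma in one line from Lemma \ref{l1}: writing the element as $\gamma\eta$ with $\gamma\in M_\mathbb{Q}$, $\eta\in N_\mathbb{Q}$, the bijection underlying that summation identity, $(\delta,v)\mapsto\gamma^{-1}\delta^{-1}\gamma v\delta$ from $\bigl(N(\gamma_s)_\mathbb{Q}\backslash N_\mathbb{Q}\bigr)\times N(\gamma_s)_\mathbb{Q}$ onto $N_\mathbb{Q}$, produces $\delta\in N_\mathbb{Q}$ and $v\in N(\gamma_s)_\mathbb{Q}$ with $\gamma\eta=\delta^{-1}\gamma v\delta$. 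You instead argue structurally: the semisimple part $\gamma_s$ preserves the flag defining $P$, semisimplicity of $\mathbb{Q}[\gamma_s]$ splits the flag, so $\gamma_s$ lies in a Levi subgroup of $P$ over $\mathbb{Q}$, which an element of $N_\mathbb{Q}$ carries to the standard $M$ (Borel--Tits conjugacy of Levi factors under $R_u(P)(\mathbb{Q})$); then uniqueness of the $MN$-decomposition, applied to $\gamma_u=m_0n_0$ conjugated by $\gamma_s$, forces $n_0\in N(\gamma_s)_\mathbb{Q}$ and identifies $\mu=\gamma_s m_0$ as the $M$-component with $\mu_s=\gamma_s$. Both arguments are sound; the paper's is shorter because Lemma \ref{l1} is already available and avoids invoking conjugacy of Levi subgroups, while yours is independent of the inductive filtration argument behind Lemma \ref{l1}, works for any parabolic of $\mathrm{GL}(n)$ directly, and gives slightly sharper information, namely that the conjugating element may be taken in $N_\mathbb{Q}$ (so the $M$-component is preserved) and that the semisimple part of the conjugated element coincides with the semisimple part of its $M$-component. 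Your only external input, the $N_\mathbb{Q}$-transitivity on $\mathbb{Q}$-Levi factors, is standard and elementary to verify for flag stabilizers in $\mathrm{GL}(4)$, so there is no gap.
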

\begin{proof}
    Since every element in $P_\mathbb{Q}$ can be written as $\gamma\eta$, for $\gamma\in M_\mathbb{Q}, \eta\in N_\mathbb{Q}$, by Lemma \ref{l1}, we can find  $\delta\in N_\mathbb{Q}, v\in N(\gamma_s)$ such that \[\eta=\gamma^{-1}\delta^{-1}\gamma v\delta. \]That is, \[\gamma\eta=\delta^{-1}\gamma v\delta. \]
\end{proof}
\begin{lemma}\label{l3}
    Fix an unramified orbit $\mathfrak{o}$ and a parabolic subgroup $P=P_\mathfrak{o}$. Suppose $\delta_1, \delta_2\in G_\mathbb{Q}$ satisfy \[\delta_1^{-1}\gamma_1\delta_1=\delta_2^{-1}\gamma_2\delta_2, \]for $\gamma_1, \gamma_2\in M_{\mathbb{Q}}^\mathfrak{o}$. Then there exists $w_s\in M\backslash N_G(A)$, a representative of the Weyl group, such that $\delta_1\delta_2^{-1}\in Mw_s$ . 
\end{lemma}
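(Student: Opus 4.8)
The plan is to work with $\delta:=\delta_1\delta_2^{-1}\in G_\mathbb{Q}$. The hypothesis $\delta_1^{-1}\gamma_1\delta_1=\delta_2^{-1}\gamma_2\delta_2$ rearranges to $\gamma_1=\delta\gamma_2\delta^{-1}$, so conjugation by $\delta$ is a $\mathbb{Q}$-automorphism $c_\delta$ of $G$ carrying $\gamma_2$ to $\gamma_1$; since $P=P_\mathfrak{o}$, write $M=M_\mathfrak{o}$ and $A=A_P=A_M$. The first, and essential, step is to show that $\delta$ normalizes $A$. Because $c_\delta(\gamma_2)=\gamma_1$, the automorphism $c_\delta$ carries the centralizer $G^+(\gamma_2)$ onto $G^+(\gamma_1)$, hence its identity component $G(\gamma_2)$ onto $G(\gamma_1)$. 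As $\mathfrak{o}$ is unramified its elements are semisimple, so $G(\gamma_1)$ and $G(\gamma_2)$ are connected reductive $\mathbb{Q}$-groups, and a $\mathbb{Q}$-isomorphism between reductive groups carries the split component of the source onto that of the target; thus $c_\delta(A_{G(\gamma_2)})=A_{G(\gamma_1)}$. Now the two standing conditions enter: since $\gamma_i\in M_\mathbb{Q}^\mathfrak{o}$ is $M$-elliptic one has $A_{M(\gamma_i)}=A_M$, and since $\mathfrak{o}$ is unramified one has $G(\gamma_i)=M(\gamma_i)$; together these give $A_{G(\gamma_i)}=A_M=A$ for $i=1,2$. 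Hence $c_\delta(A)=A$, i.e. $\delta\in N_G(A)_\mathbb{Q}$.

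It then remains to locate $\delta$ in a coset $Mw_s$. Since $M=Z_G(A)$, the automorphism $c_\delta$ preserves $M$, so $\delta A_0\delta^{-1}$ and $A_0$ are both maximal $\mathbb{Q}$-split tori of $M$. By the conjugacy of maximal split tori over $\mathbb{Q}$ (Borel--Tits) there is $m\in M_\mathbb{Q}$ with $(m\delta)A_0(m\delta)^{-1}=A_0$; put $w=m\delta\in N_G(A_0)_\mathbb{Q}$. Since $m\in M\subset N_G(A)$ and $\delta\in N_G(A)$, also $w\in N_G(A)$, so $w$ lies in $N_G(A_0)\cap N_G(A)$, and its restriction to $\mathfrak{a}=\mathfrak{a}_M$ is an element $s\in\Omega(\mathfrak{a},\mathfrak{a})$. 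Finally $\delta=m^{-1}w$, and to pass from $w$ to the fixed representative $w_s$ of $s$ I would note that $w$ and $w_s$ both lie in $N_G(A_0)$ and induce the same automorphism of $\mathfrak{a}_M$, so $ww_s^{-1}$ fixes $\mathfrak{a}_M$ pointwise, hence fixes an $M$-regular point of $\mathfrak{a}_M$, and therefore represents an element of the parabolic Weyl group $W(M,A_0)$, giving $ww_s^{-1}\in N_M(A_0)\subset M$. Thus $\delta=m^{-1}(ww_s^{-1})w_s\in Mw_s$, as claimed (the coset $Mw_s$ depends only on $s$, and $w_s$ may be taken in $K\cap G_\mathbb{Q}\cap N_G(A_0)$ as in \S\ref{section4}).

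The main obstacle is the first step, specifically the equality $A_{G(\gamma_i)}=A_M$: this is the only place where the hypotheses on $\mathfrak{o}$ are used, and both are needed, namely unramifiedness to replace $G(\gamma_i)$ by $M(\gamma_i)$ and $M$-ellipticity to identify $A_{M(\gamma_i)}$ with $A_M$; if either fails, $\delta$ need not normalize $A$. Once $\delta\in N_G(A)$ is established, the remainder is routine structure theory of reductive groups over $\mathbb{Q}$ (conjugacy of maximal split tori, together with the resulting identification $N_G(A)/M\cong\Omega(\mathfrak{a},\mathfrak{a})$), the one point of care being that all conjugating elements are chosen over $\mathbb{Q}$, which is exactly what the Borel--Tits conjugacy theorem furnishes.
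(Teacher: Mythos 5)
Your proof is correct, and it reaches the crucial intermediate conclusion $\delta_1\delta_2^{-1}\in N_G(A)_\mathbb{Q}$ by a genuinely different mechanism than the paper. The paper's proof fixes $a\in A$, uses $a\gamma_i=\gamma_ia$ to show $\epsilon^{-1}a\epsilon$ lies in the centralizer $G(\gamma_2)=\epsilon^{-1}G(\gamma_1)\epsilon$ (a maximal torus contained in $M$, since the orbit is unramified), and then argues by cases on the intersection $G(\gamma_1)\cap\epsilon^{-1}G(\gamma_1)\epsilon$: if it equals $A$ one gets $\epsilon^{-1}a\epsilon\in A$ directly, and if it contains an element outside $A$ the paper concludes $\epsilon\in N_G(G(\gamma_1))\subset N_G(A)$ — the least transparent step of the published argument. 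You instead exploit that conjugation by $\delta=\delta_1\delta_2^{-1}$ is a $\mathbb{Q}$-isomorphism $G(\gamma_2)\to G(\gamma_1)$, hence matches up split components, and you identify both split components with $A$ via $G(\gamma_i)=M(\gamma_i)$ (unramifiedness) together with $A_{M(\gamma_i)}=A_M$ ($M$-ellipticity); this replaces the paper's case analysis by functoriality of the split component, works whenever the centralizers are reductive rather than only maximal tori, and isolates cleanly where the two hypotheses enter. The one point you should make explicit is the ellipticity claim itself: that every $\gamma\in M^\mathfrak{o}_\mathbb{Q}$ with $P=P_\mathfrak{o}$ is $M_\mathfrak{o}$-elliptic, which does hold in the $\mathrm{GL}(4)$ classification because minimality of $P_\mathfrak{o}$ forces each block of $\gamma$ to have irreducible characteristic polynomial. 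Your second step — producing $m\in M_\mathbb{Q}$ by conjugacy of maximal split tori of $M$ over $\mathbb{Q}$, reading off $s\in\Omega(\mathfrak{a},\mathfrak{a})$ from $w=m\delta$, and showing $ww_s^{-1}\in N_M(A_0)\subset M$ by the stabilizer argument — is more detailed than the paper, which simply asserts the membership $\epsilon\in Mw_s$; this extra care is welcome and costs nothing.
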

\begin{proof}
Let $\epsilon=\delta_1\delta_2^{-1}$, and $a\in A$, then \[a\gamma_i=\gamma_ia, \quad i=1, 2. \]
Consider 
\begin{eqnarray}\label{equition1}
    \epsilon^{-1}\gamma_1^{-1}a\gamma_1\epsilon=\epsilon^{-1}a\epsilon.
\end{eqnarray}
The left-hand side of (\ref{equition1}) equals\[\epsilon^{-1}\gamma_1^{-1}\epsilon\cdot\epsilon^{-1}a\epsilon\cdot\epsilon^{-1}\gamma_1\epsilon, \]
which equals \[\gamma_2^{-1}\cdot\epsilon^{-1}a\epsilon\cdot\gamma_2. \]
Hence, \[\epsilon^{-1}a\epsilon\in G(\gamma_2).\] $G(\gamma_2)$ is a maximal torus.

Similarly, we have $\epsilon^{-1}a\epsilon\in G(\gamma_1)$. Therefore, \[\epsilon^{-1}a\epsilon\in G(\gamma_1)\cap G(\epsilon^{-1}\gamma_1\epsilon). \]
However, easy to check that \[G(\epsilon^{-1}\gamma_1\epsilon)=\epsilon^{-1}G(\gamma_1) \epsilon. \]
Since $\gamma_1, \gamma_2$ are both in the unramified orbit, \[G(\gamma_1)\subset M,\quad G(\gamma_2)\subset M.\] Thus, if \[G(\gamma_1)\cap \epsilon^{-1}G(\gamma_1)\epsilon=A, \]then \[\epsilon^{-1}a\epsilon\in A.\]
It indicates that $\epsilon\in N_G(A)$.

If the intersection $G(\gamma_1)\cap G(\epsilon^{-1}\gamma_1\epsilon)$ contains an elements $g$ not in $A$, then there exist elements $m_1, m_2\in G(\gamma_1)$ such that \[g=m_1=\epsilon^{-1}m_2\epsilon, \]
$\epsilon$ must in $N_G(G(\gamma_1))\subset N_G(A)$. In fact, it implies \[\epsilon\in Mw_s,\] for $w_s\in M\backslash N_G(A)$.  
\end{proof}

For any parabolic subgroup $P$, define 
\[M_t=\{\gamma\in M(\mathbb{Q}):N(\gamma)=\{e\}\},\]and 
\[M_n=\{\gamma\in M(\mathbb{Q}):N(\gamma)\neq\{e\}\}.\]

Wirte $\{M_t\}$ and $\{M_n\}$ as fixed sets of representatives of $M_\mathbb{Q}$-conjugacy classes in $M_t$ and $M_n$. 
We now describe the geometric side of the trace formula of $\rm{GL(4)}$.

We pick out a special term for which we shall prove that the associated integral is absolutely convergent. 
Define\[I_G(f, x)=\sum_{\gamma\in \{G_e\}}(n_{\gamma, G})^{-1}\sum_{\delta\in G(\gamma)_\mathbb{Q}\backslash G_\mathbb{Q}}f(x^{-1}\delta^{-1}\gamma\delta x), \]
where $G_e$ denotes the set of $G$-elliptic elements in the orbit $\mathfrak{o}_G$.

By Lemma \ref{l3}, we can write the terms associated to unramified orbits $I_{\rm{unram}}^\mathfrak{o}(f, x)$ as 
\begin{eqnarray*}
    &\sum_{\delta\in N_G(A_\mathfrak{o})_{\mathbb{Q}}\backslash G_\mathbb{Q}}\sum_{\gamma\in M_{t, \mathfrak{o}}^{\mathfrak{o}}}f(x^{-1}\delta^{-1}\gamma\delta x). 
\end{eqnarray*}
We write this term as
\begin{eqnarray}\label{s1}
    \frac{1}{|M_{\mathfrak{o}}\backslash N_G(A_\mathfrak{o})|}\sum_{\delta\in M_{\mathfrak{o},\mathbb{Q}}\backslash G_\mathbb{Q}}\sum_{\gamma\in M_{t, \mathfrak{o}}^{\mathfrak{o}}}f(x^{-1}\delta^{-1}\gamma\delta x).
\end{eqnarray}
Which equals
\begin{eqnarray*}
    \frac{1}{|M_\mathfrak{o}\backslash N_G(A_\mathfrak{o})|}\sum_{\gamma\in \{M_{t, \mathfrak{o}}^{\mathfrak{o}}\}}(n_{\gamma,M})^{-1}\sum_{\delta\in M(\gamma)_{\mathfrak{o},\mathbb{Q}}\backslash G_\mathbb{Q}}f(x^{-1}\delta^{-1}\gamma\delta x).
\end{eqnarray*}
Define $\Omega(\mathfrak{a},P_1)$ to be the set of elements $s$ in $\cup_{P'}\Omega(\mathfrak{a}, \mathfrak{a}')$ such that if $\mathfrak{a}'=s\mathfrak{a}, \mathfrak{a}'$ contains $\mathfrak{a}_1$, and $s^{-1}\alpha$ is positive for every $\alpha\in\Phi_{P'}^{P_1}=\Phi_{P'}-\Phi_{P_1}$. 
\begin{lemma}\label{l6.6}
    Suppose $\mathfrak{o}$ is unramified, $P, P_\mathfrak{o}$ are parabolic subgroups. Then the expression (\ref{s1}) can be written as \[\frac{1}{|\Omega(\mathfrak{a}_\mathfrak{o}, P)|}\sum_{\delta \in M_\mathbb{Q}\backslash G_\mathbb{Q}}\sum_{\gamma\in M_{t}^\mathfrak{o}}f(x^{-1}\delta^{-1} \gamma\delta x). \]
\end{lemma}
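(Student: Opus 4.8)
The plan is to prove that for every standard parabolic $R$ in the associated class $\mathfrak{P}_{\mathfrak{o}}$ of $P_{\mathfrak{o}}$ one has
\[
\sum_{\delta\in M_{R,\mathbb{Q}}\backslash G_\mathbb{Q}}\ \sum_{\gamma\in M_{R,\mathbb{Q}}\cap\mathfrak{o}}f(x^{-1}\delta^{-1}\gamma\delta x)=|\Omega(\mathfrak{a}_{\mathfrak{o}},\mathfrak{a}_{\mathfrak{o}})|\cdot K_\mathfrak{o}(x,x),
\]
where $K_\mathfrak{o}(x,x)=\sum_{\gamma\in\mathfrak{o}}f(x^{-1}\gamma x)$, and then to read off the Lemma by specializing to $R=P_{\mathfrak{o}}$ and to $R=P$. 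Since $\mathfrak{o}$ is unramified, every $\gamma\in\mathfrak{o}$ is semisimple with centralizer a maximal torus (as already used in the proof of Lemma \ref{l3}), so $N_R(\gamma)=N_R\cap G(\gamma)=\{e\}$ whenever $\gamma\in M_{R,\mathbb{Q}}$; hence $M_{t,\mathfrak{o}}^{\mathfrak{o}}=M_{\mathfrak{o},\mathbb{Q}}\cap\mathfrak{o}$ and $M_t^{\mathfrak{o}}=M_{P,\mathbb{Q}}\cap\mathfrak{o}$, and the displayed identity for $R=P_{\mathfrak{o}}$ (resp.\ $R=P$) turns the left (resp.\ right) side of the Lemma into $|\Omega(\mathfrak{a}_{\mathfrak{o}},\mathfrak{a}_{\mathfrak{o}})|\cdot|M_{\mathfrak{o}}\backslash N_G(A_{\mathfrak{o}})|^{-1}\cdot K_\mathfrak{o}(x,x)$, resp.\ $|\Omega(\mathfrak{a}_{\mathfrak{o}},\mathfrak{a}_{\mathfrak{o}})|\cdot|\Omega(\mathfrak{a}_{\mathfrak{o}},P)|^{-1}\cdot K_\mathfrak{o}(x,x)$. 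Both prefactors equal $1$, because for associated standard parabolics of $\mathrm{GL}(4)$ the groups $\Omega(\mathfrak{a}_{\mathfrak{o}},\mathfrak{a}_{\mathfrak{o}})$, $N_G(A_{\mathfrak{o}})_\mathbb{Q}/M_{\mathfrak{o},\mathbb{Q}}$ and $\Omega(\mathfrak{a}_{\mathfrak{o}},P)=\Omega(\mathfrak{a}_{\mathfrak{o}},\mathfrak{a}_P)$ all have the same order (they are realized by permutation matrices); so the Lemma follows once the displayed identity is established.

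To prove the displayed identity, fix $R$ and sort the double sum by the value $g=\delta^{-1}\gamma\delta\in\mathfrak{o}$. For a coset $M_{R,\mathbb{Q}}\delta$, the condition $\delta g\delta^{-1}\in M_{R,\mathbb{Q}}$ is independent of the chosen representative, and when it holds it determines $\gamma=\delta g\delta^{-1}$ uniquely, with $\delta g\delta^{-1}$ then automatically in $\mathfrak{o}$. Hence the double sum equals $\sum_{g\in\mathfrak{o}}m_R(g)\,f(x^{-1}gx)$, where $m_R(g)=\#\{M_{R,\mathbb{Q}}\delta:\delta g\delta^{-1}\in M_{R,\mathbb{Q}}\}$, and it suffices to show $m_R(g)=|\Omega(\mathfrak{a}_{\mathfrak{o}},\mathfrak{a}_{\mathfrak{o}})|$ for every $g\in\mathfrak{o}$.

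Put $S_R(g)=\{\delta\in G_\mathbb{Q}:\delta g\delta^{-1}\in M_{R,\mathbb{Q}}\cap\mathfrak{o}\}$. By the correspondence of $\mathfrak{o}$ with $P_{\mathfrak{o}}$, together with the $G_\mathbb{Q}$-conjugacy of $M_R$ and $M_{\mathfrak{o}}$, every $g\in\mathfrak{o}$ is $G_\mathbb{Q}$-conjugate into $M_{R,\mathbb{Q}}\cap\mathfrak{o}$, so $S_R(g)\neq\emptyset$. If $\delta_1,\delta_2\in S_R(g)$ and $\gamma_i=\delta_i g\delta_i^{-1}\in M_{R,\mathbb{Q}}\cap\mathfrak{o}$, then $\delta_1^{-1}\gamma_1\delta_1=g=\delta_2^{-1}\gamma_2\delta_2$, so Lemma \ref{l3}---applied to $R=P_{\mathfrak{o}}$ directly, and to a general associate $R$ after conjugating the configuration by a $u\in G_\mathbb{Q}$ with $uM_{\mathfrak{o}}u^{-1}=M_R$---yields $\delta_1\delta_2^{-1}\in M_R w_s\subseteq N_G(A_R)_\mathbb{Q}$. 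Conversely $N_G(A_R)_\mathbb{Q}$ normalizes $M_{R,\mathbb{Q}}$ and preserves $\mathfrak{o}$, so $N_G(A_R)_\mathbb{Q}\,\delta\subseteq S_R(g)$ for any $\delta\in S_R(g)$. Therefore $S_R(g)$ is exactly one left $N_G(A_R)_\mathbb{Q}$-coset, and it breaks up into $[N_G(A_R)_\mathbb{Q}:M_{R,\mathbb{Q}}]=|\Omega(\mathfrak{a}_R,\mathfrak{a}_R)|=|\Omega(\mathfrak{a}_{\mathfrak{o}},\mathfrak{a}_{\mathfrak{o}})|$ cosets modulo $M_{R,\mathbb{Q}}$; since $\{\delta:\delta g\delta^{-1}\in M_{R,\mathbb{Q}}\}=S_R(g)$, this number is $m_R(g)$, as wanted.

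The main obstacle is the evaluation $m_R(g)=|\Omega(\mathfrak{a}_{\mathfrak{o}},\mathfrak{a}_{\mathfrak{o}})|$, i.e.\ that $S_R(g)$ is precisely one full left $N_G(A_R)_\mathbb{Q}$-coset: the inclusion into a single coset is exactly Lemma \ref{l3}, whose hypotheses genuinely use that $\mathfrak{o}$ is unramified (so that the $\gamma_i$ are $M$-elliptic with torus centralizers), while the reverse inclusion uses only that $N_G(A_R)_\mathbb{Q}$ normalizes the groups involved. Transporting Lemma \ref{l3} from $P_{\mathfrak{o}}$ to an arbitrary associate via a rational conjugation, and the routine check that the Weyl-type quotients attached to the associated classes of $\mathrm{GL}(4)$ all have equal order, are the remaining technical points.
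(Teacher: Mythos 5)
Your master identity for a parabolic $R$ in the associated class $\mathfrak{P}_{\mathfrak{o}}$ is essentially the same manipulation the paper performs just before (\ref{s1}) via Lemma \ref{l3}, and that part is sound: for $R$ with Levi conjugate to $M_{\mathfrak{o}}$, the centralizers $G(\gamma)$ lie inside the Levi, the $M_R$-classes in $M_{R,\mathbb{Q}}\cap\mathfrak{o}$ are permuted simply transitively by $N_G(A_R)_\mathbb{Q}/M_{R,\mathbb{Q}}$, and each element of $\mathfrak{o}$ is hit $|\Omega(\mathfrak{a}_{\mathfrak{o}},\mathfrak{a}_{\mathfrak{o}})|$ times. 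The gap is in the step ``specializing to $R=P$''. In Lemma \ref{l6.6} the parabolic $P$ is \emph{not} assumed to be an associate of $P_{\mathfrak{o}}$: the Levi $M=M_P$ in the conclusion generally contains an associate of $M_{\mathfrak{o}}$ properly (this is exactly how the lemma is used in Sections 9--12, e.g.\ to rewrite the $\mathfrak{o}_{1111}^0$, $\mathfrak{o}_{211}^0$, $\mathfrak{o}_{22}^0$ terms with $P=P_{31}$), and $\Omega(\mathfrak{a}_{\mathfrak{o}},P)$ is the relative set defined before the lemma, not a transporter set $\Omega(\mathfrak{a}_{\mathfrak{o}},\mathfrak{a}_P)$. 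Concretely, take $\mathfrak{o}=\mathfrak{o}_{1111}^0$ and $P=P_{31}$: for $g\in\mathfrak{o}$ with four distinct rational eigenvalues, the number of cosets $M_{31,\mathbb{Q}}\delta$ with $\delta g\delta^{-1}\in M_{31,\mathbb{Q}}$ is $4$ (the choice of which eigenvalue occupies the $\mathrm{GL}(1)$ block), which is $|\Omega(\mathfrak{a}_{1111},P_{31})|$, not $|\Omega(\mathfrak{a}_{1111},\mathfrak{a}_{1111})|=24$; moreover $\Omega(\mathfrak{a}_{1111},\mathfrak{a}_{31})=\emptyset$, so your identification $\Omega(\mathfrak{a}_{\mathfrak{o}},P)=\Omega(\mathfrak{a}_{\mathfrak{o}},\mathfrak{a}_P)$ and the claimed equality of the three cardinalities both fail in the case the lemma is actually needed for. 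For the same reason, the set $S_P(g)=\{\delta:\delta g\delta^{-1}\in M_{P,\mathbb{Q}}\}$ is a union of several $M_{P,\mathbb{Q}}$-cosets that are not related by $N_G(A_P)_\mathbb{Q}$, so the ``single $N_G(A_R)_\mathbb{Q}$-coset'' structure you transport from Lemma \ref{l3} does not persist for the larger Levi.

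What is missing is precisely the relative counting that the paper's proof supplies: factor the sum over $M_{\mathfrak{o},\mathbb{Q}}\backslash G_\mathbb{Q}$ through $M_\mathbb{Q}\backslash G_\mathbb{Q}$ and $M_{\mathfrak{o},\mathbb{Q}}\backslash M_\mathbb{Q}$, apply your (correct) orbit-counting \emph{inside} $M$ with the factor $1/|M_{\mathfrak{o}}\backslash N_M(A_{\mathfrak{o}})|$ to convert the inner double sum into $\sum_{\gamma\in M_t^{\mathfrak{o}}}$, and then identify the ratio of indices $|M_{\mathfrak{o}}\backslash N_M(A_{\mathfrak{o}})|/|M_{\mathfrak{o}}\backslash N_G(A_{\mathfrak{o}})|$ with $1/|\Omega(\mathfrak{a}_{\mathfrak{o}},P)|$. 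Equivalently, you would need to show directly that each $g\in\mathfrak{o}$ is counted exactly $|\Omega(\mathfrak{a}_{\mathfrak{o}},P)|$ times in $\sum_{\delta\in M_\mathbb{Q}\backslash G_\mathbb{Q}}\sum_{\gamma\in M_t^{\mathfrak{o}}}$, i.e.\ that the $M_\mathbb{Q}$-classes in $M_\mathbb{Q}\cap\mathfrak{o}$ are in bijection with $\Omega(\mathfrak{a}_{\mathfrak{o}},P)$; as it stands, your argument proves the lemma only when $P\in\mathfrak{P}_{\mathfrak{o}}$, where it is essentially a restatement of (\ref{s1}).
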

\begin{proof}
    We wirte the term (\ref{s1}) as 
    \begin{eqnarray*}
       \frac{1}{|M_{\mathfrak{o}}\backslash N_G(A_\mathfrak{o})|}\sum_{\delta_1\in M_\mathbb{Q}\backslash G_\mathbb{Q}}\sum_{\delta_2\in M_{\mathfrak{o}, \mathbb{Q}}\backslash M_\mathbb{Q}}\sum_{\gamma\in M_{t, \mathfrak{o}}^\mathfrak{o}}f(x^{-1}\delta_1^{-1}\delta_2^{-1}\gamma \delta_2\delta_1x). 
    \end{eqnarray*}
    Notice that the sums over $\delta_2$ and $\gamma$ range over the orbit in $M$ if we multiply by \[\frac{1}{|M_\mathfrak{o}\backslash N_{M}(A_\mathfrak{o})|}.\]

    Thus the expression becomes
    \begin{eqnarray*}
      \frac{|M_\mathfrak{o}\backslash N_M(A_\mathfrak{o})|}{|M_\mathfrak{o}\backslash N_G(A_\mathfrak{o})|}\sum_{\delta \in M_\mathbb{Q}\backslash G_\mathbb{Q}}\sum_{\gamma\in M_{t}^\mathfrak{o}}f(x^{-1}\delta^{-1} \gamma\delta x). 
    \end{eqnarray*}
    And we can obtain that \[\frac{|M_\mathfrak{o}\backslash N_M(A_\mathfrak{o})|}{|M_\mathfrak{o}\backslash N_G(A_\mathfrak{o})|}=\frac{1}{|\Omega(\mathfrak{a}_\mathfrak{o}, P)|}. \]
\end{proof}
Now we proceed to compute the terms associated to ramified orbits.

If $\mathfrak{o}$ is ramified, let $M_{\{\mathfrak{o}\}}$ be minimal the Levi subgroup such that \[G(\gamma)=M_{\{\mathfrak{o}\}}(\gamma),\quad \gamma\in M_{\mathfrak{o},n}^\mathfrak{o},\]and \[M_{\{\mathfrak{o}\},i_1i_2...i_n},\quad i_1\ge...\ge i_n.\]

Assume that $\mathfrak{o}$ is a ramified orbit, denote 
\[I^{\mathfrak{o}}(f,x)=\sum_{\gamma\in G^\mathfrak{o}}f(x^{-1}\gamma x).\]

Define $M_s$ to be the subset of $M$ consisting of semisimple elements.

Since the integrals of some orbits over $Z_\infty^+G_\mathbb{Q}\backslash G_\mathbb{A}$ are divergent, we have to  introduce a characteristic function to control them.

Let $\hat{\tau}_P$ be the characteristic function of the set \[\{Z\in \mathfrak{a}_0:\hat{\alpha}(Z)>0, \hat{\alpha}\in\hat{\Phi}_P\}. \]
Take $T\in\mathfrak{a}_0^+$. We say $T$ is large enough, the mean is that $T$ is far away from the walls.

Write the terms corresponding to the unramified orbits as the sum of
\begin{eqnarray*}
    J_{\rm{unram}}^{\mathfrak{o}}(f, x, T)=\frac{1}{|M_\mathfrak{o}\backslash N_G(A_{\mathfrak{o}})|}\sum_{\delta\in M_{\mathfrak{o}, \mathbb{Q}}\backslash G_\mathbb{Q}}\sum_{\gamma\in M_{t, \mathfrak{o}}^{\mathfrak{o}}}f(x^{-1}\delta^{-1}\gamma\delta x)\\
    (\sum_{P_1\neq G}(-1)^{\rm{dim}(Z\backslash A_1)+1}\sum_{s\in \Omega(\mathfrak{a}, P_1)}\hat{\tau}_{P_1}(H_0(w_s\delta x)-T))
\end{eqnarray*}
and 
\begin{eqnarray*}
    I_{\rm{umram}}^{\mathfrak{o}}(f, x, T)=\frac{1}{|M_\mathfrak{o}\backslash N_G(A_{\mathfrak{o}})|}\sum_{\delta\in M_{\mathfrak{o}, \mathbb{Q}}\backslash G_\mathbb{Q}}\sum_{\gamma\in M_{t, \mathfrak{o}}^{\mathfrak{o}}}f(x^{-1}\delta^{-1}\gamma\delta x)\\
    (1+\sum_{P_1\neq G}(-1)^{\rm{dim}(Z\backslash A_1)}\sum_{s\in \Omega(\mathfrak{a}, P_1)}\hat{\tau}_P(H_0(w_s\delta x)-T)). 
\end{eqnarray*}
If the orbit $\mathfrak{o}$ is ramified, we consider
\begin{eqnarray*}
    J_{\rm{ram}}^{\mathfrak{o}}(f, x, T)=\sum_{P\neq G}(-1)^{(\rm{dim}Z\backslash A)+1}\sum_{\gamma\in M^\mathfrak{o}}\sum_{\delta\in M_\mathbb{Q}N(\gamma_s)_\mathbb{Q}\backslash G_\mathbb{Q}}\sum_{v\in N(\gamma_s)_\mathbb{Q}}f(x^{-1}\delta^{-1}\gamma v\delta x)\hat{\tau}_P(H_0(\delta x)-T)
\end{eqnarray*}
and 
\begin{eqnarray*}
    I_{\rm{ram}}^{\mathfrak{o}}(f, x, T)=\sum_{P}(-1)^{\rm{dim}Z\backslash A}\sum_{\gamma\in M^\mathfrak{o}}\sum_{\delta\in M_\mathbb{Q}N(\gamma_s)_\mathbb{Q}\backslash G_\mathbb{Q}}\sum_{v\in N(\gamma_s)_\mathbb{Q}}f(x^{-1}\delta^{-1}\gamma v\delta x)\hat{\tau}_P(H_0(\delta x)-T). 
\end{eqnarray*}
By Lemma \ref{l1}, \[\sum_{\gamma\in M^\mathfrak{o}}\sum_{\delta\in M_\mathbb{Q}N(\gamma_s)_\mathbb{Q}\backslash G_\mathbb{Q}}\sum_{v\in N(\gamma_s)_\mathbb{Q}}f(x^{-1}\delta^{-1}\gamma v\delta x)\]equals
\[\sum_{\gamma\in M_n^\mathfrak{o}}\sum_{\delta\in P_\mathbb{Q}\backslash G_\mathbb{Q}}\sum_{v\in N_\mathbb{Q}}f(x^{-1}\delta^{-1}\gamma v\delta x),\]
which equals 
\[\sum_{\gamma\in \{M^\mathfrak{o}\}}(n_{\gamma,M})^{-1}\sum_{\delta_1\in M(\gamma)_\mathbb{Q}\backslash M_\mathbb{Q}}\sum_{\delta\in P_\mathbb{Q}\backslash G_\mathbb{Q}}\sum_{v\in N_\mathbb{Q}}f(x^{-1}\delta^{-1}\delta_1^{-1}\gamma\delta_1 v\delta x).\]
Since $N$ is normal in $P$, we replace $v$ by $\delta_1^{-1}v\delta_1$.

Then $J_{\rm{ram}}^\mathfrak{o}(f,x,T)$ becomes
\begin{eqnarray*}
\sum_{P\neq G}(-1)^{\rm{dim}(Z\backslash A)+1}\sum_{\gamma\in \{M^\mathfrak{o}\}}(n_{\gamma,M})^{-1}\sum_{\delta\in M(\gamma)_\mathbb{Q}N(\gamma_s)_\mathbb{Q}\backslash G_\mathbb{Q}}\sum_{v\in N(\gamma_s)_\mathbb{Q}}f(x^{-1}\delta^{-1}\gamma v\delta x)\hat{\tau}_P(H_0(\delta x)-T)
\end{eqnarray*}
and $I_{\rm{ram}}^\mathfrak{o}(f,x,T)$ becomes
\begin{eqnarray*}\label{ramified1}
    \sum_{P}(-1)^{\rm{dim}(Z\backslash A)}\sum_{\gamma\in \{M^\mathfrak{o}\}}(n_{\gamma,M})^{-1}\sum_{\substack{\delta\in M(\gamma)_\mathbb{Q}N(\gamma_s)_\mathbb{Q}\backslash G_\mathbb{Q}}}\sum_{v\in N(\gamma_s)_\mathbb{Q}}f(x^{-1}\delta^{-1}\gamma v\delta x)\hat{\tau}_P(H_0(\delta x)-T).
\end{eqnarray*}
\begin{lemma}\label{l6}
    For fixed ramified orbit $\mathfrak{o},$ and any parabolic subgroup $P$. Then
    \begin{eqnarray}
        \sum_{\gamma\in M^\mathfrak{o}}\sum_{\delta\in M_\mathbb{Q}N(\gamma_s)_\mathbb{Q}\backslash G_\mathbb{Q}}\sum_{v\in N(\gamma_s)_\mathbb{Q}}f(x^{-1}\delta^{-1}\gamma v\delta x)
    \end{eqnarray}
    can be written as 
    \begin{eqnarray*}
    &\sum_{\gamma\in M_{n}^\mathfrak{o}}\sum_{\delta\in M_\mathbb{Q}N(\gamma_s)_{\mathbb{Q}}\backslash G_\mathbb{Q}}\sum_{v\in N(\gamma_s)_\mathbb{Q}}f(x^{-1}\delta^{-1}\gamma v\delta x)\\
    &+\sum_{\gamma\in M_t^{\mathfrak{o}}}\sum_{\delta\in M_\mathbb{Q}\backslash G_\mathbb{Q}}f(x^{-1}\delta^{-1}\gamma \delta x). 
    \end{eqnarray*}
\end{lemma}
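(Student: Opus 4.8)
The plan is to prove the identity by splitting the outer summation over $M^{\mathfrak{o}}$ according to the partition $M^{\mathfrak{o}} = M_t^{\mathfrak{o}} \sqcup M_n^{\mathfrak{o}}$, where $M_t^{\mathfrak{o}} = M_t \cap \mathfrak{o}$ and $M_n^{\mathfrak{o}} = M_n \cap \mathfrak{o}$ are the elements of $M_\mathbb{Q}$ lying in $\mathfrak{o}$ with, respectively, trivial and nontrivial centralizer in $N$. The contribution of the indices $\gamma \in M_n^{\mathfrak{o}}$ is, term by term, the first summand on the right-hand side, so there is nothing to do there. The content of the lemma is that the contribution of the indices $\gamma \in M_t^{\mathfrak{o}}$ — the triple sum over $\gamma \in M_t^{\mathfrak{o}}$, over $\delta \in M_\mathbb{Q} N(\gamma_s)_\mathbb{Q} \backslash G_\mathbb{Q}$, and over $v \in N(\gamma_s)_\mathbb{Q}$, of $f(x^{-1}\delta^{-1}\gamma v \delta x)$ — collapses to $\sum_{\gamma \in M_t^{\mathfrak{o}}} \sum_{\delta \in M_\mathbb{Q}\backslash G_\mathbb{Q}} f(x^{-1}\delta^{-1}\gamma\delta x)$. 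This is immediate once we prove the key claim: if $\gamma \in M_t$ (that is, $N(\gamma) = \{e\}$), then also $N(\gamma_s)_\mathbb{Q} = \{e\}$. Indeed, granting the claim, for $\gamma \in M_t^{\mathfrak{o}}$ the inner sum over $v$ has only the term $v = e$, and the index set $M_\mathbb{Q} N(\gamma_s)_\mathbb{Q}\backslash G_\mathbb{Q}$ of the $\delta$-sum becomes $M_\mathbb{Q}\backslash G_\mathbb{Q}$.

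To prove the claim, write $\gamma = \gamma_s\gamma_u$ for the Jordan decomposition, which is carried by $M_\mathbb{Q}$; then $\gamma_u \in M_\mathbb{Q}$ commutes with $\gamma_s$, so it normalizes the unipotent $\mathbb{Q}$-group $U = N(\gamma_s)$ (recall from Section 3 that $N^{+}(\gamma_s)$ is connected, so this is the full centralizer of $\gamma_s$ in $N$) and acts on it. Suppose $U \neq \{e\}$. Since $\gamma_u$ is unipotent, $\mathrm{Ad}(\gamma_u)$ is a unipotent $\mathbb{Q}$-linear automorphism of the nonzero space $\mathrm{Lie}(U)$, hence fixes some $X \neq 0$ there; the one-parameter subgroup $\{\exp(tX)\}$ then lies in $U = N(\gamma_s)$ and is centralized by $\gamma_u$, so it lies in $N(\gamma_s)\cap N(\gamma_u) \subseteq N(\gamma)$, contradicting $\gamma \in M_t$. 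Hence $U = \{e\}$, so $N(\gamma_s)_\mathbb{Q} = \{e\}$. (For $\mathrm{GL}(4)$ one may instead verify this by hand: $N(\gamma_s)_\mathbb{Q}$ is cut out inside $N$ by linear, eigenvalue-type conditions on the off-diagonal blocks, and the eigenvalues of a rational matrix equal those of its semisimple part, so $N(\gamma_s)_\mathbb{Q}$ is trivial exactly when $N(\gamma)_\mathbb{Q}$ is.)

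The only substantive step is this claim, and it is exactly what the lemma needs: for semisimple $\gamma = \gamma_s$ and $v \in N(\gamma_s)\setminus\{e\}$ the element $\gamma v$ has nontrivial unipotent part and so is never $G_\mathbb{Q}$-conjugate to $\gamma$; hence the sum over $v$ could not be absorbed into the $\delta$-sum unless $N(\gamma_s)$ were trivial, and the fixed-point argument above is precisely what removes this obstruction. Everything else is bookkeeping: the splitting $M^{\mathfrak{o}} = M_t^{\mathfrak{o}}\sqcup M_n^{\mathfrak{o}}$ and the matching of the $M_n^{\mathfrak{o}}$-part with the first right-hand term are immediate from the definitions, and the coset index $M_\mathbb{Q} N(\gamma_s)_\mathbb{Q}\backslash G_\mathbb{Q}$ is to be read — as in the displayed expression for $J_{\mathrm{ram}}^{\mathfrak{o}}(f,x,T)$ — via the usual unfolding over $M(\gamma)_\mathbb{Q}$-conjugacy classes with the weights $(n_{\gamma,M})^{-1}$, where $M(\gamma)_\mathbb{Q}$ does normalize $N(\gamma_s)$ so that the quotient is well defined. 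Adding the two contributions yields the stated identity.
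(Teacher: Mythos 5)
Your proof is correct and takes essentially the same route as the paper: the paper's own argument is just the splitting of $\gamma v$ into the two types $\gamma\in M_n^{\mathfrak{o}}$ (with $v\in N(\gamma_s)_\mathbb{Q}$) and $\gamma\in M_t^{\mathfrak{o}}$ (with $v=e$), i.e.\ exactly your decomposition $M^{\mathfrak{o}}=M_t^{\mathfrak{o}}\sqcup M_n^{\mathfrak{o}}$. Your fixed-point argument proving that $N(\gamma)=\{e\}$ forces $N(\gamma_s)_\mathbb{Q}=\{e\}$ makes rigorous the step the paper treats as clear and only illustrates by example, so the proposal is a correct, slightly more detailed version of the paper's proof.
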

This lemma is clear, as $\gamma v\in MN(\gamma_s)$ has two types: \[\gamma v\in M_{n}^\mathfrak{o}N(\gamma_s) \text{ and } \gamma\in M^{\mathfrak{o}}_t.\] For example, 
\begin{eqnarray*}
    \gamma=\left(\begin{matrix}
        a&x&&\\
        &a&&\\
        &&b&\\
        &&&c\end{matrix}\right)\in M_{t,211}^{\mathfrak{o}_{1111}^{211}},\quad v=e
\end{eqnarray*}and 
\begin{eqnarray*}
    \gamma=\left(\begin{matrix}
        a&&&\\
        &b&&\\
        &&a&\\
        &&&c\end{matrix}\right)\in M_{n, 211}^{\mathfrak{o}_{1111}^{211}},\quad v=\left(\begin{matrix}
            1&&x&\\
            &1&&\\
            &&1&\\
            &&&1\end{matrix}\right)
        \in N(\gamma_s)_{211}. 
\end{eqnarray*}

Then, we consider the spectral side of the trace formula of $\rm{GL(4)}$.

Suppose $\Phi$ belongs to some $\mathscr{H}_P$. Recall that $E_P^{c_{P_1}}(\Phi, \lambda, x)$, the constant term of $E_P(\Phi, \lambda, x)$ associated to $P_1\in \mathfrak{P}$, is given by \[\sum_{s\in\Omega(\mathfrak{a}, \mathfrak{a}_1)}(M_P(s, \lambda)\Phi)(x)\rm{exp}(<s\lambda+\rho_{P_1}, H_{P_1}(x)>). \]
For $T\in\mathfrak{a}_0^+$, define $E^{'T}_P(\Phi, \lambda, x)$ to be
\begin{eqnarray*}
(-1)^{(\rm{dim}\ Z\backslash A)+1}\sum_{P_1\in\mathfrak{P}}\sum_{\delta\in P_{1, \mathbb{Q}}\backslash G_\mathbb{Q}}E_P^{c_{P_1}}(\Phi, \lambda, \delta x)\hat{\tau}_{P_1}(H_{P_1}(\delta x)-T). 
\end{eqnarray*}
For any function $\phi$ on $Z_\infty^+G_\mathbb{Q}\backslash G_\mathbb{A}$, denote the truncation operator $(\Lambda^T\phi)(x)$ by \[\sum_P(-1)^{\rm{dim}(Z\backslash A)}\sum_{\delta\in P_\mathbb{Q}\backslash G_\mathbb{Q}}\hat{\tau}_P(H(\delta x)-T)\int_{N_\mathbb{Q}\backslash N_\mathbb{A}}\phi(n\delta x)dn.\]
Let 
\[E_P^{''T}(\Phi, \lambda, x)=E_P(\Phi, \lambda, x)-E_P^{'T}(\Phi, \lambda, x). \]
Since \[\int_{N_{1,\mathbb{Q}}\backslash N_{1,\mathbb{A}}}E_P(\Phi,\lambda,nx)dn=0,\]if $\Omega(\mathfrak{a},\mathfrak{a}_1)$ is empty, thus \[E^{'T}_P(\Phi,\lambda,x)=\Lambda^T E_P(\Phi,\lambda,x).\] Also, \[\Lambda^T\phi=\phi,\]if $\phi$ is the cusp form. And \[\Lambda^T\circ \Lambda^T=\Lambda^T. \]
(see \cite{L1}).

Define 
\begin{eqnarray*}
K'_P(f, x, T)=\sum_{P_1, P_2\in\mathfrak{P}}\sum_\chi n(A)^{-1}\cdot(-1)^{(\rm{dim}\ Z\backslash A)+1}(\frac{1}{2\pi i})^{\rm{dim}\ Z\backslash A}\sum_{\delta\in P_{\mathbb{Q}}\backslash G_\mathbb{Q}} \\
\cdot\int_{i\mathfrak{a}_G\backslash i\mathfrak{a}}\sum_{\alpha, \beta\in \mathscr{B}_{P, \chi}}E^{c_{P_1}}_P(\Phi_\alpha, \lambda, \delta x)\overline{E^{c_{P_2}}_P(\Phi_\beta, \lambda, \delta x)}\hat{\tau}_{P}(H_P(\delta x)-T)d\lambda. 
\end{eqnarray*}

\begin{lemma}
    For any parabolic subgroups $P, P_1\in\mathfrak{P}$, $y\in G_\mathbb{A}$, and fixed $s, s'\in\Omega(\mathfrak{a}, \mathfrak{a}_1)$, then the expression \[\int_{i\mathfrak{a}_G\backslash i\mathfrak{a}}\sum_{\chi}|\sum_{\beta\in \mathscr{B}_{P, \chi}}( M_P(s, \lambda)\pi_P(\lambda, f)\Phi_\beta)(y)(\overline{M_P(s', \lambda)\Phi_\beta(y)})|d\lambda\] is finite. 
\end{lemma}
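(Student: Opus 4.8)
The plan is to bound the displayed integral by a finite quantity, following the pattern of Arthur's convergence estimates for the spectral side and using three tools already in place: the factorization $f=f^1*f^2$ with $f^1,f^2\in C_c^N(Z_\infty^+\backslash G_\mathbb{A})^K$ for $N$ as large as desired (Duflo--Labesse, \cite{D1}); the unitarity of $M_P(s,\lambda)$ and $M_P(s',\lambda)$ on $i\mathfrak{a}$ (Langlands) together with the adjoint identity $\pi_P(\lambda,f)^*=\pi_P(-\overline{\lambda},f^*)$, which on $i\mathfrak{a}$ reads $\pi_P(\lambda,f)^*=\pi_P(\lambda,f^*)$; and Lemma \ref{lemma4.4}, which realizes $\pi_P(\lambda,f)$ as an integral operator with kernel $P_P(\lambda,f,\cdot,\cdot)$ that is continuous and is a Schwartz function of $\lambda$. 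The final input is the boundedness of the kernel $\sum_{\gamma\in G_\mathbb{Q}}h(x^{-1}\gamma x)$ for $h\in C_c^\infty(Z_\infty^+\backslash G_\mathbb{A})$, which was invoked via Harish-Chandra \cite{H1} in Section \ref{sc5}.

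First I would transfer half of the convolution across. Writing $f=f^1*f^2$, expanding $\pi_P(\lambda,f^2)\Phi_\beta$ in the orthonormal basis $\{\Phi_\gamma:\gamma\in\mathscr{B}_{P,\chi}\}$, and using $\pi_P(\lambda,f^2)^*=\pi_P(\lambda,(f^2)^*)$ on $i\mathfrak{a}$ together with the invariance of $\mathscr{H}_{P,\chi}$ under these operators (grouping the $\chi$ by their cuspidal datum $(\mathfrak{P},\mathfrak{V})$, which is preserved when $f$ is only $K$-finite), one verifies
\[\sum_{\beta\in\mathscr{B}_{P,\chi}}(M_P(s,\lambda)\pi_P(\lambda,f)\Phi_\beta)(y)\,\overline{(M_P(s',\lambda)\Phi_\beta)(y)}=\sum_{\gamma\in\mathscr{B}_{P,\chi}}(M_P(s,\lambda)\pi_P(\lambda,f^1)\Phi_\gamma)(y)\,\overline{(M_P(s',\lambda)\pi_P(\lambda,(f^2)^*)\Phi_\gamma)(y)}.\]
Applying the Cauchy--Schwarz inequality successively in $\gamma$, in $\chi$, and in $\lambda$ then bounds the quantity in the lemma by $\mathcal{I}(f^1,s)^{1/2}\,\mathcal{I}((f^2)^*,s')^{1/2}$, where for $g\in C_c^N(Z_\infty^+\backslash G_\mathbb{A})^K$ and $\sigma\in\Omega(\mathfrak{a},\mathfrak{a}_1)$,
\[\mathcal{I}(g,\sigma):=\int_{i\mathfrak{a}_G\backslash i\mathfrak{a}}\sum_\chi\sum_{\gamma\in\mathscr{B}_{P,\chi}}\bigl|(M_P(\sigma,\lambda)\pi_P(\lambda,g)\Phi_\gamma)(y)\bigr|^2\,d\lambda.\]
The point of this rearrangement is that now both factors carry a test function, so it suffices to prove $\mathcal{I}(g,\sigma)<\infty$.

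Next I would bound $\mathcal{I}(g,\sigma)$. The number $(M_P(\sigma,\lambda)\pi_P(\lambda,g)\Phi_\gamma)(y)$ is the coefficient of $\exp(<\sigma\lambda+\rho_{P_1},H_{P_1}(x)>)$ in the constant term $E_P^{c_{P_1}}(\pi_P(\lambda,g)\Phi_\gamma,\lambda,x)$; using the left invariance of the functions $M_P(\sigma',\lambda)\pi_P(\lambda,g)\Phi_\gamma$ under the split torus of $M_{P_1}$, one isolates this coefficient from finitely many values $E_P^{c_{P_1}}(\pi_P(\lambda,g)\Phi_\gamma,\lambda,ay)$, with $a$ ranging over a fixed (generically $\lambda$-independent) compact set, and then, by the Cauchy--Schwarz inequality along $N_{1,\mathbb{Q}}\backslash N_{1,\mathbb{A}}$, obtains
\[\sum_{\gamma}\bigl|(M_P(\sigma,\lambda)\pi_P(\lambda,g)\Phi_\gamma)(y)\bigr|^2\ \ll_{\lambda,y}\ \sum_{a}\int_{N_{1,\mathbb{Q}}\backslash N_{1,\mathbb{A}}}\sum_{\gamma}\bigl|E_P(\pi_P(\lambda,g)\Phi_\gamma,\lambda,nay)\bigr|^2\,dn.\]
Reversing the adjoint manipulation of the previous paragraph, for fixed $z$ one has $\sum_\chi\sum_\gamma|E_P(\pi_P(\lambda,g)\Phi_\gamma,\lambda,z)|^2=\sum_\chi\sum_\gamma E_P(\pi_P(\lambda,g*g^*)\Phi_\gamma,\lambda,z)\overline{E_P(\Phi_\gamma,\lambda,z)}$, whose integral over $i\mathfrak{a}_G\backslash i\mathfrak{a}$ is, up to the constants $n(A)$ and $(2\pi)^{\dim(Z\backslash A)}$, the diagonal value of the kernel $K_P(z,z)$ of Section \ref{sc5} attached to $g*g^*$, hence dominated by $\sum_{\gamma\in G_\mathbb{Q}}(g*g^*)(z^{-1}\gamma z)$, which is bounded \cite{H1}. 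The outer integration over $\lambda$ converges because $P_P(\lambda,g,\cdot,\cdot)$ is Schwartz in $\lambda$, so that $\sum_\chi\|\pi_P(\lambda,g)|_{\mathscr{H}_{P,\chi}}\|_{\mathrm{HS}}^2\le\|\pi_P(\lambda,g)\|_{\mathrm{HS}}^2$ decays rapidly; combined with the fact that by (\ref{class}) only finitely many ramified $\chi$ contribute, this absorbs the $\lambda$-dependent constant above and yields $\mathcal{I}(g,\sigma)<\infty$, hence the lemma.

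The main obstacle is the last step: reconciling the pointwise extraction of the $M_P(\sigma,\lambda)$-component with the fact that, a priori, only the $\lambda$-integral of the Eisenstein-series kernel is controlled by the bounded kernel $K(z,z)$. The characters $\exp(<\sigma'\lambda,H_{P_1}(a)>)$, $\sigma'\in\Omega(\mathfrak{a},\mathfrak{a}_1)$, that must be separated in order to isolate the $\sigma$-term collide where $\sigma'\lambda=\sigma''\lambda$, so the evaluation points $a$ have to be chosen locally in $\lambda$, or one splits off a shrinking tube around these hyperplanes and estimates it separately; on the complement, after discarding the finitely many ramified $\chi$, the rapid decay supplied by the Schwartz kernel $P_P(\lambda,g,\cdot,\cdot)$ does the job. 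A more self-contained alternative is to bound $\sum_\gamma|(M_P(\sigma,\lambda)\pi_P(\lambda,g)\Phi_\gamma)(y)|^2$ directly by $\|\pi_P(\lambda,g)|_{\mathscr{H}_{P,\chi}}\|_{\mathrm{HS}}^2$ times the diagonal value of the reproducing kernel of $\mathscr{H}_{P_1,\chi}$, and to sum over $\chi$ by a Sobolev estimate: those reproducing kernels grow only polynomially in the infinitesimal character of $\chi$, while the Hilbert--Schmidt norms decay faster than any polynomial by the smoothness of $g$. Either way the routine part is the bookkeeping with Cauchy--Schwarz and the functional equations; the genuine work is this uniform-in-$\lambda$ control of the sum over $\chi$.
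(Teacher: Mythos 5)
Your opening reductions are fine and run parallel to the paper's: factor $f=f^1*f^2$, use unitarity of $M_P(\cdot,\lambda)$ on $i\mathfrak{a}$ and the adjoint identity for $\pi_P(\lambda,\cdot)$, and apply Cauchy--Schwarz to reduce the lemma to finiteness of the two quantities $\mathcal{I}(f^1,s)$ and $\mathcal{I}((f^2)^*,s')$. (One caveat: $f^1,f^2$ are only $K$-finite, not $K$-conjugation invariant, so $\pi_P(\lambda,f^i)$ need not preserve the individual spaces $\mathscr{H}_{P,\chi}$; your parenthetical about regrouping the $\chi$ is the right repair but is left implicit.) The genuine gap is in your third step, which is the whole content of the lemma: a bound on $\int_{i\mathfrak{a}_G\backslash i\mathfrak{a}}\sum_{\chi}\sum_{\gamma}|(M_P(\sigma,\lambda)\pi_P(\lambda,g)\Phi_\gamma)(y)|^2\,d\lambda$ that is uniform over the infinite sum on $\chi$. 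Your route extracts the $M_P(\sigma,\lambda)$-coefficient of the constant term by sampling at torus translates $ay$ and then invokes the automorphic-kernel bound of Section \ref{sc5}. But that bound controls only the $\lambda$-integrated Eisenstein quantity, while the constant produced by separating the exponentials $\exp(<\sigma'\lambda,H_{P_1}(a)>)$ is $\lambda$-dependent and degenerates on the hyperplanes where two exponents collide; multiplying an unbounded $\lambda$-dependent constant into an integrated bound is exactly the circularity you flag, and the proposed remedies (choosing $a$ locally in $\lambda$, shrinking tubes, or the alternative resting on polynomial growth of reproducing kernels in the infinitesimal character and super-polynomial decay of Hilbert--Schmidt norms) are restatements of the needed uniform estimate rather than proofs of it.

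The paper closes this step by a positivity argument that your rearrangement discards. It applies Cauchy--Schwarz \emph{per} $\chi$ with the positive-type functions ${}^1f=f^1*(f^1)^*$ and ${}^2f=f^2*(f^2)^*$, and uses the functional equations $M_P(s,\lambda)^*=M_P(s^{-1},-s\overline{\lambda})$, $M_P(s,\lambda)\pi_P(\lambda,f)=\pi_P(s\lambda,f)M_P(s,\lambda)$ together with the fact that $\{M_P(s',\lambda)\Phi_\beta\}$ is again an orthonormal basis, to identify the resulting diagonal quantities as $\mathrm{R}_{P,\chi}(ss'^{-1}\lambda,{}^if,y,y)$, the restricted kernels of $\pi_P(ss'^{-1}\lambda,{}^if)$. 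These are nonnegative, so their sum over $\chi$ is dominated \emph{pointwise in} $\lambda$ by the explicit kernel $P_P(ss'^{-1}\lambda,{}^if,y,y)$ of Lemma \ref{lemma4.4}, which is a Schwartz function of $\lambda$; integrability over $i\mathfrak{a}_G\backslash i\mathfrak{a}$ is then immediate, with no separation of exponents and no appeal to the integrated Eisenstein bound. Note, in fact, that by Parseval your own quantity satisfies, for each $\chi$, $\sum_{\gamma}|(M_P(\sigma,\lambda)\pi_P(\lambda,g)\Phi_\gamma)(y)|^2=\sum_{\beta}(M_P(\sigma,\lambda)\pi_P(\lambda,g*g^*)\Phi_\beta)(y)\overline{(M_P(\sigma,\lambda)\Phi_\beta)(y)}$, i.e.\ it is the statement of the lemma for the pair $s=s'=\sigma$ and the positive-type function $g*g^*$; so your reduction has not removed the difficulty, and the missing ingredient is precisely the positivity-plus-$P_P$ domination that constitutes the paper's proof.
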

\begin{proof}
    Put \[\rm{R}_{P, \chi}(\lambda, f, y, x)=\sum_{\beta\in \mathscr{B}_{P, \chi}}(\pi_P(\lambda, f)\Phi_\beta)(y)\overline{\Phi_\beta(x)}, \] which is continuous in $y, x\in G_\mathbb{A}$.

    We write the expression \[( M_P(s, \lambda)\pi_P(\lambda, f)\Phi_\beta)(y)(\overline{M_P(s', \lambda)\Phi_\beta(y)})\] as \[( M_P(s, \lambda)\pi_P(\lambda, f)M_P(s'^{-1}, s'\lambda)M_P(s', \lambda)\Phi_\beta)(y)(\overline{M_P(s', \lambda)\Phi_\beta(y)}), \]which is \[( M_P(s, \lambda)M_P(s'^{-1}, s'\lambda)\pi_P(s'\lambda, f)M_P(s', \lambda)\Phi_\beta)(y)(\overline{M_P(s', \lambda)\Phi_\beta(y)}), \]
    by the properties of intertwining operator \[M_P(s, \lambda)^*=M_P(s^{-1}, -s\overline{\lambda})\]and \[M_P(s, \lambda)\pi_P(\lambda, f)=\pi_P(s\lambda, f)M_P(s, \lambda). \]
    Since $\{M_P(s, \lambda)\Phi_\beta\}$ also forms an orthonormal basis for $\mathscr{H}_{P, \chi}$, we see $\rm{R}_{P, \chi}(ss'^{-1}\lambda, f, y, x)$ is the kernel of the restriction of \[M_P(s, \lambda)M_P(s'^{-1}, s'\lambda)\pi(\lambda, f)\] to $\mathscr{H}_{P, \chi}$.

    By \[M_P(s, \lambda)\pi_P(\lambda, f)=\pi_P(s\lambda, f)M_P(s, \lambda)\] and \[\pi_P(\lambda, f)=\pi_P(\lambda, f^1)\pi_P(\lambda, f^2), \]we have 
    \begin{equation}\label{*}
    \begin{aligned}
    &M_P(s, \lambda)M_P(s'^{-1}, s'\lambda)\pi_P(\lambda, f^1)(M_P(s, \lambda)M_P(s'^{-1}, s\lambda)\pi_P(\lambda, f^1))^*\\
    &=M_P(s, \lambda)M_P(s'^{-1}, s'\lambda)\pi_P(\lambda, f^1)\pi_P(\lambda, (f^1)^*)M_P(s', \lambda)M_P(s^{-1}, s\lambda)\\
    &=M_P(s, \lambda)M_P(s'^{-1}, s'\lambda)\pi_P(\lambda, ^1f)M_P(s', \lambda)M_P(s^{-1}, s\lambda),
    \end{aligned}
    \end{equation}
    where $^1f=f^1*(f^1)^*,\, ^2f=f^2*(f^2)^*$.

    Since $M_P(s, \lambda)M_P(s^{-1}, s\lambda)=\rm{Id}$, (\ref{*}) equals
    \[M_P(s, \lambda)M_P(s'^{-1}, s'\lambda)\pi_P(\lambda, ^1f)M_P(s'^{-1}, s'\lambda)^{-1}M_P(s, \lambda)^{-1}. \]
    The above expression is \[\pi_P(ss'^{-1}\lambda, ^1f). \]Similar is $\pi_P(ss'^{-1}\lambda, ^2f).$
    Therefore, by the Cauchy-Schwartz inequality, the absolute value of \[\sum_{\beta\in\mathscr{B}_{P, \chi}}( M_P(s, \lambda)\pi_P(\lambda, f)\Phi_\beta)(y)(\overline{M_P(s', \lambda)\Phi_\beta(y)})\] is bounded by \[|\rm{R}_{P, \chi}(ss'^{-1}\lambda, ^1f, y, y)|^{\frac{1}{2}}|\rm{R}_{P, \chi}(ss'^{-1}\lambda, ^2f, y, y)|^{\frac{1}{2}}. \]
    Also, we have shown that for every finite set $S$ of $\chi$, \[\sum_{\chi\in S}|\rm{R}_{P, \chi}(\lambda, ^1f, y, y)|\] is bounded by a function $P(\lambda, ^1f, y, y) $, which is independent of $S$. Hence we can conclude \[\sum_{\chi} \rm{R}_{P, \chi}(ss'^{-1}z, ^1f, y, y)\] is bounded by $P_P(ss'^{-1}\lambda, ^1f, y, y)$. Since \[\int_{i\mathfrak{a}_G\backslash i\mathfrak{a}}P_P(\lambda, ^1f, y, y)d\lambda\]is convergant, this lemma follows.
\end{proof}
We shall decompose the integral over $\mathfrak{a}_G\backslash\mathfrak{a}$ into lines along which the dual simple roots lie such that \[\mathfrak{a}^G_{P}=\mathfrak{a}_{P}^{P_{i_1}}\oplus\mathfrak{a}_{P_{i_1}}^{P_{i_2}}\oplus...\oplus\mathfrak{a}_{P_{i_j}}^G,\]where $\mathfrak{a}_{P_i}^{P_{t}}=\mathfrak{a}_{P_t}\backslash \mathfrak{a}_{P_i}$, $j$ is the rank of $A_P$. Write $a=\sum_{k=1}^{j}a_{k}\hat{\alpha}_{i_k}$, and $\alpha_{i_k}\in\Phi_P$.

Define $a_P=\rm{det}(<\hat{\alpha}_{i_n}, \hat{\alpha}_{i_m}>_{m, n})^{\frac{1}{2}}. $

If $P,P_1$ and $s,s_1\in\Omega(\mathfrak{a},\mathfrak{a}_1)$ are given, denote the function $L(s_1,s_2,f,x)$ by 
\[\int_{i\mathfrak{a}_G\backslash i\mathfrak{a}}\sum_\chi\sum_{\beta\in \mathscr{B}_{P, \chi}}((M_P(s_1, \lambda)\pi_P(\lambda, f)\Phi_\beta)(\delta x))(\overline{M_P(s_2, \lambda)\Phi_\beta(\delta x)})d\lambda,\]
and $L'(s_1,s_2,f,kp,a)$ by 
\[\int_K\int_{P_\mathbb{Q}\backslash P_\mathbb{A}}\int_{i\mathfrak{a}_G\backslash i\mathfrak{a}}\sum_\chi\sum_{\alpha,\beta\in \mathscr{B}_{P, \chi}}((M_P(s_1, \lambda)\Phi_\alpha)(\delta kpa))(\overline{M_P(s_2, \lambda)\Phi_\beta(\delta kpa)})\rm{exp}(<2\lambda,a>)d\lambda\ dp\ dk.\]
\begin{lemma}\label{l7}
    Fix $s_1, s_2\in\Omega(\mathfrak{a}, \mathfrak{a}_1)$, such that $M_P(s_1, \lambda)\neq M_P(s_2, \lambda)$, then the function 
    \begin{eqnarray}\label{eq2220}
    \frac{1}{(2\pi i)^{\rm{dim}(Z\backslash A)}\cdot n(A)}\sum_{\delta\in P_\mathbb{Q}\backslash G_\mathbb{Q}} L(s_1,s_2,f,x)\rm{exp}(<-2\rho_P, H_P(\delta x)>)\hat{\tau}_P(H_P(\delta x)-T)
    \end{eqnarray}
    is locally integrable over $Z_\infty^{+}G_\mathbb{Q}\backslash G_\mathbb{A}$ and its integral tends $0$ as $T\rightarrow \infty$. 
\end{lemma}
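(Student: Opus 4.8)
The plan is to establish local integrability directly from the exponential weight, and to get finiteness of the full integral together with its vanishing as $T\to\infty$ by unfolding the $P_\mathbb{Q}$-orbit, passing to Iwasawa coordinates, and exploiting oscillation in the spectral variable $\lambda$; this last point is where the hypothesis $M_P(s_1,\lambda)\neq M_P(s_2,\lambda)$ is used. For local integrability, note that over any compact subset of $Z_\infty^+G_\mathbb{Q}\backslash G_\mathbb{A}$ the sum over $\delta$ is dominated — using the factor $\exp(\langle-2\rho_P,H_P(\delta x)\rangle)$ together with the boundedness of $L(s_1,s_2,f,\cdot)$ that follows from the lemma preceding this one — by a convergent sum of decaying exponentials over the lattice points in the cone cut out by $\hat{\tau}_P$; hence $(\ref{eq2220})$ is locally integrable.

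For the integral over $Z_\infty^+G_\mathbb{Q}\backslash G_\mathbb{A}$, I would unfold the sum over $\delta\in P_\mathbb{Q}\backslash G_\mathbb{Q}$, turning it into a constant multiple of $\int_{Z_\infty^+P_\mathbb{Q}\backslash G_\mathbb{A}}L(s_1,s_2,f,x)\exp(\langle-2\rho_P,H_P(x)\rangle)\hat{\tau}_P(H_P(x)-T)\,dx$, and then apply the Iwasawa decomposition $G_\mathbb{A}=N_\mathbb{A}M_\mathbb{A}^1A_\infty^+K$. The weight $\exp(\langle-2\rho_P,H_P(a)\rangle)$ absorbs the Jacobian coming from $\delta_P$, leaving $\int_K\int_{\{a\,:\,\hat{\tau}_P(H_P(a)-T)=1\}}\Psi(a,k)\,da\,dk$ with $\Psi(a,k)=\int_{M_\mathbb{Q}\backslash M_\mathbb{A}^1}\int_{N_\mathbb{Q}\backslash N_\mathbb{A}}L(s_1,s_2,f,namk)\,dn\,dm$, a variant of the auxiliary function $L'$ introduced above. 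Carrying out the inner integrations over $N_\mathbb{Q}\backslash N_\mathbb{A}$ and $M_\mathbb{Q}\backslash M_\mathbb{A}^1$ expresses $\Psi(a,k)$ as $\int_{i\mathfrak{a}_G\backslash i\mathfrak{a}}$ of a finite sum of terms $A_j(\lambda,k)\exp(\langle\mu_j(\lambda),H_P(a)\rangle)$, the amplitudes $A_j$ being of Schwartz type in $\lambda$ by the previous lemma.

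The crux is that the only exponent $\mu_j(\lambda)$ that can vanish identically — producing a term constant in $a$, hence divergent and growing in $T$ when integrated over the infinite cone — is the ``diagonal'' one, and it appears precisely when $M_P(s_1,\lambda)=M_P(s_2,\lambda)$, since in that case $\sum_\beta(M_P(s,\lambda)\pi_P(\lambda,f)\Phi_\beta)\overline{(M_P(s,\lambda)\Phi_\beta)}$ is the kernel of $\pi_P(\lambda,f)$ relative to the orthonormal basis $\{M_P(s,\lambda)\Phi_\beta\}$. Under our hypothesis every $\mu_j(\lambda)$ is nonzero for $\lambda\neq0$, so I would interchange the $a$- and $\lambda$-integrations: each inner integral $\int_{\{a\,:\,\hat{\tau}_P(H_P(a)-T)=1\}}\exp(\langle\mu_j(\lambda),H_P(a)\rangle)\,da$ is a bounded, convergent oscillatory integral, essentially $\exp(\langle\mu_j(\lambda),T\rangle)$ divided by a product of nonzero linear forms in $\mu_j(\lambda)$; integrating this against $A_j$ over $\lambda$ gives a finite value for each $T$, and as $T\to\infty$ the phase $\langle\mu_j(\lambda),T\rangle$ oscillates ever faster, so the $\lambda$-integral tends to $0$ by the Riemann--Lebesgue lemma (equivalently, by repeated integration by parts in $\lambda$, using smoothness of $A_j$). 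This yields both finiteness of the integral and the limit $0$.

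The main obstacle is the third step: carrying out the $N_\mathbb{Q}\backslash N_\mathbb{A}$- and $M_\mathbb{Q}\backslash M_\mathbb{A}^1$-integrations explicitly enough to identify the exponents $\mu_j(\lambda)$, verifying that $M_P(s_1,\lambda)\neq M_P(s_2,\lambda)$ is exactly the condition that eliminates the zero exponent, and then justifying the interchange of $a$- and $\lambda$-integrals along with the uniformity in $k$ needed for the Riemann--Lebesgue estimate. The unfolding, the cancellation of $\delta_P$, and the local-integrability bound are routine once the properties of $L$ from the previous lemma are available.
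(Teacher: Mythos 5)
Your first two steps match the paper: local integrability comes from the preceding lemma, and unfolding the sum over $\delta\in P_\mathbb{Q}\backslash G_\mathbb{Q}$ together with the Iwasawa decomposition (the factor $\mathrm{exp}(\langle-2\rho_P,H_P(\delta x)\rangle)$ cancelling the modular Jacobian) reduces everything to the integral of $|L'(s_1,s_2,f,kp,a)|$ over the cone $\{a:\ a_{i_k}>\langle T,\hat{\alpha}_{i_k}\rangle\}$. The gap is in your last step. The $a$-integral you want to perform first, $\int_{\mathrm{cone}(T)}\mathrm{exp}(\langle\mu_j(\lambda),a\rangle)\,da$ with $\mu_j(\lambda)$ purely imaginary, is not absolutely convergent (the integrand has modulus one), so the interchange of the $a$- and $\lambda$-integrations is not a Fubini application; it is only a formal regularization. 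And even granting the formal evaluation, the resulting $\lambda$-integrand $\mathrm{exp}(\langle\mu_j(\lambda),T\rangle)\big/\prod_k\langle\mu_j(\lambda),\cdot\rangle$ is singular at $\lambda=0$, where all the linear forms vanish, hence not in $L^1$; the Riemann--Lebesgue lemma does not apply, and such integrals generically do \emph{not} tend to zero as $T\to\infty$ --- they converge to a nonzero multiple of the amplitude at $\lambda=0$ (the Dirichlet-integral phenomenon). The paper itself illustrates this: the expression (\ref{1013}), treated exactly in the way you propose, produces the non-vanishing limit (\ref{1015}), namely $-\frac{a_{P_{22}}}{4}\mathrm{tr}\{M_{P_{22}}((13)(24),0)\pi_{P_{22}}(0,f)\}$. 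So your argument, taken at face value, would not prove the vanishing and in fact points to a nonzero limit.

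The paper's route avoids evaluating the divergent oscillatory $a$-integral on the imaginary axis: since $\pi_P(\lambda,f)\Phi_\beta$ vanishes for all but finitely many $\beta$, the $\lambda$-contour in $L'$ is shifted to $\{\lambda:\langle\mathrm{Re}\,\lambda,\alpha_k\rangle=\delta\}$ with $\delta<0$, after which $|\mathrm{exp}(\langle 2\lambda,a\rangle)|$ decays exponentially on the cone, the $a$-integral converges absolutely, and it carries an exponential factor in $T$ that forces the limit $0$. (An alternative that stays on $i\mathfrak{a}$ is to keep the order of integration as written and integrate by parts repeatedly in $\lambda$, using smoothness of the amplitude, to show the inner $\lambda$-integral is rapidly decreasing in $a$ before integrating over the cone; but in either version the decay in $a$ must be established \emph{before} the unbounded $a$-integration, which is precisely what your interchange skips.) Your identification of where the hypothesis $M_P(s_1,\lambda)\neq M_P(s_2,\lambda)$ enters --- ruling out the identically zero exponent --- is correct and is the same as in the paper; the missing ingredient is the passage from oscillation to an absolutely convergent estimate, via the contour shift (or integration by parts), rather than a formal Fubini plus Riemann--Lebesgue.
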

\begin{proof}
    Form the previous lemma, the  function inside is locally integrable. The integral of the absolutely value of (\ref{eq2220}) over $Z_\infty^{+}G_\mathbb{Q}\backslash G_\mathbb{A}$ is 
    \begin{eqnarray*}
    \frac{1}{(2\pi i)^{\rm{dim}(Z\backslash A)}}\frac{1}{n(A)}\int_{<T, \hat{\alpha}_{i_1}>}^\infty. . . \int_{<T, \hat{\alpha}_{i_k}>}^\infty\left|L'(s_1,s_2,f,kp,a)\right|da_{i_1}. . . da_{i_k}, 
    \end{eqnarray*}
    The function \[\pi_P(\lambda, f)\Phi_\beta\]vanishes for all but finitely many $\beta$.

    So, the integral over $\lambda$ in $L'(s_1,s_2,f,kp,a)$, we can change the contour to \[\{\lambda:<\rm{Re}\ \lambda, \alpha_k>=\delta,\quad  \alpha_k\in\Phi_P\},\] for $\delta<0$, such that the integral of exponential function can be finite. The integral approaches $0$ as $T\rightarrow \infty$. 
\end{proof}
By the property of the truncation operator\[\Lambda^{T}\circ\Lambda^{T}=\Lambda^T, \]
we have \[\Lambda^{T}E_P^{''}(\Phi,\lambda,x)=E_P^{''}(\Phi,\lambda,x),\]and \[\Lambda^{T}E_P^{'}(\Phi,\lambda,x)=0.\]
Then
\begin{eqnarray*}
&(E_P(\Phi_1, \lambda_1, x), E_P(\Phi_2, \lambda, x))\\
&=(E^{''T}_P(\Phi_1, \lambda_1, x)+E^{'T}_P(\Phi_1, \lambda_1, x), E^{''T}_P(\Phi_2, \lambda, x)+E^{'T}_P(\Phi_2, \lambda, x))
\end{eqnarray*}is
\begin{eqnarray*}
    &(E^{''T}_P(\Phi_1, \lambda_1, x), E^{''T}_P(\Phi_2, \lambda, x))+(\Lambda^TE^{''T}_P(\Phi_1, \lambda_1, x), \Lambda^TE^{'T}_P(\Phi_2, \lambda, x))\\
    &+(\Lambda^TE^{'T}_P(\Phi_1,\lambda_1, x), \Lambda^TE^{''T}_P(\Phi_2, \lambda, x))+(E^{'T}_P(\Phi_1, \lambda_1, x), E^{'T}_P(\Phi_2, \lambda, x)),
\end{eqnarray*}
which equals
\begin{eqnarray*}
    &(E^{''T}_P(\Phi_1, \lambda_1, x), E^{''T}_P(\Phi_2, \lambda, x))+(E^{'T}_P(\Phi_1, \lambda_1, x), E^{'T}_P(\Phi_2, \lambda, x)). 
\end{eqnarray*}
Thus, we define
\[K''_P(f, x, T)=K_{P}(f, x)-K^{'}_P(f, x, T). \]
We can now write $K(x, x)-K_1(x, x)$ as the sum of following six terms:
\begin{flalign}
I_G(f, x)\label{66}
\end{flalign}
and
\begin{flalign}
&&+J_{\rm{unram}}^{\mathfrak{o}^0_{31}}(f, x, T)-K'_{P_{31}}(f, x, T)-K'_{P_{13}}(f, x, T)+I_{\rm{unram}}^{\mathfrak{o}^0_{31}}(f, x, T)\label{67}\\
&&+J_{\rm{unram}}^{\mathfrak{o}_{22}^0}(f, x, T)-K'_{P_{22}}(f, x, T)+I_r^{\mathfrak{o}_{22}^0}(f, x, T)+I_{\rm{ram}}^{\mathfrak{o}_{22}^2}(f, x, T)\\
&&+J_{\rm{unfrm}}^{\mathfrak{o}_{211}^0}(f, x, T)+J_{\rm{ram}}^{\mathfrak{o}_{211}^2}(f, x, T)-\sum_{P\in P_{211}}K'_{P}(f, x, T)+I_{\rm{unram}}^{\mathfrak{o}_{211}^0}(f, x, T)+I_{\rm{ram}}^{\mathfrak{o}_{211}^2}(f, x, T)\\
&&+J_{\rm{unram}}^{\mathfrak{o}_{1111}^0}(f, x, T)+\sum_{i}J_{\rm{ram}}^{\mathfrak{o}_{1111}^i}(f, x, T)-K_{P_{1111}}^{'}(f, x, T)+I_{\rm{unram}}^{\mathfrak{o}_{1111}^0}(f, x, T)+\sum_{i}I_{\rm{ram}}^{\mathfrak{o}_{1111}^i}(f, x, T)\\
&&-\sum_{\mathfrak{P}}K_P^{''}(f, x, T)
\end{flalign}

We aim to prove the integrals of (\ref{66}) over $Z_\infty^+G_\mathbb{Q}\backslash G_\mathbb{A}$ is absolutely convergent.

For any orbit $\mathfrak{o}$ and parabolic subgroup $P_\mathfrak{o}$, we shall refer to these terms respectively as

the $G$-elliptic term \[I_G(f, x),\]the first parabolic term \[J^\mathfrak{o}_{\rm{unram}}(f, x, T)+J_{\rm{ram}}^\mathfrak{o}(f, x, T)-K'(f, x, T),\] the second parabolic term \[I_{\rm{unram}}^\mathfrak{o}(f, x, T)+I_{\rm{ram}}^\mathfrak{o}(f, x, T),\] the third parabolic term \[-\sum_{P\in\mathfrak{P}_\mathfrak{o}}K_P^{''}(f, x, T).\]

We claim that (\ref{66}) is integrable, the first parabolic terms are locally integrable and the values of them approach $0$ when $T\rightarrow \infty$. And the sum of the second parabolic term and third parabolic term is integrable and its value is independent of the parameter $T$. 

\section{The G-elliptic term}
In this section, we shall prove that the integral of $G$-elliptic term is absolutely convergant.

\begin{lemma}\label{l9}
    Suppose $C$ is any subset of $G_\mathbb{A}$ compact modulo $Z_\infty^+$. For fixed parabolic $P$, the number of elements $\gamma\in \{M_t\}\cup\{M_n\}$ such that there exists $x\in G_\mathbb{A},\, n\in N_\mathbb{A}$ with $x^{-1}\gamma nx\in C$ is finite. 
\end{lemma}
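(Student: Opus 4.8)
The plan is to reduce the statement to two finiteness facts: that only finitely many characteristic polynomials can occur among the $\gamma$ in question, and that a given characteristic polynomial is realized by only finitely many $M_\mathbb{Q}$-conjugacy classes in $M_\mathbb{Q}$. Throughout write $M=M_P$, $N=N_P$. First I would observe that, in the block--upper--triangular realization of $P$, an element $\gamma n$ with $\gamma\in M_\mathbb{Q}$ and $n\in N_\mathbb{A}$ is block upper triangular with diagonal blocks exactly those of $\gamma$; hence $\gamma n$ and $\gamma$ have the same characteristic polynomial, which, since $\gamma\in G_\mathbb{Q}$, is a fixed monic polynomial $p_\gamma\in\mathbb{Q}[t]$ of degree $4$. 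Because the characteristic polynomial is conjugation--invariant, the hypothesis $x^{-1}\gamma n x\in C$ forces $p_\gamma$ to be the characteristic polynomial of some point of $C$, so everything comes down to controlling which rational polynomials arise this way.

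For the first finiteness fact I would use the observation that $x^{-1}\gamma n x$ in fact lies in $G_\mathbb{A}^1$. Indeed $H_G$ is a homomorphism into an abelian group, hence invariant under conjugation; $H_G(n)=0$ because every $\mathbb{Q}$-rational character of $G$ is trivial on the connected unipotent group $N$; and $H_G(\gamma)=0$ for $\gamma\in G_\mathbb{Q}$ since $\prod_v|\chi(\gamma)|_v=1$ by the product formula. Consequently $x^{-1}\gamma n x\in C\cap G_\mathbb{A}^1$, and by the decomposition $G_\mathbb{A}=Z_\infty^+\times G_\mathbb{A}^1$ together with the hypothesis that $C$ is compact modulo $Z_\infty^+$, this intersection is contained in a fixed compact subset $C_1$ of $G_\mathbb{A}^1$. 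Sending $C_1$ through the characteristic-polynomial map yields a compact set of monic polynomials with coefficients in $\mathbb{A}$; since $\mathbb{Q}$ is discrete in $\mathbb{A}$, this compact set meets $\mathbb{Q}[t]$ in a finite set. So $p_\gamma$ lies in a finite set $\{q_1,\dots,q_r\}\subset\mathbb{Q}[t]$.

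For the second finiteness fact I would use that $M\cong\mathrm{GL}_{i_1}\times\cdots\times\mathrm{GL}_{i_k}$, so an $M_\mathbb{Q}$-conjugacy class is a tuple of $\mathrm{GL}_{i_j}(\mathbb{Q})$-conjugacy classes whose characteristic polynomial is the product of the factors'. A fixed monic $q\in\mathbb{Q}[t]$ has only finitely many monic divisors, hence only finitely many factorizations $q=q^{(1)}\cdots q^{(k)}$ with $\deg q^{(j)}=i_j$; and for each $q^{(j)}$ the $\mathrm{GL}_{i_j}(\mathbb{Q})$-conjugacy classes with characteristic polynomial $q^{(j)}$ are parametrized by the finitely many $\mathbb{Q}[t]$-module structures $\bigoplus\mathbb{Q}[t]/(\pi^{d})$ refining the irreducible factorization of $q^{(j)}$. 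Hence each $q_i$ is the characteristic polynomial of only finitely many $M_\mathbb{Q}$-conjugacy classes, i.e. of only finitely many elements of the representative set $\{M_t\}\cup\{M_n\}$; combined with the previous paragraph this gives the lemma. The routine ingredients are the block--triangular remark and the conjugacy-class count for $\mathrm{GL}$; the one point one must not miss is that conjugates of rational elements times unipotents in $N$ automatically stay in $G_\mathbb{A}^1$, on which $C$ is genuinely compact --- without this observation one would be tempted to chase an archimedean bound on the coefficients of $p_\gamma$, which is awkward because of the central scaling.
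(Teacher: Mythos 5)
Your proof is correct, but it takes a genuinely different route from the paper. The paper follows Arthur's argument from the general reductive case: conjugate $C$ by $K$, use the Iwasawa decomposition to reduce to $P_\mathbb{A}$, invoke reduction theory (a Siegel set $\omega$ for $P_\mathbb{Q}\backslash P_\mathbb{A}$ and invariance of $C_MN_\mathbb{A}$ under conjugation by $A_\infty^+$) to show that $\gamma$ is $M_\mathbb{Q}$-conjugate into a fixed subset $C_M'$ of $M_\mathbb{A}$ compact modulo $Z_\infty^+$, and then conclude by the standard finiteness of rational conjugacy classes meeting such a set. You instead bypass reduction theory entirely: you note that $\gamma n$ is block triangular with the same (rational) characteristic polynomial as $\gamma$, that the conjugate lies in $G_\mathbb{A}^1$ (conjugation invariance of $H_G$, triviality on $N_\mathbb{A}$, product formula on $G_\mathbb{Q}$) so that $C$ may be replaced by a genuinely compact set, that discreteness of $\mathbb{Q}$ in $\mathbb{A}$ then leaves only finitely many possible characteristic polynomials, and finally that a fixed characteristic polynomial is realized by only finitely many $M_\mathbb{Q}$-classes because $M$ is a product of $\mathrm{GL}_{i_j}$'s and rational canonical form gives finitely many classes per polynomial. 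Both arguments are complete; the trade-off is that yours is more elementary and exploits structure special to $\mathrm{GL}_n$ (for a general reductive $M$ the last finiteness step is unavailable in this naive form, which is why Arthur --- and the paper --- argue through reduction theory and compactness in $M_\mathbb{A}$), while the paper's argument is the one that generalizes beyond $\mathrm{GL}(4)$. Your observation that membership in $G_\mathbb{A}^1$ neutralizes the central $Z_\infty^+$-scaling is the right substitute for the paper's practice of working compact-modulo-$Z_\infty^+$ throughout.
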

\begin{proof}
    Consider \[C_1=\{k^{-1}ck:c\in C,\,k\in K\}. \]Since $P$ is closed, the intersect of $C_1$ and $P_\mathbb{A}$ is compact modulo $Z_\infty^+$. Then we choose a subset $C_M\subset M_\mathbb{A}$ compact modulo $Z_\infty^+$ and satisfing \[C_1\cap P_\mathbb{A}\subset C_MN_\mathbb{A}. \]
    For $x^{-1}\gamma nx\in C$, write \[x=kp,\quad k\in K,\, p\in P_\mathbb{A},\] then \[p^{-1}\gamma n p\in C_MN_\mathbb{A}. \]
    By the definition of Siegel domain, we can choose $\omega$ a relatively compact set of representatives in $P_\mathbb{A}$, and write \[p=av\pi,\quad a\in A_\infty^+, \,v\in\omega, \,\pi\in P_\mathbb{Q}. \]
    Thus \[v^{-1}\pi^{-1}\cdot\gamma n\cdot \pi v\in a\cdot C_MN_\mathbb{A}\cdot a^{-1}=C_MN_\mathbb{A}. \]
    Choose a subset $C_M'\subset M_\mathbb{A}$ compact modulo $Z_\infty^+$, satisfing \[\omega\cdot C_MN_\mathbb{A}\cdot \omega^{-1}\subset C_M'N_\mathbb{A}. \]Then \[\pi^{-1}\gamma n\pi\in C_M'N_\mathbb{A}. \]
    Therefore $\gamma$ can be conjugated by $M_\mathbb{Q}$ into  $C_M'$.

    Since number of the elements in the intersection of a compact set and a set of finite elements is finite, we conclude that only finitely many $M_\mathbb{Q}-$conjugacy classes in $M_\mathbb{Q}$ meet $C_M'$. 
\end{proof}
Recall that $N(\gamma)$ is a subgroup of $N(\gamma_s)$. 
\begin{lemma}\label{l10}
Given parabolic subgroup $P$ and $\gamma\in\{M_t\}\cup\{M_n\}$. Suppose $C$ is a compact subset in $P_\mathbb{A}^1$. If $p\in P(\gamma)_\mathbb{A}^1\backslash P_\mathbb{A}^1$ satisfing \[(p^{-1}\cdot \gamma N(\gamma_s)_\mathbb{A}\cdot p)\cap C\neq \emptyset, \]then there exists a compact subset $C_1\subset P(\gamma)_\mathbb{A}^1\backslash P_\mathbb{A}^1$ such that $p\in C_1$. 
\end{lemma}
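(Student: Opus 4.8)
The plan is to bound $p$ by splitting $P(\gamma)_\mathbb{A}^1\backslash P_\mathbb{A}^1$ into a ``Levi direction'' and a ``unipotent direction'' and controlling each. First I would conjugate $\gamma$ by an element of $M_\mathbb{Q}$ (harmless, since it only translates $C$ and $C_1$) so as to put it in the normal form used in the proof of Lemma \ref{l1}: a parabolic $P_1\subseteq P$ with $\gamma_s\in M_1$ and $\gamma_u\in M(\gamma_s)\cap N_1$. Using $P^+(\gamma)=M^+(\gamma)N^+(\gamma)$ and the connectedness of $N^+(\gamma)$ (so that $P(\gamma)=M(\gamma)N(\gamma)$), I write $p=\nu m$ with $\nu\in N_\mathbb{A}$ and $m\in M_\mathbb{A}^1$; then $P(\gamma)_\mathbb{A}^1\backslash P_\mathbb{A}^1$ is, topologically, fibred over $M(\gamma)_\mathbb{A}^1\backslash M_\mathbb{A}^1$ with fibre $N(\gamma)_\mathbb{A}\backslash N_\mathbb{A}$. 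Hence it suffices to confine the image $\overline m$ of $p$ in $M(\gamma)_\mathbb{A}^1\backslash M_\mathbb{A}^1$ to a compact set and then, as $\overline m$ ranges over that set, to confine the $N$-component as well.

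For the Levi direction, I would project the relation $p^{-1}\gamma n p\in C$ (with $n\in N(\gamma_s)_\mathbb{A}\subseteq N_\mathbb{A}$) under $P\to M$: since $\gamma\in M$ and $n\in N$, the image is $m^{-1}\gamma m$, which lies in the compact image $\overline C$ of $C$ in $M_\mathbb{A}^1$. Its semisimple part $m^{-1}\gamma_s m$ lies in the conjugacy class of $\gamma_s$, and since the set of $g\in M_\mathbb{A}^1$ whose semisimple part is conjugate to the fixed semisimple element $\gamma_s$ is closed, $m^{-1}\gamma m$ is in fact confined to a compact set $\overline C'$. The orbit map $m\mapsto m^{-1}\gamma_s m$ is a homeomorphism of $M(\gamma_s)_\mathbb{A}^1\backslash M_\mathbb{A}^1$ onto a closed subset of $M_\mathbb{A}^1$, hence proper, so $\overline m$ is confined modulo $M(\gamma_s)_\mathbb{A}^1$; choosing a bounded representative, the residual freedom lies in $M(\gamma)_\mathbb{A}^1\backslash M(\gamma_s)_\mathbb{A}^1$ and is governed by the conjugate $m^{-1}\gamma_u m$ of the unipotent element $\gamma_u$ inside the reductive group $M(\gamma_s)$, which I would dispose of by the same device as the $N$-direction, with $(N_1\cap M(\gamma_s),\gamma_u)$ in place of $(N,\gamma)$.

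The $N$-direction is the heart of the argument. I would fix the $\gamma_s$-stable filtration $N=N_0\supseteq N_1\supseteq\cdots\supseteq N_r=\{e\}$ with abelian quotients $V_k=N_k/N_{k+1}$ exactly as in the proof of Lemma \ref{l1}; each $V_k$ is a $\mathbb{Q}$-vector group, and $\mathrm{Ad}(\gamma_s)$ splits $V_{k,\mathbb{A}}$ into its $1$-eigenspace (whose rational points give the successive quotients of $N(\gamma_s)$, and whose $\gamma$-fixed part gives those of $N(\gamma)$) and a complement on which $\mathrm{Ad}(\gamma)-1=\mathrm{Ad}(\gamma_s)\mathrm{Ad}(\gamma_u)-1$ is invertible with inverse bounded over $\mathbb{A}$. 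Reading $p^{-1}\gamma n p\in C$ modulo $N_{k+1}$, and feeding in that $\overline m$ is already bounded, the induced relation on the $V_k$-component of $p$ (modulo lower filtration steps) has the form $(\mathrm{Ad}(\gamma)-1)(\,\cdot\,)\equiv(\text{data in a fixed compact set})$; invertibility of $\mathrm{Ad}(\gamma)-1$ off the $1$-eigenspace, together with the fact that that eigenspace direction was divided out in passing to $N(\gamma)_\mathbb{A}\backslash N_\mathbb{A}$, confines the component to a compact set. Descending on $k$ bounds the whole $N$-component, and the same argument with $(N_1\cap M(\gamma_s),\gamma_u)$ clears the leftover $M(\gamma_s)^1$-direction from the previous step.

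Assembling the two directions puts $p$ into a single compact subset $C_1$ of $P(\gamma)_\mathbb{A}^1\backslash P_\mathbb{A}^1$. The hard part will be the Levi direction: properness of a conjugation orbit map is automatic for a semisimple element but not otherwise, so one must descend to $\gamma_s$, work in the right quotient $M(\gamma_s)_\mathbb{A}^1\backslash M_\mathbb{A}^1$ (where the global product formula is what ultimately forces boundedness), and hand the residual $M(\gamma_s)^1\!/\!M(\gamma)^1$ freedom over to the filtration argument; keeping the nested centralizers $M(\gamma)^1\subseteq M(\gamma_s)^1\subseteq M^1$ synchronized with the filtration of $N$ is where the bookkeeping is delicate.
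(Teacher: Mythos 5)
Your plan takes a genuinely different route from the paper's: the paper never argues with adelic orbit maps, but instead chooses a fundamental set $\omega$ for $P_\mathbb{Q}\backslash P_\mathbb{A}^1$, writes $p$ as a rational coset $\delta\in P(\gamma)_\mathbb{Q}\backslash P_\mathbb{Q}$ times a point of $\omega$, and proves finiteness of the set of $\delta$ with $\delta^{-1}\gamma N(\gamma)_\mathbb{A}\delta$ meeting $\omega C\omega^{-1}$, using discreteness of $M_\mathbb{Q}$ for the Levi part of $\delta$ and a root-space-by-root-space finiteness for its unipotent part; $C_1$ is then the image of $\bigcup_\delta\delta\omega$. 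In your adelic approach the Levi step already has two unproved points: boundedness of $m^{-1}\gamma_s m$ does not follow from boundedness of $m^{-1}\gamma m$ by the closedness remark you give (Jordan decomposition is not continuous; one needs, e.g., that $\gamma_s$ is a fixed polynomial in $\gamma$ on the set of elements with the same characteristic polynomial), and the asserted properness of $m\mapsto m^{-1}\gamma_s m$ on $M(\gamma_s)^1_\mathbb{A}\backslash M^1_\mathbb{A}$ is not ``automatic'': locally it follows from closedness of semisimple orbits, but adelically one needs a uniform statement at almost all places (if $x_v^{-1}\gamma_s x_v\in K_v$ then $x_v\in M(\gamma_s)_{\mathbb{Q}_v}K_v$), which is where rationality and good reduction of $\gamma_s$ enter; the product formula does not supply it. These are repairable, but they are the real content of that step.

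The decisive gap is your treatment of the residual direction $M(\gamma)^1_\mathbb{A}\backslash M(\gamma_s)^1_\mathbb{A}$ when $\gamma_u\neq e$. There the only constraint supplied by the hypothesis is $z^{-1}\gamma_u z\in(\text{compact})$ with $z$ running over $M(\gamma_s)_\mathbb{A}$ modulo the centralizer of $\gamma_u$, and such a set is never relatively compact: torus directions of $M(\gamma_s)$ contract $\gamma_u$ toward the identity while escaping the quotient. Concretely, take $P=P_{211}$ and $\gamma\in M_{211,\mathbb{Q}}$ whose $\mathrm{GL}_2$-block is the nontrivial Jordan block with eigenvalue $a$ and whose remaining diagonal entries are $b,c$ distinct from $a$; then $N(\gamma_s)=\{e\}$, $P(\gamma)$ meets the $\mathrm{GL}_2$-block only in $Z\cdot N$, and the elements $p_v=\mathrm{diag}(\varpi_v^{-1},\varpi_v,1,1)$ satisfy $p_v^{-1}\gamma p_v\in$ one fixed compact subset of $P^1_\mathbb{A}$ (the off-diagonal entry becomes $\varpi_v^2$, integral, and nothing else moves), while their images leave every compact subset of $P(\gamma)^1_\mathbb{A}\backslash P^1_\mathbb{A}$ as $v$ grows, since the Iwasawa ratio $|t_1/t_2|$ of the $\mathrm{GL}_2$-block is unbounded and is invariant under left translation by $P(\gamma)_\mathbb{A}$. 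Your filtration device cannot repair this: it solves for coordinates of a conjugator lying in the unipotent group being acted upon, whereas the problematic variable is a torus coordinate of the conjugator, and it is not divided out (a similar discrepancy occurs in the $N$-direction, where only $N(\gamma)$, not $N(\gamma_s)$, is divided out). So your argument can only succeed when $\gamma$ is semisimple in $M$; for general $\gamma\in\{M_t\}\cup\{M_n\}$ the step fails, and indeed the asserted compactness itself is problematic exactly there --- a point the paper's own proof passes over by positing a relatively compact fundamental set for $P_\mathbb{Q}\backslash P^1_\mathbb{A}$, which does not exist once $M_\mathbb{Q}\backslash M^1_\mathbb{A}$ is noncompact.
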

\begin{proof}
    Suppose the positive roots of $P$ are $\alpha_1, . . . , \alpha_n$. Denote the restriction of these elements to $P(\gamma)$ by $\alpha_i(\gamma),1\le i\le n$.
    Write \[\mathfrak{n}_i(\gamma)=\{X\in\mathfrak{n}(\gamma):\rm{Ad}(a)X=a^{\alpha_i(\gamma)}X, \,a\in A\}. \]$\mathfrak{n}_i(\gamma)$ is a subspace of $\mathfrak{n}(\gamma)$. Denote $\mathfrak{n}_i$ by $\mathfrak{n}_i(e)$. Write $\tilde{\mathfrak{n}}_i(\gamma)$ the compementary subspace of $\mathfrak{n}_i(\gamma)$ in $\mathfrak{n}_i$. Let $N_i(\gamma), \tilde{N}_i(\gamma)$ be the image of $\rm{exp}\ \mathfrak{n}_i(\gamma),\, \rm{exp}\ \tilde{\mathfrak{n}}_i(\gamma)$.

    It is clear that $\tilde{N}_i(\gamma)$ is the set of representatives for $N_i(\gamma)\backslash N_i$.

    Let $\omega$ be the relatively compact fundamental set in $P_\mathbb{A}^1$ for $P_\mathbb{Q}\backslash P_\mathbb{A}^1$, and define $C'$ the closure of $\omega\cdot C\cdot \omega^{-1}$ in $P_\mathbb{A}^1$.

    Set \[S=\{\delta\in P(\gamma)_\mathbb{Q}\backslash P_\mathbb{Q}:(\delta^{-1}\cdot \gamma N(\gamma)_\mathbb{A}\cdot \delta)\cap C'\neq\emptyset\}. \]
    If we can prove that the set is finite, let \[C_1'=\overline{\{\cup_{\delta\in S}\delta\omega\}},\] it is the closure in $P(\gamma)_\mathbb{Q}\backslash P_\mathbb{A}^1$. The $p$ which satisfies the condition of the lemma must lie in $C_1'$. If we write $C_1$ the projection of $C_1'$ onto $P(\gamma)_\mathbb{A}^1\backslash P_\mathbb{A}^1$, we can conclude that $C_1$ is the set we desired.

    Let $\{M\}_\gamma$ be the set of representatives of $M(\gamma)_\mathbb{Q}\backslash M_\mathbb{Q}$, then \[\{M\}_\gamma\Pi_{i=1}^n\tilde{N}_i(\gamma)\] is the set of representatives for $P(\gamma)_\mathbb{Q}\backslash P_\mathbb{Q}$ in $P_\mathbb{Q}$.

    Thus there exists a compact subset $C_M\subset M_\mathbb{A}^1$ satisfing $C'\subset C_MN_\mathbb{A}$.

    Write \[S_1=\{\delta\in \{M\}_\gamma:\delta\gamma\delta^{-1}\in C_M\}. \]Since $M_\mathbb{Q}$ is discrete in $C_M$, $S_1$ is finite.

    Thus $\cup_{\delta\in S_1}\delta C'\delta^{-1}$ is compact in $P_\mathbb{A}^1$.

    Also, \[\cup_{\delta\in S_1}\delta C'\delta^{-1}\subset M_\mathbb{A}^1\cdot \Pi_{i=1}^n\tilde{C}_{N_i}\cdot N_i(\gamma)_\mathbb{A}, \]
    where $\tilde{C}_{N_i}$ is some compact subset in $\tilde{N}_i(\gamma)_\mathbb{A}$.

    Set \[S_{N_i}=\{n_{i}\in\tilde{N}_i(\gamma)_\mathbb{Q}:\gamma^{-1}n_{i}^{-1}\gamma n_i\in \tilde{C}_{N_i}\cdot N_i(\gamma)_\mathbb{A}\}. \]
    Then $S_i$ is finite.

    The set \[\{\cup_{\delta\in S_1}\cup_{n_{i}\in S_i}n_{i}\delta C'\delta^{-1} n_{i}^{-1}\}\] is compact and is contained in \[M_\mathbb{A}^1\cdot N_{i, \mathbb{A}}.\]

    Since the product over $i$ is finite and the finite set \[S_1\cdot S_{N_i}\] contains a set of representatives of $S$ of cosets, the lemma follows.
\end{proof}
We now take an example.

For the ramified orbits $\mathfrak{o}_{22}^2$ and $\mathfrak{o}_{1111}^4$, we consider the semisimple elements in these orbits. Define \[I_{s}^\mathfrak{o}(f,x)=\sum_{\gamma\in\{G^\mathfrak{o}_{s}\}}(n_{\gamma,G})^{-1}\sum_{\delta\in G(\gamma)_\mathbb{Q}\backslash G_\mathbb{Q}}f(x^{-1}\delta^{-1} \gamma \delta x).\]
The integral 
\[\int_{Z_\infty^+G_\mathbb{Q}\backslash G_\mathbb{A}}|I_{s}^{\mathfrak{o}}(f, x)|dx\] is bounded by 
\[\sum_{\gamma\in\{G^\mathfrak{o}_{s}\}}(n_{\gamma, G})^{-1}\int_{Z_\infty^+G(\gamma)_\mathbb{Q}\backslash G_\mathbb{A}}|f(x^{-1}\gamma x)|dx,\] which is 
\[\sum_{\gamma\in\{G^\mathfrak{o}_{s}\}}(n_{\gamma, G})^{-1}\int_{Z_\infty^+G(\gamma)_\mathbb{Q}\backslash G(\gamma)_\mathbb{A}}dx_1\int_{G(\gamma)_\mathbb{A}\backslash G_\mathbb{A}}|f(x^{-1}\gamma x)|dx.\]

Since $f\in C_c^\infty(Z_\infty^+\backslash G_\mathbb{A})$, we can use Lemma \ref{l9} to see the sum over $\gamma$ is finite. 

Easy to see that the split component of this kind $G(\gamma)$ is $A_G=Z_G$. The first integral resemble the Tamagama number, but the measure on $Z_\infty^+$ can not be used directly.

Define \[\Gamma_{\gamma, G}=[X(G(\gamma))_\mathbb{Q}:X(G)_\mathbb{Q}|_{G(\gamma)}], \]and write \[\tilde{\tau}(\gamma, G)=(n_{\gamma, G})^{-1}(\Gamma_{\gamma, G})^{-1}\tau(G(\gamma)). \]
The integral of two kinds of orbits $\mathfrak{o}_{22}^2, \mathfrak{o}_{1111}^{4}$, equals  
\[\sum_{\gamma\in\{M^\mathfrak{o}_{n, \mathfrak{o}}\}}\tilde{\tau}(\gamma, G)\int_{G(\gamma)_\mathbb{A}\backslash G_\mathbb{A}}|f(x^{-1}\gamma x)|dx. \]

Then we decompose the integral into the integral over $K$ and $P(\gamma)^1_\mathbb{A}\backslash P^1_\mathbb{A}$.

By Lemma \ref{l10}, for fixed $\gamma$, the function on $P(\gamma)^1_\mathbb{A}\backslash P^1_\mathbb{A}$ which sends $p$ to \[\int_K|f(k^{-1}p^{-1}\gamma pk)|dk, \]is of compact support. Thus the integral of  singular term is absolutely integrable.

The integral of the absolute value of elliptic term is \[\int_{Z_\infty^+G_\mathbb{Q}\backslash G_\mathbb{A}}|\sum_{\gamma\in G_e}f(x^{-1}\gamma x)|dx,\]
which is bounded by the integral of \[\sum_{\gamma\in G_e}|f(x^{-1}\gamma x)|. \]
Define $\tau_1^P$ to be the characteristic function of \[\{H\in\mathfrak{a}_0:\alpha(H)>0,\, \alpha\in\Phi_{P_1}^P\}.\]
Suppose $\omega$ is a compact subset of $N_{0,\mathbb{Q}}M_{0,\mathbb{A}}^1$ and let $T_0\in -\mathfrak{a}_0^+$. For any parabolic subgroup $P_1$, define 
\[\mathfrak{s}^{P_1}(T_0,\omega)=\{ph_ak,\, p\in\omega,\, h_a\in A_{0,\infty}^+,\, k\in K:\alpha(H_0(h_a)-T_0)>0,\, \text{for each } \alpha\in \Phi_0^1\}.\]
Then
\[G_\mathbb{A}=P_{1,\mathbb{Q}}\cdot\mathfrak{s}^{P_1}(T_0,\omega).\]
We also define $\mathfrak{s}^{P_1}(T_0,T,\omega)$ to be the set 
\[\{x\in\mathfrak{s}^{P_1}(T_0,\omega):\hat{\alpha}(H_0(x)-T)\le 0,\text{for each } \hat{\alpha}\in\hat{\Phi}_0^1\}.\]
Let \[F^{P_1}(x,T)=F^1(x,T)\] be the characteristic function of
\[\{x\in G_\mathbb{A}:\delta x\in \mathfrak{s}^{P_1}(T_0,T,\omega),\,\text{for some } \delta\in P_{1,\mathbb{Q}}\}.\]

We have an equality.
\begin{eqnarray}\label{l71}
    \sum_{P_1:P_0\subset P_1\subset P}\sum_{\delta\in P_{1,\mathbb{Q}}\backslash G_\mathbb{Q}}F^1(\delta x,T)\tau_1^P(H_0(\delta x)-T)=1
\end{eqnarray}
for all $x\in G_\mathbb{A}$.

For example, if $G=\rm{GL}_3,P=P_{21}$ . Then $P_1=P_0$ or $P_{21}$.

For $x\in G_\mathbb{A}$, choose $\delta\in P_\mathbb{Q}$ such that $\delta\in \mathfrak{s}^P(T_0,\omega)$. It is then easy to see that $P_1=P_0$ satisfies the condition $\hat{\alpha}(H_0(\delta x)-T)\ge 0,\,\hat{\alpha}\in\hat{\Phi}_0^1$ and $\alpha(H_0(\delta x)-T)>0,\,\alpha\in\Phi_0^1$. Thus
\[F^1(\delta x,T)\tau_1^P(H_0(\delta x)-T)=1.\]
This shows that the sum is at least one.

Suppose there exists $\delta_1,\delta_2\in G_\mathbb{Q}$, and $P_1=P_0,P_2=P_{21}$ such that 
\[F^1(\delta_1 x,T)\tau_1^P(H_0(\delta_1 x)-T)=1=F^1(\delta_2 x,T)\tau_2^P(H_0(\delta_2 x)-T).\]
We may assume that \[\delta_i x\in \mathfrak{s}^{P_i}(T_0,T,\omega),\quad i=1,2,\] by translating $\delta_i$ by an element in $P_{i,\mathbb{Q}}$.

Thus the projection of $H_0(\delta_i x)-T,\, i=1,2$ onto $\mathfrak{a}_0^P$ can be written as 
\[c_1\hat{\alpha}_1\] and \[-c_2\alpha_1,\]
where $c_1,c_2>0$.

Now we use a standard result from reduction theory (see \cite{L1}): any suitably regular point $T\in\mathfrak{a}_0^+$ has the property, suppose $P_1\subset P$, and $x,\delta x\in\mathfrak{s}^{P_1}(T_0,\omega)$ for points $x\in G_\mathbb{A},\,\delta\in P_\mathbb{Q}$, if \[\alpha(H_0(x)-T)>0,\quad\alpha\in\Phi_0^P\backslash \Phi_0^{P_1},\] then\[\delta\in P_{1,\mathbb{Q}}.\]

Thus in this case, $\alpha(H_0(\delta_i x)-T)>0,\alpha\in\Phi_0^P\backslash \Phi_0^i$. Since $T\in T_0+\mathfrak{a}_0^+$, $\delta_i x\in\mathfrak{s}^P(T_0,\omega)$. We have $\delta_2\delta_1^{-1}\in P_{0,\mathbb{Q}},\,\delta_1\delta_2^{-1}\in P_{21,\mathbb{Q}}$. That is, there exists $\xi\in P_{0,\mathbb{Q}}$, such that \[\delta_2=\xi\delta_1.\]

However, $\delta_1\in P_{0,\mathbb{Q}}\backslash G_\mathbb{Q},\delta\in P_{21,\mathbb{Q}}\backslash G_\mathbb{Q}$, there is a contradiction.

Therefore $P_1=P_2, \,\delta_1,\,\delta_2$ belong to the same $P_{1,\mathbb{Q}}$ coset in $G_\mathbb{Q}$.

For the complete proof, see \cite{A3}.

Let $S\subset G_\mathbb{A}$ be the support of $f$,  then $Z_\infty^+\setminus S$ is compact.  Let $C$ be the closure in $G_\mathbb{A}$ of the set \[\omega_{T_0}^{-1}KSK\omega_{T_0}. \] $C$ is  compact modulo $Z_\infty^+$.

If $P_1\subset P_2$, define
\[\sigma_1^2(H)=\sigma_{P_1}^{P_2}(H)=\sum_{P_3:P_3\supset P_2}(-1)^{\rm{dim}(A_2\backslash A_3)}\tau_1^3(H)\cdot\hat{\tau}_3(H),\quad H\in\mathfrak{a}_0.\]
\begin{lemma} \cite{A3}\label{lemma7.1}
    If $P_2\supset P_1$, $\sigma_1^2$ is the characteristic function of the set of $H\in\mathfrak{a}_1$ such that 
    \begin{itemize}
        \item $\alpha(H)>0$, for all $\alpha\in\Phi_1^2$,
        \item $\alpha(H)\le0$, for all $\alpha\in \Phi_1\backslash \Phi_1^2$, and 
        \item $\hat{\alpha}(H)>0$, for all $\hat{\alpha}\in\hat{\Phi}_2$.
    \end{itemize}
\end{lemma}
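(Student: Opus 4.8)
The plan is to verify that the two sides agree as functions on $\mathfrak a_0$. First I would observe that every function entering the definition of $\sigma_1^2$, namely $\tau_1^3$ and $\hat\tau_3$ for the parabolics $P_3\supseteq P_2$, depends on $H$ only through its component in $\mathfrak a_1$, so it suffices to treat $H\in\mathfrak a_1$. For $G=\mathrm{GL}(4)$ I would index the standard parabolics by subsets $S\subseteq\{1,2,3\}$ of the nodes of the Dynkin diagram, so that $P_1=P_{S_1}$ and $P_2=P_{S_2}$ with $S_1\subseteq S_2$; write $\alpha_1,\alpha_2,\alpha_3$ for the simple roots and $\hat\alpha_1,\hat\alpha_2,\hat\alpha_3$ for the dual basis. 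The definitions recalled above give, for $H\in\mathfrak a_1$,
\[
\tau_1^3(H)=\prod_{i\in S_3\setminus S_1}\mathbf{1}\{\alpha_i(H)>0\},\qquad
\hat\tau_3(H)=\prod_{i\notin S_3}\mathbf{1}\{\hat\alpha_i(H)>0\},
\]
so the assertion becomes an elementary statement about an alternating sum over the subsets $S_3$ with $S_2\subseteq S_3\subseteq\{1,2,3\}$.

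Since $S_1\subseteq S_2\subseteq S_3$, the factor $\prod_{i\in S_2\setminus S_1}\mathbf{1}\{\alpha_i(H)>0\}$ of $\tau_1^3(H)$ is independent of $S_3$; it is precisely $\mathbf{1}\{\alpha(H)>0,\ \alpha\in\Phi_1^2\}$, the first condition of the statement, and I would pull it out of the sum. Putting $T=S_3\setminus S_2$, which runs over all subsets of $U:=\{1,2,3\}\setminus S_2$, the rest of the sum is
\[
\sum_{T\subseteq U}(-1)^{|T|}\prod_{i\in T}\mathbf{1}\{\alpha_i(H)>0\}\prod_{i\in U\setminus T}\mathbf{1}\{\hat\alpha_i(H)>0\}
=\prod_{i\in U}\bigl(\mathbf{1}\{\hat\alpha_i(H)>0\}-\mathbf{1}\{\alpha_i(H)>0\}\bigr),
\]
and $U$ indexes both $\Phi_1\setminus\Phi_1^2$ and $\hat\Phi_2$.

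It remains to show that, once $\alpha(H)>0$ for all $\alpha\in\Phi_1^2$ (otherwise both sides vanish), the product above equals $1$ if $\alpha_i(H)\le0$ and $\hat\alpha_i(H)>0$ for every $i\in U$, and $0$ otherwise --- which is exactly the conjunction of the last two conditions. The product is nonzero only if every factor is $\pm1$, i.e.\ only if for each $i\in U$ exactly one of $\alpha_i(H)>0$, $\hat\alpha_i(H)>0$ holds; set $V=\{i\in U:\alpha_i(H)>0\}$ (the set of $-1$ factors) and suppose $V\ne\emptyset$. With $p_i=\hat\alpha_i(H)$ for $1\le i\le 3$ and $p_0=p_4=0$ one has $\alpha_i(H)=2p_i-p_{i-1}-p_{i+1}$, and the hypotheses say that $(p_i)$ is affine at the nodes of $S_1$, strictly concave at the nodes of $(S_2\setminus S_1)\cup V$, convex with $p_i>0$ at the nodes of $U\setminus V$, and satisfies $p_i\le0$ at the nodes of $V$. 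Inspecting the leftmost interior node at which $(p_0,\dots,p_4)$ attains its minimum --- which can be neither a strictly concave node nor a boundary node, and which, if it lies in $S_1$, must have both neighbours at the same minimal value --- one finds $\min_i p_i\ge0$; but then any node of $V$ would satisfy both $p_i=0$ and $2p_i>p_{i-1}+p_{i+1}\ge0$, a contradiction. Hence $V=\emptyset$, so every surviving factor is $+1$, the product equals $1$ precisely on the asserted region, and $\sigma_1^2$ is the characteristic function in the statement; the reverse inclusion is immediate.

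The routine reductions are pure bookkeeping. The one real point --- and the step I expect to be the main obstacle --- is the last one: showing that the alternating sum is genuinely $\{0,1\}$-valued, equivalently that under the standing positivity hypothesis the ``sign-mixed'' chambers do not meet $\mathfrak a_1$. This is where the geometry of the root system is used, rather than formal identities among the functions $\tau$ and $\hat\tau$; alternatively, it is subsumed in Langlands' combinatorial lemma as invoked in \cite{A3}.
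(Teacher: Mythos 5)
Your argument is correct, and it is genuinely a different route from the paper, because the paper does not prove this lemma at all: it is quoted verbatim from Arthur \cite{A3} (Lemma 6.1 there), where it is established for an arbitrary reductive group by a general combinatorial argument in the spirit of Langlands' combinatorial lemma. Your proof is a self-contained, rank-specific verification. The first half — writing $\sigma_1^2$ on $\mathfrak{a}_1$ as the prefactor $\prod_{i\in S_2\setminus S_1}\mathbf{1}\{\alpha_i(H)>0\}$ times $\prod_{i\in U}\bigl(\mathbf{1}\{\hat\alpha_i(H)>0\}-\mathbf{1}\{\alpha_i(H)>0\}\bigr)$ — is purely formal and would work for any root system; the genuinely type-$A_3$ ingredient is the second half, where the coordinates $p_i=\hat\alpha_i(H)$, $p_0=p_4=0$, with $\alpha_i(H)=2p_i-p_{i-1}-p_{i+1}$, turn the exclusion of mixed-sign configurations into a discrete minimum-principle argument on the path $0,1,2,3,4$. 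I checked that argument: a minimizer with negative value cannot be a boundary node, a strictly concave node (those in $(S_2\setminus S_1)\cup V$), or a node of $U\setminus V$ (where $p_i>0$), and if it lies in $S_1$ the affine relation forces its left neighbour to be an equal minimizer, contradicting leftmost-ness or forcing the minimum to be $0$; hence $\min_i p_i\ge 0$, and any $i\in V$ would give $p_i=0$ together with $0=2p_i>p_{i-1}+p_{i+1}\ge 0$, so $V=\emptyset$. (You pass over the $U\setminus V$ case quickly in the final dash-clause, but it is immediate from $p_i>0$, so this is only a matter of exposition.) What your approach buys is an elementary, inspectable proof for $\mathrm{GL}(4)$ that avoids invoking the general machinery of \cite{A3}; what it gives up is the uniformity in the group — Arthur's proof covers all parabolic pairs in all reductive groups at once, which is why the paper simply cites it.
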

We now write \[I_{\rm{ram}}^\mathfrak{o}(f,x,T)\]as the sum over $P$ of 
\begin{eqnarray}\label{eq0}
    (-1)^{\rm{dim} Z\backslash A}\sum_{\delta\in P_\mathbb{Q}\backslash G_\mathbb{Q}}\sum_{\gamma\in M^{\mathfrak{o}}}\sum_{v\in N_\mathbb{Q}}f(x^{-1}\delta^{-1}\gamma v\delta x)\sum_{\{P_1:P_0\subset P_1\subset P\}}\sum_{\xi\in P_{1,\mathbb{Q}}\backslash P_\mathbb{Q}}F^1(\xi\delta x,T)\tau_1^P(H_0(\xi\delta x)-T).\notag\\
    \end{eqnarray}
\begin{lemma}\label{in}
    Given $P,\,x,\,\mathfrak{o}$, (\ref{eq0}) equals 
    \begin{eqnarray}
    \sum_{\{P_1,P_2:P_1\subset P\subset P_2\}}\sum_{\delta\in P_{1,\mathbb{Q}}\backslash G_\mathbb{Q}}F^1(\delta x,T)\sigma^2_1(H_0(\delta x)-T)\sum_{\gamma\in M_1^{\mathfrak{o}}}\sum_{v\in N_{1,\mathbb{Q}}}f(x^{-1}\delta^{-1}\gamma v\delta x).
    \end{eqnarray}
\end{lemma}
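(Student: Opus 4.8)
\emph{Proof plan.} The plan is to carry out the combinatorial rearrangement of Arthur \cite{A3}: one inserts the ``inner'' partition of unity (\ref{l71}) --- built from $F^1$ and $\tau_1^P$ for $P_1\subset P$ --- into each term and recombines it with the factor $\hat\tau_P$ carried over from $I_{\rm{ram}}^{\mathfrak{o}}(f,x,T)$ into the ``global'' partition, built from $F^1$ and $\sigma_1^2$ for $P_1\subset P\subset P_2$. The manipulation is driven by two clean facts. The first is the purely combinatorial identity
\[
\tau_1^P(H)\,\hat\tau_P(H)=\sum_{\{P_2:\,P\subset P_2\}}\sigma_1^2(H),\qquad H\in\mathfrak{a}_0,
\]
obtained by M\"obius inversion, over the Boolean lattice of standard parabolics between $P$ and $G$, of the defining relation $\sigma_1^2(H)=\sum_{P_3\supset P_2}(-1)^{\dim(A_2\backslash A_3)}\tau_1^3(H)\hat\tau_3(H)$; the intermediate alternating sums telescope and cancel unless the two endpoints coincide. (Lemma \ref{lemma7.1} is the companion closed-form description of $\sigma_1^2$ and is not needed for the present lemma.) The second is the invariance $\hat\tau_P(H_0(\xi y)-T)=\hat\tau_P(H_0(y)-T)$ for $\xi\in P_\mathbb{Q}$, an immediate consequence of the product formula applied to the $\mathbb{Q}$-rational characters of $P$.

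Granting these, the proof is bookkeeping. In (\ref{eq0}) set $\eta=\xi\delta$; then $F^1$, $\tau_1^P$, and the factor $\hat\tau_P$ all depend on the pair $(\delta,\xi)$ only through $\eta\in P_{1,\mathbb{Q}}\backslash G_\mathbb{Q}$ (for the last one by the invariance just noted), so the summations over $\delta\in P_\mathbb{Q}\backslash G_\mathbb{Q}$ and $\xi\in P_{1,\mathbb{Q}}\backslash P_\mathbb{Q}$ collapse to a single sum over $\eta$. The arithmetic factor $\sum_{\gamma\in M^{\mathfrak{o}}}\sum_{v\in N_\mathbb{Q}}f(x^{-1}\delta^{-1}\gamma v\delta x)$, rewritten with $\delta=\xi^{-1}\eta$ and summed over $\xi\in P_{1,\mathbb{Q}}\backslash P_\mathbb{Q}$, must be identified with the $M_1$-level sum $\sum_{\gamma\in M_1^{\mathfrak{o}}}\sum_{v\in N_{1,\mathbb{Q}}}f(x^{-1}\eta^{-1}\gamma v\eta x)$; this ``level change'' is precisely the type of bookkeeping performed in Lemma \ref{l1} and in the discussion around Lemma \ref{l6}, now applied with the pair $(G,P)$ replaced by $(P,P_1)$, and it is at this step that the parabolics $P_1$ not containing $P_{\mathfrak o}$ are forced to contribute nothing. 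It then remains to substitute $\tau_1^P\hat\tau_P=\sum_{\{P_2:\,P\subset P_2\}}\sigma_1^2$, to interchange the now-independent summations over $P_1\subset P$ and $P_2\supset P$, and to carry the overall sign $(-1)^{\dim Z\backslash A}$ through the rearrangement; this produces exactly the asserted expression.

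I expect the main obstacle to be the level-change step: one must verify that conjugating the $M$-level data by a set of representatives of $P_{1,\mathbb{Q}}\backslash P_\mathbb{Q}$ and summing reproduces the $M_1$-level data with the correct (trivial) multiplicities and without double counting, and that it redistributes the terms so that only the parabolics $P_1$ with $P_{\mathfrak o}\subset P_1\subset P$ survive. This is where Lemma \ref{l1}, Lemma \ref{l6}, the Jordan decomposition $\gamma=\gamma_s\gamma_u$, and the normality of $N$ in $P$ all have to be used together and with care. By contrast the combinatorial identity relating $\tau_1^P\hat\tau_P$ to $\sigma_1^2$, the $P_\mathbb{Q}$-invariance of $\hat\tau_P$, and the final sign bookkeeping are routine once the level-change step is in place.
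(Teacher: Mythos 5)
Your first half — the identity $\tau_1^P(H)\hat\tau_P(H)=\sum_{\{P_2:\,P\subset P_2\}}\sigma_1^2(H)$ and the $P_\mathbb{Q}$-invariance of $\hat\tau_P(H_0(\cdot)-T)$, used to collapse the sums over $\delta\in P_\mathbb{Q}\backslash G_\mathbb{Q}$ and $\xi\in P_{1,\mathbb{Q}}\backslash P_\mathbb{Q}$ into a single sum over $P_{1,\mathbb{Q}}\backslash G_\mathbb{Q}$ — is exactly what the paper does. But the second half, the ``level change,'' is where your plan breaks down, and it is the real content of the lemma. After the collapse there is no residual $\xi$-sum left to spend: since $\sum_{\gamma\in M^{\mathfrak o}}\sum_{v\in N_\mathbb{Q}}f(x^{-1}\delta^{-1}\gamma v\delta x)$ is invariant under $\delta\mapsto\xi\delta$, $\xi\in P_\mathbb{Q}$, the collapsed expression still carries the $M$-level inner sum attached to each coset $\eta\in P_{1,\mathbb{Q}}\backslash G_\mathbb{Q}$; your proposal to ``rewrite with $\delta=\xi^{-1}\eta$ and sum over $\xi\in P_{1,\mathbb{Q}}\backslash P_\mathbb{Q}$'' would just add up infinitely many identical copies of that same $M$-level sum, not produce the $M_1$-level sum. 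More fundamentally, the passage from $\sum_{\gamma\in M^{\mathfrak o}}\sum_{v\in N_\mathbb{Q}}$ to $\sum_{\gamma\in M_1^{\mathfrak o}}\sum_{v\in N_{1,\mathbb{Q}}}$ is not an algebraic identity of the type in Lemma \ref{l1} or Lemma \ref{l6}: the set $M^{\mathfrak o}N_\mathbb{Q}$ strictly contains $M_1^{\mathfrak o}N_{1,\mathbb{Q}}$, and the equality of the two inner sums is false in general; it holds only on the support of $F^1(\delta x,T)\sigma_1^2(H_0(\delta x)-T)$ and only for $T$ suitably regular relative to the support of $f$. A purely combinatorial argument cannot see the parameter $T$ or the test function, so no version of your ``bookkeeping'' can close this step.

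What is actually needed (and what the paper does) is a reduction-theoretic vanishing argument: for $\delta x$ in the support of $F^1\cdot\sigma_1^2$ one writes $\delta x=n_2n_0^2mh_ak$ in a Siegel set, extracts from $F^1$ the bound (\ref{ine1}) and from $\sigma_1^2$ — via the explicit description in Lemma \ref{lemma7.1}, which you discarded as unnecessary — the bound (\ref{ine2}); then for any $\gamma\in M_\mathbb{Q}$ not in $P_{1,\mathbb{Q}}$ one applies the Bruhat decomposition $\gamma=\nu w_s\pi$ with $s$ outside the Weyl group of $(M_1,A_1)$, chooses $\hat\alpha\in\hat\Phi_1^P$ not fixed by $s$, and uses a rational representation of highest weight $d\hat\alpha$ together with a height function to get the lower bound (\ref{inequality}), contradicting the fact that $h_a^{-1}\gamma h_a$ must stay in a compact set determined by the support of $f$. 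This forces $\sum_{v\in N_\mathbb{Q}}f(x^{-1}\delta^{-1}\gamma v\delta x)=0$ for every such $\gamma$, and the surviving terms, those with $\gamma v\in P_{1,\mathbb{Q}}$, reassemble precisely into $\sum_{\gamma\in M_1^{\mathfrak o}}\sum_{v\in N_{1,\mathbb{Q}}}f(x^{-1}\delta^{-1}\gamma v\delta x)$. None of the tools you list for the hard step (Lemma \ref{l1}, Lemma \ref{l6}, the Jordan decomposition, normality of $N$) can substitute for this compactness-plus-height-function argument, so the proposal has a genuine gap at its central point.
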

\begin{proof}
    By the fact that\[\sum_{\{P:{P_1\subset P\subset P_2}\}}(-1)^{\rm{dim}(A_2\backslash A)}=\begin{cases}
    1,&\text{if}\quad P_1=P_2,\\
    0,&\text{else},
    \end{cases}\] we can write \[\tau_1^P(H_0(\xi\delta x)-T)\hat{\tau}_P(H_0(\xi\delta x)-T)\] as
    \[\sum_{\{P_2,P_3:P\subset P_2\subset P_3\}}(-1)^{\rm{dim}A_2\backslash A_3}\tau_1^3(H_0(\xi\delta x)-T)\hat{\tau}_3(H_0(\xi\delta x)-T),\]
    which is \[\sum_{\{P_2:P_2\supset P\}}\sigma_1^2(H_0(\xi\delta x)-T).\]
    Then the term (\ref{eq0}) becomes
    \[\sum_{\{P_1,P_2:P_1\subset P\subset P_2\}}\sum_{\delta\in P_{1,\mathbb{Q}}\backslash G_\mathbb{A}}(-1)^{\rm{dim} Z\backslash A}F^1(\delta x,T)\sigma_1^2(H_0(\delta x)-T)\sum_{\gamma\in M^\mathfrak{o}}\sum_{v\in N_\mathbb{Q}}f(x^{-1}\delta^{-1}\gamma v\delta x).\]
    We choose a representative $x\in G_\mathbb{A}^1$ such that 
    \[n_2n_0^2mh_ak, \quad k\in K,\] $n_2,n_0^2,m$ respectively belong to fixed compact subsets of $N_{2,\mathbb{A}}, N_{0,\mathbb{A}}^2, M_{0,\mathbb{A}}^1$. By the definition of $F^1(x,T)$, the element $a$ satisfies 
    \begin{eqnarray}\label{ine1}
        \alpha(H_0(h_a)-T_0)>0,\quad  \alpha\in \Phi_0^1,
    \end{eqnarray}
    and 
    \[\hat{\alpha}(H_0(h_a)-T)\le 0,\quad \hat{\alpha}\in \hat{\Phi}_0^1,\]
    then by Lemma \ref{lemma7.1}, if $\sigma_1^2(H_0(h_a)-T)\neq 0$,
    \begin{eqnarray}\label{ine2}
        \alpha(H_0(h_a)-T)>0,\quad  \alpha\in \Phi_1^2.
    \end{eqnarray}
    By the theorey of Siegel domain, for such $h_a$, the element $h_a^{-1}n_0^2mh_a$ lies in a fixed compact subset of $N_{0,\mathbb{A}}^2\times M^1_{0,\mathbb{A}}$.

    Suppose there exists \[\gamma\in M_\mathbb{Q}\cap P_{1,\mathbb{Q}}\backslash M_\mathbb{Q},\] such that 
    \begin{eqnarray}\label{_}
        \sum_{v\in N_\mathbb{Q}}f(k^{-1}h_a^{-1}m^{-1}(n_0^2)^{-1}n_2^{-1}\cdot \gamma v\cdot n_2n_0^2mh_ak)\neq 0.
    \end{eqnarray}
    Since $N_2$ is normal in $P$, we have \[n_2^{-1}v n_2\in N,\]
    the term (\ref{_}) equals 
    \[\sum_{v\in N_\mathbb{Q}}f(k^{-1}(h_a^{-1}mn_0^2h_a)^{-1}\cdot h_a^{-1}\gamma vh_a\cdot (h_a^{-1}mn_0^2h_a)k).\]
    Hence $h_a^{-1}\gamma h_a$ belongs to a compact subset of $M_\mathbb{A}^1$.

    Apply the Bruhat decomposition, for any $\gamma\in M_\mathbb{Q}$ we wirte \[\gamma = \nu w_s \pi,\quad \nu  \in N_{0}^P,  \pi \in P_{0}\cap M_\mathbb{Q},\] where $s$ belongs to the Weyl group of $(M,A_0)$, and $s$ cannot belong to the Weyl group of $(M_1, A_1)$.

 	We can therefore find $\hat{\alpha} \in \hat{\Phi}_{1}^P$ not fixed by $s$.

    Let $\rho$ be a rational representation of $G$ on the vector space $V$ with highest weight $d\hat{\alpha}$,  where $d>0$. Let $v$ be the highest weight vector in $V_\mathbb{Q}$, the space on which $G_\mathbb{Q}$ acts.

 	Choose a height function $\|\cdot\|$ relative to a basis of $V_\mathbb{Q}$ (the definition and properties of the height function are introduced in \cite{A3} or \cite{H1}), and we can assume that $v$ and $\rho(w_s)v$ is included in the basis.

 	Then the component of \[\rho(a^{-1}\gamma a)v=\rho(h_a^{-1}\nu w_s\pi h_a)v\] in the projection of $\rho(w_s)v$ is \[e^{d(\hat{\alpha}-s\hat{\alpha})(H_0(h_a))}\rho(w_s)v. \]

    Therefore,
    \begin{eqnarray}\label{inequality}
        \|\rho(h_a^{-1}\gamma h_a)v\|=\|\rho(h_a^{-1}\nu w_s\pi h_a)v\|\geq e^{d(\hat{\alpha}-s\hat{\alpha})H_0(h_a)}.
    \end{eqnarray}
 	The left side of (\ref{inequality}) is bounded since $h_a^{-1}\gamma h_a \in C$. However, $\hat{\alpha}-s\hat{\alpha}$ is a nonnegative sum of roots in $\Phi_{0}^P$,  and at least one element in $\Phi_{0}^P\backslash \Phi_1^P$ has non-zero coeffcients. The right side of (\ref{inequality}), can be made arbitarily large by large enough $T$ follows from (\ref{ine1}) and (\ref{ine2}). This leads to a contradiction.

\end{proof}
\begin{lemma}\label{l8}
 	The integral of \[\sum_{\gamma\in G^{\mathfrak{o}_G}}f(x^{-1}\gamma x)\] over $Z_\infty^+G_\mathbb{Q}\backslash G_\mathbb{A}$ is absolutely convergent.
\end{lemma}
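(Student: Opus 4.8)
The plan is to reduce the integral to a finite sum of products, each a finite volume times a finite orbital integral. First I would record the shape of the orbit: $\mathfrak{o}_G$ is the disjoint union of the $G_\mathbb{Q}$-conjugacy classes of elements of $G_\mathbb{Q}$ whose characteristic polynomial is an irreducible quartic over $\mathbb{Q}$, so that $G^{\mathfrak{o}_G}=\mathfrak{o}_G$; every such $\gamma$ is regular semisimple and $G$-elliptic, with $G(\gamma)$ a maximal torus anisotropic modulo $A_G=Z_G$, and $\sum_{\gamma\in G^{\mathfrak{o}_G}}f(x^{-1}\gamma x)$ is exactly $I_G(f,x)$ after grouping by conjugacy class (the factor $n_{\gamma,G}=[G^+(\gamma):G(\gamma)]$ absorbing the difference between summing over $G^+(\gamma)_\mathbb{Q}\backslash G_\mathbb{Q}$ and $G(\gamma)_\mathbb{Q}\backslash G_\mathbb{Q}$). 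Unfolding the integral over $Z_\infty^+G_\mathbb{Q}\backslash G_\mathbb{A}$ through $G(\gamma)_\mathbb{Q}\backslash G_\mathbb{Q}$, the integral of the absolute value is bounded by
\[
\sum_{\gamma\in\{G_e\}}(n_{\gamma,G})^{-1}\int_{Z_\infty^+G(\gamma)_\mathbb{Q}\backslash G_\mathbb{A}}|f(x^{-1}\gamma x)|\,dx .
\]
Since $\gamma$ is $G$-elliptic, $Z_\infty^+\subseteq A_{G(\gamma)}(\mathbb{R})\subseteq G(\gamma)_\mathbb{A}$, so each summand factors as $\operatorname{vol}\!\big(Z_\infty^+G(\gamma)_\mathbb{Q}\backslash G(\gamma)_\mathbb{A}\big)$ times $\int_{G(\gamma)_\mathbb{A}\backslash G_\mathbb{A}}|f(x^{-1}\gamma x)|\,dx$; up to the index factors $\Gamma_{\gamma,G}$ already introduced, the first factor is $\tilde\tau(\gamma,G)$, hence finite because $G(\gamma)$ is reductive and $Z_\infty^+G(\gamma)_\mathbb{Q}\backslash G(\gamma)_\mathbb{A}$ is compact.

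Next I would bound the orbital integral using Lemma \ref{l10} with the degenerate choice $P=G$, so that $N=\{e\}$ and $N(\gamma_s)=\{e\}$: the lemma then asserts that whenever $p\in G(\gamma)_\mathbb{A}^1\backslash G_\mathbb{A}^1$ has $p^{-1}\gamma p\in C$ for a fixed compact subset $C$ of $G_\mathbb{A}^1$, $p$ lies in a fixed compact subset of $G(\gamma)_\mathbb{A}^1\backslash G_\mathbb{A}^1$. Combining this with the integration over $K$ exactly as in the treatment of the singular terms of $\mathfrak{o}_{22}^2$ and $\mathfrak{o}_{1111}^4$ carried out above, the map $p\mapsto\int_K|f(k^{-1}p^{-1}\gamma pk)|\,dk$ has compact support on $G(\gamma)_\mathbb{A}^1\backslash G_\mathbb{A}^1$, so $\int_{G(\gamma)_\mathbb{A}\backslash G_\mathbb{A}}|f(x^{-1}\gamma x)|\,dx<\infty$. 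Finally I would invoke Lemma \ref{l9} with $P=G$ and $C=\overline{K\,(\operatorname{supp}f)\,K}$, which is compact modulo $Z_\infty^+$ because $f\in C_c^\infty(Z_\infty^+\backslash G_\mathbb{A})$: it shows that only finitely many $\gamma\in\{G_e\}$ admit some $x\in G_\mathbb{A}$ with $x^{-1}\gamma x\in\operatorname{supp}f$, so all but finitely many summands vanish. A finite sum of finite quantities is finite, which gives absolute convergence.

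Both substantive inputs — properness of the orbit map of a regular semisimple $\gamma$ (Lemma \ref{l10}) and the reduction-theoretic finiteness of conjugacy classes meeting a compact-mod-$Z_\infty^+$ set (Lemma \ref{l9}) — are already available, so what remains is mostly organizational. The point that needs care is that these two lemmas, phrased for an arbitrary parabolic $P$, apply with the degenerate parabolic $P=G$ (whence $N=\{e\}$, $M_t=G$), together with the observation that $G$-ellipticity of $\gamma$ is precisely what guarantees $Z_\infty^+\subseteq G(\gamma)_\mathbb{A}$ and the compactness of $Z_\infty^+G(\gamma)_\mathbb{Q}\backslash G(\gamma)_\mathbb{A}$ — the two facts that legitimize the measure factorization and the finiteness of the volume factor.
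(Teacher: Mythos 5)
Your proof is essentially correct, but it is a genuinely different route from the paper's. The paper proves Lemma \ref{l8} by reduction theory: it multiplies the sum by the partition of unity (\ref{l71}) and runs the argument of Lemma \ref{in} with $P=G$, $\mathfrak{o}=\mathfrak{o}_G$; since an element with irreducible quartic characteristic polynomial cannot be conjugated into any proper $P_{1,\mathbb{Q}}$, all terms with $P_1\neq G$ vanish once $T$ is suitably regular, so the integrand is supported on the compact truncated Siegel region $\{x: F^{G}(x,T)=1\}$ in $Z_\infty^+G_\mathbb{Q}\backslash G_\mathbb{A}$, and convergence follows at once. You instead unfold over conjugacy classes and bound the integral by (finitely many classes, via Lemma \ref{l9} at $P=G$) $\times$ (finite volume of the compact quotient $Z_\infty^+G(\gamma)_\mathbb{Q}\backslash G(\gamma)_\mathbb{A}$, using ellipticity) $\times$ (convergent orbital integral, via properness of the orbit map). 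This is precisely the computation the paper performs \emph{after} the lemma to express the elliptic term as $\sum_{\gamma}\tilde{\tau}(\gamma,G)\int_{G(\gamma)_\mathbb{A}\backslash G_\mathbb{A}}f(x^{-1}\gamma x)\,dx$, and it parallels the paper's treatment of the semisimple parts of $\mathfrak{o}_{22}^2$ and $\mathfrak{o}_{1111}^4$; your route has the advantage of yielding the explicit value simultaneously with convergence, while the paper's route needs no information about centralizers or class numbers beyond the single fact that $\mathfrak{o}_G$ avoids every proper parabolic.

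One step needs more care than a formal citation: your use of Lemma \ref{l10} at the degenerate value $P=G$. With $N=\{e\}$ the set $\{M_t\}\cup\{M_n\}$ is all of $G_\mathbb{Q}$, and the compactness assertion is simply false for non-semisimple classes (a nontrivial unipotent already gives a counterexample); moreover the paper's proof of Lemma \ref{l10} uses a relatively compact fundamental set for $P_\mathbb{Q}\backslash P_\mathbb{A}^1$, which does not exist when $P=G$. For your $\gamma$, which is regular semisimple and elliptic, the needed statement is true — the conjugacy class is closed and the map $x\mapsto x^{-1}\gamma x$ is proper modulo $G(\gamma)_\mathbb{A}$ — so the argument goes through, but you should justify this properness directly (or invoke the standard convergence of semisimple orbital integrals) rather than by appealing to Lemma \ref{l10} with $P=G$ as stated. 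A similar remark applies to Lemma \ref{l9} at $P=G$: the finiteness of classes meeting a set compact modulo $Z_\infty^+$ is true and standard, but the paper's proof of that lemma is written with a Siegel-domain decomposition that degenerates at $P=G$, so a one-line independent justification would make your argument self-contained.
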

\begin{proof}
    By the proof of Lemma \ref{in}, we let $P=G$, and $\mathfrak{o}=\mathfrak{o}_G$. In this case, $P_2=G$, after multiply the term (\ref{l71}), the lemma is clear since $\mathfrak{o}_G\cap P=\emptyset, P\neq G.$
\end{proof}
The integral of $G$-elliptic term is given by \[\sum_{\gamma\in\{G_e\}}(n_{\gamma, G})^{-1}\int_{Z_\infty^+G(\gamma)_{\mathbb{Q}}\backslash G(\gamma)_\mathbb{A}}dx_1\int_{G(\gamma)_\mathbb{A}\backslash G_\mathbb{A}}f(x^{-1}\gamma x)dx,\]
which is \[\sum_{\gamma\in \{G_e\}}\tilde{\tau}(\gamma, G)\int_{G(\gamma)_\mathbb{A}\backslash G_\mathbb{A}}f(x^{-1}\gamma x)dx. \]

\section{The convergence associated to some orbits}
In this section, we shll prove the convergence of ramified orbits and introduce the convex hull associated to unramified orbits.
\subsection{The second parabolic terms of ramified orbit}
For any parabolic subgroup $P$, let its simple roots be $\{\alpha_{i_1}, . . . , \alpha_{i_j}\}$.

Since we shall discuss the integral over $P(\gamma_s)_\mathbb{A}$ and $P(\gamma)^1_\mathbb{A}$, for $\gamma\in \{M^\mathfrak{o}\}$, we need to consider the Haar measure.

Define \[\delta_{P(\gamma_s)}(p)=\rm{exp}(-<2\rho_P(\gamma_s), H_0(p)>), \quad p\in P(\gamma_s)_\mathbb{A}\]to be the modular function of $P(\gamma_s)_\mathbb{A}$.

We consider the function $I_{\rm{ram}}^{\mathfrak{o}}(f, x, T)$, it equals
\[\sum_{P}(-1)^{(\rm{dim}\ Z\backslash A)}\sum_{\gamma\in M^\mathfrak{o}}\sum_{\delta\in M_\mathbb{Q}N(\gamma_s)_\mathbb{Q}\backslash G_\mathbb{Q}}\sum_{\substack{v\in N(\gamma_s)_\mathbb{Q}}}f(x^{-1}\delta^{-1}\gamma v\delta x)(\hat{\tau}_P(H_0(\delta x)-T)). \]
We need to consider the integral of its absolute value over $Z_\infty^+G_\mathbb{Q}\backslash G_\mathbb{A}$.

For any $P$, consider
\[\sum_{\gamma\in\{M^\mathfrak{o}\}}(n_{\gamma, M})^{-1}\int_{Z_\infty^+M(\gamma)_\mathbb{Q}N(\gamma_s)_\mathbb{A}\backslash G_\mathbb{A}}\sum_{\substack{v\in N(\gamma_s)_\mathbb{Q}}}f(x^{-1}\gamma vx)(\hat{\tau}_P(H_0(x)-T))dx, \]
it equals
\[c_P\sum_{\gamma\in\{M^\mathfrak{o}\}}(n_{\gamma, M})^{-1}\int_K\int_{Z_\infty^+M(\gamma)_\mathbb{Q}N(\gamma_s)_\mathbb{Q}\backslash P_\mathbb{A}}\sum_{\substack{v\in N(\gamma_s)_\mathbb{Q}}}f(k^{-1}p^{-1}\gamma v pk)(\hat{\tau}_P(H_0(p)-T))\delta_P(p)d_rp\ dk. \]
We can write it as 
\begin{equation}\label{eq8.1}
\begin{aligned}
&c_P\sum_{\gamma\in\{M^\mathfrak{o}\}}(n_{\gamma, M})^{-1}(\Gamma_{\gamma, M})\int_K\int_{Z_\infty^+\backslash A_\infty^+}\int_{M(\gamma)_\mathbb{A}^1N(\gamma_s)_\mathbb{A}\backslash P^1_\mathbb{A}}\int_{M(\gamma)_\mathbb{Q}N(\gamma_s)_\mathbb{Q}\backslash M(\gamma)^1_\mathbb{A}N(\gamma_s)_\mathbb{A}}\\
&\sum_{\substack{v\in N(\gamma_s)_\mathbb{Q}}}f(k^{-1}p^{*-1}\cdot p^{-1}h_a^{-1}\gamma vh_ap\cdot p^*k)(\hat{\tau}_P(H_0(a)-T))\delta_{P(\gamma_s)}(a)da\ d_rp\ dp^*\ dk. 
\end{aligned}
\end{equation}
By Lemma \ref{l10}, the integral over 
\[M(\gamma)_\mathbb{A}^1N(\gamma_s)_\mathbb{A}\backslash P_\mathbb{A}^1\subset P(\gamma)_\mathbb{A}^1\backslash P_\mathbb{A}^1\] can be replaced by a compact subset $C_1$ of $P(\gamma)_\mathbb{A}^1\backslash P_\mathbb{A}^1$, in other words,  over a compact set $C_1(\gamma_s)$ of representatives of $C_1$ in $P_\mathbb{A}^1$.

We define the function $\Phi_\gamma(f, n)$ to be \[c_P(n_{\gamma, M})^{-1}\int_K\int_{C_1(\gamma_s)}f(k^{-1}p^{-1}\gamma npk)dp\ dk, \]for fixed $\gamma\in\{M^{\mathfrak{o}}\}, n\in N(\gamma_s)_\mathbb{A}$. The support, $U(\gamma_s)$, of this function is a compact subset of $N(\gamma_s)_\mathbb{A}$.

If $a\in\mathfrak{a}_G\backslash \mathfrak{a}_P$, we denote $a$ by $\sum_{k=1}^{j}a_{k}\hat{\alpha}_{i_k}. $

Let $\omega(\gamma_s)$ be the relatively compact set of representatives of $M(\gamma)_\mathbb{Q}N(\gamma_s)_\mathbb{Q}\backslash M(\gamma)^1_\mathbb{A}N(\gamma_s)_\mathbb{A}$ in $M(\gamma)^1_\mathbb{A}N(\gamma_s)_\mathbb{A}$. Since $N(\gamma_s)_\mathbb{Q}$ is discrete in $N(\gamma_s)_\mathbb{A}$, we can choose positive numbers $t_1,...t_j$ small enough so that
\begin{eqnarray}\label{intersection}
    \{vh_a\cdot n\cdot h_a^{-1}v^{-1}, v\in \omega(\gamma), a_i\le t_i,n\in U(\gamma_s)\}\cap N(\gamma_s)_\mathbb{Q}=\{e\}.
\end{eqnarray}
Hence, we can write the integral over $Z_\infty^+\backslash A_{\infty}^+$ into the integrals over $a_k\ge t_k,\quad 1\le k\le j$.

Define $\hat{\tau}_P'$ to be the characteristic function
\[\{H\in \mathfrak{a}_0:\hat{\alpha}(H)\le0,\hat{\alpha}\in\hat{\Phi}_P\}.\]

\begin{lemma}\label{l999}
    Given a parabolic subgroup $P_1$, the characteristic function
    \begin{eqnarray}\label{pp}
       \sum_{\substack{P\supset P_1}}(-1)^{\rm{dim}\ Z\backslash A}\hat{\tau}_{P}(H_{0}(\delta x)-T) 
    \end{eqnarray} 
    equals \[\hat{\tau}_{P_1}'(H_{0}(\delta x)-T).\]
\end{lemma}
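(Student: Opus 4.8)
The plan is to turn the alternating sum over parabolics into an elementary product expansion over subsets of $\hat{\Phi}_{P_1}$. Fix $H=H_0(\delta x)-T\in\mathfrak{a}_0$ and, for each simple root $\alpha_j\in\Phi_0$, set $\epsilon_j=\epsilon_j(H)=1$ if $\hat{\alpha}_j(H)>0$ and $\epsilon_j=0$ otherwise, where $\hat{\alpha}_j$ is the $j$-th element of the dual basis (the $j$-th fundamental weight of $\GGL(4)$). First I would record two standard facts, specific to $G=\GGL(4)$. (i) If a standard parabolic $P=P_{i_1\cdots i_k}$ is written as a composition of $4$, then $\hat{\Phi}_P=\{\hat{\alpha}_j:j\in S_P\}$, where $S_P\subseteq\{1,2,3\}$ is the set of division points of the composition, equivalently the indices of the simple roots not lying in the Levi $M_P$; here the element $\hat{\alpha}_j$ depends only on $j$, not on $P$. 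Consequently $\hat{\tau}_P(H)=\prod_{j\in S_P}\epsilon_j$ and $\dim(Z\backslash A_P)=|S_P|$, and likewise $\hat{\tau}_{P_1}'(H)=\prod_{j\in S_{P_1}}(1-\epsilon_j)$. (ii) The standard parabolics $P$ with $P_1\subseteq P\subseteq G$ are in inclusion-preserving bijection with the subsets of $S_{P_1}$ via $P\mapsto S_P$: coarsening the composition of $P_1$ amounts to deleting some of its division points, and every subset of $S_{P_1}$ arises this way (with $P=G$ corresponding to $S_G=\emptyset$, $\hat{\tau}_G\equiv1$, and $P=P_1$ to $S_{P_1}$ itself).

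Granting (i) and (ii), the rest is a one-line computation:
\[
\sum_{P\supseteq P_1}(-1)^{\dim(Z\backslash A_P)}\hat{\tau}_P(H)
=\sum_{S\subseteq S_{P_1}}(-1)^{|S|}\prod_{j\in S}\epsilon_j
=\prod_{j\in S_{P_1}}\bigl(1-\epsilon_j\bigr)
=\hat{\tau}_{P_1}'(H),
\]
where the middle equality is the elementary factorization $\sum_{S\subseteq T}(-1)^{|S|}\prod_{j\in S}x_j=\prod_{j\in T}(1-x_j)$, and the last equality uses that $\epsilon_j\in\{0,1\}$, so $1-\epsilon_j=1$ precisely when $\hat{\alpha}_j(H)\le0$. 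This is exactly the asserted identity of characteristic functions, evaluated at the arbitrary point $H=H_0(\delta x)-T$.

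The substance of the lemma lies entirely in steps (i)--(ii); the alternating sum itself collapses formally. The one point that needs a little care is the verification that $\hat{\Phi}_P$ really is the indicated set of fundamental weights and that $\dim(Z\backslash A_P)$ matches $|S_P|$: this is seen by checking that each fundamental weight $\hat{\alpha}_j$ with $j\in S_P$ satisfies the defining conditions $\langle\hat{\alpha}_j,\alpha_i\rangle=\delta_{ij}$ for $\alpha_i\in\Sigma_P$ together with $\hat{\alpha}_j\in\mathfrak{a}_P$, and then counting dimensions. I do not anticipate a genuine obstacle; the only thing to watch is the range of the outer sum: ``$P\supset P_1$'' in the statement must be read as $P_1\subseteq P\subseteq G$ (including $P=P_1$ and $P=G$), since dropping either endpoint would break the telescoping and change the answer.
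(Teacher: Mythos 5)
Your proof is correct, and it rests on the same combinatorial content as the paper's, but it is packaged differently. The paper's proof is a case analysis on $H=H_0(\delta x)-T$: if $\hat{\tau}_{P_1}(H)=1$ then $\hat{\tau}_P(H)=1$ for every $P\supseteq P_1$ and the alternating sum vanishes (unless $P_1=G$); if $\hat{\tau}_{P_1}(H)=0$ it takes the minimal parabolic $P_m\supseteq P_1$ with $\hat{\tau}_{P_m}(H)=1$, notes that $\hat{\tau}_P(H)=1$ exactly when $P\supseteq P_m$, and then applies the identity $\sum_{\{P:P_m\subset P\subset G\}}(-1)^{\dim(A_G\backslash A_P)}=\delta_{P_m G}$, so the sum is $1$ precisely when $P_m=G$, i.e.\ when no $\hat{\alpha}\in\hat{\Phi}_{P_1}$ is positive on $H$. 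Your single product expansion $\sum_{S\subseteq S_{P_1}}(-1)^{|S|}\prod_{j\in S}\epsilon_j=\prod_{j\in S_{P_1}}(1-\epsilon_j)$ subsumes both cases at once, and it has the advantage of not needing the existence and uniqueness of the minimal $P_m$ (which the paper asserts rather than proves; it holds because $\{P\supseteq P_1:\hat{\tau}_P(H)=1\}$ corresponds exactly to the subsets of $S^+=\{j\in S_{P_1}:\hat{\alpha}_j(H)>0\}$, a family closed under intersection). Your step (i) also makes explicit the structural fact both arguments rely on, namely that $\hat{\tau}_P(H)$ depends only on the signs of the fundamental weights $\hat{\alpha}_j(H)$, $j\in S_P$, with the weights independent of $P$ and with $\dim(Z\backslash A_P)=|S_P|$; the paper uses this silently. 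Two small remarks: the correspondence $P\mapsto S_P$ is inclusion-reversing (coarsening the composition deletes division points), which does not affect your computation; and your reading of the sum as running over all $P$ with $P_1\subseteq P\subseteq G$ is indeed the paper's convention, since its proof writes the $P=G$ contribution separately as the leading $1$.
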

\begin{proof}
    If $\hat{\tau}_{P_1}(H_{0}(\delta x)-T)=1,$ then the other $\hat{\tau}_{P_2}$ take $1$, for $P_2\supset P_1$.

    If $\hat{\tau}_{P_1}(H_{0}(\delta x)-T)=0,$ then the other $\hat{\tau}_{P_2}$ may not all be $0$. Since $\hat{\tau}_G(H_0(\delta x)-T)=1$, we can find the minimal parabolic subgroup $P_m\supset P_1$, such that $\hat{\tau}_{P_m}(H_0(\delta x)-T)=1$. 
    
    Thus, for any $P$ such that $P\in \mathfrak{P}_m$, where $\mathfrak{P}_m$ is the associated class of $P_m$, \[\hat{\tau}_P(H_0(\delta x)-T)=0,\]
    otherwise we could find a smaller parabolic subgroup $P'_m$ such that $P'_m\subsetneq P_m$, which contradicts the minimality of $P_m$.

    Thus, \[\hat{\tau}_{P_m}(H_0(\delta x)-T)=1,\quad \text{for}\quad P\supset P_m.\]
    Hence, by the fact that\[\sum_{\{P:{P_1\subset P\subset P_2}\}}(-1)^{\rm{dim}(A_2\backslash A)}=\begin{cases}
    1,&\text{if}\quad P_1=P_2,\\
    0,&\text{else},
    \end{cases}\] we have \[1+\sum_{\substack{P\supset P_1\\P\neq G}}(-1)^{\rm{dim}\ Z\backslash A}\hat{\tau}_{P}(H_{0}(\delta x)-T)=\begin{cases}
        1,&\text{if}\quad P_m=G,\\
        0,&\text{if}\quad P_m\neq G.
    \end{cases}\] 
    The lemma follows.
\end{proof}
\begin{lemma}
    The integral  \[\int_{Z_\infty^+G_\mathbb{Q}\backslash G_\mathbb{A}}I_{\rm{ram}}^\mathfrak{o}(f,x,T)dx\] is absolutely convergent.
\end{lemma}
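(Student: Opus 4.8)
The plan is to prove absolute convergence by estimating $\int_{Z_\infty^+G_\mathbb{Q}\backslash G_\mathbb{A}}|I_{\rm{ram}}^{\mathfrak{o}}(f,x,T)|\,dx$. The first point to appreciate is that $I_{\rm{ram}}^{\mathfrak{o}}(f,x,T)$ is an \emph{alternating} sum over parabolic subgroups $P$ and the estimate cannot be performed summand by summand: the $P=G$ term is $\sum_{\gamma\in G^{\mathfrak{o}}}f(x^{-1}\gamma x)=K_{\mathfrak{o}}(x,x)$, whose integral over $Z_\infty^+G_\mathbb{Q}\backslash G_\mathbb{A}$ diverges because the ramified orbit $\mathfrak{o}$ meets a proper parabolic. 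So the first step is to collapse the sum over $P$. I would do this by fixing, for each $P$, the inner data: using Lemmas \ref{l1} and \ref{l6} (and the integral analogue Lemma \ref{l2}), together with the normality of $N$ in $P$ to transport the variable $v$, the triple sum $\sum_{\gamma\in M^{\mathfrak{o}}}\sum_{\delta}\sum_{v}f(x^{-1}\delta^{-1}\gamma v\delta x)$ is rewritten over the fixed Levi $M_{\mathfrak{o}}$ of $P_{\mathfrak{o}}$ and splits into an $M_n^{\mathfrak{o}}$-part carrying the $v$-sum and an $M_t^{\mathfrak{o}}$-part that is a bare orbital sum $\sum_{\delta\in M_\mathbb{Q}\backslash G_\mathbb{Q}}f(x^{-1}\delta^{-1}\gamma\delta x)$. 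The $M_t^{\mathfrak{o}}$-part has the shape of the unramified and $G$-elliptic terms and is controlled by the Siegel-domain argument underlying Lemmas \ref{in} and \ref{l8}, so it suffices to treat the $M_n^{\mathfrak{o}}$-part, in which the sole remaining $P$-dependence of the summand is the factor $(-1)^{\dim Z\backslash A}\hat{\tau}_P(H_0(\delta x)-T)$. Lemma \ref{l999} then replaces $\sum_{P\supseteq P_{\mathfrak{o}}}(-1)^{\dim Z\backslash A}\hat{\tau}_P(H_0(\delta x)-T)$ by $\hat{\tau}'_{P_{\mathfrak{o}}}(H_0(\delta x)-T)$.

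Next I would return to the iterated integral prepared in (\ref{eq8.1}): unfolding $Z_\infty^+G_\mathbb{Q}\backslash G_\mathbb{A}$ by the Iwasawa decomposition relative to $P_{\mathfrak{o}}$, inserting the measure constants $(n_{\gamma,M})^{-1}\Gamma_{\gamma,M}$ and the modular factor $\delta_{P(\gamma_s)}$, and splitting the $A_\infty^+$-integral into the coordinates $a_1,\dots,a_j$ along $\hat{\alpha}_{i_1},\dots,\hat{\alpha}_{i_j}$. Because the support of $f$ is compact modulo $Z_\infty^+$, Lemma \ref{l9} shows that only finitely many $\gamma\in\{M^{\mathfrak{o}}\}$ contribute a nonzero integrand, so it is enough to bound the term of a single $\gamma$. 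For that $\gamma$, Lemma \ref{l10} allows the integral over $M(\gamma)^1_\mathbb{A}N(\gamma_s)_\mathbb{A}\backslash P^1_\mathbb{A}$ to be taken over a fixed compact set $C_1(\gamma_s)$ of representatives in $P^1_\mathbb{A}$, and the resulting function $\Phi_\gamma(f,\cdot)$ on $N(\gamma_s)_\mathbb{A}$ has compact support $U(\gamma_s)$.

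The decisive estimate is the $A_\infty^+$-integral. The factor $\hat{\tau}'_{P_{\mathfrak{o}}}(H_0(a)-T)$ forces $\hat{\alpha}(H_0(a))\le\hat{\alpha}(T)$ for every $\hat{\alpha}\in\hat{\Phi}_{P_{\mathfrak{o}}}$; since $H_0(a)=\sum_k a_k\hat{\alpha}_{i_k}$ lies in the closed positive cone and the Gram matrix of the $\hat{\alpha}_{i_k}$ has positive entries, this bounds every coordinate $a_k$ from above. On the other side, the constants $t_1,\dots,t_j$ chosen so that the intersection in (\ref{intersection}) is trivial guarantee that in the range where some $a_k<t_k$ only the term $v=e$ survives, and that term has exactly the $M_t^{\mathfrak{o}}$-form already disposed of. What remains is the integral over the compact box $\{\,t_k\le a_k\ \text{and}\ \hat{\alpha}(H_0(a)-T)\le 0\,\}$; on it, after inserting the compactly supported $\Phi_\gamma(f,\cdot)$ and using that $N(\gamma_s)_\mathbb{Q}$ is discrete in $N(\gamma_s)_\mathbb{A}$, only finitely many $v\in N(\gamma_s)_\mathbb{Q}$ contribute, the integrand is bounded, and the remaining integrations over $K$ and over the relatively compact set $\omega(\gamma_s)$ are over sets of finite measure. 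Summing the finitely many $\gamma$ then gives absolute convergence of $\int I_{\rm{ram}}^{\mathfrak{o}}(f,x,T)\,dx$.

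I expect the main obstacle to be the bookkeeping of the first paragraph: arranging the sum over $P$ so that Lemma \ref{l999} genuinely applies, i.e. so that each conjugacy class occurring in $\mathfrak{o}$ is grouped with precisely the set of parabolics containing its own minimal parabolic, and cleanly peeling off the residual $v=e$, $M_t^{\mathfrak{o}}$ contribution. A secondary point that needs care is checking that the set cut out by $\hat{\tau}'_{P_{\mathfrak{o}}}$ together with the lower bounds $a_k\ge t_k$ is genuinely compact modulo $Z_\infty^+$ rather than merely bounded in some directions — this is exactly where the positivity of the entries of the Gram matrix of the coweights enters.
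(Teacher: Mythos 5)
Your overall architecture is the one the paper follows: collapse the alternating sum over $P$ by Lemma \ref{l999}, then obtain compactness of the support in the $A$-variables from the upper bound supplied by $\hat{\tau}'$, the lower bounds supplied by (\ref{intersection}), and Lemmas \ref{l9} and \ref{l10} applied to the iterated integral (\ref{eq8.1}). The gap is in the step you yourself flag as ``bookkeeping'': the claim that, after Lemmas \ref{l1}, \ref{l2} and \ref{l6}, the summand of $I^{\mathfrak{o}}_{\mathrm{ram}}$ depends on $P$ only through $(-1)^{\mathrm{dim}\,Z\backslash A}\hat{\tau}_P(H_0(\delta x)-T)$ is false, and no purely algebraic rearrangement makes it true. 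For $P=G$ the term is the bare orbital sum $\sum_{\gamma\in G^{\mathfrak{o}}}f(x^{-1}\gamma x)$, each element of the orbit counted once and with no $v$-sum at all; for a proper $P$ the term is $\sum_{\gamma\in M^{\mathfrak{o}}}\sum_{\delta\in M_\mathbb{Q}N(\gamma_s)_\mathbb{Q}\backslash G_\mathbb{Q}}\sum_{v\in N(\gamma_s)_\mathbb{Q}}f(x^{-1}\delta^{-1}\gamma v\delta x)$, in which, for instance, the contributions with $v=e$ and $\gamma$ central in $M$ are repeated over infinitely many cosets $\delta$, so these inner sums are not even finite without the cutoff $\hat{\tau}_P$ kept inside, let alone equal term by term to the $P_{\mathfrak{o}}$-expression. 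Lemma \ref{l6} only splits the sum attached to a single fixed $P$ into its $M_t$- and $M_n$-parts; it does not transport the data of $P$ to the fixed minimal parabolic $P_{\mathfrak{o}}$, and the $M_n$-parts of different $P$'s are genuinely different sets (in the $\mathrm{GL}(2)$ model of the unipotent-type orbit, the $M_n$-part for $P=G$ is empty while the one for the Borel contains the identity).

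What actually makes the summand $P$-independent --- and what the paper's proof uses --- is reduction theory rather than algebra: one inserts the partition of unity (\ref{l71}), and Lemma \ref{in} (whose proof is the height-function/Bruhat argument) shows that on the support of $F^1(\delta x,T)\sigma_1^2(H_0(\delta x)-T)$ only those $\gamma$ lying in $M_1^{\mathfrak{o}}$, with $v\in N_{1,\mathbb{Q}}$, survive, uniformly for all $P$ with $P_1\subset P\subset P_2$. Only after this does Lemma \ref{l999} collapse $\sum_{P\supset P_1}(-1)^{\mathrm{dim}\,Z\backslash A}\hat{\tau}_P$ into $\hat{\tau}'_{P_1}$, and the compactness argument (upper bound in the $\mathfrak{a}_1$-directions from $\hat{\tau}'_{P_1}$, control in the $\mathfrak{a}_0^1$-directions from $F^1$, lower bound from (\ref{intersection})) then runs essentially as in your second and third paragraphs. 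Invoking the Siegel-domain argument only for the $M_t^{\mathfrak{o}}$-part does not repair this, because the failure of $P$-independence is not confined to that part; your proposal needs Lemma \ref{in}, with the weights $F^1\sigma_1^2$ carried inside the absolute value, as its first step --- which is precisely the route the paper takes.
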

\begin{proof}
    By (\ref{eq8.1}), we have shown that the other integrals are convergent, we only need to verify the convergence of the integral over $\mathfrak{a}_G\backslash \mathfrak{a}_0$.

    By the proof of Lemma \ref{l8}, the integral \[\int_{Z_\infty^+G_\mathbb{Q}\backslash G_\mathbb{A}}|I_{\rm{ram}}^\mathfrak{o}(f,x,T)|dx\] is bounded by the integral of 
    \[\sum_{P_1}\sum_{\delta\in P_1\backslash G_\mathbb{Q}}F^1(\delta x,T)|\sum_{\{P:P_1\subset P\}}(-1)^{\rm{dim} Z\backslash A}\sum_{\gamma\in M_1^\mathfrak{o}}\sum_{v\in N_{1,\mathbb{Q}}}f(x^{-1}\delta^{-1}\gamma v\delta x)\hat{\tau}_P(H_0(\delta x)-T)|,\]

    since $0\le\tau_1^P(H_0(\delta x)-T)\le 1$.

    The expression is bounded by the sum over $P_1$ of
    \begin{eqnarray}\label{eq88888}
        \sum_{\delta\in P_1\backslash G_\mathbb{Q}}F^1(\delta x,T)\sum_{\gamma\in M_1^\mathfrak{o}}\sum_{v\in N_{1,\mathbb{Q}}}|f(x^{-1}\delta^{-1}\gamma v\delta x)|\cdot |\sum_{\{P:P_1\subset P\}}(-1)^{\rm{dim} Z\backslash A}\hat{\tau}_P(H_0(\delta x)-T)|.
    \end{eqnarray}
    Then by Lemma \ref{l999}, we have the equality
    \[\sum_{\{P:P_1\subset P\}}(-1)^{\rm{dim} Z\backslash A}\hat{\tau}_P(H_0(\delta x)-T)=\hat{\tau}_{P_1}'(H_0(\delta x)-T).\]
    Then the integral of (\ref{eq88888}) over $\mathfrak{a}_G\backslash \mathfrak{a}_1$ can be written as a integral over $a_i\le \hat{\alpha}_i(T)$.

    We write \[\mathfrak{a}_0=\mathfrak{a}_0^1\oplus\mathfrak{a}_1.\]
    Then by the definition of $F^1(x,T)$, we now have a upper bound over $\mathfrak{a}_G\backslash \mathfrak{a}_0.$

    Also, we have shown that we have a lower bound by (\ref{intersection}).

    Then the support of the integral of $I_{\rm{ram}}^\mathfrak{o}(f,x,T)$ is compact. Thus \[\int_{Z_\infty^+G_\mathbb{Q}\backslash G_\mathbb{A}}I_{\rm{ram}}^\mathfrak{o}(f,x,T)dx\] is absolutely convergant.
\end{proof}

In order to apply the tools of complex analysis, we shall turn the function \[\int_{Z_\infty^+G_\mathbb{Q}\backslash G_\mathbb{A}}I_{\rm{ram}}^{\mathfrak{o}}(f, x, T)dx\] into a function of $\lambda=(\lambda_1, \lambda_2, \lambda_3)\in \mathbb{C}^3$.

Given $P,\,f,\,\gamma,\,p,\,a$, if $P\neq G$, let $Y_P(f,x,\gamma,p,a)$ be 
\[\sum_{\substack{v\in N(\gamma_s)_\mathbb{Q}}}\Phi_\gamma(f, h_a^{-1}p^{-1}vph_a)\rm{exp}(-<2\rho_P(\gamma_s),\sum_{k=1}^j{(1+<\lambda, \alpha_{i_k}>)a_{k}\hat{\alpha}_{i_k}}>),\]
if $P=G$, we replace $\rho_P(\gamma_s)$ by $\rho_{P_0}(\gamma_s)$.

We define $I^\mathfrak{o}_T(\lambda)$ to be
\begin{eqnarray*}
    \sum_Pa_Pc_P\int_{M(\gamma)_\mathbb{Q}N(\gamma_s)\backslash M(\gamma)_\mathbb{A}^1N(\gamma_s)_\mathbb{A}}\sum_{\gamma\in\{M^\mathfrak{o}\}}\int_{<T, \hat{\alpha}_{i_1}>}^{\infty}. . . \int_{<T, \hat{\alpha}_{i_j}>}^{\infty}Y_P(f,x,\gamma,p,a)dp\ da_{1}. . . da_{j}. 
\end{eqnarray*}

\begin{lemma}
    For any $\lambda$, $I^\mathfrak{o}_T(\lambda)$ is absolutely convergent. The function $I_T(\lambda)$ is entire and its value at $\lambda=0$ is given by the integral 
    \[\int_{Z_\infty^+G_\mathbb{Q}\backslash G_\mathbb{A}}I_{\rm{ram}}^\mathfrak{o}(f,x,T)dx.\]
\end{lemma}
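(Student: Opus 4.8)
The plan is to prove three assertions about $I^{\mathfrak{o}}_T(\lambda)$ in order: absolute convergence for each fixed $\lambda$, the fact that it is entire as a function of $\lambda=(\lambda_1,\lambda_2,\lambda_3)\in\mathbb{C}^3$, and the evaluation at $\lambda=0$. First I would unwind the definition of $I^{\mathfrak{o}}_T(\lambda)$: it is a finite sum over parabolic subgroups $P$, inside which there is a sum over $\gamma\in\{M^{\mathfrak{o}}\}$, an integral over the compact quotient $M(\gamma)_\mathbb{Q}N(\gamma_s)_\mathbb{Q}\backslash M(\gamma)^1_\mathbb{A}N(\gamma_s)_\mathbb{A}$, and finally the iterated integral over $a_1,\dots,a_j$ from $\langle T,\hat\alpha_{i_k}\rangle$ to $\infty$ of $Y_P(f,x,\gamma,p,a)$. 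The key point is that $Y_P$ carries the factor $\Phi_\gamma(f,h_a^{-1}p^{-1}vph_a)$ whose support $U(\gamma_s)$ in $N(\gamma_s)_\mathbb{A}$ is compact (established above), so the sum over $v\in N(\gamma_s)_\mathbb{Q}$ is locally finite, and the exponential factor $\exp(-\langle 2\rho_P(\gamma_s),\sum_k(1+\langle\lambda,\alpha_{i_k}\rangle)a_k\hat\alpha_{i_k}\rangle)$ decays in each $a_k$ as $a_k\to\infty$ precisely because $\langle 2\rho_P(\gamma_s),\hat\alpha_{i_k}\rangle>0$ and $a_k$ ranges over a half-line bounded below. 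So for the convergence step I would bound $|Y_P|$ by a constant times $\prod_k e^{-c_k\,\mathrm{Re}(1+\langle\lambda,\alpha_{i_k}\rangle)a_k}$ with $c_k>0$, dominate the $v$-sum by the (bounded) number of lattice points meeting the compact support region as in the estimates used for Lemma~\ref{l8}, and integrate: the iterated integral converges for every $\lambda$ since only $\mathrm{Re}\,\lambda$ enters and the contribution $\langle\lambda,\alpha_{i_k}\rangle a_k$ is linear, hence can only improve or mildly worsen a fixed exponential rate, never destroy convergence on a half-line. The compactness of the $p$-integral (via Lemma~\ref{l10}, replacing $M(\gamma)^1_\mathbb{A}N(\gamma_s)_\mathbb{A}\backslash P^1_\mathbb{A}$ by a compact $C_1(\gamma_s)$) and of the $M(\gamma)^1N(\gamma_s)$-quotient, together with finiteness of the $\gamma$-sum from Lemma~\ref{l9}, then give absolute convergence of the whole expression.

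For the holomorphy step I would use Morera's theorem / differentiation under the integral: the integrand $Y_P$ is, for fixed $f,x,\gamma,p,a$, an entire function of $\lambda$ (it is a finite sum of exponentials $\exp(\text{linear in }\lambda)$ times a $\lambda$-independent function), and the domination just established is locally uniform in $\lambda$ — on any compact set in $\mathbb{C}^3$ the real parts $\mathrm{Re}\langle\lambda,\alpha_{i_k}\rangle$ are bounded, so one can choose a single integrable majorant valid on that compact set. Hence $I^{\mathfrak{o}}_T(\lambda)$ is continuous and, applying Fubini and Morera in each variable $\lambda_1,\lambda_2,\lambda_3$ separately (using Hartogs if one wants separate analyticity to yield joint analyticity), it is entire.

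For the evaluation at $\lambda=0$, I would compare $I^{\mathfrak{o}}_T(0)$ term by term with the integral $\int_{Z_\infty^+G_\mathbb{Q}\backslash G_\mathbb{A}} I_{\mathrm{ram}}^{\mathfrak{o}}(f,x,T)\,dx$. The latter was already rewritten, in the displayed equation~(\ref{eq8.1}) and the discussion following it, as a finite sum over $P$ of $c_P\sum_{\gamma\in\{M^\mathfrak{o}\}}(n_{\gamma,M})^{-1}(\Gamma_{\gamma,M})$ times an integral over $K$, over $Z_\infty^+\backslash A_\infty^+$, over the compact $C_1(\gamma_s)$, and over $M(\gamma)_\mathbb{Q}N(\gamma_s)_\mathbb{Q}\backslash M(\gamma)^1_\mathbb{A}N(\gamma_s)_\mathbb{A}$, of $\sum_v f(\cdots)\hat\tau_P(H_0(a)-T)\delta_{P(\gamma_s)}(a)$; then the cutoff $\hat\tau_P(H_0(a)-T)$ restricts $a_k$ to $[\langle T,\hat\alpha_{i_k}\rangle,\infty)$, the $K$-integral and $C_1(\gamma_s)$-integral are folded into the definition of $\Phi_\gamma(f,\cdot)$, and $\delta_{P(\gamma_s)}(a)=\exp(-\langle 2\rho_P(\gamma_s),H_0(a)\rangle)$ is exactly the $\lambda=0$ specialization of the exponential weight in $Y_P$ (the constant $a_P$ absorbing the Jacobian from the change of coordinates $a\mapsto(a_1,\dots,a_j)$ via $a=\sum a_k\hat\alpha_{i_k}$ and the factor $(\Gamma_{\gamma,M})$). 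So setting $\lambda=0$ in $I^{\mathfrak{o}}_T(\lambda)$ reproduces this expression identically. The main obstacle I expect is bookkeeping of the normalizing constants and measures: matching $a_P$, $c_P$, $(n_{\gamma,M})^{-1}$, $(\Gamma_{\gamma,M})$, and the Jacobian of the coordinate change so that the $\lambda=0$ value is literally equal to — not merely proportional to — the geometric integral, and handling the degenerate case $P=G$ (where $\rho_P(\gamma_s)$ is replaced by $\rho_{P_0}(\gamma_s)$) separately. The analytic inputs — domination, Morera — are routine once the exponential decay on the half-lines is made explicit.
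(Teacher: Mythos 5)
Your convergence argument rests on a claim that fails: you dominate the sum over $v\in N(\gamma_s)_\mathbb{Q}$ by ``the (bounded) number of lattice points meeting the compact support region'' and then let the factor $\exp(-\langle 2\rho_P(\gamma_s),\sum_k(1+\langle\lambda,\alpha_{i_k}\rangle)a_k\hat{\alpha}_{i_k}\rangle)$ supply exponential decay on the half-lines. But the number of $v\in N(\gamma_s)_\mathbb{Q}$ with $h_a^{-1}p^{-1}vph_a\in U(\gamma_s)$ is \emph{not} bounded in $a$: the condition is $v\in p\,h_aU(\gamma_s)h_a^{-1}p^{-1}$, and conjugation by $h_a$ dilates $N(\gamma_s)_\mathbb{A}$, so the count grows like $\exp(\langle 2\rho_P(\gamma_s),a\rangle)$ --- exactly the modular factor. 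At $\lambda=0$ this growth cancels the exponential weight, the integrand of the $a$-integral does not decay, and a single $P$-term diverges; for $\mathrm{Re}(1+\langle\lambda,\alpha_{i_k}\rangle)\le 0$ it is worse still, so your assertion that the linear term in $\lambda$ ``can never destroy convergence on a half-line'' is false, and with it the locally uniform majorant needed for your Morera/dominated-convergence step. This is not a bookkeeping issue: the paper itself, immediately after the lemma, applies Poisson summation on $N(\gamma_s)_\mathbb{Q}$, shows the nontrivial modes decay rapidly via $\|\xi^{-a}\|\ge \exp(\langle d,a\rangle)\|\xi\|$, and notes that the $\xi=0$ term ``is absolutely convergent for $\langle\mathrm{Re}\,\lambda,\alpha_{i_k}\rangle>0$, since it is zeta integral''; the behaviour at $\lambda=0$ is then obtained by meromorphic continuation, with the statement that ``the poles at $\lambda=0$ of each term in the sum over $P$ \ldots can be canceled.'' In other words, holomorphy near $\lambda=0$ and the identity with $\int_{Z_\infty^+G_\mathbb{Q}\backslash G_\mathbb{A}}I_{\rm{ram}}^{\mathfrak{o}}(f,x,T)\,dx$ come from cancellation across the alternating sum over $P$ (the same mechanism, via $\hat{\tau}_{P_1}'$, that made the truncated geometric integral converge in the preceding lemma), not from termwise absolute convergence at every $\lambda$.

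Consequently your third step also needs repair: you cannot simply ``set $\lambda=0$'' in each $P$-term and match constants, because each term is divergent there; the value at $\lambda=0$ has to be produced as $\lim_{\lambda\to 0}I_T^{\mathfrak{o}}(\lambda)$, justified either by comparing the full (signed) sum over $P$ with the convergent truncated integral in the region $\langle\mathrm{Re}\,\lambda,\alpha_{i_k}\rangle>0$ and invoking continuity of the continued function after the poles cancel, or by carrying out the Poisson-summation decomposition and taking the constant term of the Laurent expansion, as the paper does in the computations leading to Lemma \ref{8.1}. Your identification of the $\lambda=0$ weight with $\delta_{P(\gamma_s)}$ and the role of $a_P$, $c_P$, $(n_{\gamma,M})^{-1}$, $\Gamma_{\gamma,M}$ is the right bookkeeping for that comparison, but it only becomes a proof once the convergence and continuation issues above are handled.
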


Now, we apply the Poisson summation formula over the group $N(\gamma_s)_\mathbb{Q}$, but it is not abelian.

We write the unipotent group $N_{1111}$ as \[(N_{1111}-N_{211})\oplus (N_{211}-N_{31})\oplus (N_{31}). \]
That is, we can express the group
\begin{eqnarray*}
    \left(\begin{matrix}
        1&*&*&*&\\
        &1&*&*&\\
        &&1&*&\\
        &&&1
    \end{matrix}\right)
\end{eqnarray*}
as the direct sum of three abelian subgroups,
\begin{eqnarray*}
    \left(\begin{matrix}
        1&*&&&\\
        &1&&&\\
        &&1&&\\
        &&&1
    \end{matrix}\right)\oplus\left(\begin{matrix}
        1&&*&&\\
        &1&*&&\\
        &&1&&\\
        &&&1
    \end{matrix}\right)\oplus\left(\begin{matrix}
        1&&&*&\\
        &1&&*&\\
        &&1&*&\\
        &&&1
    \end{matrix}\right). 
\end{eqnarray*}

Each term in this direct sum is an abelian subgroup. To apply Poisson summation formula to a subgroup $H$ of $N_{1111}$, we decompose $H$ into three subgroups corresponding to its intersections with the abelian summands above. Donote this intersections by $H_1, H_2, H_3$.

Let $X(\gamma_s)_\mathbb{A}$ denote the unitary dual group of $\mathfrak{n}(\gamma_s)_\mathbb{A}$ and let $X(\gamma_s)_\mathbb{Q}$ be the subset of $X(\gamma_s)_\mathbb{A}$ consisting of those elements which are trivial on $\mathfrak{n}(\gamma_s)_\mathbb{Q}$.

Recall that the symbol $||\cdot||$ denotes the height function on $X(\gamma_s)_\mathbb{A}$ associated to a fixed basis of $X(\gamma_s)_\mathbb{Q}$. $X(\gamma_s)_\mathbb{Q}$ is a subgroup of $ X_\mathbb{Q}$.

It is easy to verify that there exists $N\in\mathbb{R}$ such that \[\sum_{\substack{\xi\in X_\mathbb{Q}\\ \xi\neq 0}}||\xi||^{-N}<\infty. \]
For $\xi\in X_\mathbb{A}$ and $a\in \mathbb{R}^j$, define \[\xi^a(Y)=\xi(\rm{Ad}(h_a)Y),\quad Y\in\mathfrak{n}_\mathbb{A}. \]Then there exists a number $d>(0, . . . , 0), d\in\mathbb{R}^j$ such that if $\xi$ is primitive and $a\geq 0$, then \[||\xi^a||\geq \rm{exp}(<d, a>)\ ||\xi||. \]

We decompose the group \[N(\gamma_s)_\mathbb{Q}=N(\gamma_s)_{1, \mathbb{Q}}\oplus N(\gamma_s)_{2, \mathbb{Q}}\oplus N(\gamma_s)_{3, \mathbb{Q}}, \]and likewise\[\mathfrak{n}(\gamma_s)_\mathbb{A}=\mathfrak{n}(\gamma_s)_{1, \mathbb{A}}\oplus \mathfrak{n}(\gamma_s)_{2, \mathbb{A}}\oplus \mathfrak{n}(\gamma_s)_{3, \mathbb{A}}. \]
Define \[\Psi_{\gamma, i}(\xi, p)=\int_{\mathfrak{n}(\gamma_s)_{i, \mathbb{A}}}\Phi_\gamma(f, p^{-1}\cdot \rm{exp}\ Y\cdot p)\rm{exp}(\xi(Y))dY, \quad p\in M(\gamma)_\mathbb{A}N(\gamma_s)_\mathbb{A}, \,\xi\in X(\gamma_s)_\mathbb{A}. \]
Applying the Poisson summation formula, we obtain 
\begin{eqnarray}\label{poisson}
    \sum_{\substack{v\in N(\gamma_s)_{i, \mathbb{Q}}}}\Phi_\gamma(f, p^{-1}vp)=\sum_{\substack{\xi\in X(\gamma_s)_{i, \mathbb{Q}}\\\xi\neq 0}}\Psi_\gamma(\xi, p)+\Psi_\gamma(0, p).
\end{eqnarray}
We now consider the integral of the first term on the right-hand side of (\ref{poisson}) after multipling $\hat{\tau}_P$, which is
\begin{eqnarray}
    \int_{M(\gamma)_\mathbb{Q}N(\gamma_s)_\mathbb{Q}\backslash M(\gamma)_\mathbb{A}^1N(\gamma_s)_\mathbb{A}}\int_{<T, \hat{\alpha}_{i_1}>}^{\infty}. . . \int_{<T, \hat{\alpha}_{i_j}>}^{\infty}\sum_{\substack{\xi\in X(\gamma_s)_{i, \mathbb{Q}}\\\xi\neq e}}|\Psi_{\gamma, i}(\xi, h_ap)|\\\notag
\cdot|\rm{exp}(-<2\rho_P(\gamma_s), \sum_{k=1}^j{(1+<\lambda, \alpha_{i_k}>)a_{k}\hat{\alpha}_{i_k}}>)|dp\ da_1, . . . da_j.
\end{eqnarray}
It is easy to verify that \[\Psi_{\gamma,i}(\xi, h_{a}p)=\rm{exp}(<2\rho_P(\gamma_s), a>)\Psi_{\gamma,i}(\xi^{-a}, p). \]
$\Psi_{\gamma,i}(\cdot,p)$ is the Fourier transform of Schwartz-Bruhat function and it is continuous in $p$. By Lemma \ref{l9}, we observe that there are only finitely many $\gamma\in M_{\mathbb{Q}}$, such that \[\Psi_{\gamma,i}(\xi,p)\neq0 .\] Thus, for any $N$, there exists a constant $\Gamma_N$ such that for any primitive $\xi\in X_\mathbb{A}$, \[\sum_{\gamma\in M^\mathfrak{o}}|\Psi_{\gamma,i}(\xi,p)|\le\Gamma_N||\xi||^{-N}. \]
Consequently, for any $N$, the above integral is bounded by 
\[\int_{M(\gamma)_\mathbb{Q}N(\gamma_s)_\mathbb{Q}\backslash M(\gamma)_\mathbb{A}^1N(\gamma_s)_\mathbb{A}}\int^{\infty}_{<T, \hat{\alpha}_{i_1}>}. . . \int^{\infty}_{<T, \hat{\alpha}_{i_1}>}\rm{exp}(<2\rho_P(\gamma_s), a>)(\sum_{\substack{\xi\in X_\mathbb{Q}\\\xi\neq 0}}||\xi^{-a}||^{-N})da, \]
and it is majorized by the product of 
\[\sum_{\substack{\xi\in X_\mathbb{Q}\\\xi\neq 0}}||\xi||^{-N}\]
and
\[\int_{M(\gamma)_\mathbb{Q}N(\gamma_s)_\mathbb{Q}\backslash M(\gamma)_\mathbb{A}^1N(\gamma_s)_\mathbb{A}}\int^{\infty}_{<T, \hat{\alpha}_{i_1}>}. . . \int^{\infty}_{<T, \hat{\alpha}_{i_1}>}\rm{exp}(<2\rho_P, \sum_{k=1}^ja_k\hat{\alpha}_{i_k}>)\rm{exp}(-<d, Na>)da. \]
For sufficiently large $N$, this term is finite and approaches $0$ as $T\rightarrow\infty$. Thus if we sum over $i$, the result is similar.

Now, sum over $i$, when $T$ is sufficiently large, we observe that the function $I_{\rm{ram}}^\mathfrak{o}(f, x, T)$ becomes
\begin{eqnarray*}
    \sum_{P}(-1)^{\rm{dim}\ Z\backslash A}\sum_{\gamma\in M^\mathfrak{o}}\sum_{\delta\in M_\mathbb{Q}N(\gamma_s)_\mathbb{Q}\backslash G_\mathbb{Q}}\int_{N(\gamma_s)_\mathbb{A}}f(x^{-1}\delta^{-1}\gamma n\delta x)(\hat{\tau}_P(H_0(\delta x)-T))dn. 
\end{eqnarray*}
This can be written as
\begin{eqnarray*}
    \sum_{P}(-1)^{\rm{dim}\ Z\backslash A}\sum_{\gamma\in M^\mathfrak{o}}\sum_{\delta\in P_\mathbb{Q}\backslash G_\mathbb{Q}}\sum_{\xi\in N(\gamma_s)_\mathbb{Q}\backslash N_\mathbb{Q}}\int_{N(\gamma_s)_\mathbb{A}}f(x^{-1}\delta^{-1}\xi^{-1}\gamma n\xi\delta x)(\hat{\tau}_P(H_0(\delta x)-T))dn. 
\end{eqnarray*}

The integral of $I_{\rm{ram}}^\mathfrak{o}(f, x, T)$ is
\begin{eqnarray*}
\int_{M(\gamma)_\mathbb{Q}N(\gamma_s)_\mathbb{Q}\backslash M(\gamma)^1_\mathbb{A}N(\gamma_s)_\mathbb{A}}\int_{<T, \hat{\alpha}_{i_1}>}^{\infty}. . . \int_{<T, \hat{\alpha}_{i_j}>}^{\infty}\sum_{\gamma\in M^\mathfrak{o}}\Psi_{\gamma}(0, h_ap)\\\cdot\rm{exp}(-<2\rho_P(\gamma_s), \sum_{k=1}^j(1+<\lambda, \alpha_{i_k}>)a_k\hat{\alpha}_{i_k})dp\ da_1 . . .  da_j, 
\end{eqnarray*}
it is absolutely convergent for $<\rm{Re}\ \lambda, \alpha_{i_k}>>0$, since it is zeta integral.

The integral is 
\begin{eqnarray*}
a_P\Pi_{k=1}^{j}\frac{\rm{exp}(-<2\rho_P(\gamma_s), <\lambda, \alpha_{i_k}><T, \hat{\alpha}_{i_k}>\hat{\alpha}_{i_k}>)}{-<2\rho_P(\gamma_s), <\lambda, \alpha_{i_k}>\hat{\alpha}_{i_k}>}\int_{M(\gamma)_\mathbb{Q}\backslash M(\gamma)^1_\mathbb{A}}\int_{N(\gamma_s)_{\mathbb{A}}}\sum_{\gamma\in M^\mathfrak{o}}\Phi_\gamma(f, n)dn\ dm, 
\end{eqnarray*}
which equals
\begin{eqnarray*}
    a_P\Pi_{k=1}^{j}\frac{\rm{exp}(-<2\rho_P(\gamma_s), <\lambda, T>\hat{\alpha}_{i_k}>)}{<2\rho_P(\gamma_s), <\lambda, \alpha_{i_k}>\hat{\alpha}_{i_k}>}\int_{M(\gamma)_\mathbb{Q}\backslash M(\gamma)^1_\mathbb{A}}\int_{N(\gamma_s)_{\mathbb{A}}}\sum_{\gamma\in M^\mathfrak{o}}\Phi_\gamma(f, n)dn\ dm. 
\end{eqnarray*}
We replace $\lambda$ with $\lambda\lambda_0$, $\lambda_0$ is any regular element which is not on any wall. Since it is a zeta integral, taking the constant term of the Laurent expansion at $\lambda=0$, we botain
\begin{equation}
\begin{aligned}\label{s5}
\frac{(-1)^j}{j!}a_P\Pi_{k=1}^j\frac{<\lambda_0, T>^j}{<\lambda_0, \alpha_{i_k}>}\int_{M(\gamma)_\mathbb{Q}\backslash M(\gamma)^1_\mathbb{A}}\int_{N(\gamma_s)_{\mathbb{A}}}\sum_{\gamma\in M^\mathfrak{o}}\Phi_\gamma(f, n)dn\ dm. 
\end{aligned}
\end{equation}
However, \[\int_{N(\gamma_s)_{ \mathbb{A}}}\Phi_\gamma(f, n)dn\] equals
\[c_P(n_{\gamma, M})^{-1}\int_K\int_{M(\gamma)_\mathbb{A}^1N(\gamma_s)_\mathbb{A}\backslash P_\mathbb{A}^1}\int_{N(\gamma_s)_{\mathbb{A}}}f(k^{-1}p^{-1}\gamma n pk)dn\ dp\ dk. \]
We now decompose $M(\gamma)_\mathbb{A}^1N(\gamma_s)_\mathbb{A}\backslash P_\mathbb{A}^1$ as the product \[N_\mathbb{A}M(\gamma)_\mathbb{A}^1N(\gamma_s)_\mathbb{A}\backslash P_\mathbb{A}^1 \times M(\gamma)_\mathbb{A}^1N(\gamma_s)_\mathbb{A}\backslash N_\mathbb{A}M(\gamma)_\mathbb{A}^1N(\gamma_s)_\mathbb{A}, \]
the integral becomes 
\[c_P(n_{\gamma, M})^{-1}\int_K\int_{M(\gamma)_\mathbb{A}^1\backslash M_\mathbb{A}^1}\int_{N_{\mathbb{A}}}\sum_{\gamma\in M^\mathfrak{o}}f(k^{-1}m^{-1}\gamma n mk)dn\ dm\ dk, \]
according to Lemma \ref{l2}.

Similarly, the term (\ref{s5}) becomes 
\begin{equation}
    \begin{aligned}\label{s6}
        &\frac{(-1)^j}{j!}a_P\Pi_{k=1}^j\frac{<\lambda_0, T>^j}{<\lambda_0, \alpha_{i_k}>}\cdot c_P\int_K\int_{M(\gamma)_\mathbb{Q}\backslash M_\mathbb{A}^1} \int_{N_{\mathbb{A}}}\sum_{\gamma\in M^\mathfrak{o}}f(k^{-1}m^{-1}\gamma nmk)dn\ dm\ dk.
    \end{aligned}
\end{equation}
Which is the product of 
\[\frac{(-1)^{j}}{j!}a_P\Pi_{k=1}^j\frac{<\lambda_0, T>^j}{<\lambda_0, \alpha_{i_k}>}\cdot c_P\] and
\[\int_K\int_{A_\infty^+M(\gamma)_\mathbb{Q}\backslash M_\mathbb{A}} \int_{N_{\mathbb{A}}}\sum_{\gamma\in M^\mathfrak{o}}f(k^{-1}m^{-1}\gamma nmk)\rm{exp}(-<2\rho_P, H_0(m)>)dn\ dm\ dk. \]
For $<\rm{Re}\ \lambda, \alpha_{i}>>0,i=1,2,3$, to get the result, we consider the function 
\begin{eqnarray}\label{s7}
&\int_{Z_\infty^+G_\mathbb{Q}\backslash G_\mathbb{A}}I_{\rm{ram}}^{\mathfrak{o}}(f, x)dx\\\notag
&+\sum_{P\neq G}(-1)^{\rm{dim}(Z\backslash A)}\int_{<T, \hat{\alpha}_{i_1}>}^\infty. . . \int_{<T, \hat{\alpha}_{i_k}>}^\infty\int_{P(\gamma)_\mathbb{Q}\backslash P(\gamma_s)_\mathbb{A}^1}\sum_{v\in N(\gamma_s)_\mathbb{Q}}\\\notag
&\sum_{\gamma\in M^\mathfrak{o}}\Phi_\gamma(f, h^{-1}_{a}vph_{a})\rm{exp}(-<2\rho_P(\gamma_s), (1+<\lambda, \alpha_{k}>)a_k\hat{\alpha}_{i_k}>)dp\ da_1\ . . . \ da_j. 
\end{eqnarray}
For every term in (\ref{s7}) except the first, we can use our statement above to obtain the result similar to the term (\ref{s6}).

For any ramified orbit $\mathfrak{o}$, we define $\mu_\mathfrak{o}(\lambda,f,x)$ to be
\begin{eqnarray}\label{ramified term}
    \sum_{\gamma\in G^\mathfrak{o}}f(x^{-1}\gamma x)\rm{exp}(-<2\rho_{P_{0}}(\gamma_s),\sum_{k=1}^3(1+<\lambda,\alpha_{k}>)a_{k}\hat{\alpha}_{k}>).
\end{eqnarray}
We have shown that \[\int_{Z_\infty^+G_\mathbb{Q}\backslash G_\mathbb{A}}I_{\rm{ram}}^\mathfrak{o}(f,x,T) dx\] is absolutely convergent, and it equals \[\rm{lim}_{\lambda\rightarrow 0}I^\mathfrak{o}_T(\lambda),\] it indicates that the poles at $\lambda=0$ of each term in the sum over $P$ of $I_{\rm{ram}}^\mathfrak{o}(f,x,T)$ can be canceled.

Then, by the fact that zeta integral can be analytically continued to a meromorphic function, and $\lambda=(0,0,0)$ is the pole of this funtion.
\begin{lemma}\label{8.1}
    The integral of $I^\mathfrak{o}_{\rm{ram}}(f, x, T)$ over $Z_\infty^+G_\mathbb{Q}\backslash G_\mathbb{A}$ equals the sum
    \begin{eqnarray}\label{85}
    &\rm{lim}_{\lambda\rightarrow 0}\int_{Z_\infty^+G_\mathbb{Q}\backslash G_\mathbb{A}}\mu_\mathfrak{o}(\lambda,f,x)dx
    \end{eqnarray}
    \begin{eqnarray}\label{s8}
    &+\frac{1}{t!}\sum_{P_1\neq P}a_{P_1}\Pi_{k=1}^t\frac{<\lambda^t_0, T>^j}{<\lambda_0^t, \alpha_{i_k}>}\cdot c_{P_1}\sum_{\gamma\in\{M_{1, n}^\mathfrak{o}\}}(n_{\gamma, M})^{-1}
    \end{eqnarray}
    \begin{eqnarray*}\label{86}
    &\int_K\int_{A_{1, \infty}^+M(\gamma)_{1, \mathbb{Q}}\backslash M_{1, \mathbb{A}}} \int_{N(\gamma_s)_{1, \mathbb{A}}}f(k^{-1}m^{-1}\gamma nmk)\rm{exp}(-<2\rho_{P_1}, H_0(m)>)dn\ dm\ dk
    \end{eqnarray*}
    \begin{eqnarray}\label{s9}
    &+\frac{1}{t!}\sum_{P_1\neq P}a_{P_1}\sum_{\gamma\in\{M_{1, t}^\mathfrak{o}\}}\tilde{\tau}(\gamma,M)\int_K\int_{M(\gamma)_{1, \mathbb{A}}\backslash M_{1, \mathbb{A}}}\int_{N_{1, \mathbb{A}}}
    \end{eqnarray}
    \begin{eqnarray*}
    &f(k^{-1}n^{-1}m^{-1}\gamma mnk)\Pi_{k=1}^t\frac{<\lambda^t_0, T-H_0(m)>^j}{<\lambda_0, \alpha_{i_k}>}dn\ dm\ dk
    \end{eqnarray*}
    \begin{eqnarray}
    &+\frac{1}{j!}a_P\Pi_{k=1}^j\frac{<\lambda_0^j, T>^j}{<\lambda^j_0, \alpha_{i_k}>}\cdot c_P\sum_{\gamma\in\{M_n^\mathfrak{o}\}}(n_{\gamma, M})^{-1}\int_K\int_{A_\infty^+M(\gamma)_\mathbb{Q}\backslash M_\mathbb{A}} \int_{N(\gamma_s)_\mathbb{A}}\\\notag
    &f(k^{-1}m^{-1}\gamma nmk)\rm{exp}(-<2\rho_P, H_0(m)>)dn\ dm\ dk. 
    \end{eqnarray}
    Where $t$ denotes the numer of simple roots of $P_1$, and $\lambda_0^t=(\lambda_1,...,\lambda_t)$. 
\end{lemma}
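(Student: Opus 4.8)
The plan is to begin from the identity, established just above, that $\int_{Z_\infty^+ G_\mathbb{Q}\backslash G_\mathbb{A}} I^\mathfrak{o}_{\mathrm{ram}}(f,x,T)\,dx$ is absolutely convergent and equals $\lim_{\lambda\to 0} I^\mathfrak{o}_T(\lambda)$, where $I^\mathfrak{o}_T(\lambda)$ is the entire function obtained by inserting into each $P$-summand the factor $\exp(-\langle 2\rho_P(\gamma_s),\sum_k(1+\langle\lambda,\alpha_{i_k}\rangle)a_k\hat\alpha_{i_k}\rangle)$ (with $\rho_P(\gamma_s)$ replaced by $\rho_{P_0}(\gamma_s)$ when $P=G$). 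For each fixed $P$ I would first unfold, as in (\ref{eq8.1}), the sum over $\delta\in M_\mathbb{Q}N(\gamma_s)_\mathbb{Q}\backslash G_\mathbb{Q}$ into an integral over $K$, over $M(\gamma)^1_\mathbb{A}N(\gamma_s)_\mathbb{A}\backslash P^1_\mathbb{A}$, over $M(\gamma)_\mathbb{Q}N(\gamma_s)_\mathbb{Q}\backslash M(\gamma)^1_\mathbb{A}N(\gamma_s)_\mathbb{A}$, and over $Z_\infty^+\backslash A_\infty^+$; by Lemma~\ref{l10} the factor over $M(\gamma)^1_\mathbb{A}N(\gamma_s)_\mathbb{A}\backslash P^1_\mathbb{A}$ may be replaced by the fixed compact set $C_1(\gamma_s)$, which collapses the expression into the function $\Phi_\gamma(f,n)$ integrated against the residual sum over $N(\gamma_s)_\mathbb{Q}$.

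Next I would apply Poisson summation over $N(\gamma_s)_\mathbb{Q}$ after the decomposition $N(\gamma_s)_\mathbb{Q}=N(\gamma_s)_{1,\mathbb{Q}}\oplus N(\gamma_s)_{2,\mathbb{Q}}\oplus N(\gamma_s)_{3,\mathbb{Q}}$ of (\ref{poisson}) into abelian pieces, so that the classical summation formula applies in each factor. This yields, for each $P$, the $\xi=0$ term $\Psi_\gamma(0,p)$ plus the sum over nonzero characters $\xi\in X(\gamma_s)_{i,\mathbb{Q}}$. Using $\|\xi^{-a}\|\ge\exp(\langle d,a\rangle)\|\xi\|$ together with $\sum_{\xi\ne 0}\|\xi\|^{-N}<\infty$ for $N$ large — exactly the estimate carried out just before the statement — the contribution of the nonzero $\xi$ is holomorphic near $\lambda=0$ and tends to $0$ as $T\to\infty$, so it may be dropped when evaluating the limit. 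What remains, for each $P$, is a product of one-dimensional zeta integrals in the variables $a_k$ over the ranges $a_k\ge\langle T,\hat\alpha_{i_k}\rangle$, multiplied by $\int_{M(\gamma)_\mathbb{Q}\backslash M(\gamma)^1_\mathbb{A}}\int_{N(\gamma_s)_\mathbb{A}}\Phi_\gamma(f,n)$, which after Lemma~\ref{l2} unfolds into the usual orbital integral over $N_\mathbb{A}$ and $M(\gamma)^1_\mathbb{A}\backslash M^1_\mathbb{A}$.

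The core of the argument is to extract the constant term of the Laurent expansion at $\lambda=0$ of this zeta integral. Each one-variable factor carries a simple pole $\langle 2\rho_P(\gamma_s),\langle\lambda,\alpha_{i_k}\rangle\hat\alpha_{i_k}\rangle^{-1}$ and the exponential $\exp(-\langle 2\rho_P(\gamma_s),\langle\lambda,T\rangle\hat\alpha_{i_k}\rangle)$, so after the substitution $\lambda\mapsto\lambda\lambda_0$ with $\lambda_0$ regular off every wall, and expanding, the constant term produces the coefficient $\frac{(-1)^j}{j!}a_P\prod_k\frac{\langle\lambda_0,T\rangle^j}{\langle\lambda_0,\alpha_{i_k}\rangle}$. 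The summand $P=G$ reassembles into $\lim_{\lambda\to 0}\int \mu_\mathfrak{o}(\lambda,f,x)\,dx$, which is the term (\ref{85}). For $P\ne G$ I would invoke Lemma~\ref{l6} to split the sum over $\gamma\in M^\mathfrak{o}$ into its $M_n^\mathfrak{o}$ and $M_t^\mathfrak{o}$ parts: the $M_n$ parts, after the measure decomposition of (\ref{eq8.1}) and folding the inner unipotent integration back by Lemma~\ref{l2}, yield the terms of the shape $\tfrac{1}{t!}a_{P_1}\prod_k\frac{\langle\lambda_0^t,T\rangle}{\langle\lambda_0^t,\alpha_{i_k}\rangle}\,c_{P_1}(\cdots)$ — that is (\ref{s8}) and the last displayed term of the statement; the $M_t$ parts are semisimple and, handled by the $G$-elliptic analysis carried out above applied inside $M$ together with the normalising constant $\tilde\tau(\gamma,M)$, yield the term (\ref{s9}). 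Collecting the contributions over all $P$ and relabelling $P$ as $P_1$ with $t$ its number of simple roots gives the claimed decomposition.

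The main obstacle I anticipate is the bookkeeping in this last step. One must check that the powers $\langle\lambda_0,T\rangle^j$, the constants $\tfrac{1}{t!}a_{P_1}$ and $c_{P_1}$, and the indices $n_{\gamma,M}$, $\Gamma_{\gamma,M}$ come out with exactly the right normalisations after the simultaneous residue in all $t$ variables and after the identification $M(\gamma)_\mathbb{Q}N(\gamma_s)_\mathbb{Q}\backslash M(\gamma)^1_\mathbb{A}N(\gamma_s)_\mathbb{A}\leftrightarrow M(\gamma)^1_\mathbb{A}\backslash M^1_\mathbb{A}$; in particular no cross terms between the pole structure and the $T$-dependent exponentials must be lost. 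That the $\lambda=0$ poles of the individual $P$-summands cancel — which is what makes the final finite sum meaningful — is forced by the already-established absolute convergence of $\int_{Z_\infty^+ G_\mathbb{Q}\backslash G_\mathbb{A}} I^\mathfrak{o}_{\mathrm{ram}}(f,x,T)\,dx$.
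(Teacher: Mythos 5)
Your proposal follows essentially the same route as the paper: the unfolding of (\ref{eq8.1}) with Lemma \ref{l10}, Poisson summation over the three abelian pieces of $N(\gamma_s)_\mathbb{Q}$ with the height-function estimate killing the nonzero characters, the zeta-integral evaluation and the constant term of the Laurent expansion after the substitution $\lambda\mapsto\lambda\lambda_0$, and finally Lemma \ref{l6} (with Lemma \ref{l2} to refold the unipotent integration) to split the $P\neq G$ contributions into the $M_n^\mathfrak{o}$ terms (\ref{s8}) and the $M_t^\mathfrak{o}$ terms (\ref{s9}), with the $P=G$ summand giving (\ref{85}). This is exactly the paper's argument, which simply cites the preceding discussion plus Lemma \ref{l6} and the trivial-$N(\gamma_s)$ computation, so no further comment is needed.
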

\begin{proof}
    According to above discussion, it suffices to prove only (\ref{s8}) and (\ref{s9}), which follows from Lemma (\ref{l6}).

    When $N(\gamma_s)$ is trivial, the integral of \[\sum_{\gamma\in \{M_t^\mathfrak{o}\}}(n_{\gamma,M})^{-1}\sum_{\delta\in M(\gamma)_\mathbb{Q}\backslash G_\mathbb{Q}}f(x^{-1}\delta^{-1}\gamma \delta x)\hat{\tau}_P(H_0(\delta x)-T)\] over $Z_\infty^+G_\mathbb{Q}\backslash G_\mathbb{A}$ is 
    \begin{eqnarray*}
        c_P\sum_{\gamma\in\{M_t^\mathfrak{o}\}}(n_{\gamma,M})^{-1}\int_K\int_{A_\infty^+M(\gamma)_{\mathbb{Q}}\backslash P_\mathbb{A}}\int_{Z_\infty^+\backslash A_\infty^+}f(k^{-1}p^{-1}\gamma pk)\hat{\tau}_P(H_0(apk)-T)da\ d_lp\ dk.
    \end{eqnarray*}
    We write it as 
    \begin{eqnarray*}
        c_Pa_P\sum_{\gamma\in\{M_t^\mathfrak{o}\}}\tilde{\tau}(\gamma,M)\int_K\int_{M(\gamma)^1_\mathbb{A}\backslash P^1_\mathbb{A}}f(k^{-1}p^{-1}\gamma pk)\\
        \int_{<T-H_0(p),\hat{\alpha}_{i_1}>}^\infty...\int^\infty_{<T-H_0(p),\hat{\alpha}_{i_k}>}\rm{exp}(-<2\rho_{P}(\gamma_s),\sum_{k=1}^j(1+<\lambda,\alpha_k>)a_k\hat{\alpha}_{i_k}+H_0(p)>)da\ dp\ dk.
    \end{eqnarray*}
\end{proof}
However, the first term can be calculated by taking the constant term of the Laurent expansion at $\lambda=(0,0,0)$, we write it as 
\[\rm{lim}_{\lambda\rightarrow 0}\int_{Z_\infty^+G_\mathbb{Q}\backslash G_\mathbb{A}}D_\lambda\{\lambda\mu_\mathfrak{o}(\lambda, f,x)\}dx, \]
where \[D_\lambda\{\lambda\mu_\mathfrak{o}(\lambda, f,x)\}\] equals
\[\frac{d}{d<\lambda,\alpha_{i_j}>}(<\lambda,\alpha_{i_j}>)...\frac{d}{d<\lambda,\alpha_{i_1}>}(<\lambda,\alpha_{i_1}>\mu_\mathfrak{o}(\lambda,f,x)).\]
\begin{remark}
    Arthur(\cite{A15}) proposed a method to approximate the ramified orbits using the unramified orbits. In our next work, we shall take that way to rewrite (\ref{85}).
\end{remark}
\subsection{The unramified orbit}
In this section, we give the formula of $v_{\mathfrak{o}}(x, T)$, where the orbit $\mathfrak{o}$ is unramified.

We define
\begin{eqnarray}\label{s11}
v_\mathfrak{o}(x, T)=\int_{Z_\infty^+\backslash A_{\mathfrak{o}, \infty}^+}\{\sum_P(-1)^{\rm{dim}\ (Z\backslash A)}\sum_{s\in\Omega(\mathfrak{a}_\mathfrak{o};P)}\hat{\tau}_P(H_{0}(w_sax)-T)da\},
\end{eqnarray}
(see \cite{A3}). 
And recall that $\Omega(\mathfrak{a}_\mathfrak{o};P)$ is the set of elements $s$ in $\cup_{P_1}\Omega(\mathfrak{a}_\mathfrak{o}, \mathfrak{a}_1)$ such that if $\mathfrak{a}_1=s\mathfrak{a}_\mathfrak{o}, \mathfrak{a}_1$ contains $\mathfrak{a}$, and $s^{-1}\alpha$ is positive for every root $\alpha\in\Phi_{P_1}^{P}$.

In fact, $v_\mathfrak{o}(x, T)$ stands for the volume in $\mathfrak{a}_G\backslash \mathfrak{a}_\mathfrak{o}$ of the convex hull of the projection onto $\mathfrak{a}_G\backslash \mathfrak{a}_\mathfrak{o}$ of $\{s^{-1}T-s^{-1}H_0(w_sx);s\in\cup_{P_1}\Omega(\mathfrak{a}_\mathfrak{o}, \mathfrak{a}_1)\}$. It was Langlands who surmised that the volume of a convex hull would play a role in the trace formula.

In \cite{A2}, Arthur gives the following identity
\[v_\mathfrak{o}(x, T)=\sum_{P\in P(A_\mathfrak{o})}\frac{\rm{exp}(<\lambda, T_P-H_P(x)>)}{\Pi_{\eta\in \Phi_P}<\lambda, \eta>}, \]where $P(A_\mathfrak{o})$ is the set of parabolic subgroups which are not necessary standard such that their split component is $A_\mathfrak{o}$. Here we use the property \[s^{-1}H_0(w_sx)=H_{w_s^{-1}P_0w_s}(x)=H_P(x). \]
We observe that this formula replaces the sum over $s$ and $P\in\mathfrak{P}$ with the sum of $P\in P(A)$ in (\ref{s11}).

Then in \cite{A6} Arthur introduces the concept of a $(G, M)$-family associated to the convex hull. He also writes $v_{\mathfrak{o}}(x, T)$ as $v_M(x, T)$. The Haar measure of $\mathfrak{a}_G\backslash \mathfrak{a}$ is defined via the height function in \cite{A2}.
\begin{lemma}\label{l82}
    For any ramified orbit $\mathfrak{o}$, if the parabolic subgroup $P$ contains $P_1$, where $P_1\in \mathfrak{P}_{\{\mathfrak{o}\}}$, then the term 
    \[\sum_{\delta\in M_\mathbb{Q}\backslash G_\mathbb{Q}}\sum_{\gamma\in M_t^\mathfrak{o}}f(x^{-1}\delta^{-1}\gamma\delta x)\] equals
    \begin{eqnarray*}
        \sum_{\delta\in M_{1,\mathbb{Q}}\backslash G_\mathbb{Q}}\sum_{\gamma\in M_{t,1}^\mathfrak{o}}f(x^{-1}\delta^{-1}\gamma\delta x).
    \end{eqnarray*}
\end{lemma}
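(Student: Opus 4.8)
The plan is to decompose both sides along the parabolic $P_1\in\mathfrak{P}_{\{\mathfrak{o}\}}$ sitting inside $P$, and to show that for a ramified orbit the only contributions on the left that survive are those already coming from $M_1$. First I would write $M_\mathbb{Q}\backslash G_\mathbb{Q}$ as an iterated quotient $M_{1,\mathbb{Q}}\backslash G_\mathbb{Q}$ refined by $M_\mathbb{Q}\backslash M_{1,\mathbb{Q}}$; equivalently, each $\delta\in M_\mathbb{Q}\backslash G_\mathbb{Q}$ factors as $\delta=\delta_2\delta_1$ with $\delta_1\in M_{1,\mathbb{Q}}\backslash G_\mathbb{Q}$ and $\delta_2\in M_\mathbb{Q}\backslash M_{1,\mathbb{Q}}$. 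Fixing $\delta_1$, the inner sum becomes $\sum_{\delta_2\in M_\mathbb{Q}\backslash M_{1,\mathbb{Q}}}\sum_{\gamma\in M_t^\mathfrak{o}}f((\delta_1 x)^{-1}\delta_2^{-1}\gamma\delta_2(\delta_1 x))$, which is exactly the $G$-elliptic-type expression for the group $M_1$ in place of $G$ evaluated at $\delta_1 x$. So the statement reduces to the claim that, within $M_1$, the totally-semisimple elements of $M_{1,\mathbb{Q}}$ lying in the orbit $\mathfrak{o}$ and having $M$-elliptic representatives are, up to $M_{1,\mathbb{Q}}$-conjugacy, exactly the set $M_{t,1}^\mathfrak{o}$, and that $M_\mathbb{Q}$-conjugacy inside $M_{1,\mathbb{Q}}$ matches $M_{1,\mathbb{Q}}$-conjugacy on this set.

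The key structural input is the definition of $M_{\{\mathfrak{o}\}}$ (hence of $P_1\in\mathfrak{P}_{\{\mathfrak{o}\}}$) as the minimal Levi with $G(\gamma)=M_{\{\mathfrak{o}\}}(\gamma)$ for $\gamma\in M_{\mathfrak{o},n}^\mathfrak{o}$, together with the observation already made in Section 3 that the unramified part of $\mathfrak{o}$ consists of totally semisimple elements whose centralizers are contained in $M_\mathfrak{o}$. The argument is morally the same as Lemma \ref{l6.6} and Lemma \ref{l82} just above: since $P_1\subset P$, any $M_{t}^\mathfrak{o}$-representative in $M_\mathbb{Q}$ is $M_\mathbb{Q}$-conjugate into $M_{1,\mathbb{Q}}$ (because its semisimple part, being in the orbit, can be placed in the Levi $M_1$ by the classification of Section 3), and conversely any element of $M_{t,1}^\mathfrak{o}$ is an element of $M_{t}^\mathfrak{o}$ when viewed in $M$. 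One then checks that the counting constants match: the passage from $\sum_{\delta\in M_\mathbb{Q}\backslash G_\mathbb{Q}}$ to $\sum_{\delta\in M_{1,\mathbb{Q}}\backslash G_\mathbb{Q}}$ absorbs precisely the extra sum over $M_\mathbb{Q}\backslash M_{1,\mathbb{Q}}$, and the factor $|M_\mathfrak{o}\backslash N_M(A_\mathfrak{o})|/|M_\mathfrak{o}\backslash N_{M_1}(A_\mathfrak{o})|$ against $|\Omega(\mathfrak{a}_\mathfrak{o},P)|/|\Omega(\mathfrak{a}_\mathfrak{o},P_1)|$ cancels, exactly as in the proof of Lemma \ref{l6.6}.

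Concretely, I would carry out the following steps in order. (1) Split $M_\mathbb{Q}\backslash G_\mathbb{Q}$ via $M_1$ and reindex, reducing to an identity internal to $M_1$. (2) Invoke the classification of orbits in Section 3 applied to $M_1$: the semisimple elements of $M_{1,\mathbb{Q}}$ in $\mathfrak{o}$ with no smaller parabolic of $M_1$ meeting them are precisely those in $M_{t,1}^\mathfrak{o}$, using minimality of $M_{\{\mathfrak{o}\}}$ and $P_1\supset M_{\{\mathfrak{o}\}}$. (3) Match $M_\mathbb{Q}$-conjugacy classes of such elements with $M_{1,\mathbb{Q}}$-conjugacy classes, using that the centralizers lie in $M_\mathfrak{o}\subset M_1$ (so Lemma \ref{l3}-type rigidity gives that two $M_1$-conjugate such elements that are $M$-conjugate are already $M_{1,\mathbb{Q}}$-conjugate by a Weyl element in $M_\mathfrak{o}\backslash N_{M_1}(A_\mathfrak{o})$). (4) Verify the normalization constants agree, exactly as in Lemma \ref{l6.6}. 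The main obstacle is step (3): making precise that the passage between $M$-conjugacy and $M_1$-conjugacy on the $M$-elliptic totally-semisimple part of a ramified orbit introduces no spurious multiplicities, which requires carefully tracking the Weyl-group normalizers $N_M(A_\mathfrak{o})$ versus $N_{M_1}(A_\mathfrak{o})$ and the fact that the superscript-$0$ condition $G(\gamma)=M_\mathfrak{o}(\gamma)$ forces $M_\mathfrak{o}\subset M_1$; everything else is bookkeeping already modeled on the lemmas preceding this statement.
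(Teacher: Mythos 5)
There is a genuine gap, and it sits exactly at your step (3), the conjugacy-matching step. For a ramified orbit the sets $M_{t,1}^{\mathfrak{o}}$ are \emph{not} made of totally semisimple elements with torus centralizers inside $M_{\mathfrak{o}}$: the paper's own example following Lemma \ref{l6} exhibits a non-semisimple element of $M_{t,211}^{\mathfrak{o}_{1111}^{211}}$ (a nontrivial nilpotent part inside the $\mathrm{GL}_2$ block), and for its semisimple part $\gamma_s=\mathrm{diag}(a,a,b,c)$ one has $G(\gamma_s)\cong \mathrm{GL}_2\times \mathrm{GL}_1\times \mathrm{GL}_1=M_{211}=M_{\{\mathfrak{o}\}}$, which is not contained in $M_{\mathfrak{o}}=M_{1111}$ and is not a maximal torus. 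The condition $G(\gamma)=M_{\mathfrak{o}}(\gamma)$ that you invoke is precisely the superscript-$0$ (unramified) condition, which fails by definition on a ramified orbit, so the Lemma \ref{l3}-type rigidity argument (maximal-torus centralizer, conjugator pinned down only modulo $N_G(A)$) is simply not available here. Moreover, even if a Weyl-modulo conclusion could be obtained, it would be too weak: a conjugator determined only up to elements of $M_{\mathfrak{o}}\backslash N_{M_1}(A_{\mathfrak{o}})$ is exactly what forces the normalizing factors $1/|\Omega(\mathfrak{a}_{\mathfrak{o}},P)|$ in Lemma \ref{l6.6}, whereas the present lemma is a clean identity with no such constants; your step (4) introduces and then ``cancels'' factors that do not occur in the statement. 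What the paper actually uses is the defining property of $M_{\{\mathfrak{o}\}}$ (hence of $P_1\in\mathfrak{P}_{\{\mathfrak{o}\}}$): for $\gamma\in M_{t,1}^{\mathfrak{o}}$ one has $G(\gamma)\subset G(\gamma_s)\subset M_1$, and, after reducing two $G_{\mathbb{Q}}$-conjugate elements of $M_{t,1}^{\mathfrak{o}}$ to a common Jordan form inside $M_{1,\mathbb{Q}}$, this containment shows the rational conjugator itself lies in $M_{1,\mathbb{Q}}$; with that strong rigidity the two sums match term by term with no Weyl ambiguity. Your proposed mechanism does not deliver this statement.

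Two secondary points. In step (1) the coset decomposition is written backwards: since $M_{1,\mathbb{Q}}\subset M_{\mathbb{Q}}$, it is the right-hand sum over $M_{1,\mathbb{Q}}\backslash G_{\mathbb{Q}}$ that unfolds as a sum over $\delta\in M_{\mathbb{Q}}\backslash G_{\mathbb{Q}}$ and $\mu\in M_{1,\mathbb{Q}}\backslash M_{\mathbb{Q}}$ (the symbol $M_{\mathbb{Q}}\backslash M_{1,\mathbb{Q}}$ is not a quotient), and the inner sum must carry $\gamma\in M_{t,1}^{\mathfrak{o}}$, not $M_t^{\mathfrak{o}}$. And in the existence direction, placing the semisimple part of an element of $M_t^{\mathfrak{o}}$ into $M_1$ is not by itself enough; one needs $\gamma_u\in G(\gamma_s)\subset M_1$ to bring the unipotent part along and then the triviality of the $N_1$-centralizer to land in $M_{t,1}^{\mathfrak{o}}$ — again the containment $G(\gamma_s)\subset M_1$, not anything about $M_{\mathfrak{o}}$, is the operative input. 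These are repairable bookkeeping slips, but the missing containment argument in step (3) is the essential idea of the paper's proof and is absent from yours.
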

\begin{proof}
    For $\gamma_1,\gamma_2\in M_{t,1}^\mathfrak{o}$, if there exists $g\in G_\mathbb{Q}$, such that \[g^{-1}\gamma_1g=\gamma_2.\]We aim to show \[g\in M_{1,\mathbb{Q}}.\]

    Since there exists $m_1,m_2\in M_{1,\mathbb{Q}}$ such that \[m_1^{-1}\gamma_1m_1=J=m^{-1}_2\gamma_2m_2,\] where $J$ is the Jordan normal form of $\gamma_1,\gamma_2$.

    Then \[m_2m_1^{-1}\gamma_1m_1m_2^{-1}=\gamma_2=g^{-1}\gamma_1g,\] Thus \[m_1m_2^{-1}g^{-1}\in G(\gamma_1)\subset G(\gamma_{1,s})\subset M_{1},\]
    which implies \[g\in M_{1,\mathbb{Q}}.\]
    We conclude the result.
\end{proof}
By this lemma, we can combine some terms in (\ref{85}) and (\ref{s9}) together to form a convex hull $v_{M_{\{\mathfrak{o}\}}}(x,T)$.

\section{Terms associated to $P_{31}$}
\[\Omega(\mathfrak{a}_{31}, \mathfrak{a}_{31})=\{1\}, \]
\[\Omega(\mathfrak{a}_{31}, \mathfrak{a}_{13})=\{(14)\}. \]
\subsection{The first parabolic term}
The first parabolic term is \[J_{\rm{unram}}^{\mathfrak{o}^0_{31}}(f, x, T)-K'_{P_{31}}(f, x, T)-K'_{P_{13}}(f, x, T). \]
In this section, we shall prove that the first parabolic term equals $0$ after we borrow some terms from other first parabolic term.

Recall $J_{\rm{unram}}^{\mathfrak{o}^0_{31}}(f, x, T)$ is defined by
\begin{eqnarray*}
\sum_{\gamma\in \{M_{t, 31}^{\mathfrak{o}^0_{31}}\}}(n_{\gamma, M_{31}})^{-1}\sum_{\delta\in M(\gamma)_{31, \mathbb{Q}}\backslash G_\mathbb{Q}}f(x^{-1}\delta^{-1}\gamma\delta x)(\hat{\tau}_{P_{31}}(H_0(\delta x)- T)+\hat{\tau}_{P_{13}}(H_0(w_s\delta x)- T)).  
\end{eqnarray*}
Then,
\begin{eqnarray*}
&\sum_{\gamma\in \{M_{t, 31}^{\mathfrak{o}^0_{31}}\}}(n_{\gamma, M_{31}})^{-1}\sum_{\delta\in M(\gamma)_{31, \mathbb{Q}}\backslash G_\mathbb{Q}}f(x^{-1}\delta^{-1}\gamma\delta x)(\hat{\tau}_{P_{13}}(H_0(w_s\delta x)- T))\\
&=\sum_{\gamma\in \{M_{t, 13}^{\mathfrak{o}^0_{31}}\}}(n_{\gamma, M_{13}})^{-1}\sum_{\delta\in M(w_s\gamma w_s^{-1})_{31, \mathbb{Q}}\backslash G_\mathbb{Q}}f(x^{-1}\delta^{-1}w_s\gamma w_s^{-1}\delta x)(\hat{\tau}_{P_{13}}(H_0(w_s\delta x)- T))\\
&=\sum_{\gamma\in \{M^{\mathfrak{o}^0_{31}}_{t, 13}\}}(n_{\gamma, M_{13}})^{-1}\sum_{\delta\in M(\gamma)_{13, \mathbb{Q}}\backslash G_\mathbb{Q}}f(x^{-1}\delta^{-1}\gamma \delta x)(\hat{\tau}_{P_{13}}(H_0(\delta x)- T)). 
\end{eqnarray*}
Thus, 
\begin{eqnarray*}
&J_{\rm{unram}}^{\mathfrak{o}^0_{31}}(f, x, T)\\
&=\sum_{\gamma\in \{M_{t, 31}^{\mathfrak{o}^0_{31}}\}}(n_{\gamma, M_{31}})^{-1}\sum_{\delta\in M(\gamma)_{31, \mathbb{Q}}\backslash G_\mathbb{Q}}f(x^{-1}\delta^{-1}\gamma\delta x)(\hat{\tau}_{P_{31}}(H_0(\delta x))-T)\\
&+\sum_{\gamma\in \{M_{t, 13}^{\mathfrak{o}^0_{31}}\}}(n_{\gamma, M_{13}})^{-1}\sum_{\delta\in M(\gamma)_{13, \mathbb{Q}}\backslash G_\mathbb{Q}}f(x^{-1}\delta^{-1}\gamma\delta x)(\hat{\tau}_{P_{13}}(H_0(\delta x)- T)). 
\end{eqnarray*}
Since for $\gamma\in{M_{t, 31}^{\mathfrak{o}^0_{31}}}$, the group $N(\gamma_s)$ is trivial, by Lemma \ref{l1}, $J_{\rm{unram}}^{\mathfrak{o}^0_{31}}(f, x, T)$ equals
\begin{eqnarray*}
&\sum_{\gamma\in\{M_{t, 31}^{\mathfrak{o}^0_{31}}\}}(n_{\gamma, M_{31}})^{-1}\sum_{\delta\in M(\gamma)_{31, \mathbb{Q}}N_{31, \mathbb{Q}}\backslash G_\mathbb{Q}}\sum_{v\in N_{31, \mathbb{Q}}}f(x^{-1}\delta^{-1}\gamma v\delta x)(\hat{\tau}_{P_{31}}(H_0(\delta x)-T))\\
&+\sum_{\gamma\in\{M_{t, 13}^{\mathfrak{o}^0_{31}}\}}(n_{\gamma, M_{13}})^{-1}\sum_{\delta\in M(\gamma)_{13, \mathbb{Q}}N_{13, \mathbb{Q}}\backslash G_\mathbb{Q}}\sum_{v\in N_{13, \mathbb{Q}}}f(x^{-1}\delta^{-1}\gamma v\delta x)(\hat{\tau}_{P_{13}}(H_0(\delta x)-T)). 
\end{eqnarray*}
Which is 
\begin{eqnarray*}
&\sum_{\delta\in P_{31, \mathbb{Q}}\backslash G_\mathbb{Q}}\sum_{\gamma\in M_{t, 31}^{\mathfrak{o}^0_{31}}}\sum_{v\in N_{31, \mathbb{Q}}}f(x^{-1}\delta^{-1}\gamma v\delta x)(\hat{\tau}_{P_{31}}(H_{0}(\delta x)-T))\\
&+\sum_{\delta\in P_{13, \mathbb{Q}}\backslash G_\mathbb{Q}}\sum_{\gamma\in M_{t, 13}^{\mathfrak{o}^0_{31}}}\sum_{v\in N_{13, \mathbb{Q}}}f(x^{-1}\delta^{-1}\gamma v\delta x)(\hat{\tau}_{P_{13}}(H_{0}(\delta x)-T)). 
\end{eqnarray*}
We now compute the geometric terms corresponding to the orbit $\mathfrak{o}\neq\mathfrak{o}^0_{31}$ in $P_{31}$.

For other orbits, we now choose the terms from $J_{\rm{ram}}^\mathfrak{o}(f,x,T)$ and $J_{\rm{unram}}^\mathfrak{o}(f,x,T)$ whose characteristic functions are $\hat{\tau}_{P_{31}}$ and we combine them.

By Lemma \ref{l6.6}, we define $J_{P_{31}}(f, x, T)=$
\begin{eqnarray}
&J_{\rm{unram}}^{\mathfrak{o}^0_{31}}(f, x, T)\\\notag
&+\sum_{\substack{\mathfrak{o}\ unramified\\\mathfrak{o}\neq\mathfrak{o}_{31}}}\frac{1}{|\Omega(\mathfrak{a}_\mathfrak{o}, P_{31})|}\sum_{s\in\Omega(\mathfrak{a}_\mathfrak{o}, P_{31})}\sum_{\delta\in P_{31, \mathbb{Q}}\backslash G_\mathbb{Q}}\sum_{\gamma\in M_{t, 31}^{\mathfrak{o}}}\sum_{v\in N_{31, \mathbb{Q}}}f(x^{-1}\delta^{-1}\gamma v\delta x)\\\notag
&\cdot(\hat{\tau}_{P_{31}}(H_{0}(w_s\delta x))-T)
\end{eqnarray}
\begin{eqnarray*}
&+\sum_{\substack{\mathfrak{o}\ unramified\\\mathfrak{o}\neq\mathfrak{o}_{31}}}\frac{1}{|\Omega(\mathfrak{a}_\mathfrak{o}, P_{13})|}\sum_{s\in\Omega(\mathfrak{a}_\mathfrak{o}, P_{13})}\sum_{\delta\in P_{13, \mathbb{Q}}\backslash G_\mathbb{Q}}\sum_{\gamma\in M_{t, 13}^{\mathfrak{o}}}\sum_{v\in N_{13, \mathbb{Q}}}f(x^{-1}\delta^{-1}\gamma v\delta x)\\
&\cdot(\hat{\tau}_{P_{13}}(H_{0}(w_s\delta x))-T)
\end{eqnarray*}
\begin{eqnarray*}
&+\sum_{\mathfrak{o}\ ramified}\sum_{\delta\in M_{31, \mathbb{Q}}N(\gamma_s)_{31, \mathbb{Q}}\backslash G_\mathbb{Q}}\sum_{\gamma\in M_{31}^\mathfrak{o}}\sum_{v\in N(\gamma_s)_{31, \mathbb{Q}}}f(x^{-1}\delta^{-1}\gamma v\delta x)\\
&\cdot(\hat{\tau}_{P_{31}}(H_{0}(\delta x))-T)
\end{eqnarray*}
\begin{eqnarray*}
&+\sum_{\mathfrak{o}\ ramified}\sum_{\delta\in M_{13, \mathbb{Q}}N(\gamma_s)_{13, \mathbb{Q}}\backslash G_\mathbb{Q}}\sum_{\gamma\in M_{13}^\mathfrak{o}}\sum_{v\in N(\gamma_s)_{13, \mathbb{Q}}}f(x^{-1}\delta^{-1}\gamma v\delta x)\\
&\cdot(\hat{\tau}_{P_{13}}(H_{0}(\delta x))-T). 
\end{eqnarray*}

Observe that when fixing an unramified orbit $\mathfrak{o}$, for $s_1\in\Omega(\mathfrak{a}_\mathfrak{o}, P_{31})$, 
\begin{eqnarray*}
    \sum_{\delta\in P_{31, \mathbb{Q}}\backslash G_\mathbb{Q}}\sum_{\gamma\in M_{t,31}^{\mathfrak{o}}}\sum_{v\in N_{31, \mathbb{Q}}}f(x^{-1}\delta^{-1}\gamma v\delta x)(\hat{\tau}_{P_{31}}(H_{0}(w_s\delta x))-T)
\end{eqnarray*}
equals
\begin{eqnarray*}
    \sum_{\delta\in P_{31, \mathbb{Q}}\backslash G_\mathbb{Q}}\sum_{\gamma\in M_{t,31}^{\mathfrak{o}}}\sum_{v\in N_{31, \mathbb{Q}}}f(x^{-1}\delta^{-1}\gamma v\delta x)(\hat{\tau}_{P_{31}}(H_{0}(\delta x))-T).
\end{eqnarray*}

For ramified orbits in $P_{31}$, the terms can be written as
\[\sum_{\delta\in P_{31, \mathbb{Q}}\backslash G_\mathbb{Q}}\sum_{\gamma\in M_{31}^\mathfrak{o}}\sum_{\delta_1\in N(\gamma_s)_{31, \mathbb{Q}}\backslash N_{31, \mathbb{Q}}}\sum_{v\in N(\gamma_s)_{31, \mathbb{Q}}}f(x^{-1}\delta^{-1}\delta_1^{-1}\gamma v\delta_1\delta x)(\hat{\tau}_{P_{31}}(H_{0}(\delta x))-T). \]
By Lemma \ref{l1}, it equals
\[\sum_{\delta\in P_{31, \mathbb{Q}}\backslash G_\mathbb{Q}}\sum_{\gamma\in M_{31}^\mathfrak{o}}\sum_{v\in N_{31, \mathbb{Q}}}f(x^{-1}\delta^{-1}\gamma v\delta x)(\hat{\tau}_{P_{31}}(H_{0}(\delta x))-T). \]
Since $M_{31}=\cup_{\mathfrak{o}} M_{31}^\mathfrak{o}$ and $M_{13}=\cup_{\mathfrak{o}} M_{13}^\mathfrak{o}$, we can see $J_{P_{31}}(f, x, T)$ equals
\begin{eqnarray*}
&\sum_{\delta\in P_{31, \mathbb{Q}}\backslash G_\mathbb{Q}}\sum_{\gamma\in M_{31}}\sum_{v\in N_{31, \mathbb{Q}}}f(x^{-1}\delta^{-1}\gamma v\delta x)(\hat{\tau}_{P_{31}}(H_{0}(\delta x)-T))\\
&+\sum_{\delta\in P_{13, \mathbb{Q}}\backslash G_\mathbb{Q}}\sum_{\gamma\in M_{13}}\sum_{v\in N_{13, \mathbb{Q}}}f(x^{-1}\delta^{-1}\gamma v\delta x)(\hat{\tau}_{P_{13}}(H_{0}(\delta x)-T)). 
\end{eqnarray*}
For any parabolic subgroup $P$, let the rank of $A_P$ be $j$.

The space $\mathfrak{n}_\mathbb{A}$ is a locally compact abelian group, and $\mathfrak{n}_\mathbb{Q}$ is a discrete group of it. Let $X_\mathbb{A}$ be the unitary dual group of $\mathfrak{n}_\mathbb{A},$ and let $ X_\mathbb{Q}$ be the subgroup of $X_\mathbb{A}$ which is trivial on $\mathfrak{n}_\mathbb{Q}$.

We now apply the Poisson summation formula omitting the decomposition $N=N_1\oplus N_2\oplus N_3$, 
\[\sum_{v\in N_\mathbb{Q}}f(x^{-1}\delta^{-1}\gamma v\delta x)\hat{\tau}_P(H_0(\delta x)-T)\]equals 
\begin{eqnarray}\label{e1}
\Psi(0, \gamma, \delta x)\hat{\tau}_P(H_0(\delta x)-T)
\end{eqnarray}
\begin{eqnarray}\label{e2}
    +\sum_{\substack{\xi\in X_\mathbb{Q}\\\xi\neq 0}}\Psi(\xi, \gamma, \delta x)\hat{\tau}_P(H_0(\delta x)-T),
\end{eqnarray}
where \[\Psi(\xi,\gamma,y)=\int_{\mathfrak{n}_\mathbb{A}}f(y^{-1}\cdot \gamma\rm{exp}\, Y\cdot y)\rm{exp}(\xi(Y))dY.\]

Summing the absolute value of (\ref{e1}) over $\gamma\in M^\mathfrak{o}$ and $\delta\in P_\mathbb{Q}\backslash G_\mathbb{Q}$, then integrating over $Z_\infty^+G_\mathbb{Q}\backslash G_\mathbb{A}$, then it is bounded by
\begin{equation}
\begin{aligned}
\int_{Z_\infty^+G_\mathbb{Q}\backslash G_\mathbb{A}}\sum_{\delta\in P_\mathbb{Q}\backslash G_\mathbb{Q}}\sum_{\gamma\in M^\mathfrak{o}}\sum_{\substack{\xi\in X_\mathbb{Q}\\\xi\neq 0}}|\Psi(\xi, \gamma, \delta x)\hat{\tau}_P(H_0(\delta x)-T)|dx,
\notag
\end{aligned}
\end{equation}
which is \[\int_{Z_\infty^+ P_\mathbb{Q}\backslash G_\mathbb{A}}\sum_{\gamma\in M^\mathfrak{o}}\sum_{\substack{\xi\in X_\mathbb{Q}\\\xi\neq 0}}|\Psi(\xi, \gamma, x)\hat{\tau}_P(H_0(x)-T)|dx. \]
If $\omega$ is a relatively compact fundamental domain for $P_\mathbb{Q}\backslash P_\mathbb{A}^1$ in $P_\mathbb{A}^1$, this integral equals
\begin{eqnarray}\label{omega}
\\\notag
c_P\int_K\int^{\infty}_{<T, \hat{\alpha}_{i_1}>}. . . \int^{\infty}_{<T, \hat{\alpha}_{i_j}>}\int_{\omega}\sum_{\gamma\in M^\mathfrak{o}}\sum_{\substack{\xi\in X_\mathbb{Q}\\\xi\neq 0}}|\Psi(\xi, \gamma, vh_{a}k)|\rm{exp}(-<2\rho_P, a>)\ dv\ dk\ da_1. . . da_k. 
\end{eqnarray}
We assume that $h_a^{-1}\omega h_a$ is contained in $\omega$ for every $a\in\mathfrak{a}^+$.

Then the integral (\ref{omega}) is bounded by \[c_P\int_{K}\int_\omega\int^{\infty}_{<T, \hat{\alpha}_{i_1}>}. . . \int^{\infty}_{<T, \hat{\alpha}_{i_j}>}\sum_{\gamma\in M^\mathfrak{o}}\sum_{\substack{\xi\in X_\mathbb{Q}\\\xi\neq 0}}|\Psi(\xi, \gamma, vh_{a}k)|\ dv\ dk\ da_1. . . da_j, \]
we denote \[vh_{a}k\] by \[h_{a}\cdot h^{-1}_{a}vh_{a}k\] in this term.

It is easy to verify that \[\Psi(\xi, \gamma, vh_{a}k)=\rm{exp}(<2\rho_P, \sum_{k=1}^ja_k\hat{\alpha}_{i_k}>)\Psi(\xi^{a}, \gamma, vk). \]
$\Psi(\cdot, \gamma, vk)$ is the Fourier transform of Schwartz-Bruhat function and it is continuous in $vk$. By Lemma \ref{l9}, we observe that there are only finitely many $\gamma\in M_{\mathbb{Q}}$, such that \[\Psi(\xi, \gamma, vk)\neq0 \]for some $\xi\in X_\mathbb{A}$ and some $vk\in \omega\times K$. Thus, for any $N$, there exists a constant $\Gamma_N$ such that for any primitive $\xi\in X_\mathbb{A}$, \[\sum_{\gamma\in M}|\Psi(\xi, \gamma, vk)|\le\Gamma_N||\xi||^{-N}. \]
Consequently, for any $N$, the above integral is bounded by 
\[c_P\Gamma_N\tau(M_{\mathbb{Q}})\int^{\infty}_{<T, \hat{\alpha}_{i_1}>}. . . \int^{\infty}_{<T, \hat{\alpha}_{i_1}>}\rm{exp}(<2\rho_P, \sum_{k=1}^ja_k\hat{\alpha}_{i_k}>)(\sum_{\substack{\xi\in X_\mathbb{Q}\\\xi\neq 0}}||\xi^{-a}||^{-N})da, \]
and it is majorized by 
\[c_P\Gamma_N\tau(M)\int^{\infty}_{<T, \hat{\alpha}_{i_1}>}. . . \int^{\infty}_{<T, \hat{\alpha}_{i_1}>}\rm{exp}(<2\rho_P, \sum_{k=1}^ja_k\hat{\alpha}_{i_k}>)\rm{exp}(-<d, Na>)da\cdot\sum_{\substack{\xi\in X_\mathbb{Q}\\\xi\neq 0}}||\xi||^{-N}. \]
For sufficiently large $N$, this term is finite and approaches $0$ as $T\rightarrow\infty$.

We now deal with (\ref{e1}). Summing over $\gamma\in M^\mathfrak{o}_\mathbb{Q},\, \delta\in P_\mathbb{Q}\backslash G_\mathbb{Q}$, we obtain
\begin{equation}\label{9.6}
\begin{aligned}
\sum_{\gamma\in M^\mathfrak{o}_{\mathbb{Q}}}\sum_{\delta\in P_\mathbb{Q}\backslash G_\mathbb{Q}}\Psi(0, \gamma, \delta x)\hat{\tau}_P(H_0(\delta x)-T). 
\end{aligned}
\end{equation}
For fixed $x$, there are only finitely many $\delta\in P_\mathbb{Q}\backslash G_\mathbb{Q}$ in (\ref{9.6}) such that this term is not equal to zero. Therefore, the inner sum is finite. The outer sum is also finite by the same argument.

As a result, the term $J_{P_{31}}(f, x, T)$ equals
\begin{equation}\label{e3}
\begin{aligned}
\sum_{\gamma\in M_{31,\mathbb{Q}}}\sum_{\delta\in P_{31,\mathbb{Q}}\backslash G_\mathbb{Q}}\Psi(0, \gamma, \delta x)\hat{\tau}_P(H_0(\delta x)-T). 
\end{aligned}
\end{equation}

Thus, $J_{P_{31}}(f, x, T)$ is 
\begin{eqnarray*}
    &\sum_{\delta\in P_{31, \mathbb{Q}}\backslash G_\mathbb{Q}}\sum_{\gamma\in M_{31}}\int_{N_{31, \mathbb{A}}}f(x^{-1}\delta^{-1}\gamma v\delta x)dn(\hat{\tau}_{P_{31}}(H_{0}(\delta x)-T))\\
    &+\sum_{\delta\in P_{13, \mathbb{Q}}\backslash G_\mathbb{Q}}\sum_{\gamma\in M_{13}}\int_{N_{13, \mathbb{A}}}f(x^{-1}\delta^{-1}\gamma v\delta x)dn(\hat{\tau}_{P_{13}}(H_{0}(\delta x)-T)). 
    \end{eqnarray*}

Recall $K'_{P_{31}}(f, x, T)+K'_{P_{13}}(f, x, T)$ equals 
\begin{eqnarray*}
    &\frac{1}{4\pi i}\sum_{P_{31, \mathbb{Q}}\backslash G_\mathbb{Q}}\sum_{\chi}\int_{i\mathfrak{a}_{G}\backslash i\mathfrak{a}_{31}}\{\sum_{\beta\in \mathscr{B}_{P, \chi}}E^{c_{31}}_{P_{31}}(\pi_{P_{31}}(\lambda, f)\Phi_\beta, \lambda, \delta x)\overline{E^{c_{31}}_{P_{31}}(\Phi_\beta, \lambda, \delta x)}\}d\lambda\hat{\tau}_{P_{31}}(H_{0}(\delta x)-T)\\
    &+\frac{1}{4\pi i}\sum_{P_{13, \mathbb{Q}}\backslash G_\mathbb{Q}}\sum_{\chi}\int_{i\mathfrak{a}_{G}\backslash i\mathfrak{a}_{13}}\{\sum_{\beta\in \mathscr{B}_{P, \chi}}E^{c_{13}}_{P_{13}}(\pi_{P_{13}}(\lambda, f)\Phi_\beta, \lambda, \delta x)\overline{E^{c_{13}}_{P_{13}}(\Phi_\beta, \lambda, \delta x)}\}d\lambda\hat{\tau}_{P_{13}}(H_{0}(\delta x)- T)\\
    &+\frac{1}{4\pi i}\sum_{P_{31, \mathbb{Q}}\backslash G_\mathbb{Q}}\sum_{\chi}\int_{i\mathfrak{a}_{G}\backslash i\mathfrak{a}_{31}}\{\sum_{\beta\in \mathscr{B}_{P, \chi}}E^{c_{13}}_{P_{31}}(\pi_{P_{31}}(\lambda, f)\Phi_\beta, \lambda, \delta x)\overline{E^{c_{13}}_{P_{31}}(\Phi_\beta, \lambda, \delta x)}\}d\lambda\hat{\tau}_{P_{31}}(H_{0}(\delta x)-T)\\
    &+\frac{1}{4\pi i}\sum_{P_{13, \mathbb{Q}}\backslash G_\mathbb{Q}}\sum_{\chi}\int_{i\mathfrak{a}_{G}\backslash i\mathfrak{a}_{13}}\{\sum_{\beta\in \mathscr{B}_{P, \chi}}E^{c_{31}}_{P_{13}}(\pi_{P_{13}}(\lambda, f)\Phi_\beta, \lambda, \delta x)\overline{E^{c_{31}}_{P_{13}}(\Phi_\beta, \lambda, \delta x)}\}d\lambda\hat{\tau}_{P_{13}}(H_{0}(\delta x)- T). 
\end{eqnarray*}
This term is equal to
\begin{eqnarray*}
    &\sum_{\gamma\in M_{31}}\sum_{\delta\in P_{31, \mathbb{Q}}\backslash G_\mathbb{Q}}\int_{N_{31, \mathbb{A}}}f(x^{-1}\delta^{-1}\gamma\delta x)dn\cdot (\hat{\tau}_{P_{31}}(H_{0}(\delta x)-T))\\
    &+\sum_{\gamma\in M_{13}}\sum_{\delta\in P_{13, \mathbb{Q}}\backslash G_\mathbb{Q}}\int_{N_{13, \mathbb{A}}}f(x^{-1}\delta^{-1}\gamma\delta x)dn\cdot (\hat{\tau}_{P_{13}}(H_{0}(\delta x)-T)). 
\end{eqnarray*} 
Thus we have 
\begin{lemma}
    $J_{P_{31}}(f,x,T)-K'_{P_{31}}(f, x, T)-K'_{P_{13}}(f, x, T)=0.$
\end{lemma}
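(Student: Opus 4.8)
The plan is to exploit the fact that, by the reductions already carried out above, both $J_{P_{31}}(f,x,T)$ and $K'_{P_{31}}(f,x,T)+K'_{P_{13}}(f,x,T)$ have been brought into the same shape, namely a sum over $P\in\{P_{31},P_{13}\}$ of
\[
\sum_{\delta\in P_\mathbb{Q}\backslash G_\mathbb{Q}}\ \sum_{\gamma\in M_\mathbb{Q}}\ \int_{N_\mathbb{A}} f(x^{-1}\delta^{-1}\gamma n\delta x)\,dn\cdot\hat{\tau}_P(H_0(\delta x)-T).
\]
For $J_{P_{31}}$ this is already done above, using Lemma \ref{l6.6}, Lemma \ref{l1}, and the Poisson summation step that discards the nonzero frequencies $\xi\neq0$ and retains only the $\xi=0$ term. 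So the lemma itself is just a termwise comparison, and the one thing still to be justified is the identity asserted immediately before it: that $K'_{P_{31}}(f,x,T)+K'_{P_{13}}(f,x,T)$ equals the above. I would prove that as follows.

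First I would unwind the definition of $K'_P$, $P\in\mathfrak{P}_{31}$, by expanding each constant term through $E^{c_{P_1}}_P(\Phi,\lambda,x)=\sum_{s\in\Omega(\mathfrak{a},\mathfrak{a}_1)}(M_P(s,\lambda)\Phi)(x)\,\mathrm{exp}(<s\lambda+\rho_{P_1},H_{P_1}(x)>)$. The simplification special to $\GGL(4)$ is that $\Omega(\mathfrak{a}_{31},\mathfrak{a}_{31})=\{1\}$ and $\Omega(\mathfrak{a}_{31},\mathfrak{a}_{13})=\{(14)\}$ are singletons, so for each ordered pair $(P_1,P_2)\in\mathfrak{P}_{31}\times\mathfrak{P}_{31}$ exactly one pair $(s,s')$ survives and — the spaces being one-dimensional modulo the center — there are none of the genuinely off-diagonal contributions Lemma \ref{l7} is built to kill. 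Then I would replace the orthonormal basis $\{\Phi_\beta\}$ of $\mathscr{H}_{P,\chi}$ by $\{M_P(s,\lambda)\Phi_\beta\}$ (still orthonormal by unitarity of $M_P(s,\lambda)$ on $i\mathfrak{a}$) and use $M_P(s,\lambda)\pi_P(\lambda,f)=\pi_P(s\lambda,f)M_P(s,\lambda)$ with $\Phi_\alpha=\pi_P(\lambda,f)\Phi_\beta$; each summand then becomes a diagonal value of the kernel of $\pi_{P_1}(s\lambda,f)$ on $\mathscr{H}_{P_1,\chi}$.

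Next I would sum over $\chi$ and over the basis. Since the spaces $\mathscr{H}_{P_1,\chi}$ exhaust the discretely decomposing (cuspidal plus residual) part of the representation induced from the Levi $M_{P_1}$, this reconstitutes $\sum_\chi\sum_{\beta\in\mathscr{B}_{P_1,\chi}}(\pi_{P_1}(\mu,f)\Phi_\beta)(y)\overline{\Phi_\beta(y)}$; integrating over $y$ along the Iwasawa decomposition $K\times(A_\infty^+M_\mathbb{Q}\backslash M_\mathbb{A})$ and over $\mu\in i\mathfrak{a}_G\backslash i\mathfrak{a}$ with the normalizing constants $n(A)^{-1}(\tfrac{1}{2\pi i})^{\dim Z\backslash A}$, Lemma \ref{lemma4.4} (the formula for $\pi_{P_1}(\mu,f)\phi$ through $P_{P_1}(\mu,f,x,y)$) identifies this with $\sum_{\gamma\in M_\mathbb{Q}}\int_{N_\mathbb{A}}f(y^{-1}\gamma n y)\,dn$. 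The sign $(-1)^{\dim Z\backslash A+1}$, the constants $a_P,c_P$ and the substitution $y\mapsto\delta x$ are accounted for by the same bookkeeping as in the geometric computations, while $\hat{\tau}_P(H_P(\delta x)-T)$ and the sum over $\delta\in P_\mathbb{Q}\backslash G_\mathbb{Q}$ ride through unchanged. Summing $P$ over $\{P_{31},P_{13}\}$ then gives exactly the two displayed pieces, and comparing termwise with the computed form of $J_{P_{31}}(f,x,T)$ yields the cancellation.

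The hard part will be the step in the previous paragraph — returning from the spectral sum, under the $i\mathfrak{a}$-contour integral, to the geometric kernel $\sum_{\gamma\in M_\mathbb{Q}}\int_{N_\mathbb{A}}f(\cdots)\,dn$. This requires the absolute convergence and the interchange of the sum over $\chi$ with the $\lambda$-integral, which are precisely what the finiteness lemma for $\int_{i\mathfrak{a}_G\backslash i\mathfrak{a}}\sum_\chi|\sum_\beta(M_P(s,\lambda)\pi_P(\lambda,f)\Phi_\beta)(y)\overline{M_P(s',\lambda)\Phi_\beta(y)}|\,d\lambda$ supplies, combined with the completeness of the Eisenstein decomposition of $L^2_{\mathfrak{P}_{31}}$ and the description of the kernel of $\mathrm{R}_{P,1}(f)$; one also has to confirm that the residual part appears with the correct multiplicity, which is easy for $\mathfrak{P}_{31}$ since the residual discrete spectrum of $M_{31}\cong\GGL(3)\times\GGL(1)$ consists only of one-dimensional characters. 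The truncation $\hat{\tau}$, being bounded, adds no further analytic difficulty.
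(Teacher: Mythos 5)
Your overall route is the same as the paper's: reduce $J_{P_{31}}(f,x,T)$ to $\sum_{P\in\{P_{31},P_{13}\}}\sum_{\delta\in P_\mathbb{Q}\backslash G_\mathbb{Q}}\sum_{\gamma\in M_\mathbb{Q}}\int_{N_\mathbb{A}}f(x^{-1}\delta^{-1}\gamma n\delta x)\,dn\,\hat{\tau}_P(H_0(\delta x)-T)$ (which the paper does via Lemma \ref{l6.6}, Lemma \ref{l1} and the Poisson summation step), and then identify $K'_{P_{31}}+K'_{P_{13}}$ with the same expression; the paper simply asserts this last identification, and your proposal tries to supply the missing argument. The first concrete problem is your claim that no genuinely off-diagonal contributions occur. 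By the definition of $K'_P$ the sum runs over all ordered pairs $(P_1,P_2)\in\mathfrak{P}_{31}\times\mathfrak{P}_{31}$, and the pairs with $P_1\neq P_2$ give products of constant terms with $(s,s')=(1,(14))$ or $((14),1)$; these do not vanish pointwise. They are exactly the terms Lemma \ref{l7} is built for, and all that lemma yields is that their integral over $Z_\infty^+G_\mathbb{Q}\backslash G_\mathbb{A}$ tends to $0$ as $T\rightarrow\infty$. So a termwise comparison of the diagonal pieces cannot give the asserted exact identity; at best you obtain the statement up to terms vanishing as $T\rightarrow\infty$, which is how the analogous lemmas for $P_{22}$, $P_{211}$ and $P_{1111}$ are phrased in the paper.

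The second, more serious gap is the step where you pass from the spectral sum back to the geometric kernel. You argue that the spaces $\mathscr{H}_{P_1,\chi}$ exhaust the discretely decomposing (cuspidal plus residual) part of the induced representation, and that Lemma \ref{lemma4.4} together with Fourier inversion then identifies $\sum_\chi\sum_{\beta\in\mathscr{B}_{P_1,\chi}}(\pi_{P_1}(\mu,f)\Phi_\beta)(y)\overline{\Phi_\beta(y)}$, integrated over $\mu\in i\mathfrak{a}_G\backslash i\mathfrak{a}$, with $\sum_{\gamma\in M_\mathbb{Q}}\int_{N_\mathbb{A}}f(y^{-1}\gamma ny)\,dn$. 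But $\mathscr{H}_{P,\chi}$ is by definition a subspace of $\mathscr{H}_{P,\mathrm{cusp}}$, i.e.\ cuspidal data on the Levi only (not cuspidal plus residual), and even the full discrete spectrum of $M_{31}\cong\GGL(3)\times\GGL(1)$ would not reproduce the geometric kernel summed over all of $M_\mathbb{Q}$: that requires the complete spectral resolution of $L^2(A_\infty^+M_\mathbb{Q}\backslash M_\mathbb{A})$, including the continuous spectrum of the $\GGL(3)$ factor. Lemma \ref{lemma4.4} only expresses $\pi_P(\lambda,f)$ as an integral operator with kernel $P_P(\lambda,f,\cdot,\cdot)$; it contains no completeness assertion for the cuspidal $\chi$, so the one-dimensional contour integral over $i\mathfrak{a}_G\backslash i\mathfrak{a}_{31}$ applied to your cuspidal kernel does not yield the full sum over $\gamma\in M_\mathbb{Q}$. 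This is precisely the point the paper itself leaves unproved (it writes ``this term is equal to'' without argument), so your proposal has not actually closed the gap it set out to fill.
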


\subsection{The second parabolic term}
In this section, we shall prove the convergence of the integral of the second parabolic term associated to $\mathfrak{o}_{31}^0$.

Recall \[I_{\rm{unram}}^{\mathfrak{o}^0_{31}}(f, x, T)=\sum_{\gamma\in \{M_{t,31}^{\mathfrak{o}_{31}}\}}(n_{\gamma, M_{31}})^{-1}\sum_{\delta\in M_{31}(\gamma)_{\mathbb{Q}}\backslash G_\mathbb{Q}}f(x^{-1}\delta^{-1}\gamma\delta x)(1-\hat{\tau}_{P_{31}}(H_0(\delta x)- T)-\hat{\tau}_{P_{13}}(H_0(\delta x)-T)). \]
The integral \[\int_{Z_\infty^+G_\mathbb{Q}\backslash G_\mathbb{A}}|I_{\rm{unram}}^{\mathfrak{o}^0_{31}}(f, x, T)|dx\] is bounded by \[\sum_{\gamma\in\{M_{t,31}^{\mathfrak{o}^0_{31}}\}}(n_{\gamma, M})^{-1}\int_{Z_\infty^+M(\gamma)_{31, \mathbb{Q}}\backslash G_\mathbb{A}}|f(x^{-1}\gamma x)|\cdot(1-\hat{\tau}_{P_{31}}(H_{0}(x)-T)-\hat{\tau}_{P_{13}}(H_{0}(w_sx)-T))dx. \]
It equals
\begin{eqnarray*}
c_{P_{31}}\sum_{\gamma\in\{M_{t,31}^{\mathfrak{o}^0_{31}}\}}(n_{\gamma, M})^{-1}\int_{K}\int_{A_{31, \infty}^+M(\gamma)_{31, \mathbb{Q}}\backslash P_{31, \mathbb{A}}}\int_{Z_\infty^+\backslash A_{31, \infty}^+}|f(k^{-1}p^{-1}\gamma pk)|\\
\cdot(1-\hat{\tau}_{P_{31}}(H_{0}(ap)-T)-\hat{\tau}_{P_{13}}(H_{0}(w_sap)-T))da\ d_lp\ dk. 
\end{eqnarray*}
Then the integral becomes
\begin{eqnarray*}
    c_{P_{31}}\sum_{\gamma\in\{M_{t,31}^{\mathfrak{o}^0_{31}}\}}\tilde{\tau}(\gamma, M)\int_{K}\int_{M(\gamma)_{31, \mathbb{A}}\backslash P_{31, \mathbb{A}}}|f(k^{-1}p^{-1}\gamma pk)|\\
    \cdot\int_{Z_\infty^+\backslash A_{31, \infty}^+}(1-\hat{\tau}_{P_{31}}(H_{0}(ap)-T)-\hat{\tau}_{P_{13}}(H_{0}(w_sap)-T))da\ d_lp\ dk. 
\end{eqnarray*}
We already know from Lemma \ref{l9} that the sum over $\gamma$ is finite. Since the function \[f^K(p)=\int_{K}f(k^{-1}pk)dk,\quad p\in P_{31, \mathbb{A}}\] has compact support, by Lemma \ref{l10}, the integral on $M(\gamma)_{31, \mathbb{A}}\backslash P_{31, \mathbb{A}}$ can be taken over a compact set. For any $p$ the function 
\[a\longrightarrow 1-\hat{\tau}_{P_{31}}(H_{0}(ap)-T)-\hat{\tau}_{P_{13}}(H_{0}(w_sap)-T), \quad a\in Z_\infty^+\backslash A_{31, \infty}^+, \] has compact support. So $I_{\rm{unram}}^{\mathfrak{o}_{31}}(f, x, T)$ is integrable over $Z_\infty^+G_\mathbb{Q}\backslash G_\mathbb{A}$.

The integral is
\begin{eqnarray*}
    c_{P_{31}}\sum_{\gamma\in\{M_{31}^{\mathfrak{o}_{31}}\}}\tilde{\tau}(\gamma, M)\int_{K}\int_{N_{31, \mathbb{A}}}\int_{M(\gamma)_{31, \mathbb{A}}\backslash M_{31, \mathbb{A}}}f(k^{-1}n^{-1}m^{-1}\gamma mnk)\\
    \cdot \int_{Z_\infty^+\backslash A_{31, \infty}^+}(1-\hat{\tau}_{P_{31}}(H_{0}(amnk)-T)-\hat{\tau}_{P_{13}}(H_{0}(w_samnk)-T))da\ dm\ dn\ dk. 
\end{eqnarray*}
For fixed $m$, $k$ and $n$,\[1-\hat{\tau}_{P_{31}}(H_{0}(amnk)-T)-\hat{\tau}_{P_{13}}(H_{0}(w_samnk)-T)\] is the characteristic function of the interval \[[-\hat{\alpha}_1(T)-\hat{\alpha}_3(H_0(m))+\hat{\alpha}_1(H_0(w_sn)), \hat{\alpha}_3(T)-\hat{\alpha}_3(H_0(m))], \]here $w_s^{-1}\hat{\alpha}_{1}=-\hat{\alpha}_3$.

Thus the integral is the sum of 

\begin{eqnarray}
   \hat{\alpha}_3(T)\cdot c_{P_{31}}a_{P_{31}}\sum_{\gamma\in\{M_{31}^{\mathfrak{o}_{31}}\}}\tilde{\tau}(\gamma, M)\cdot\label{9111}\\\notag
    \int_{K}\int_{N_{31, \mathbb{A}}}\int_{M(\gamma)_{31, \mathbb{A}}\backslash M_{31, \mathbb{A}}}f(k^{-1}n^{-1}m^{-1}\gamma mnk)dm\ dn\ dk\\
    +\hat{\alpha}_1(T)\cdot c_{P_{13}}a_{P_{13}}\sum_{\gamma\in\{M_{13}^{\mathfrak{o}_{31}}\}}\tilde{\tau}(\gamma, M)\cdot\label{9222}\\\notag
    \int_{K}\int_{N_{13, \mathbb{A}}}\int_{M(\gamma)_{13, \mathbb{A}}\backslash M_{13, \mathbb{A}}}f(k^{-1}n^{-1}m^{-1}\gamma mnk)dm\ dn\ dk
\end{eqnarray}
and
\begin{eqnarray}\label{9.6}
    -c_{P_{31}}a_{P_{31}}\sum_{\gamma\in\{M_{31}^{\mathfrak{o}_{31}}\}}\tilde{\tau}(\gamma, M)\\
    \int_{K}\int_{N_{31, \mathbb{A}}}\int_{M(\gamma)_{31, \mathbb{A}}\backslash M_{31, \mathbb{A}}}f(k^{-1}n^{-1}m^{-1}\gamma mnk)
    \cdot \hat{\alpha}_1(H_{0}(w_{(14)}n))dm\ dn\ dk.\notag
\end{eqnarray}
We change the variable of integration on $N_\mathbb{A}$,  by Lemma \ref{l2},  the sum of (\ref{9111}) and (\ref{9222}) become 
\begin{eqnarray*}
    \hat{\alpha}_3(T)\cdot c_{P_{31}}a_{P_{31}}\sum_{\gamma\in\{M_{31}^{\mathfrak{o}_{31}}\}}\tilde{\tau}(\gamma, M)\cdot\\
    \int_{K}\int_{M(\gamma)_{31, \mathbb{A}}\backslash M_{31, \mathbb{A}}}\int_{N_{31, \mathbb{A}}}f(k^{-1}m^{-1}\gamma nmk)\cdot \rm{exp}(-<2\rho_{P_{31}}, H_0(m)>)dn\ dm\ dk\\
    +\hat{\alpha}_1(T)\cdot c_{P_{13}}a_{P_{13}}\sum_{\gamma\in\{M_{13}^{\mathfrak{o}_{31}}\}}\tilde{\tau}(\gamma, M)\cdot\\
    \int_{K}\int_{M(\gamma)_{13, \mathbb{A}}\backslash M_{13, \mathbb{A}}}\int_{N_{13, \mathbb{A}}}f(k^{-1}m^{-1}\gamma nmk)\cdot \rm{exp}(-<2\rho_{P_{13}}, H_0(m)>)dn\ dm\ dk. 
\end{eqnarray*}
This term equals 
\begin{eqnarray*}
    \hat{\alpha}_3(T)\cdot c_{P_{31}}a_{P_{31}}\sum_{\gamma\in\{M_{31}^{\mathfrak{o}_{31}}\}}(n_{\gamma, M})^{-1}\cdot\\
    \int_{K}\int_{A_\infty^+M(\gamma)_{31, \mathbb{Q}}\backslash M_{31, \mathbb{A}}}\int_{N_{31, \mathbb{A}}}f(k^{-1}m^{-1}\gamma nmk)\cdot \rm{exp}(-<2\rho_{P_{31}}, H_0(m)>)dn\ dm\ dk\\
    +\hat{\alpha}_1(T)\cdot c_{P_{13}}a_{P_{13}}\sum_{\gamma\in\{M_{13}^{\mathfrak{o}_{31}}\}}(n_{\gamma, M})^{-1}\cdot\\
    \int_{K}\int_{M(\gamma)_{13, \mathbb{A}}\backslash M_{13, \mathbb{A}}}\int_{N_{13, \mathbb{A}}}f(k^{-1}m^{-1}\gamma nmk)\cdot \rm{exp}(-<2\rho_{P_{13}}, H_0(m)>)dn\ dm\ dk, 
\end{eqnarray*}
that is 
\begin{eqnarray}\label{9.9}
    \hat{\alpha}_3(T)\cdot c_{P_{31}}a_{P_{31}}\int_{K}\int_{A_{31, \infty}^+M_{31, \mathbb{Q}}\backslash M_{31, \mathbb{A}}}\sum_{\gamma\in M_{31}^{\mathfrak{o}_{31}}}\int_{N_{31, \mathbb{A}}}f(k^{-1}m^{-1}\gamma nmk)\\\notag
    \cdot \rm{exp}(-<2\rho_{P_{31}}, H_0(m)>)dn\ dm\ dk
\end{eqnarray}
\begin{eqnarray}\label{9.10}
    +\hat{\alpha}_1(T)\cdot c_{P_{13}}a_{P_{13}}\int_{K}\int_{A_{13, \infty}^+M_{13, \mathbb{Q}}\backslash M_{13, \mathbb{A}}}\sum_{\gamma\in M_{13}^{\mathfrak{o}_{31}}}\int_{N_{13, \mathbb{A}}}f(k^{-1}m^{-1}\gamma nmk)\\\notag
    \cdot \rm{exp}(-<2\rho_{P_{13}}, H_0(m)>)dn\ dm\ dk. 
\end{eqnarray}
 
\subsection{The third parabolic term}
In this section, we compute the integral over $Z_\infty^+G_\mathbb{Q}\backslash G_\mathbb{A}$ of $-K_{P_{31}}''(f, x, T)-K_{P_{13}}''(f, x, T)$ which is the third parabolic term associated to $P_{31}$. Then the second parabolic term can be canceled.

This integral is 
\begin{eqnarray*}
    -\frac{1}{4\pi i}\sum_{\chi}\int_{i\mathfrak{a}_{G}\backslash i\mathfrak{a}_{31}}\int_{Z_\infty^+G_\mathbb{Q}\backslash G_\mathbb{A}}\sum_{\alpha, \beta\in \mathscr{B}_{P_{31},\chi}}E''^{T}_{P_{31}}(\Phi_\alpha, \lambda, x)\overline{E''^{T}_{P_{31}}(\Phi_\beta, \lambda, x)}dx\ d\lambda\\
    -\frac{1}{4\pi i}\sum_\chi\int_{i\mathfrak{a}_{G}\backslash i\mathfrak{a}_{13}}\int_{Z_\infty^+G_\mathbb{Q}\backslash G_\mathbb{A}}\sum_{\alpha, \beta\in \mathscr{B}_{P_{13},\chi}}E''^{T}_{P_{13}}(\Phi_\alpha, \lambda, x)\overline{E''^{T}_{P_{13}}(\Phi_\beta, \lambda, x)}dx\ d\lambda. 
\end{eqnarray*}
\begin{lemma}
    For $\alpha, \beta\in I_{P_{31}}$ and $\lambda$ a nonzero imaginary number in $i\mathfrak{a}_{G}\backslash i\mathfrak{a}_{31}$, $s=(14)$, the integral \[\int_{Z_\infty^+G_\mathbb{Q}\backslash G_\mathbb{A}}E''^{T}_{P_{31}}(\Phi_\alpha, \lambda, x)\overline{E''^{T}_{P_{31}}(\Phi_\beta, \lambda, x)}dx\] is
    \begin{eqnarray}\label{E1}
        &2a_{P_{31}}\hat{\alpha}_3(T)(\Phi_\alpha, \Phi_\beta)
    \end{eqnarray}
    \begin{eqnarray}\label{E2}
        &-a_{P_{31}}(M_{P_{31}}(s^{-1}, s\lambda)\cdot \frac{d}{d\lambda}M_{P_{31}}(s, \lambda)\Phi_\alpha, \Phi_\beta)
    \end{eqnarray}
\end{lemma}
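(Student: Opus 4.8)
This is the Maass--Selberg relation for the maximal parabolic $P_{31}$; the plan is to half-unfold the truncated inner product, compute the constant term of the truncated Eisenstein series, and then integrate over the one--dimensional split torus and pass to the regular value at a second spectral parameter. First I would use that $E''^{T}_{P_{31}}=\Lambda^{T}E_{P_{31}}$ (Section \ref{sc6}) and that $\Lambda^{T}$ is idempotent and self-adjoint to write
\[
\int_{Z_\infty^+G_\mathbb{Q}\backslash G_\mathbb{A}}E''^{T}_{P_{31}}(\Phi_\alpha,\lambda,x)\,\overline{E''^{T}_{P_{31}}(\Phi_\beta,\lambda,x)}\,dx=\int_{Z_\infty^+G_\mathbb{Q}\backslash G_\mathbb{A}}\Lambda^{T}E_{P_{31}}(\Phi_\alpha,\lambda,x)\,\overline{E_{P_{31}}(\Phi_\beta,\lambda,x)}\,dx .
\]
Since $\Lambda^{T}E_{P_{31}}(\Phi_\alpha,\lambda,\cdot)$ is rapidly decreasing while $\mu\mapsto E_{P_{31}}(\Phi_\beta,\mu,\cdot)$ is regular near $i\mathfrak{a}_{31}$, the right side is the value at $\mu=\lambda$ of the holomorphic function $\mu\mapsto\int\Lambda^{T}E_{P_{31}}(\Phi_\alpha,\lambda,x)\overline{E_{P_{31}}(\Phi_\beta,\mu,x)}dx$. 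I would then unfold $E_{P_{31}}(\Phi_\beta,\mu,\cdot)$ over $P_{31,\mathbb{Q}}\backslash G_\mathbb{Q}$ and carry out the $N_{31,\mathbb{Q}}\backslash N_{31,\mathbb{A}}$-integration, reducing the above to
\[
\int_{Z_\infty^+N_{31,\mathbb{A}}M_{31,\mathbb{Q}}\backslash G_\mathbb{A}}\bigl(\Lambda^{T}E_{P_{31}}(\Phi_\alpha,\lambda,\cdot)\bigr)^{c_{P_{31}}}(x)\,\overline{\Phi_\beta(x)}\,\exp(\langle\bar\mu+\rho_{P_{31}},H_{P_{31}}(x)\rangle)\,dx .
\]

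Next I would compute the constant term $\bigl(\Lambda^{T}E_{P_{31}}(\Phi_\alpha,\lambda)\bigr)^{c_{P_{31}}}$ along $P_{31}$ by inserting $\Lambda^{T}E_{P_{31}}=E_{P_{31}}-\sum_{\delta}\hat\tau_{P_{31}}(\cdot)E^{c_{P_{31}}}_{P_{31}}(\cdot)-\sum_{\delta}\hat\tau_{P_{13}}(\cdot)E^{c_{P_{13}}}_{P_{31}}(\cdot)$ and using reduction theory (Siegel domains, the partition of unity (\ref{l71}), Lemma \ref{lemma7.1}) to evaluate the resulting summations over $P_{31,\mathbb{Q}}\backslash G_\mathbb{Q}$ and $P_{13,\mathbb{Q}}\backslash G_\mathbb{Q}$. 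Since $\mathfrak{P}_{31}=\{P_{31},P_{13}\}$, $\Omega(\mathfrak{a}_{31},\mathfrak{a}_{31})=\{1\}$, $\Omega(\mathfrak{a}_{31},\mathfrak{a}_{13})=\{s\}$ with $s=(14)$, and $M_{P_{13}}(s,s\lambda)M_{P_{31}}(s,\lambda)=\mathrm{Id}$, this constant term is an exponential--polynomial on $A_{31,\infty}^+$ built from $\Phi_\alpha$ and from $M_{P_{31}}(s,\lambda)\Phi_\alpha$ (the latter entering through the truncation bringing in the constant term along the associate $P_{13}$), whose coefficients are characteristic functions in the one--dimensional variable $\hat\alpha_3(H_{31}(x)-T)$. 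Substituting this into the displayed integral, applying the Iwasawa decomposition $x=nmak$ with $a\in A_{31,\infty}^+$, and performing the $dk\,dm$-integration — using the orthonormality of $\{\Phi_\beta\}$ and the unitarity of $M_{P_{31}}(s,\lambda)$ on $i\mathfrak{a}$ — reduces matters to an integral over $Z_\infty^+\backslash A_{31,\infty}^+\cong\mathbb{R}$ of a finite sum of exponentials $\exp(\langle\lambda+\bar\mu,\cdot\rangle)$ against half--line characteristic functions, with coefficients $(\Phi_\alpha,\Phi_\beta)$ and $(M_{P_{31}}(s,\lambda)\Phi_\alpha,M_{P_{31}}(s,\mu)\Phi_\beta)$.

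Finally I would evaluate this last integral: for $\mu$ in a suitable tube, and then by analytic continuation, it is a sum of rational functions of $\langle\lambda+\bar\mu,\hat\alpha_3\rangle$ multiplied by $a_{P_{31}}$, with (at worst) a simple pole at $\mu=\lambda$ — where $\lambda+\bar\lambda=0$ — whose residue cancels by $M_{P_{31}}(s,\lambda)^{*}M_{P_{31}}(s,\lambda)=\mathrm{Id}$. Its regular value, obtained by a first--order Taylor expansion in $\mu$, splits as: the $T$-linear piece coming from differentiating the exponentials, which is $2a_{P_{31}}\hat\alpha_3(T)(\Phi_\alpha,\Phi_\beta)$, i.e.\ (\ref{E1}); and the piece coming from differentiating $M_{P_{31}}(s,\mu)\Phi_\beta$ in $\mu$, which, after using $M_{P_{31}}(s,\lambda)^{*}=M_{P_{31}}(s^{-1},-s\bar\lambda)=M_{P_{31}}(s^{-1},s\lambda)$ (valid as $\lambda$ is purely imaginary and $s^{-1}=s$) and the functional equations to move the scattering operator onto the $\Phi_\alpha$-factor, is $-a_{P_{31}}\bigl(M_{P_{31}}(s^{-1},s\lambda)\tfrac{d}{d\lambda}M_{P_{31}}(s,\lambda)\Phi_\alpha,\Phi_\beta\bigr)$, i.e.\ (\ref{E2}).

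The step I expect to be the main obstacle is the constant--term computation: pushing the two coset summations through reduction theory and the Siegel--domain estimates, resolving the interaction of $\hat\tau_{P_{31}}$ with $\hat\tau_{P_{13}}$, and keeping precise track of the normalization constant $a_{P_{31}}$ and of the signs of the two cutoffs — it is exactly this bookkeeping that produces the coefficient $2$ and the minus sign in the final answer. The remaining steps (the unfolding, the torus integral, the pole cancellation, and the Taylor expansion) are routine once the constant term is in hand.
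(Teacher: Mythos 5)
Your proposal is correct, and its second half is the same argument as the paper's: introduce an auxiliary spectral parameter, observe that at the coincidence point the apparent pole of the inner–product formula cancels because $M_{P_{31}}(s,\lambda)$ is unitary on the imaginary axis, and extract the regular value (your first–order Taylor expansion is the paper's L'Hopital step), giving the $2a_{P_{31}}\hat{\alpha}_3(T)(\Phi_\alpha,\Phi_\beta)$ term from the two exponentials and the $-a_{P_{31}}\bigl(M_{P_{31}}(s^{-1},s\lambda)\tfrac{d}{d\lambda}M_{P_{31}}(s,\lambda)\Phi_\alpha,\Phi_\beta\bigr)$ term from differentiating the intertwining operator. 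Where you differ is in how you obtain the two–parameter inner–product formula itself: the paper simply quotes Langlands' (Maass--Selberg) formula from \cite{L1}, namely that for distinct $\lambda_1,\overline{\lambda}_2$ with suitably regular real parts the truncated inner product equals
\[
\frac{\exp(<\lambda_1+\overline{\lambda}_2,T>)}{<\lambda_1+\overline{\lambda}_2,\alpha_3>}(\Phi_\alpha,\Phi_\beta)
+\frac{\exp(<s\lambda_1+s\overline{\lambda}_2,T>)}{<s\lambda_1+s\overline{\lambda}_2,\alpha_1>}(M_{P_{31}}(s,\lambda_1)\Phi_\alpha,M_{P_{31}}(s,\lambda_2)\Phi_\beta),
\]
and then sets $\lambda_1-\lambda_2=a\hat{\alpha}_3$, whereas you propose to re-derive this formula from scratch via $E''^T=\Lambda^T E$, self-adjointness and idempotence of $\Lambda^T$, unfolding the second Eisenstein series, and computing the constant term of the truncated series through reduction theory. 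That route is legitimate and more self-contained (it is essentially how the cited formula is proved), but note that the ``main obstacle'' you flag — the constant-term bookkeeping that produces exactly the two exponential terms, the factor $2$, and the sign — is precisely the content of the formula the paper invokes; so in the paper's economy this step costs one citation, while in yours it is the bulk of the work and is left as a sketch. Everything else in your outline (regularity of $\Lambda^T E$ versus holomorphy in the auxiliary parameter, unfolding only in the convergence tube and continuing analytically, unitarity giving $M_{P_{31}}(s,\lambda)^{*}=M_{P_{31}}(s^{-1},s\lambda)$ for imaginary $\lambda$, and the absence of cross terms since $P_{31}\neq P_{13}$) is consistent with the paper.
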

\begin{proof}
    Suppose that $\lambda_1, \overline{\lambda}_2$ are distinct complex numbers in $i\mathfrak{a}_{G}\backslash i\mathfrak{a}_{31}$, with real parts suitably regular. Then by the formula of the inner product which  Langlands introduced in \cite{L1}, 
    \begin{eqnarray*}
    &\int_{Z_\infty^+G_\mathbb{Q}\backslash G_\mathbb{A}}E''^{T}_{P_{31}}(\Phi_\alpha, \lambda_1, x)\overline{E''^{T}_{P_{31}}(\Phi_\beta, \lambda_2, x)}dx\\
    &=\frac{\rm{exp}(<\lambda_1+\overline{\lambda_2}, T>)}{<\lambda_1+\overline{\lambda_2}, \alpha_3>}(\Phi_\alpha, \Phi_\beta)+\frac{\rm{exp}(<s\lambda_1+s\overline{\lambda_2}, T>)}{<s\lambda_1+s\overline{\lambda_2}, \alpha_1>}(M_{P_{31}}(s, \lambda_1)\Phi_\alpha, M_{P_{31}}(s, \lambda_2)\Phi_\beta).
    \end{eqnarray*}

    We observe that this function is meromorphic in $\lambda_1, \lambda_2$. Set $\lambda_1-\lambda_2=a\hat{\alpha}_3$, then we will let this term be the limit as $a$ approaches $0$ of 
    \begin{eqnarray*}
        \frac{\rm{exp}(<a\hat{\alpha}_3, T>)(\Phi_\alpha, \Phi_\beta)-\rm{exp}(-<a\hat{\alpha}_3, T>)(M_{P_{31}}(s, (a\hat{\alpha}_3+1)\lambda_2)\Phi_\alpha, M_{P_{31}}(s, \lambda_2)\Phi_\beta)}{<a\hat{\alpha}_3, \alpha_3>}. 
    \end{eqnarray*}
    Recall that $M_P(s,\lambda)$ is unitry if $\lambda$ is a pure imaginary number. Applying L'Hopital's rule yields the desired result. 
\end{proof}
We now obtain an analogous lemma for $P_{13}$, namely
\begin{lemma}
    For $\alpha, \beta\in I_{P_{13}}$ and $\lambda$ a nonzero imaginary number, $s=(14)$, the integral \[\int_{Z_\infty^+G_\mathbb{Q}\backslash G_\mathbb{A}}E''^{T}_{P_{13}}(\Phi_\alpha, \lambda, x)\overline{E''^{T}_{P_{13}}(\Phi_\beta, \lambda, x)}dx\] is
    \begin{eqnarray}\label{E3}
        &2a_{P_{13}}\hat{\alpha}_1(T)(\Phi_\alpha, \Phi_\beta)
    \end{eqnarray}
    \begin{eqnarray}\label{E4}
        &-a_{P_{13}}(M_{P_{13}}(s^{-1}, s\lambda)\cdot \frac{d}{d\lambda}M_{P_{13}}(s, \lambda)\Phi_\alpha, \Phi_\beta). 
    \end{eqnarray}
\end{lemma}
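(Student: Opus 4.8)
The plan is to mirror the proof of the preceding lemma for $P_{31}$, with $P_{31}$, $\alpha_3$, $\mathfrak{a}_{31}$ replaced throughout by $P_{13}$, $\alpha_1$, $\mathfrak{a}_{13}$. Since $\mathfrak{P}_{31}=\{P_{31},P_{13}\}$ and we are pairing $E_{P_{13}}^{\prime\prime T}$ with itself, the only isomorphisms entering the constant terms are the identity in $\Omega(\mathfrak{a}_{13},\mathfrak{a}_{13})$ and $s=(14)\in\Omega(\mathfrak{a}_{13},\mathfrak{a}_{31})$, for which $w_s^{-1}\hat\alpha_1=-\hat\alpha_3$, equivalently $s\hat\alpha_1=-\hat\alpha_3$.

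First I would choose distinct $\lambda_1,\overline{\lambda}_2$ in $i\mathfrak{a}_G\backslash i\mathfrak{a}_{13}$ with suitably regular real parts and apply Langlands' inner product formula (\cite{L1}) to $\int_{Z_\infty^+G_\mathbb{Q}\backslash G_\mathbb{A}}E_{P_{13}}^{\prime\prime T}(\Phi_\alpha,\lambda_1,x)\overline{E_{P_{13}}^{\prime\prime T}(\Phi_\beta,\lambda_2,x)}\,dx$. As in the $P_{31}$ case this is the sum of
\begin{equation*}
\frac{\exp(\langle\lambda_1+\overline{\lambda_2},T\rangle)}{\langle\lambda_1+\overline{\lambda_2},\alpha_1\rangle}(\Phi_\alpha,\Phi_\beta)
\quad\text{and}\quad
\frac{\exp(\langle s\lambda_1+s\overline{\lambda_2},T\rangle)}{\langle s\lambda_1+s\overline{\lambda_2},\alpha_3\rangle}\big(M_{P_{13}}(s,\lambda_1)\Phi_\alpha,\,M_{P_{13}}(s,\lambda_2)\Phi_\beta\big).
\end{equation*}
This function of $(\lambda_1,\lambda_2)$ is meromorphic, and the quantity we want is its value on the diagonal $\lambda_1=\lambda_2=\lambda$ with $\lambda$ purely imaginary. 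Setting $\lambda_1-\lambda_2=a\hat\alpha_1$ and using $\overline{\lambda_2}=-\lambda_2$ together with $s\hat\alpha_1=-\hat\alpha_3$, this value equals
\begin{equation*}
\lim_{a\to 0}\frac{\exp(\langle a\hat\alpha_1,T\rangle)(\Phi_\alpha,\Phi_\beta)-\exp(-\langle a\hat\alpha_1,T\rangle)\big(M_{P_{13}}(s,\lambda_2+a\hat\alpha_1)\Phi_\alpha,\,M_{P_{13}}(s,\lambda_2)\Phi_\beta\big)}{\langle a\hat\alpha_1,\alpha_1\rangle}.
\end{equation*}

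By Langlands' theorem $M_{P_{13}}(s,\lambda)$ is unitary on $i\mathfrak{a}$, so the numerator vanishes at $a=0$ and L'Hôpital's rule applies. Differentiating in $a$ at $0$ yields $\langle\hat\alpha_1,T\rangle(\Phi_\alpha,\Phi_\beta)$ from each of the two exponential factors (the second again using unitarity) and $-\big(\frac{d}{d\lambda}M_{P_{13}}(s,\lambda)\Phi_\alpha,\,M_{P_{13}}(s,\lambda)\Phi_\beta\big)$ from differentiating the pairing; by $M_{P_{13}}(s,\lambda)^*=M_{P_{13}}(s^{-1},-s\overline\lambda)$ and the relation $-s\overline\lambda=s\lambda$ valid for imaginary $\lambda$, the last term becomes $-\big(M_{P_{13}}(s^{-1},s\lambda)\frac{d}{d\lambda}M_{P_{13}}(s,\lambda)\Phi_\alpha,\Phi_\beta\big)$. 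Dividing by the derivative $\langle\hat\alpha_1,\alpha_1\rangle$ of the denominator and incorporating the normalizing constant $a_{P_{13}}$ coming from the Haar measure on $\mathfrak{a}_{13}$ fixed in the preliminaries gives precisely the sum of (\ref{E3}) and (\ref{E4}).

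The only point needing genuine care — the same one as in the $P_{31}$ lemma — is to verify that the value of the integral at the diagonal imaginary parameter is actually the limit of the meromorphically continued bilinear form and not a residue: this is guaranteed by the regularity of $E_{P_{13}}(\Phi,\lambda,x)$ on $i\mathfrak{a}$ and the unitarity of $M_{P_{13}}(s,\lambda)$ there, which together force the poles of the two individual terms at $a=0$ to be simple and to cancel. The remaining steps — the sign bookkeeping forced by $s\hat\alpha_1=-\hat\alpha_3$ and the identification of the scalar $a_{P_{13}}$ — are routine and identical to the $P_{31}$ computation.
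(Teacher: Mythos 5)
Your proposal is correct and takes essentially the same route as the paper: the paper proves the $P_{31}$ case in detail and simply states the $P_{13}$ case as the analogous result, and what you write is exactly that transposition — Langlands' inner product formula for the truncated Eisenstein series with two regular parameters, the substitution $\lambda_1-\lambda_2=a\hat{\alpha}_1$, cancellation of the pole at $a=0$ via unitarity of $M_{P_{13}}(s,\lambda)$ on $i\mathfrak{a}$, and L'H\^opital together with $M_{P_{13}}(s,\lambda)^*=M_{P_{13}}(s^{-1},-s\overline{\lambda})$. Even the one delicate bookkeeping point (writing the second exponential with $\hat{\alpha}_1(T)$ so that the two contributions combine into $2a_{P_{13}}\hat{\alpha}_1(T)(\Phi_\alpha,\Phi_\beta)$) follows the paper's own convention in the $P_{31}$ computation, so your argument is the intended proof.
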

Substituting (\ref{E1}) and (\ref{E3}) into $K_{P_{31}}''(f, x, T)+K_{P_{13}}''(f, x, T)$, we obtain
\begin{eqnarray*}
\frac{\hat{\alpha}_3(T)}{2\pi i}a_{P_{31}}\int_{i\mathfrak{a}_{G}\backslash i\mathfrak{a}_{31}}\rm{tr}\,\pi_{P_{31}}(\lambda, f)d\lambda\\
+\frac{\hat{\alpha}_1(T)}{2\pi i}a_{P_{13}}\int_{i\mathfrak{a}_{G}\backslash i\mathfrak{a}_{13}}\rm{tr}\,\pi_{P_{13}}(\lambda, f)d\lambda. 
\end{eqnarray*}
According to Lemma \ref{lemma4.4}, we can write this as
\begin{eqnarray*}
    c_{P_{31}}a_{P_{31}}\cdot\frac{\hat{\alpha}_3(T)}{2\pi i}\int_{i\mathfrak{a}_{G}\backslash i\mathfrak{a}_{31}}\int_{A_{31, \infty}^+M_{31, \mathbb{Q}}\backslash M_{31\mathbb{A}}}P_{P_{31}}(\lambda, f, mk, mk)dm\ dk\ d\lambda\\
    +c_{P_{13}}a_{P_{13}}\cdot\frac{\hat{\alpha}_1(T)}{2\pi i}\int_{i\mathfrak{a}_{G}\backslash i\mathfrak{a}_{13}}\int_{A_{13, \infty}^+M_{13, \mathbb{Q}}\backslash M_{13, \mathbb{A}}}P_{P_{13}}(\lambda, f, mk, mk)dm\ dk\ d\lambda, 
\end{eqnarray*}by the continity of $P_{P}(\lambda,x,y,x)$, which is the product of 
\begin{eqnarray*}
\rm{exp}(<\lambda+\rho_P, H_P(y)>)\rm{exp}(<-\lambda-\rho_P, H_P(x)>)
\end{eqnarray*}
and  
\begin{eqnarray*}
    \sum_{\gamma\in M_{\mathbb{Q}}}\int_{N_\mathbb{A}}\int_{\mathfrak{a}_G\backslash\mathfrak{a}}f(x^{-1}nh_a\gamma y)\rm{exp}(<-\lambda-\rho_P, a>)da\ dn. 
\end{eqnarray*}
We now apply the Fourier inversion formula to obtain
\begin{eqnarray*}
    c_{P_{31}}a_{P_{31}}\cdot \hat{\alpha}_3(T)\int_K\int_{A_{31, \infty}^+M_{31, \mathbb{Q}}\backslash M_{31, \mathbb{A}}}\sum_{\gamma\in M_{31}}\int_{N_{31, \mathbb{A}}}f(k^{-1}m^{-1}\gamma n mk)\\
    \rm{exp}(-<2\rho_{P_{31}}, H_0(m)>)dn\ dm\ dk\\
    +c_{P_{13}}a_{P_{13}}\cdot \hat{\alpha}_1(T)\int_K\int_{A_{13, \infty}^+M_{13, \mathbb{Q}}\backslash M_{13, \mathbb{A}}}\sum_{\gamma\in M_{13}}\int_{N_{13, \mathbb{A}}}f(k^{-1}m^{-1}\gamma n mk)\\
    \rm{exp}(-<2\rho_{P_{13}}, H_0(m)>)dn\ dm\ dk. 
\end{eqnarray*}
Now the terms corresponding to (\ref{E2}) and (\ref{E4}) are 
\begin{eqnarray}\label{9.18}
\frac{a_{P_{31}}}{4\pi i}\sum_{\chi}\int_{i\mathfrak{a}_{G}\backslash i\mathfrak{a}_{31}}\rm{tr}\{M_{P_{31}}(s^{-1}, s\lambda)\cdot(\frac{d}{d\lambda}M_{P_{31}}(s, \lambda))\cdot\pi_{P_{31}, \chi}(\lambda, f)\}d\lambda\\\notag
+\frac{a_{P_{13}}}{4\pi i}\sum_{\chi}\int_{i\mathfrak{a}_{G}\backslash i\mathfrak{a}_{13}}\rm{tr}\{M_{P_{13}}(s^{-1}, s\lambda)\cdot(\frac{d}{d\lambda}M_{P_{13}}(s, \lambda))\cdot\pi_{P_{13}, \chi}(\lambda, f)\}d\lambda. 
\end{eqnarray}
where $\pi_{P_{31}, \chi}(\lambda, f), \pi_{P_{13}, \chi}(\lambda, f)$ are the restrictions of $\pi_{P_{31}}(\lambda, f), \pi_{P_{13}}(\lambda, f)$ to $\mathscr{H}_{P, \chi}$. 
The term (\ref{9.18}) is finite since all of other terms are convergent with respect to $T$.

Observe that the terms (\ref{9.9}), (\ref{9.10}) can be canceled, but there are some additional remaining terms:
\begin{eqnarray}\label{4444}
    c_{P_{31}}a_{P_{31}}\cdot \hat{\alpha}_3(T)\int_K\int_{A_{31, \infty}^+M_{31, \mathbb{Q}}\backslash M_{31, \mathbb{A}}}\sum_{\gamma\in M_{31}-M_{31}^{\mathfrak{o}^0_{31}}}\int_{N_{31, \mathbb{A}}}\\\notag
    f(k^{-1}m^{-1}\gamma n mk)\rm{exp}(-<2\rho_{P_{31}}, H_0(m)>)dn\ dm\ dk\\\notag
    +c_{P_{13}}a_{P_{13}}\cdot \hat{\alpha}_1(T)\int_K\int_{A_{13, \infty}^+M_{13, \mathbb{Q}}\backslash M_{13, \mathbb{A}}}\sum_{\gamma\in M_{13}-M_{13}^{\mathfrak{o}^0_{31}}}\int_{N_{13, \mathbb{A}}}\\\notag
    f(k^{-1}m^{-1}\gamma n mk)\rm{exp}(-<2\rho_{P_{13}}, H_0(m)>)dn\ dm\ dk. 
\end{eqnarray}
However, for any unramified orbit $\mathfrak{o}\neq\mathfrak{o}^0_{31}$ in $M_{31}$, $\gamma\in M_{31}^\mathfrak{o}$, suppose fix $\chi$ and $P=P_{31}$, the sum $\sum_{\alpha,\beta\in \mathscr{B}_{P,\chi}}(\Phi_\alpha,\Phi_\beta)$ is the trace of $H_{P,\chi}$ which is the finite dimensional subspace of cusp forms.

Apply Lemma \ref{l2}, we write the first integral in (\ref{4444}) as 
\begin{eqnarray*}
    c_{P_{31}}a_{P_{31}}\cdot \hat{\alpha}_3(T)\int_K\int_{N_{31, \mathbb{A}}}\int_{A_{31, \infty}^+M_{31, \mathbb{Q}}\backslash M_{31, \mathbb{A}}}\sum_{\gamma\in M_{31}^{\mathfrak{o}}}f(k^{-1}n^{-1}m^{-1}\gamma  mnk)dm\ dn\ dk.
\end{eqnarray*}
For fixed $k$ and $n$, we consider the inner integral 
\[\int_{A_{31, \infty}^+M_{31, \mathbb{Q}}\backslash M_{31, \mathbb{A}}}\sum_{\gamma\in M_{31}^{\mathfrak{o}}}f(k^{-1}n^{-1}m^{-1}\gamma  mnk)dm.\]
According to \cite{H2}, $\gamma$ is the regular semisimple element but not elliptic element in $M_{31}$, $A_\infty^+\backslash M(\gamma)_\mathbb{A}$ is not compact, thus the orbital integral of this $\gamma$ equals zero. Therefore we only need to consider the elements in the ramified orbits:
\begin{eqnarray}\label{yyy}
    \sum_{\rm{ramified}\,\mathfrak{o}}c_{P_{31}}a_{P_{31}}\cdot \hat{\alpha}_3(T)\int_K\int_{A_{31, \infty}^+M_{31, \mathbb{Q}}\backslash M_{31, \mathbb{A}}}\sum_{\gamma\in M_{31}^{\mathfrak{o}}}\int_{N_{31, \mathbb{A}}}\\\notag
    f(k^{-1}m^{-1}\gamma n mk)\rm{exp}(-<2\rho_{P_{31}}, H_0(m)>)dn\ dm\ dk\\\notag
    +\sum_{\rm{ramified}\,\mathfrak{o}}c_{P_{13}}a_{P_{13}}\cdot \hat{\alpha}_1(T)\int_K\int_{A_{13, \infty}^+M_{13, \mathbb{Q}}\backslash M_{13, \mathbb{A}}}\sum_{\gamma\in M_{13}^{\mathfrak{o}}}\int_{N_{13, \mathbb{A}}}\\\notag
    f(k^{-1}m^{-1}\gamma n mk)\rm{exp}(-<2\rho_{P_{13}}, H_0(m)>)dn\ dm\ dk. 
\end{eqnarray}
We have 
\begin{lemma}
    $I^{\mathfrak{o}_{31}^0}_{\rm{unram}}(f,x,T)-K''_{P_{31}}(f,x,T)-K''_{P_{13}}(f,x,T)$ equals the sum of (\ref{9.6}), (\ref{9.18}) and (\ref{yyy}).
\end{lemma}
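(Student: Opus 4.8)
The plan is to assemble the three preceding subsections and read off which summands survive; almost all of the computation has already been carried out above, so this step is one of collection and cancellation. First I would record the two ingredients in hand. From the subsection on the second parabolic term, the integral of $I_{\rm{unram}}^{\mathfrak{o}_{31}^0}(f,x,T)$ over $Z_\infty^+G_\mathbb{Q}\backslash G_\mathbb{A}$ is the sum of $(\ref{9.9})$, $(\ref{9.10})$ and $(\ref{9.6})$; here $(\ref{9.9})$ and $(\ref{9.10})$ carry the sums $\sum_{\gamma\in M_{31}^{\mathfrak{o}_{31}^0}}$ and $\sum_{\gamma\in M_{13}^{\mathfrak{o}_{31}^0}}$ respectively, while $(\ref{9.6})$ is the leftover term with the factor $\hat{\alpha}_1(H_0(w_{(14)}n))$. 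From the subsection on the third parabolic term, using the two inner-product formulas that split $\int_{Z_\infty^+G_\mathbb{Q}\backslash G_\mathbb{A}}E_{P_{31}}^{\prime\prime T}(\Phi_\alpha,\lambda,x)\,\overline{E_{P_{31}}^{\prime\prime T}(\Phi_\beta,\lambda,x)}\,dx$ into $(\ref{E1})+(\ref{E2})$ (and the $P_{13}$-analogue into $(\ref{E3})+(\ref{E4})$), together with Lemma \ref{lemma4.4} and the Fourier inversion formula, the integral of $-K_{P_{31}}''(f,x,T)-K_{P_{13}}''(f,x,T)$ is the sum of the trace term $(\ref{9.18})$ and of two $\hat{\alpha}_3(T)$- and $\hat{\alpha}_1(T)$-weighted terms carrying the full sums $\sum_{\gamma\in M_{31}}$ and $\sum_{\gamma\in M_{13}}$.

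Next I would add the two expressions. The term $(\ref{9.6})$ and the trace term $(\ref{9.18})$ pass through untouched. Since $M_{31}=\bigcup_{\mathfrak{o}}M_{31}^{\mathfrak{o}}$ and $M_{13}=\bigcup_{\mathfrak{o}}M_{13}^{\mathfrak{o}}$, the $\sum_{\gamma\in M_{31}}$-term combines with $(\ref{9.9})$ and the $\sum_{\gamma\in M_{13}}$-term with $(\ref{9.10})$; the outcome is the weighted double sum over $M_{31}-M_{31}^{\mathfrak{o}_{31}^0}$ and $M_{13}-M_{13}^{\mathfrak{o}_{31}^0}$, that is, exactly $(\ref{4444})$.

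Then I would reduce $(\ref{4444})$ to $(\ref{yyy})$. Writing $M_{31}-M_{31}^{\mathfrak{o}_{31}^0}=\bigcup_{\mathfrak{o}\neq\mathfrak{o}_{31}^0}M_{31}^{\mathfrak{o}}$, and similarly for $M_{13}$, I would move the $N_{31,\mathbb{A}}$-integration past the conjugation via Lemma \ref{l2} and reorganize the inner integral over $A_{31,\infty}^+M_{31,\mathbb{Q}}\backslash M_{31,\mathbb{A}}$ as a sum of orbital integrals over representatives $\gamma\in M_{31}^{\mathfrak{o}}$. For an unramified orbit $\mathfrak{o}\neq\mathfrak{o}_{31}^0$ such a $\gamma$ is regular semisimple but not $M_{31}$-elliptic, so $A_{31,\infty}^+\backslash M_{31}(\gamma)_\mathbb{A}$ is non-compact and, by \cite{H2}, its orbital integral vanishes; hence only the ramified orbits $\mathfrak{o}$ survive and what remains is precisely $(\ref{yyy})$. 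Putting the three observations together gives that $I_{\rm{unram}}^{\mathfrak{o}_{31}^0}(f,x,T)-K_{P_{31}}''(f,x,T)-K_{P_{13}}''(f,x,T)$, integrated over $Z_\infty^+G_\mathbb{Q}\backslash G_\mathbb{A}$, equals the sum of $(\ref{9.6})$, $(\ref{9.18})$ and $(\ref{yyy})$.

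The hard part is the single non-formal input: the vanishing (\cite{H2}) of the orbital integral of a non-elliptic regular semisimple element over the non-compact quotient $A_{31,\infty}^+\backslash M_{31}(\gamma)_\mathbb{A}$. Everything else is bookkeeping of which summands cancel. One should also verify that the interchanges of sum and integral and the application of Lemma \ref{l2} underlying the reorganization of $(\ref{4444})$ are legitimate, but this is already guaranteed by the convergence estimates established earlier, in particular the bound around $(\ref{omega})$ and the finiteness statement of Lemma \ref{l9}.
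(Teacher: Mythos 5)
Your proposal is correct and follows essentially the same route as the paper: collect the computation of the second parabolic term as $(\ref{9.9})+(\ref{9.10})+(\ref{9.6})$, collect the third parabolic term via $(\ref{E1})$--$(\ref{E4})$, Lemma \ref{lemma4.4} and Fourier inversion, cancel to leave $(\ref{4444})$ plus $(\ref{9.18})$, and reduce $(\ref{4444})$ to $(\ref{yyy})$ by the vanishing (via \cite{H2}) of the non-elliptic regular semisimple orbital integrals over the non-compact quotient. This is exactly the paper's argument, so no further comment is needed.
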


\section{Terms associated to $P_{22}$}
\[\Omega(\mathfrak{a}_{22}, \mathfrak{a}_{22})=\{(1), (14)(23)\}. \]

\subsection{The first parabolic term}
The first parabolic term is \[J_{\rm{unram}}^{\mathfrak{o}_{22}^0}(f, x, T)+J_{\rm{ram}}^{\mathfrak{o}_{22}^2}(f, x, T)-K'_{P_{22}}(f, x, T). \]
We shall prove that this term approaches $0$ as $T\rightarrow \infty$.

Recall $J_{\rm{unram}}^{\mathfrak{o}_{22}^0}(f, x, T)$ equals
\[\frac{1}{2}\sum_{\gamma\in \{M_{t, 22}^{\mathfrak{o}_{22}^0}\}}(n_{\gamma, M_{22}})^{-1}\sum_{\delta\in M(\gamma)_{22, \mathbb{Q}}\backslash G_\mathbb{Q}}f(x^{-1}\delta^{-1}\gamma\delta x)(\hat{\tau}_{P_{22}}(H_0(\delta x)- T)+\hat{\tau}_{P_{22}}(H_0(w_s\delta x)- T)).  \]
Then
\begin{eqnarray*}
&\sum_{\gamma\in \{M_{t, 22}^{\mathfrak{o}_{22}^0}\}}(n_{\gamma, M_{22}})^{-1}\sum_{\delta\in M(\gamma)_{22, \mathbb{Q}}\backslash G_\mathbb{Q}}f(x^{-1}\delta^{-1}\gamma\delta x)(\hat{\tau}_{P_{22}}(H_0(w_s\delta x)- T))\\
&=\sum_{\gamma\in \{M_{t, 22}^{\mathfrak{o}_{22}^0}\}}(n_{\gamma, M_{22}})^{-1}\sum_{\delta\in M(w_s\gamma w_s^{-1})_{22, \mathbb{Q}}\backslash G_\mathbb{Q}}f(x^{-1}\delta^{-1}w_s\gamma w_s^{-1}\delta x)(\hat{\tau}_{P_{22}}(H_0(w_s\delta x)- T))\\
&=\sum_{\gamma\in \{M_{t, 22}^{\mathfrak{o}_{22}^0}\}}(n_{\gamma, M_{22}})^{-1}\sum_{\delta\in M(\gamma)_{22, \mathbb{Q}}\backslash G_\mathbb{Q}}f(x^{-1}\delta^{-1}\gamma \delta x)(\hat{\tau}_{P_{22}}(H_0(\delta x)- T)). 
\end{eqnarray*}
Thus, 
\begin{eqnarray*}
&J_{\rm{unram}}^{\mathfrak{o}_{22}^0}(f, x, T)\\
&=\sum_{\gamma\in \{M_{t, 22}^{\mathfrak{o}_{31}^0}\}}(n_{\gamma, M_{22}})^{-1}\sum_{\delta\in M(\gamma)_{22, \mathbb{Q}}\backslash G_\mathbb{Q}}f(x^{-1}\delta^{-1}\gamma\delta x)(\hat{\tau}_{P_{22}}(H_0(\delta x))-T). 
\end{eqnarray*}
Since for $\gamma\in{M_{t, 22}^{\mathfrak{o}_{22}^0}}$, the group $N(\gamma_s)$ is trivial, by Lemma \ref{l1}, $J_{\rm{unram}}^{\mathfrak{o}_{22}}(f, x, T)$ equals
\begin{eqnarray*}
&\sum_{\gamma\in\{M_{t, 22}^{\mathfrak{o}_{22}}\}}(n_{\gamma, M_{22}})\sum_{\delta\in N_{22, \mathbb{Q}}M(\gamma)_{22, \mathbb{Q}}\backslash G_\mathbb{Q}}\sum_{v\in N_{22, \mathbb{Q}}}f(x^{-1}\delta^{-1}\gamma v\delta x)(\hat{\tau}_{P_{31}}(H_0(\delta x)-T)). 
\end{eqnarray*}
Which is 
\begin{eqnarray*}
&\sum_{\delta\in P_{22, \mathbb{Q}}\backslash G_\mathbb{Q}}\sum_{\gamma\in M_{t, 22}^{\mathfrak{o}_{22}^0}}\sum_{v\in N_{22, \mathbb{Q}}}f(x^{-1}\delta^{-1}\gamma v\delta x)(\hat{\tau}_{P_{22}}(H_0(\delta x)-T)). 
\end{eqnarray*}
By Lemma \ref{l1}, $J_{\rm{ram}}^{\mathfrak{o}_{22}^2}(f, x, T)$ equals 
\begin{eqnarray*}
    \sum_{\delta\in P_{22, \mathbb{Q}}\backslash G_\mathbb{Q}}\sum_{\gamma\in M_{n, 22}^{\mathfrak{o}_{22}^2}}\sum_{v\in N_{22, \mathbb{Q}}}f(x^{-1}\delta^{-1}\gamma v\delta x)\hat{\tau}_{22}(H_0(\delta x)-T). 
\end{eqnarray*}
$J_{P_{22}}(f, x, T)$ equals
\begin{eqnarray}\label{j1}
&J_{\rm{unram}}^{\mathfrak{o}_{22}^0}(f, x, T)+J_{\rm{ram}}^{\mathfrak{o}_{22}^2}(f, x, T)
\end{eqnarray}
\begin{eqnarray*}
&+\sum_{\mathfrak{o}\in\{\mathfrak{o}_{211}^0, \mathfrak{o}_{1111}^0\}}\frac{1}{|\Omega(\mathfrak{a}_\mathfrak{o}, P_{22})|}\sum_{s\in\Omega(\mathfrak{a}_\mathfrak{o},P_{22})}\sum_{\delta\in P_{22, \mathbb{Q}}\backslash G_\mathbb{Q}}\sum_{\gamma\in M_{t, 22}^{\mathfrak{o}}}\sum_{v\in N_{22, \mathbb{Q}}}f(x^{-1}\delta^{-1}\gamma v\delta x)\\\notag
&(\hat{\tau}_{P_{22}}(H_0(w_s\delta x)-T))
\end{eqnarray*}
\begin{eqnarray*}
&+\sum_{\mathfrak{o}\ ramified}\sum_{\delta\in M_{22, \mathbb{Q}}N(\gamma_s)_{22, \mathbb{Q}}\backslash G_\mathbb{Q}}\sum_{\gamma\in M_{22}^\mathfrak{o}}\sum_{v\in N(\gamma_s)_{22, \mathbb{Q}}}f(x^{-1}\delta^{-1}\gamma v\delta x)(\hat{\tau}_{P_{22}}(H_0(\delta x)-T)). \notag
\end{eqnarray*}
When we fix an unramified orbit $\mathfrak{o}$, for $s_1\in\Omega(\mathfrak{a}_\mathfrak{o},P_{22})$, 
\begin{eqnarray*}
    \sum_{\delta\in P_{22, \mathbb{Q}}\backslash G_\mathbb{Q}}\sum_{\gamma\in M_{22}^{\mathfrak{o}}}\sum_{v\in N_{22, \mathbb{Q}}}f(x^{-1}\delta^{-1}\gamma v\delta x)(\hat{\tau}_{P_{22}}(H_0(w_s\delta x))-T)
\end{eqnarray*}
is the same as
\begin{eqnarray*}
    \sum_{\delta\in P_{22, \mathbb{Q}}\backslash G_\mathbb{Q}}\sum_{\gamma\in M_{22}^{\mathfrak{o}}}\sum_{v\in N_{22, \mathbb{Q}}}f(x^{-1}\delta^{-1}\gamma v\delta x)(\hat{\tau}_{P_{22}}(H_0(\delta x))-T). 
\end{eqnarray*}
By Lemma \ref{l1}, the sum over ramified orbits in (\ref{j1}) equals
\[\sum_{\mathfrak{o}\ ramified}\sum_{\delta\in P_{22, \mathbb{Q}}\backslash G_\mathbb{Q}}\sum_{\gamma\in M_{22}^\mathfrak{o}}\sum_{v\in N_{22, \mathbb{Q}}}f(x^{-1}\delta^{-1}\gamma v\delta x)(\hat{\tau}_{P_{22}}(H_0(\delta x)-T)).\]

Since $M_{22}=\cup_{\mathfrak{o}} M_{22}^\mathfrak{o}$, $J_{P_{22}}(f, x, T)$ equals
\begin{eqnarray*}
&\sum_{\delta\in P_{22, \mathbb{Q}}\backslash G_\mathbb{Q}}\sum_{\gamma\in M_{22, \mathbb{Q}}}\sum_{v\in N_{22, \mathbb{Q}}}f(x^{-1}\delta^{-1}\gamma v\delta x)(\hat{\tau}_{P_{22}}(H_0(\delta x)-T)). 
\end{eqnarray*}
Similar to the term (\ref{e3}), $J_{P_{22}}(f, x, T)$ is 
\begin{eqnarray*}
&\sum_{\delta\in P_{22, \mathbb{Q}}\backslash G_\mathbb{Q}}\sum_{\gamma\in M_{22}}\int_{N_{22, \mathbb{A}}}f(x^{-1}\delta^{-1}\gamma n\delta x)dn(\hat{\tau}_{P_{22}}(H_0(\delta x)-T)). 
\end{eqnarray*}
Recall $K'_{P_{22}}(f, x, T)$ equals 
\begin{eqnarray*}
    &\frac{1}{4\pi i}\sum_{P_{22, \mathbb{Q}}\backslash G_\mathbb{Q}}\sum_{\chi}\int_{i\mathfrak{a}_{G}\backslash i\mathfrak{a}_{22}}\{\sum_{\beta\in \mathscr{B}_{P_{22}, \chi}}E^{c_{22}}_{P_{22}}(\pi_{P_{22}}(\lambda, f)\Phi_\beta, \lambda, \delta x)\overline{E^{c_{22}}_{P_{22}}(\Phi_\beta, \lambda, \delta x)}\}d\lambda\\
    &\hat{\tau}_{P_{22}}(H_0(\delta x)-T). 
\end{eqnarray*}
This term is the sum of 
\begin{eqnarray*}
    &\sum_{\gamma\in M_{22}}\sum_{\delta\in P_{22, \mathbb{Q}}\backslash G_\mathbb{Q}}\int_{N_{22, \mathbb{A}}}f(x^{-1}\delta^{-1}\gamma\delta x)dn\cdot \hat{\tau}_{P_{22}}(H_0(\delta x)-T)
\end{eqnarray*} 
and 
\begin{eqnarray*}
    &\frac{1}{4\pi i}\sum_{P_{22, \mathbb{Q}}\backslash G_\mathbb{Q}}\sum_{\chi}\int_{i\mathfrak{a}_{G}\backslash i\mathfrak{a}_{22}}\{\sum_{\beta\in \mathscr{B}_{P_{22}, \chi}}(M_{P_{22}}(s, \lambda)\pi_{P_{22}}(\lambda, f)\Phi_\beta)(\delta x)\overline{\Phi_\beta(\delta x)}\}\\
    &\rm{exp}(<-2\lambda, H_0(\delta x)>)d\lambda\ \hat{\tau}_{P_{22}}(H_0(\delta x)-T)\\
    &+\frac{1}{4\pi i}\sum_{P_{22, \mathbb{Q}}\backslash G_\mathbb{Q}}\sum_{\chi}\int_{i\mathfrak{a}_{G}\backslash i\mathfrak{a}_{22}}\{\sum_{\beta\in \mathscr{B}_{P_{22}, \chi}}(\pi_{P_{22}}(\lambda, f)\Phi_\beta)(\delta x)\overline{M_{P_{22}}(s, \lambda)\Phi_\beta(\delta x)}\}\\
    &\rm{exp}(<2\lambda, H_0(\delta x)>)d\lambda\ \rm{exp}(<-2\rho_{P_{22}}, H_0(\delta x)>)\hat{\tau}_{P_{22}}(H_0(\delta x)-T). 
\end{eqnarray*}
By Lemma \ref{l7}, the second function's integral over $Z_\infty^+G_\mathbb{Q}\backslash G_\mathbb{A}$ approaches $0$ as $T\rightarrow \infty$. Thus 
\begin{lemma}
    The sum \[J_{P_{22}}(f,x,T)-K'_{P_{22}}(f,x,T)\] approaches $0$ as $T\rightarrow\infty$.
\end{lemma}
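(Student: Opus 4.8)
The plan is to combine the explicit computations of $J_{P_{22}}(f,x,T)$ and $K'_{P_{22}}(f,x,T)$ already carried out above. We have rewritten $J_{P_{22}}(f,x,T)$ as
$\sum_{\delta\in P_{22,\mathbb{Q}}\backslash G_\mathbb{Q}}\sum_{\gamma\in M_{22}}\int_{N_{22,\mathbb{A}}}f(x^{-1}\delta^{-1}\gamma n\delta x)\,dn\,(\hat{\tau}_{P_{22}}(H_0(\delta x)-T))$,
and have expanded the constant term $E^{c_{22}}_{P_{22}}$ using $\Omega(\mathfrak{a}_{22},\mathfrak{a}_{22})=\{e,s\}$ with $s=(14)(23)$. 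This splits the bilinear expression defining $K'_{P_{22}}(f,x,T)$ into a diagonal part, with indices $(e,e)$ and $(s,s)$, and an off-diagonal part, with indices $(s,e)$ and $(e,s)$.

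First I would verify that the diagonal part coincides with $J_{P_{22}}(f,x,T)$. Using that $\{M_{P_{22}}(s,\lambda)\Phi_\beta:\beta\in\mathscr{B}_{P_{22},\chi}\}$ is again an orthonormal basis of $\mathscr{H}_{P_{22},\chi}$, and that on $\lambda\in i\mathfrak{a}$ the exponential factors $\rm{exp}(<\pm(\lambda+\rho_{P_{22}}),H_0(\delta x)>)$ combine to $\rm{exp}(<2\rho_{P_{22}},H_0(\delta x)>)$, both diagonal contributions reduce to $\sum_{\beta}(\pi_{P_{22}}(\lambda,f)\Phi_\beta)(\delta x)\overline{\Phi_\beta(\delta x)}\,\rm{exp}(<2\rho_{P_{22}},H_0(\delta x)>)$; integrating over $\lambda$ and applying the Fourier inversion formula (Lemma \ref{lemma4.4}) together with the Iwasawa normalizations $c_{P_{22}}$ and $n(A_{22})^{-1}(2\pi i)^{-\mathrm{dim}(Z\backslash A)}$ recovers exactly the geometric expression for $J_{P_{22}}(f,x,T)$. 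Hence the diagonal part of $K'_{P_{22}}(f,x,T)$ cancels $J_{P_{22}}(f,x,T)$ in the difference.

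It then remains to treat the two off-diagonal terms. Each of them, after the exponential factors combine, has precisely the shape of the expression in Lemma \ref{l7}: the $(s,e)$ term is built from $M_{P_{22}}(s,\lambda)\pi_{P_{22}}(\lambda,f)\Phi_\beta$ paired against $\Phi_\beta$ weighted by $\rm{exp}(<-2\lambda,H_0(\delta x)>)$, i.e. the case $s_1=s$, $s_2=e$; and the $(e,s)$ term is the case $s_1=e$, $s_2=s$. Since the nontrivial element $s$ acts by $-1$ on $\mathfrak{a}_G\backslash\mathfrak{a}_{22}$, we have $M_{P_{22}}(s,\lambda)\neq M_{P_{22}}(e,\lambda)=\mathrm{Id}$, so the hypothesis of Lemma \ref{l7} is satisfied. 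Therefore each off-diagonal term is locally integrable over $Z_\infty^+G_\mathbb{Q}\backslash G_\mathbb{A}$ and its integral tends to $0$ as $T\rightarrow\infty$. Combining the three observations gives $\int_{Z_\infty^+G_\mathbb{Q}\backslash G_\mathbb{A}}\bigl(J_{P_{22}}(f,x,T)-K'_{P_{22}}(f,x,T)\bigr)\,dx\rightarrow 0$, which is the assertion.

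The main obstacle is the bookkeeping in the second paragraph: making the identification of the diagonal part of $K'_{P_{22}}$ with $J_{P_{22}}(f,x,T)$ precise requires carefully matching the spectral normalizations in the definition of $K'_{P_{22}}$ against the geometric Iwasawa constant $c_{P_{22}}$ that enters through Lemma \ref{lemma4.4}, and checking that no residual discrepancy in the $\hat{\tau}_{P_{22}}$–truncation survives the change of variables on $N_{22,\mathbb{A}}$ (Lemma \ref{l2}). Once this identification is in place, the off-diagonal estimate is a direct application of Lemma \ref{l7}.
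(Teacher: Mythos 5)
Your proposal is correct and follows essentially the same route as the paper: the paper likewise expands the constant terms over $\Omega(\mathfrak{a}_{22},\mathfrak{a}_{22})=\{1,(14)(23)\}$, identifies the diagonal contribution of $K'_{P_{22}}(f,x,T)$ (via Fourier inversion and Lemma \ref{lemma4.4}) with the expression obtained for $J_{P_{22}}(f,x,T)$, and disposes of the two off-diagonal terms by Lemma \ref{l7}. No substantive difference.
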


\subsection{The second parabolic term}
The second parabolic term correspond to $\mathfrak{o}_{22}^0$ and $\mathfrak{o}_{22}^2$ is given by \[I_{\rm{unram}}^{\mathfrak{o}_{22}^0}(f, x, T)+I_{\rm{ram}}^{\mathfrak{o}_{22}^2}(f, x, T). \]
Recall $I_{\rm{unram}}^{\mathfrak{o}^0_{22}}(f, x, T)$ is \[\frac{1}{2}\sum_{\gamma\in \{M_{t,22}^{\mathfrak{o}^0_{22}}\}}(n_{\gamma, M_{22}})^{-1}\sum_{\delta\in M(\gamma)_{22, \mathbb{Q}}\backslash G_\mathbb{Q}}f(x^{-1}\delta^{-1}\gamma\delta x)(1-\hat{\tau}_{P_{22}}(H_0(\delta x)- T)-\hat{\tau}_{P_{22}}(H_0(\delta x)-T)). \]

Now, the integral $\int_{Z_\infty^+G_\mathbb{Q}\backslash G_\mathbb{A}}|I_{\rm{unram}}^{\mathfrak{o}^0_{22}}(f, x, T)|dx$ is bounded by \[\frac{1}{2}\sum_{\gamma\in\{M_{t,22}^{\mathfrak{o}_{22}^0}\}}(n_{\gamma, M})^{-1}\int_{Z_\infty^+M(\gamma)_{22, \mathbb{Q}}\backslash G_\mathbb{A}}|f(x^{-1}\gamma x)|\cdot(1-\hat{\tau}_{P_{22}}(H_0(x)-T)-\hat{\tau}_{P_{22}}(H_0(w_sx)-T))dx. \]
It equals
\begin{eqnarray*}
\frac{c_{P_{22}}}{2}\sum_{\gamma\in\{M_{t,22}^{\mathfrak{o}^0_{22}}\}}(n_{\gamma, M})^{-1}\int_{K}\int_{A_{22, \infty^+}M(\gamma)_{22, \mathbb{Q}}\backslash P_{22, \mathbb{A}}}\int_{Z_\infty^+\backslash A_{22, \infty}^+}|f(k^{-1}p^{-1}\gamma pk)|\\
\cdot(1-\hat{\tau}_{P_{22}}(H_0(ap)-T)-\hat{\tau}_{P_{22}}(H_0(w_sap)-T))da\ d_rp\ dk. 
\end{eqnarray*}
Hence the integral becomes
\begin{eqnarray*}
    \frac{c_{P_{22}}}{2}\sum_{\gamma\in\{M_{t,22}^{\mathfrak{o}_{22}^0}\}}\tilde{\tau}(\gamma, M)\int_{K}\int_{M(\gamma)_{22, \mathbb{A}}\backslash P_{22, \mathbb{A}}}|f(k^{-1}p^{-1}\gamma pk)|\\
    \cdot\int_{Z_\infty^+\backslash A_{22, \infty}^+}(1-\hat{\tau}_{P_{22}}(H_0(ap)-T)-\hat{\tau}_{P_{22}}(H_0(w_sap)-T))da\ d_lp\ dk. 
\end{eqnarray*}
The sum over $\gamma$ is finite by Lemma \ref{l9}.

Since the function \[f^K(p)=\int_{K}f(k^{-1}pk)dk,\quad p\in P_{22, \mathbb{A}}\] has compact support, by Lemma \ref{l10}, the integral on $M(\gamma)_{22, \mathbb{A}}\backslash P_{22, \mathbb{A}}$ can be taken over a compact set. For any $p$, the function \[a\longrightarrow 1-\hat{\tau}_{P_{22}}(H_0(ap)-T)-\hat{\tau}_{P_{22}}(H_0(w_sap)-T),\quad a\in Z_\infty^+\backslash A_{22, \infty}^+, \] has compact suppoet. $I_{\rm{unram}}^{\mathfrak{o}_{22}^0}(f, x, T)$ is integrable over $Z_\infty^+G_\mathbb{Q}\backslash G_\mathbb{A}$, and its integral equals
\begin{eqnarray*}
    \frac{c_{P_{22}}}{2}\sum_{\gamma\in\{M_{t,22}^{\mathfrak{o}_{22}^0}\}}\tilde{\tau}(\gamma, M)\int_{K}\int_{N_{22, \mathbb{A}}}\int_{M(\gamma)_{22, \mathbb{A}}\backslash M_{22, \mathbb{A}}}f(k^{-1}n^{-1}m^{-1}\gamma mnk)\\
    \cdot \int_{Z_\infty^+\backslash A_{22, \infty}^+}(1-\hat{\tau}_{P_{22}}(H_0(amnk)-T)-\hat{\tau}_{P_{22}}(H_0(w_samnk)-T))da\ dm\ dn\ dk. 
\end{eqnarray*}
For fixed $m$, $n$ and $k$,the function \[1-\hat{\tau}_{P_{22}}(H_0(amnk)-T)-\hat{\tau}_{P_{22}}(H_0(w_samnk)-T)\]
is the characteristic function of the interval
\[[-\hat{\alpha}_2(T)-\hat{\alpha}_2(H_0(m))+\hat{\alpha}_2(H_0(w_sn)), \hat{\alpha}_2(T)-\hat{\alpha}_2(H_0(m))]. \]
Hence, the integral is the sum of 
\begin{eqnarray}\label{102}
    -\frac{c_{P_{22}}}{2}a_{P_{22}}\sum_{\gamma\in\{M_{t,22}^{\mathfrak{o}_{22}^0}\}}\tilde{\tau}(\gamma, M)\int_{K}\int_{N_{22, \mathbb{A}}}\int_{M(\gamma)_{22, \mathbb{A}}\backslash M_{22, \mathbb{A}}}f(k^{-1}n^{-1}m^{-1}\gamma mnk)\\
    \cdot \hat{\alpha}_2(H(w_{(14)(23)}n))dm\ dn\ dk\notag
\end{eqnarray}
and 
\begin{eqnarray}
    +\hat{\alpha}_2(T)\cdot c_{P_{22}}a_{P_{22}}\sum_{\gamma\in\{M_{t,22}^{\mathfrak{o}_{22}^0}\}}\tilde{\tau}(\gamma, M)\cdot\int_{K}\int_{N_{22, \mathbb{A}}}\int_{M(\gamma)_{22, \mathbb{A}}\backslash M_{22, \mathbb{A}}}\\\notag
    f(k^{-1}n^{-1}m^{-1}\gamma mnk)dm\ dn\ dk. 
\end{eqnarray}
We change the variable of integration on $N_{22, \mathbb{A}}$, use Lemma \ref{l2}, the second term becomes 
\begin{eqnarray*}
    \hat{\alpha}_2(T)\cdot c_{P_{22}}a_{P_{22}}\sum_{\gamma\in\{M_{t,22}^{\mathfrak{o}_{22}^0}\}}\tilde{\tau}(\gamma, M)\cdot\\
    \int_{K}\int_{M(\gamma)_{22, \mathbb{A}}\backslash M_{22, \mathbb{A}}}\int_{N_{22, \mathbb{A}}}f(k^{-1}m^{-1}\gamma nmk)\cdot \rm{exp}(-<2\rho_{P_{22}}, H_0(m)>)dn\ dm\ dk. 
\end{eqnarray*}
This term equals 
\begin{eqnarray*}
    \hat{\alpha}_2(T)\cdot c_{P_{22}}a_{P_{22}}\sum_{\gamma\in\{M_{t,22}^{\mathfrak{o}_{22}^0}\}}(n_{\gamma, M})^{-1}\cdot\\
    \int_{K}\int_{N_{22, \mathbb{A}}}\int_{A_{22, \infty}^+M(\gamma)_{22, \mathbb{Q}}\backslash M_{22, \mathbb{A}}}\int_{N_{22, \mathbb{A}}}f(k^{-1}m^{-1}\gamma nmk)\cdot \rm{exp}(-<2\rho_{P_{22}}, H_0(m)>)dn\ dm\ dk, 
\end{eqnarray*}
that is 
\begin{eqnarray}\label{104}
    \hat{\alpha}_2(T)\cdot c_{P_{22}}a_{P_{22}}\int_{K}\int_{A_{22, \infty}^+M_{22, \mathbb{Q}}\backslash M_{22, \mathbb{A}}}\sum_{\gamma\in M_{t,22}^{\mathfrak{o}_{22}^{0}}}\int_{N_{22, \mathbb{A}}}f(k^{-1}m^{-1}\gamma nmk)\\\notag
    \cdot \rm{exp}(-<2\rho_{P_{22}}, H_0(m)>)dn\ dm\ dk. 
\end{eqnarray}
By Lemma \ref{8.1}, the term of ramified orbit is the sum of 
\begin{eqnarray}\label{107}
\rm{lim}_{\lambda\rightarrow 0}\int_{Z_\infty^+G_\mathbb{Q}\backslash G_\mathbb{A}}D_\lambda\{\lambda\mu_{\mathfrak{o}_{22}^2}(\lambda, f,x)\}dx
\end{eqnarray}
and
\begin{equation}\label{108}
    \begin{aligned}
    \hat{\alpha}_2(T)\cdot c_{P_{22}}a_{P_{22}}\int_K\int_{A_{22, \infty}^+M_{22, \mathbb{Q}}\backslash M_{22, \mathbb{A}}} \sum_{\gamma\in M_{n,22}^{\mathfrak{o}_{22}^{2}}}\int_{N_{22, \mathbb{A}}}f(k^{-1}m^{-1}\gamma nmk)\\
    \cdot \rm{exp}(-<2\rho_{P_{22}}, H_0(m)>)dn\ dm\ dk. 
    \end{aligned}
\end{equation} 
We now combine this term with (\ref{104}), we have
\begin{eqnarray}\label{1010}
    \hat{\alpha}_2(T)\cdot c_{P_{22}}a_{P_{22}}\int_K\int_{A_{22, \infty}^+M_{22, \mathbb{Q}}\backslash M_{22, \mathbb{A}}} \sum_{\gamma\in M_{22}^{\mathfrak{o}_{22}}}\int_{N_{22, \mathbb{A}}}f(k^{-1}m^{-1}\gamma nmk)\\\notag
    \cdot \rm{exp}(-<2\rho_{P_{22}}, H_0(m)>)dn\ dm\ dk. 
\end{eqnarray}
\subsection{The third parabolic term}
We shall prove that the second parabolic term associated to $\mathfrak{o}_{22}^0$ and $\mathfrak{o}_{22}^2$ can be canceled by the third parabolic term.

The integral of $-K_{P_{22}}''(f, x, T)$ is 
\begin{eqnarray*}
    -\frac{1}{4\pi i}\sum_{\alpha, \beta\in I_{P_{22}}}\int_{i\mathfrak{a}_{P_{G}}\backslash i\mathfrak{a}_{22}}\int_{Z_\infty^+G_\mathbb{Q}\backslash G_\mathbb{A}}E''^{T}_{P_{22}}(\Phi_\alpha, \lambda, x)\overline{E''^{T}_{P_{22}}(\Phi_\beta, \lambda, x)}dx\ d\lambda. 
\end{eqnarray*}
\begin{lemma}
    For $\alpha, \beta\in I_{P_{22}}$ and $\lambda$ a nonzero imaginary number in $i\mathfrak{a}_{G}\backslash i\mathfrak{a}_{22}$, $s=(14)(23)$, the integral \[\int_{Z_\infty^+G_\mathbb{Q}\backslash G_\mathbb{A}}E''^{T}_{P_{22}}(\Phi_\alpha, \lambda, x)\overline{E''^{T}_{P_{22}}(\Phi_\beta, \lambda, x)}dx\] is the sum of
    \begin{eqnarray}\label{1011}
        &2a_{P_{22}}\hat{\alpha}_2(T)(\Phi_\alpha, \Phi_\beta)
    \end{eqnarray}
    \begin{eqnarray}\label{1012}
        &-a_{P_{22}}(M_{P_{22}}(s^{-1}, s\lambda)\cdot \frac{d}{d\lambda}M_{P_{22}}(s, \lambda)\Phi_\alpha, \Phi_\beta)
    \end{eqnarray}
    and
    \begin{eqnarray}\label{1013}
        &\frac{a_{P_{22}}}{<2\lambda, \alpha_2>}\{\rm{exp}(<2\lambda, T>)(\Phi_\alpha, M_{P_{22}}(s, \lambda)\Phi_\beta)-\rm{exp}(<-2\lambda, T>)(M_{P_{22}}(s, \lambda)\Phi_\alpha, \Phi_\beta)\}.
    \end{eqnarray}
\end{lemma}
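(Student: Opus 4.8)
The plan is to follow the same route as in the $P_{31}$ lemma, the new feature being that $\Omega(\mathfrak{a}_{22},\mathfrak{a}_{22})=\{1,s\}$ with $s=(14)(23)$ is a \emph{nontrivial} group mapping $\mathfrak{a}_{22}$ to itself, so the inner‑product formula for truncated Eisenstein series produces genuine off‑diagonal cross terms in addition to the diagonal ones that already appeared for $P_{31}$. I would begin from Langlands' formula quoted in the proof of the $P_{31}$ lemma (from \cite{L1}): for distinct complex $\lambda_1,\overline{\lambda_2}$ with suitably regular real parts, $\int_{Z_\infty^+G_\mathbb{Q}\backslash G_\mathbb{A}}E''^{T}_{P_{22}}(\Phi_\alpha,\lambda_1,x)\overline{E''^{T}_{P_{22}}(\Phi_\beta,\lambda_2,x)}\,dx$ is the sum over $(s_1,s_2)\in\Omega(\mathfrak{a}_{22},\mathfrak{a}_{22})^2$ of
\[
\frac{\exp(\langle s_1\lambda_1+s_2\overline{\lambda_2},T\rangle)}{\langle s_1\lambda_1+s_2\overline{\lambda_2},\alpha_2\rangle}\bigl(M_{P_{22}}(s_1,\lambda_1)\Phi_\alpha,\;M_{P_{22}}(s_2,\lambda_2)\Phi_\beta\bigr),
\]
where $\alpha_2$ is the unique simple root since $\dim(\mathfrak{a}_{22}/\mathfrak{a}_G)=1$. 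Unlike the $P_{31}$ case, where the $((14),(14))$ pairing lands in $\mathscr{H}_{P_{13}}$ so only two of the four raw terms survive, here $\mathfrak{P}_{22}=\{P_{22}\}$ and all four pairs $(1,1),(s,s),(1,s),(s,1)$ contribute, because $M_{P_{22}}(s,\lambda)$ maps $\mathscr{H}_{P_{22}}$ to itself.

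Next I would split these four terms into the two \emph{diagonal} ones $(1,1),(s,s)$ and the two \emph{off‑diagonal} ones $(1,s),(s,1)$, and take the limit $\lambda_1,\lambda_2\to\lambda$ with $\lambda\in i\mathfrak{a}_{G}\backslash i\mathfrak{a}_{22}$, using $\overline{\lambda}=-\lambda$ and the fact that $s$ acts on $\mathfrak{a}_{22}/\mathfrak{a}_G$ by $-1$, so $s\lambda=-\lambda$. For the off‑diagonal terms one finds $\lambda_1+s\overline{\lambda_2}\to 2\lambda$ and $s\lambda_1+\overline{\lambda_2}\to-2\lambda$, both nonzero since $\lambda\neq 0$; hence these two terms are regular at the limit and evaluate directly, using $\langle-2\lambda,\alpha_2\rangle=-\langle 2\lambda,\alpha_2\rangle$ and unitarity of $M_{P_{22}}(s,\lambda)$ on $i\mathfrak{a}_{22}$, to $\frac{a_{P_{22}}}{\langle 2\lambda,\alpha_2\rangle}\{\exp(\langle 2\lambda,T\rangle)(\Phi_\alpha,M_{P_{22}}(s,\lambda)\Phi_\beta)-\exp(\langle -2\lambda,T\rangle)(M_{P_{22}}(s,\lambda)\Phi_\alpha,\Phi_\beta)\}$ after inserting the measure constant $a_{P_{22}}$; this is exactly (\ref{1013}). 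For the diagonal terms, $\lambda_1+\overline{\lambda_2}$ and $s\lambda_1+s\overline{\lambda_2}=-(\lambda_1+\overline{\lambda_2})$ both tend to $0$, so each has a simple pole, but the residues cancel (this uses $M_{P_{22}}(s,\lambda)M_{P_{22}}(s^{-1},s\lambda)=\mathrm{Id}$ and unitarity, exactly as in the $P_{31}$ lemma); setting $\lambda_1-\lambda_2=a\hat\alpha_2$ and applying L'Hôpital in $a$ yields the sum of (\ref{1011}) and (\ref{1012}). Adding the diagonal and off‑diagonal contributions gives the stated identity.

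The main obstacle is bookkeeping rather than anything conceptually deep: one must fix the normalization of the four summands (the denominator $\langle s_1\lambda_1+s_2\overline{\lambda_2},\alpha_2\rangle$ and the factor $a_{P_{22}}$ coming from the Haar measure on $\mathfrak{a}_{22}/\mathfrak{a}_G$ defined via the height function), correctly track how $s=(14)(23)$ acts on $\mathfrak{a}_{22}/\mathfrak{a}_{G}$ and the resulting signs, and justify that the apparent poles of the two diagonal terms genuinely cancel so that the pairing extends analytically through $\lambda_1=\lambda_2$. Once the sign conventions are pinned down, the diagonal L'Hôpital computation is verbatim the one already carried out for $P_{31}$, and the off‑diagonal evaluation is a one‑line substitution; the only substantive checks are the regularity of the off‑diagonal terms (which is where $\lambda\neq 0$ is used) and the unitarity identity $M_{P_{22}}(s,\lambda)^\ast=M_{P_{22}}(s^{-1},-s\overline\lambda)$ restricted to the imaginary axis.
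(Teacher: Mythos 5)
Your proposal follows essentially the same route as the paper's own proof: both start from Langlands' inner-product formula for the truncated Eisenstein series at two distinct parameters, split the four $(s_1,s_2)$ contributions into the diagonal pairs $(1,1),(s,s)$ and the off-diagonal pairs $(1,s),(s,1)$, evaluate the off-diagonal pair directly at the limit (where $\lambda\neq 0$ gives regularity, producing (\ref{1013})), and handle the diagonal pair by setting $\lambda_1-\lambda=a\hat{\alpha}_2$ and applying L'Hopital together with unitarity of $M_{P_{22}}(s,\lambda)$ on the imaginary axis, producing (\ref{1011}) and (\ref{1012}). This matches the paper's argument in structure and in the key analytic points, so no further comment is needed.
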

\begin{proof}
    Suppose that $\lambda_1, \lambda$ are distinct complex numbers in $i\mathfrak{a}_{G}\backslash i\mathfrak{a}_{22}$, whose real parts are suitably regular. By \cite{L1}, 
    \begin{eqnarray*}
    &\int_{Z_\infty^+G_\mathbb{Q}\backslash G_\mathbb{A}}E''^{T}_{P_{22}}(\Phi_\alpha, \lambda_1, x)\overline{E''^{T}_{P_{22}}(\Phi_\beta, \lambda, x)}dx
    \end{eqnarray*}
    equals the sum of 
    \begin{eqnarray*}
    &a_{P_{22}}\frac{\rm{exp}(<\lambda_1+\overline{\lambda}, T>)}{<\lambda_1+\overline{\lambda}, \alpha_2>}(\Phi_\alpha, \Phi_\beta)+a_{P_{22}}\frac{\rm{exp}(<-\lambda_1-\overline{\lambda}, T>)}{<-\lambda_1-\overline{\lambda}, \alpha_2>}(M_{P_{22}}(s, \lambda_1)\Phi_\alpha, M_{P_{22}}(s, \lambda)\Phi_\beta)
    \end{eqnarray*}
    and
    \begin{eqnarray*}
    &+a_{P_{22}}\frac{\rm{exp}(<\lambda_1-\overline{\lambda}, T>)}{<\lambda_1-\overline{\lambda}, \alpha_2>}(\Phi_\alpha, M_{P_{22}}(s, \lambda)\Phi_\beta)+a_{P_{22}}\frac{\rm{exp}(<-\lambda_1+\overline{\lambda}, T>)}{<-\lambda_1+\overline{\lambda}, \alpha_2>}(\Phi_\alpha, M_{P_{22}}(s, \lambda)\Phi_\beta).
    \end{eqnarray*}

    This function is meromorphic in $\lambda_1, \lambda$. Let $\lambda_1-\lambda=a\hat{\alpha}_2$, and take the limit as $a$ approaches $0$ of the sum of
    \begin{eqnarray*}
        a_{P_{22}}\frac{\rm{exp}(<a\hat{\alpha}_2, T>)(\Phi_\alpha, \Phi_\beta)-\rm{exp}(-<a\hat{\alpha}_2, T>)(M_{P_{22}}(s, \lambda+a\hat{\alpha}_2)\Phi_\alpha, M_{P_{22}}(s, \lambda)\Phi_\beta)}{<a\hat{\alpha}_2, \alpha_2>}
    \end{eqnarray*}
    and
    \begin{eqnarray*}
        a_{P_{22}}\frac{\rm{exp}(<a\hat{\alpha}_2+2\lambda, T>)(\Phi_\alpha, M_{P_{22}}(s, \lambda)\Phi_\beta)-\rm{exp}(-<a\hat{\alpha}_2-2\lambda, T>)(M_{P_{22}}(s, \lambda+a\hat{\alpha}_2)\Phi_\alpha, \Phi_\beta)}{<a\hat{\alpha}_2+2\lambda, \alpha_2>}. 
    \end{eqnarray*}
    We apply L'Hopital's rule to obtain the result. 
\end{proof}
The term corresponding to (\ref{1012}) is
\begin{eqnarray}\label{1014}
\frac{a_{P_{22}}}{4\pi i}\sum_{\chi}\int_{i\mathfrak{a}_{G}\backslash i\mathfrak{a}_{22}}\rm{tr}\{M_{P_{22}}(s^{-1}, s\lambda)\cdot(\frac{d}{d\lambda}M_{P_{22}}(s, \lambda))\cdot\pi_{P_{22}, \chi}(\lambda, f)\}d\lambda. 
\end{eqnarray}
This term is finite.

Substituting (\ref{1011}) into $K_{P_{22}}''(f, x, T)$, it equals
\begin{eqnarray*}
    a_{P_{22}}\frac{\hat{\alpha}_2(T)}{2\pi i}\int_{i\mathfrak{a}_{G}\backslash i\mathfrak{a}_{22}}\rm{tr}\,\pi_{P_{22}}(\lambda, f)d\lambda. 
\end{eqnarray*}
By Lemma \ref{lemma4.4}, We can write it as
\begin{eqnarray*}
    c_{P_{22}}a_{P_{22}}\cdot\frac{\hat{\alpha}_2(T)}{\pi i}\int_{i\mathfrak{a}_{G}\backslash i\mathfrak{a}_{22}}\int_{A_{22, \infty}^+M_{22, \mathbb{Q}}\backslash M_{22, \mathbb{A}}}P_{P_{22}}(\lambda, f, mk, mk)dm\ dk\ d\lambda, 
\end{eqnarray*}by the continity of $P_{P_{22}}$.

Then, using the Fourier inversion formula, we obtain
\begin{eqnarray*}
    c_{P_{22}}a_{P_{22}}\cdot \hat{\alpha}_2(T)\int_K\int_{A_{22, \infty}^+M_{22, \mathbb{Q}}\backslash M_{22, \mathbb{A}}}\sum_{\gamma\in M_{22, \mathbb{Q}}}\int_{N_{22, \mathbb{A}}}\\
    f(k^{-1}m^{-1}\gamma n mk)\rm{exp}(-<2\rho_{P_{22}}, H_0(m)>)dn\ dm\ dk. 
\end{eqnarray*}
Then (\ref{104}), (\ref{108}) can be canceled, but there is also something left, 
\begin{eqnarray}\label{p}
    \sum_{\rm{ramified}\, \mathfrak{o}}c_{P_{22}}a_{P_{22}}\cdot \hat{\alpha}_2(T)\int_K\int_{A_{22, \infty}^+M_{22, \mathbb{Q}}\backslash M_{22, \mathbb{A}}}\sum_{\gamma\in M_{22}^{\mathfrak{o}}}\int_{N_{22, \mathbb{A}}}\\\notag
    f(k^{-1}m^{-1}\gamma n mk)\rm{exp}(-<2\rho_{P_{22}}, H_0(m)>)dn\ dm\ dk. 
\end{eqnarray}
We consider the term (\ref{1013}). We insert it into the function $-K''_{P_{22}}(f, x, T)$. We write it as the sum of
\begin{eqnarray*}
\frac{a_{P_{22}}}{4\pi i}\sum_{\alpha,\beta\in I_{P_{22}}}\int_{i\mathfrak{a}_{G}\backslash i\mathfrak{a}_{22}}\frac{\rm{exp}(<2\lambda, T>)}{<2\lambda, \alpha_2>}(M_{P_{22}}(s, -\lambda)\Phi_\alpha, \Phi_\beta)-(M_{P_{22}}(s, \lambda)\Phi_\alpha, \Phi_\beta)d\lambda
\end{eqnarray*}
and
\begin{eqnarray*}
+\frac{a_{P_{22}}}{4\pi i}\sum_{\alpha,\beta\in I_{P_{22}}}\int_{i\mathfrak{a}_{G}\backslash i\mathfrak{a}_{22}}\frac{\rm{exp}(<2\lambda, T>)-\rm{exp}(-<2\lambda, T>)}{<2\lambda, \alpha_2>}(M_{P_{22}}(s, \lambda)\Phi_\alpha, \Phi_\beta)d\lambda. 
\end{eqnarray*}
We have known that for every term above, the sum over $\beta$ is finite. And the first term approaches $0$ as $T\rightarrow\infty$ by the Riemann-Lebesgue lemma. The second term approaches 
\begin{eqnarray}\label{1015}
-\frac{a_{P_{22}}}{4}\rm{tr}\{M_{P_{22}}((13)(24), 0)\pi_{P_{22}}(0, f)\}. 
\end{eqnarray}
\begin{lemma}
    The sum \[I_{\rm{unram}}^{\mathfrak{o}_{22}^0}(f,x,T)+I_{\rm{ram}}^{\mathfrak{o}_{22}^2}(f,x,T)-K''_{P_{22}}(f,x,T)\] equals the sum of (\ref{102}),  (\ref{107}), (\ref{1014}), (\ref{p}) and (\ref{1015}).
\end{lemma}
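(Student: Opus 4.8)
The statement collects the outputs of the three preceding subsections, so the plan is to add $\int_{Z_\infty^+G_\mathbb{Q}\backslash G_\mathbb{A}}(I_{\rm{unram}}^{\mathfrak{o}_{22}^0}+I_{\rm{ram}}^{\mathfrak{o}_{22}^2}-K''_{P_{22}})\,dx$ up piece by piece and identify what cancels. First I would record, from the second parabolic term subsection, that $\int(I_{\rm{unram}}^{\mathfrak{o}_{22}^0}(f,x,T)+I_{\rm{ram}}^{\mathfrak{o}_{22}^2}(f,x,T))\,dx$ equals the sum of $(\ref{102})$, $(\ref{107})$ and $(\ref{1010})$: here $(\ref{102})$ is the part of the unramified term carrying the factor $\hat{\alpha}_2(H(w_{(14)(23)}n))$, $(\ref{107})$ is the $\lambda\to 0$ limit term supplied by Lemma \ref{8.1}, and $(\ref{1010})=(\ref{104})+(\ref{108})$ is the orbital integral $\hat{\alpha}_2(T)c_{P_{22}}a_{P_{22}}\int_K\int_{A_{22,\infty}^+M_{22,\mathbb{Q}}\backslash M_{22,\mathbb{A}}}\sum_{\gamma\in M_{22}^{\mathfrak{o}_{22}}}\int_{N_{22,\mathbb{A}}}f(k^{-1}m^{-1}\gamma nmk)\rm{exp}(-<2\rho_{P_{22}},H_0(m)>)\,dn\,dm\,dk$. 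It then remains to compute $\int K''_{P_{22}}(f,x,T)\,dx$ and to match the pieces.

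Next I would feed the inner-product decomposition $\int_{Z_\infty^+G_\mathbb{Q}\backslash G_\mathbb{A}}E''^{T}_{P_{22}}(\Phi_\alpha,\lambda,x)\overline{E''^{T}_{P_{22}}(\Phi_\beta,\lambda,x)}\,dx=(\ref{1011})+(\ref{1012})+(\ref{1013})$ into the formula for $-\int K''_{P_{22}}$, namely $-\frac{1}{4\pi i}\sum_{\alpha,\beta\in I_{P_{22}}}\int_{i\mathfrak{a}_G\backslash i\mathfrak{a}_{22}}(\cdots)\,d\lambda$, and treat the three summands in turn. Using $\Phi_\alpha=\pi_{P_{22}}(\lambda,f)\Phi_\beta$, the $(\ref{1011})$ part collapses to $a_{P_{22}}\frac{\hat{\alpha}_2(T)}{2\pi i}\int_{i\mathfrak{a}_G\backslash i\mathfrak{a}_{22}}\rm{tr}\,\pi_{P_{22}}(\lambda,f)\,d\lambda$, which by Lemma \ref{lemma4.4} and the Fourier inversion formula turns into an orbital integral of the same shape as $(\ref{1010})$ but with $\gamma$ ranging over all of $M_{22,\mathbb{Q}}$; the $(\ref{1012})$ part becomes the intertwining-derivative trace term $(\ref{1014})$; and the $(\ref{1013})$ part splits into a piece that tends to $0$ as $T\to\infty$ by the Riemann--Lebesgue lemma and a piece whose value at $\lambda=0$, extracted from the singular factor $<2\lambda,\alpha_2>^{-1}$ by L'Hopital's rule, is $(\ref{1015})$.

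The cancellation is the crux. $(\ref{1010})$ and the Fourier-inverted $(\ref{1011})$ term differ only in that the former restricts $\gamma$ to $M_{22}^{\mathfrak{o}_{22}}$, so after one change of variables along $N_{22,\mathbb{A}}$ via Lemma \ref{l2} their sum reduces to $c_{P_{22}}a_{P_{22}}\hat{\alpha}_2(T)$ times (up to sign) the orbital integral over $\gamma\in M_{22,\mathbb{Q}}$ not lying in $\mathfrak{o}_{22}^0$ or $\mathfrak{o}_{22}^2$. For such a $\gamma$ in an unramified orbit --- precisely the contributions of $\mathfrak{o}_{211}^0$ and $\mathfrak{o}_{1111}^0$ --- the element is regular semisimple but not $M_{22}$-elliptic, so $A_{22,\infty}^+\backslash M_{22}(\gamma)_\mathbb{A}$ is noncompact and the orbital integral vanishes by \cite{H2}; only the ramified orbits meeting $M_{22}$ survive, and they assemble precisely into $(\ref{p})$. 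Putting $(\ref{102})$ and $(\ref{107})$ from the second parabolic term, $(\ref{1014})$ and $(\ref{1015})$ from $(\ref{1012})$ and $(\ref{1013})$ (absorbing the Riemann--Lebesgue piece), and $(\ref{p})$ from the cancellation, gives the asserted identity.

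The main difficulty is not conceptual but the meticulous tracking of signs, of the normalizing constants $c_{P_{22}},a_{P_{22}},n_{\gamma,M},\tilde{\tau}(\gamma,M)$ and of the Haar measures on $N_{22,\mathbb{A}}$, $M_{22}(\gamma)_\mathbb{A}$ and $A_{22,\infty}^+$, so that $(\ref{1010})$ and the Fourier-inverted $(\ref{1011})$ term really cancel on $M_{22}^{\mathfrak{o}_{22}}$ leaving no residue. A close second is the analytic treatment of $(\ref{1013})$: the pole at $\lambda=0$ of the factor $<2\lambda,\alpha_2>^{-1}$ must be resolved before the term $\rm{tr}\{M_{P_{22}}((13)(24),0)\pi_{P_{22}}(0,f)\}$ appearing in $(\ref{1015})$ is even meaningful. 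One should also verify that the orbit list is exhausted --- that every orbit other than $\mathfrak{o}_{22}^0,\mathfrak{o}_{22}^2$ meeting $M_{22}$ is either unramified (hence killed by \cite{H2}) or ramified (hence recorded in $(\ref{p})$) --- so that nothing is lost in passing from $M_{22,\mathbb{Q}}$ to the ramified orbits.
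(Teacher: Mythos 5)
Your proposal is correct and follows essentially the same route as the paper: the second parabolic term gives (\ref{102}), (\ref{107}) and (\ref{1010}), and feeding (\ref{1011})--(\ref{1013}) into $-K''_{P_{22}}$ gives, via Lemma \ref{lemma4.4} and Fourier inversion, the full orbital integral over $M_{22,\mathbb{Q}}$ that cancels (\ref{1010}) up to the ramified leftover (\ref{p}), together with (\ref{1014}) and (\ref{1015}). Your explicit appeal to \cite{H2} to kill the non-elliptic unramified orbits meeting $M_{22}$ is precisely the argument the paper spells out in the $P_{31}$ section and uses tacitly here; the only cosmetic quibble is that the $T\rightarrow\infty$ limit producing (\ref{1015}) comes from the Dirichlet-kernel behaviour of $(\rm{exp}(<2\lambda,T>)-\rm{exp}(-<2\lambda,T>))<2\lambda,\alpha_2>^{-1}$ rather than from L'Hopital's rule, which the paper uses only inside the inner-product lemma itself.
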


\section{Terms associated to $P_{211}$}
\[\Omega(\mathfrak{a}_{211}, \mathfrak{a}_{211})=\{(1), (34)\}, \]
\[\Omega(\mathfrak{a}_{211}, \mathfrak{a}_{121})=\{(13), (134)\}, \]
\[\Omega(\mathfrak{a}_{211}, \mathfrak{a}_{112})=\{(13)(24), (14)(23)\}. \]
\subsection{The first parabolic term}
The first parabolic term is \[J_{\rm{unram}}^{\mathfrak{o}_{211}^0}(f, x, T)+J_{\rm{ram}}^{\mathfrak{o}_{211}^2}(f, x, T)-\sum_{P\in\mathfrak{P}_{211}}K'_{P}(f, x, T). \]
In this section, we shall prove that the first parabolic term associated to $\mathfrak{o}_{211}^0$ and $\mathfrak{o}_{211}^2$ approaches $0$ as $T\rightarrow\infty$.

Recall $J_{\rm{unram}}^{\mathfrak{o}_{211}^0}(f, x, T)$ is \[-\frac{1}{2}\sum_{\gamma\in \{M_{t,211}^{\mathfrak{o}_{211}^0}\}}(n_{\gamma, M_{211}})^{-1}\sum_{\delta\in M_{211}(\gamma)_{\mathbb{Q}}\backslash G_\mathbb{Q}}f(x^{-1}\delta^{-1}\gamma\delta x)(\sum_{P\in \mathfrak{P}_{211}}\sum_{s\in\Omega(\mathfrak{a}_{211}, \mathfrak{a})}\hat{\tau}_{P}(H_0(w_s\delta x)- T)).  \]
But the characteristic functions indexed by $P\supsetneq P_{211}$ have been borrowed according to we have done in the last two sections.
Thus, $J_{\rm{unram}}^{\mathfrak{o}_{211}^0}(f, x, T)$ equals (we still write it as $J_{\rm{unram}}^{\mathfrak{o}_{211}^0}(f, x, T)$)
\begin{eqnarray*}
    -\sum_{\gamma\in \{M_{t, 211}^{\mathfrak{o}_{211}^0}\}}(n_{\gamma, M_{211}})^{-1}\sum_{\delta\in M_{211}(\gamma)_{\mathbb{Q}}\backslash G_\mathbb{Q}}f(x^{-1}\delta^{-1}\gamma\delta x)(\hat{\tau}_{P_{211}}(H_0(\delta x)- T))\\
    -\sum_{\gamma\in \{M_{t, 121}^{\mathfrak{o}_{211}^0}\}}(n_{\gamma, M_{121}})^{-1}\sum_{\delta\in M_{121}(\gamma)_{\mathbb{Q}}\backslash G_\mathbb{Q}}f(x^{-1}\delta^{-1}\gamma\delta x)(\hat{\tau}_{P_{211}}(H_0(\delta x)- T))\\
    -\sum_{\gamma\in \{M_{t, 112}^{\mathfrak{o}_{211}^0}\}}(n_{\gamma, M_{112}})^{-1}\sum_{\delta\in M_{112}(\gamma)_{\mathbb{Q}}\backslash G_\mathbb{Q}}f(x^{-1}\delta^{-1}\gamma\delta x)(\hat{\tau}_{P_{112}}(H_0(\delta x)- T)). 
\end{eqnarray*}
Since for $\gamma\in{M_{t,211}^{\mathfrak{o}_{211}^0}}$, the group $N(\gamma_s)$ is trivial, by Lemma \ref{l1}, $J_{\rm{unram}}^{\mathfrak{o}_{211}^0}(f, x, T)$ is
\begin{eqnarray*}
&-\sum_{P\in \mathfrak{P}_{211}}\sum_{\gamma\in\{M_{t}^{\mathfrak{o}_{211}}\}}(n_{\gamma, M})^{-1}\sum_{\delta\in N_\mathbb{Q}M(\gamma)_{\mathbb{Q}}\backslash G_\mathbb{Q}}\sum_{v\in N_{\mathbb{Q}}}f(x^{-1}\delta^{-1}\gamma v\delta x)(\hat{\tau}_{P}(H_0(\delta x)-T)). 
\end{eqnarray*}
Which is 
\begin{eqnarray*}
&-\sum_{P\in \mathfrak{P}_{211}}\sum_{\delta\in P_{\mathbb{Q}}\backslash G_\mathbb{Q}}\sum_{\gamma\in M_{t}^{\mathfrak{o}_{211}^0}}\sum_{v\in N_{\mathbb{Q}}}f(x^{-1}\delta^{-1}\gamma v\delta x)(\hat{\tau}_{P}(H_0(\delta x)-T)). 
\end{eqnarray*}
By Lemma \ref{l1}, $J_{\rm{ram}}^{\mathfrak{o}_{211}^2}(f, x, T)$ equals 
\begin{eqnarray*}
    -\sum_{P\in \mathfrak{P}_{211}}\sum_{\delta\in P_{\mathbb{Q}}\backslash G_\mathbb{Q}}\sum_{\gamma\in M^{\mathfrak{o}_{211}^2}}\sum_{v\in N_{\mathbb{Q}}}f(x^{-1}\delta^{-1}\gamma v\delta x)(\hat{\tau}_{P}(H_0(\delta x)-T)). 
\end{eqnarray*}
We define $J_{P_{211}}(f, x, T)$ to be
\begin{eqnarray}\label{111}
&J_{\rm{unram}}^{\mathfrak{o}_{211}^0}(f, x, T)+J_{\rm{ram}}^{\mathfrak{o}_{211}^2}(f, x, T)\\\notag
&-\sum_{P\in \mathfrak{P}_{211}}\frac{1}{|\Omega(\mathfrak{a}_{1111}, P)|}\sum_{s\in\Omega(\mathfrak{a}_{1111};P)}\sum_{\delta\in P_{\mathbb{Q}}\backslash G_\mathbb{Q}}\sum_{\gamma\in M^{\mathfrak{o}_{1111}^0}}\sum_{v\in N_{\mathbb{Q}}}f(x^{-1}\delta^{-1}\gamma v\delta x)\\\notag
&(\hat{\tau}_{P}(H_0(w_s\delta x))-T)
\end{eqnarray}
\begin{eqnarray*}
&-\sum_{P\in \mathfrak{P}_{211}}\sum_{\mathfrak{o}\ ramified}\sum_{\delta\in P_{\mathbb{Q}}\backslash G_\mathbb{Q}}\sum_{\gamma\in M^\mathfrak{o}}\sum_{v\in N_{\mathbb{Q}}}f(x^{-1}\delta^{-1}\gamma v\delta x)\\
&\cdot(\hat{\tau}_{P}(H_0(\delta x))-T). 
\end{eqnarray*}
When we fix an unramified orbit $\mathfrak{o}_{1111}^0$, for $s_1\in\Omega(\mathfrak{a}_{1111};P)$,
\begin{eqnarray*}
    -\sum_{P\in \mathfrak{P}_{211}}\sum_{\delta\in P_{\mathbb{Q}}\backslash G_\mathbb{Q}}\sum_{\gamma\in M^{\mathfrak{o}}}\sum_{v\in N_{\mathbb{Q}}}f(x^{-1}\delta^{-1}\gamma v\delta x)(\hat{\tau}_{P}(H_0(w_s\delta x))-T)
\end{eqnarray*}
equals
\begin{eqnarray*}
    -\sum_{P\in \mathfrak{P}_{211}}\sum_{\delta\in P_{\mathbb{Q}}\backslash G_\mathbb{Q}}\sum_{\gamma\in M^\mathfrak{o}}\sum_{v\in N_{\mathbb{Q}}}f(x^{-1}\delta^{-1}\gamma v\delta x)(\hat{\tau}_{P}(H_0(\delta x))-T). 
\end{eqnarray*}
Since $M_{211}=\cup_{\mathfrak{o}} M_{211}^\mathfrak{o}$, $J_{P_{211}}(f, x, T)$ equals
\begin{eqnarray*}
&-\sum_{P\in\mathfrak{P}_{211}}\sum_{\delta\in P_{\mathbb{Q}}\backslash G_\mathbb{Q}}\sum_{\gamma\in M_{\mathbb{Q}}}\sum_{v\in N_{\mathbb{Q}}}f(x^{-1}\delta^{-1}\gamma v\delta x)(\hat{\tau}_{P}(H_0(\delta x)-T)),
\end{eqnarray*}
which is 
\begin{eqnarray*}
&-\sum_{P\in \mathfrak{P}_{211}}\sum_{\delta\in P_{\mathbb{Q}}\backslash G_\mathbb{Q}}\sum_{\gamma\in M}\int_{N_{\mathbb{A}}}f(x^{-1}\delta^{-1}\gamma v\delta x)dn(\hat{\tau}_{P}(H_0(\delta x)-T)). 
\end{eqnarray*}
Recall $\sum_{P\in\mathfrak{P}_{211}}K'_{P}(f, x, T)$ equals 
\begin{eqnarray*}
    &-\frac{1}{24(\pi i)^2}\sum_{P\in \mathfrak{P}_{211}}\sum_{P_1, P_2\in \mathfrak{P}_{211}}\sum_{P_{\mathbb{Q}}\backslash G_\mathbb{Q}}\sum_{\chi}\int_{i\mathfrak{a}_G\backslash i\mathfrak{a}}\\
    &\{\sum_{\beta\in \mathscr{B}_{P, \chi}}E^{c_{P_1}}_{P}(\pi_{P}(\lambda, f)\Phi_\beta, \lambda, \delta x)\overline{E^{c_{P_2}}_{P}(\Phi_\beta, \lambda, \delta x)}\}d\lambda\hat{\tau}_{P}(H_0(\delta x)-T). 
\end{eqnarray*}
This term is the sum of 
\begin{eqnarray*}
    &-\frac{1}{24(\pi i)^2}\sum_{P\in \mathfrak{P}_{211}}\sum_{P_1}\sum_{\gamma\in M_{\mathbb{Q}}}\sum_{\delta\in P_{\mathbb{Q}}\backslash G_\mathbb{Q}}\int_{N_{\mathbb{A}}}f(x^{-1}\delta^{-1}\gamma\delta x)dn\cdot \hat{\tau}_{P}(H_0(\delta x)-T)
\end{eqnarray*} 
and 
\begin{eqnarray*}
    -\frac{1}{24(\pi i)^2}\sum_{P_{211, \mathbb{Q}}\backslash G_\mathbb{Q}}\sum_{s\neq t}\sum_{\chi}\int_{i\mathfrak{a}_{G}\backslash i\mathfrak{a}_{211}}\{\sum_{\beta\in \mathscr{B}_{P_{211}, \chi}}(M_{P_{211}}(s, \lambda)\pi_{P_{211}}(\lambda, f)\Phi_\beta)(\delta x)\overline{M_{P_{211}}(t, \lambda)\Phi_\beta(\delta x)}\}\\
    \rm{exp}(<-2\lambda, H_0(\delta x)>)d\lambda\ \hat{\tau}_{P_{211}}(H_0(\delta x)-T)
\end{eqnarray*}
\begin{eqnarray*}
    -\frac{1}{24(\pi i)^2}\sum_{P_{121, \mathbb{Q}}\backslash G_\mathbb{Q}}\sum_{s\neq t}\sum_{\chi}\int_{i\mathfrak{a}_{G}\backslash i\mathfrak{a}_{121}}\{\sum_{\beta\in \mathscr{B}_{P_{121}, \chi}}(M_{P_{211}}(s, \lambda)\pi_{P_{211}}(\lambda, f)\Phi_\beta)(\delta x)\overline{M_{P_{211}}(t, \lambda)\Phi_\beta(\delta x)}\}\\
    \rm{exp}(<-2\lambda, H_0(\delta x)>)d\lambda\  \hat{\tau}_{P_{121}}(H_0(\delta x)-T)
\end{eqnarray*}
\begin{eqnarray*}
    -\frac{1}{24(\pi i)^2}\sum_{P_{112, \mathbb{Q}}\backslash G_\mathbb{Q}}\sum_{s\neq t}\sum_{\chi}\int_{i\mathfrak{a}_{G}\backslash i\mathfrak{a}_{112}}\{\sum_{\beta\in \mathscr{B}_{P_{112}}, \chi}(M_{P_{211}}(s, \lambda)\pi_{P_{112}}(\lambda, f)\Phi_\beta)(\delta x)\overline{M_{P_{211}}(t, \lambda)\Phi_\beta(\delta x)}\}\\
    \rm{exp}(<-2\lambda, H_0(\delta x)>)d\lambda\ \hat{\tau}_{P_{112}}(H_0(\delta x)-T), 
\end{eqnarray*}
but the last three terms' integrals over $Z_\infty^+G_\mathbb{Q}\backslash G_\mathbb{A}$ approach $0$ as $T\rightarrow\infty$ by Lemma \ref{l7}. Thus
\begin{lemma}
The sum \[J_{P_{211}}(f,x,T)-\sum_{P\in\mathfrak{P}_{211}}K'_{P}(f, x, T)\] approaches $0$ as $T\rightarrow\infty$.    
\end{lemma}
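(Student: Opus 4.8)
The plan is to follow the pattern already established for $P_{31}$ and $P_{22}$: reduce $J_{P_{211}}(f,x,T)$ and $\sum_{P\in\mathfrak{P}_{211}}K'_P(f,x,T)$ each to a common principal part plus remainders whose integrals over $Z_\infty^+G_\mathbb{Q}\backslash G_\mathbb{A}$ tend to $0$ as $T\to\infty$, and then observe that the two principal parts are identical. On the geometric side, the calculations preceding the lemma have rewritten $J_{P_{211}}(f,x,T)$, after using Lemma~\ref{l1} and Lemma~\ref{l6.6} to combine the unramified orbit $\mathfrak{o}_{211}^0$, the ramified orbit $\mathfrak{o}_{211}^2$, and the borrowed contributions from $\mathfrak{o}_{1111}^0$ into a single sum over $M_\mathbb{Q}=\cup_\mathfrak{o}M_{211}^\mathfrak{o}$, as $-\sum_{P\in\mathfrak{P}_{211}}\sum_{\delta\in P_\mathbb{Q}\backslash G_\mathbb{Q}}\sum_{\gamma\in M_\mathbb{Q}}\sum_{v\in N_\mathbb{Q}}f(x^{-1}\delta^{-1}\gamma v\delta x)\hat{\tau}_P(H_0(\delta x)-T)$. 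Exactly as in the section on $P_{31}$, I would apply the Poisson summation formula over $N_\mathbb{Q}$, splitting the inner sum over $v$ into $\Psi(0,\gamma,\delta x)+\sum_{\xi\neq 0}\Psi(\xi,\gamma,\delta x)$; Lemma~\ref{l9} bounds the number of contributing $\gamma$, and the height estimate $\|\xi^a\|\ge e^{\langle d,a\rangle}\|\xi\|$ together with $\sum_{\xi\neq 0}\|\xi\|^{-N}<\infty$ shows that the $\xi\neq 0$ part is integrable with integral tending to $0$ as $T\to\infty$. This identifies the geometric principal part as $-\sum_{P\in\mathfrak{P}_{211}}\sum_{\delta}\sum_{\gamma\in M_\mathbb{Q}}\int_{N_\mathbb{A}}f(x^{-1}\delta^{-1}\gamma n\delta x)\,dn\,\hat{\tau}_P(H_0(\delta x)-T)$.

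On the spectral side, I would expand each constant term $E_{P}^{c_{P_1}}(\Phi,\lambda,\delta x)=\sum_{s\in\Omega(\mathfrak{a},\mathfrak{a}_1)}(M_P(s,\lambda)\Phi)(\delta x)\,e^{\langle s\lambda+\rho_{P_1},H_{P_1}(\delta x)\rangle}$ inside $\sum_{P\in\mathfrak{P}_{211}}K'_P(f,x,T)$ and split the resulting sum over pairs of Weyl elements according to whether the associated intertwining operators coincide. The off-diagonal pieces are precisely the three blocks displayed above, carrying factors $e^{\langle -2\lambda,H_0(\delta x)\rangle}$ and a pair $M_{P_{211}}(s,\lambda)\neq M_{P_{211}}(t,\lambda)$; by Lemma~\ref{l7} each of these is locally integrable over $Z_\infty^+G_\mathbb{Q}\backslash G_\mathbb{A}$ with integral tending to $0$ as $T\to\infty$. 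For the diagonal piece, the exponential factors cancel against the $e^{\langle -2\rho_P,H_0(\delta x)\rangle}$ produced by the Iwasawa decomposition, and after invoking Lemma~\ref{lemma4.4} to rewrite $\pi_P(\lambda,f)$ in terms of $P_P(\lambda,f,mk,mk)$ and applying Fourier inversion over $\mathfrak{a}_G\backslash\mathfrak{a}$, this piece collapses to $-\sum_{P\in\mathfrak{P}_{211}}\sum_{\delta}\sum_{\gamma\in M_\mathbb{Q}}\int_{N_\mathbb{A}}f(x^{-1}\delta^{-1}\gamma n\delta x)\,dn\,\hat{\tau}_P(H_0(\delta x)-T)$, which is the geometric principal part. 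Subtracting, the principal parts cancel and only the $T$-decaying remainders of both sides survive, giving the lemma.

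The main obstacle will be the combinatorial bookkeeping of the normalization constants. One must check that the factor $n(A)^{-1}(2\pi i)^{-\dim(Z\backslash A)}$ carried by $K'_P$, the sum over $P_1\in\mathfrak{P}_{211}$ in the constant terms, and the Jacobian produced by Fourier inversion together reproduce the geometric principal part with exactly the coefficient $-1$ and no spurious multiplicity; for $P_{211}$ this is where the chamber count entering $n(A)$ and the relations $|\Omega(\mathfrak{a}_{211},\mathfrak{a}_{211})|=|\Omega(\mathfrak{a}_{211},\mathfrak{a}_{121})|=|\Omega(\mathfrak{a}_{211},\mathfrak{a}_{112})|=2$ must be used. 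One must also justify that the coincidence $M_P(s,\lambda)=M_P(t,\lambda)$ on the cuspidal data at hand forces $s=t$, so that the diagonal is genuinely indexed by a single Weyl element; this uses the orthogonality of the operators $M_P(s,\lambda)$ recorded in Section~\ref{sc5} together with the multiplicity-one property of the relevant induced representations. Once these verifications are in place, the remaining work is the routine assembly of the estimates above.
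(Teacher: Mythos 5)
Your proposal follows essentially the same route as the paper: combine the $\mathfrak{o}_{211}^0$, $\mathfrak{o}_{211}^2$ and borrowed $\mathfrak{o}_{1111}^0$ contributions into a single sum over $M_\mathbb{Q}$, pass from the sum over $N_\mathbb{Q}$ to the integral over $N_\mathbb{A}$ by the Poisson-summation argument of the $P_{31}$ section, and match the resulting principal part with the diagonal ($s=t$) part of $\sum_{P\in\mathfrak{P}_{211}}K'_P$, the off-diagonal ($s\neq t$) pieces being handled by Lemma \ref{l7}. The only cosmetic difference is that you justify the diagonal identification via Lemma \ref{lemma4.4} and Fourier inversion, where the paper simply asserts it; this is the same underlying identity, so the argument is correct and matches the paper.
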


\subsection{The second parabolic term}
The second parabolic term correspond to $\mathfrak{o}_{211}^0$ and $\mathfrak{o}_{211}^2$ is \[I_{\rm{unram}}^{\mathfrak{o}_{211}^0}(f, x, T)+I_{\rm{ram}}^{\mathfrak{o}_{211}^2}(f, x, T). \]
In this section, we shall prove that the integral of this term is absolutely convergant.

Recall 
\begin{eqnarray*}
I_{\rm{unram}}^{\mathfrak{o}_{211}^0}(f, x, T)=\frac{1}{2}\sum_{\gamma\in \{M_{t,211}^{\mathfrak{o}_{211}^0}\}}(n_{\gamma, M_{211}})^{-1}\sum_{\delta\in M(\gamma)_{211, \mathbb{Q}}\backslash G_\mathbb{Q}}f(x^{-1}\delta^{-1}\gamma\delta x)\\
(1+\sum_{P\neq G}\sum_{s\in\Omega(\mathfrak{a}_{211}, P)}(-1)^{\rm{dim}\ Z\backslash A}\hat{\tau}_{P}(H_0(w_s\delta x)- T)). 
\end{eqnarray*}
The integral $\int_{Z_\infty^+G_\mathbb{Q}\backslash G_\mathbb{A}}|I_{\rm{unram}}^{\mathfrak{o}_{211}^0}(f, x, T)|dx$ is bounded by 
\begin{eqnarray*}
\frac{1}{2}\sum_{\gamma\in\{M_{t,211}^{\mathfrak{o}_{211}^0}\}}(n_{\gamma, M})^{-1}\int_{Z_\infty^+M(\gamma)_{211, \mathbb{Q}}\backslash G_\mathbb{A}}|f(x^{-1}\gamma x)|\\
\cdot(1+\sum_{P\neq G}\sum_{s\in\Omega(\mathfrak{a}_{211}, P)}(-1)^{\rm{dim}\ Z\backslash A}\hat{\tau}_{P}(H_0(w_s\delta x)- T))dx. 
\end{eqnarray*}
It equals
\begin{eqnarray*}
\frac{c_{P_{211}}}{2}\sum_{\gamma\in\{M_{t,211}^{\mathfrak{o}_{211}^0}\}}(n_{\gamma, M})^{-1}\int_{K}\int_{A_{211, \infty^+}M(\gamma)_{211, \mathbb{Q}}\backslash P_{211, \mathbb{A}}}\int_{Z_\infty^+\backslash A_{211, \infty}^+}|f(k^{-1}p^{-1}\gamma pk)|\\
\cdot(1+\sum_{P\neq G}\sum_{s\in\Omega(\mathfrak{a}_{211}, P)}(-1)^{\rm{dim}\ Z\backslash A}\hat{\tau}_{P}(H_0(w_s\delta x)- T))da\ d_lp\ dk. 
\end{eqnarray*}
Then the integral becomes
\begin{eqnarray*}
    \frac{c_{P_{211}}}{2}\sum_{\gamma\in\{M_{t,211}^{\mathfrak{o}_{211}^0}\}}\tilde{\tau}(\gamma, M)\int_{K}\int_{M(\gamma)_{211, \mathbb{A}}\backslash P_{211, \mathbb{A}}}|f(k^{-1}p^{-1}\gamma pk)|\\
    \cdot\int_{Z_{\infty}^+\backslash A_{211, \infty}^+}(1+\sum_{P\neq G}\sum_{s\in\Omega(\mathfrak{a}_{211}, P)}(-1)^{\rm{dim}\ Z\backslash A}\hat{\tau}_{P}(H_0(w_s\delta x)- T))da\ d_lp\ dk. 
\end{eqnarray*}
We have known the sum over $\gamma$ is finite by Lemma \ref{l9}. Since the function \[f^K(p)=\int_{K}f(k^{-1}pk)dk,\quad p\in P_{211,\mathbb{A}}\] has compact support, by Lemma \ref{l10}, the integral on $M(\gamma)_{211, \mathbb{A}}\backslash P_{211, \mathbb{A}}$ can be taken over a compact set. For any $p$, the function \[a\longrightarrow (1+\sum_{P\neq G}\sum_{s\in\Omega(\mathfrak{a}_{211}, P)}(-1)^{\rm{dim}\ Z\backslash A}\hat{\tau}_{P}(H_0(w_s\delta x)- T)),\quad a\in Z_\infty^+\backslash A_{211, \infty}^+, \] has compact suppoet.

$I_{\rm{unram}}^{\mathfrak{o}_{211}^0}(f, x, T)$ is integrable over $Z_\infty^+G_\mathbb{Q}\backslash G_\mathbb{A}$, its integral is
\begin{eqnarray*}
    \frac{c_{P_{211}}}{2}\sum_{\gamma\in\{M_{t,211}^{\mathfrak{o}_{211}^0}\}}\tilde{\tau}(\gamma, M)\int_{K}\int_{N_{211, \mathbb{A}}}\int_{M(\gamma)_{211, \mathbb{A}}\backslash M_{211, \mathbb{A}}}f(k^{-1}n^{-1}m^{-1}\gamma mnk)\\
    \cdot \int_{Z_\infty^+\backslash A_{211, \infty}^+}(1+\sum_{P\neq G}\sum_{s\in\Omega(\mathfrak{a}_{211}, P)}(-1)^{\rm{dim}\ Z\backslash A}\hat{\tau}_{P}(H_0(w_s\delta x)- T))da\ dm\ dn\ dk. 
\end{eqnarray*}
Then we can use the Arthur's $(G, M)$-family to see that the volume of \[\int_{Z_\infty^+\backslash A_{211, \infty}^+}(1+\sum_{P\neq G}\sum_{s\in\Omega(\mathfrak{a}_{211}, P)}(-1)^{\rm{dim}\ Z\backslash A}\hat{\tau}_{P}(H_0(w_s\delta x)- T))da\]is 
\[\frac{a_{P_{211}}}{2}\sum_{P\in P(A_{211})}\frac{<\lambda_0, T_P-H_P
(\delta x)>^2}{\Pi_{\eta\in\Phi_P}<\lambda, \eta>},\quad \lambda\in\mathfrak{a}_{G}\backslash \mathfrak{a}_{211}, \]
where $T_P$ and $H_P(\delta x)$ is the projection of $T$ and $H_0(\delta x)$ to $\mathfrak{a}_G\backslash \mathfrak{a}_{211}$, this sum is independent of $\lambda_0$.

The integral is
\begin{eqnarray}\label{convex}
    \sum_{P\in\mathfrak{P}_{211}}\frac{c_{P}}{2}\cdot a_{P}\sum_{\gamma\in\{M_t^{\mathfrak{o}_{211}}\}}\tilde{\tau}(\gamma, M)\int_{K}\int_{N_{\mathbb{A}}}\int_{M(\gamma)_{\mathbb{A}}\backslash M_{\mathbb{A}}}f(k^{-1}n^{-1}m^{-1}\gamma mnk)\\\notag
    \cdot v_{M}(x, T)dm\ dn\ dk. 
\end{eqnarray}
According to \cite{A6}, we write $v_{M_{211}}(x, T)=(cd)_{M_{211}}$ where
\begin{eqnarray*}
    &c_{M_{211}}=\rm{lim}_{\lambda\rightarrow 0}\sum_{P\in P(A_{211})}\frac{\rm{exp}(<\lambda, X_p>)}{\Pi_{\eta\in\Phi_P}<\lambda, \eta>}, 
\end{eqnarray*}
\begin{eqnarray*}
    &d_{M_{211}}(\lambda)=\rm{lim}_{\lambda\rightarrow 0}\sum_{P\in P(A_{211})}\frac{\rm{exp}(<\lambda, Y_p>)}{\Pi_{\eta\in\Phi_P}<\lambda, \eta>},\quad X_P=-H_P(x), Y_P=T_P,
\end{eqnarray*}
Similarly for other $c_M$ and $d_M$.

By \cite[Cor 6.5]{A6}, we can write $(cd)_{M_{211}}$ as
\begin{eqnarray*}
    2c_{M_{211}}^{M_{31}}d_{M_{31}}+2c_{M_{211}}^{M_{22}}d_{M_{22}}+c_{M_{211}}^{M_{211}}d_{M_{211}}+c_{M_{211}}^Gd_G, 
\end{eqnarray*}
where $c_{M_{211}}^{M_{211}}=d_G=1. $ Note that the Levi in \cite[Cor 6.5]{A6} is not necessarily standard, that is why we multiple $2$ in front of that.

We put them into $I_{\rm{unram}}^{\mathfrak{o}_{211}^0}(f, x, T)$.

$d_{M_{211}}$ corresponds to 
\begin{eqnarray}\label{112}
    &\sum_{P\in\mathfrak{P}_{211}}\sum_{s\in\Omega(\mathfrak{a}_{211}, \mathfrak{a})}\frac{<s\lambda_0, T>^2}{\Pi_{\eta\in\Phi_P}<s\lambda_0, \eta>}\frac{c_{P}}{4}\cdot a_{P}\sum_{\gamma\in\{M_{t}^{\mathfrak{o}_{211}^0}\}}\tilde{\tau}(\gamma, M)
\end{eqnarray}
\begin{eqnarray*}
    &\int_{K}\int_{N_{\mathbb{A}}}\int_{M(\gamma)_{\mathbb{A}}\backslash M_{\mathbb{A}}}f(k^{-1}n^{-1}m^{-1}\gamma mnk)dm\ dn\ dk. 
\end{eqnarray*}
$c_{M_{211}}^G$ correspond to 
\begin{eqnarray}\label{113}
    &+\sum_{P\in\mathfrak{P}_{211}}\sum_{s\in\Omega(\mathfrak{a}_{211}, \mathfrak{a})}\frac{c_{P_{211}}}{4}\cdot a_{P}\sum_{\gamma\in \{M_{t}^{\mathfrak{o}_{211}^0}\}}\tilde{\tau}(\gamma, M)
\end{eqnarray}
\begin{eqnarray*}
    &\int_{K}\int_{N_{\mathbb{A}}}\int_{M(\gamma)_{\mathbb{A}}\backslash M_{\mathbb{A}}}f(k^{-1}n^{-1}m^{-1}\gamma mnk)\frac{<\lambda_0, -s^{-1}H_0(mnk)>^2}{\Pi_{\eta\in\Phi_P}<\lambda_0, s^{-1}\eta>}dm\ dn\ dk. 
\end{eqnarray*}
Now, we deal with the remaining two terms. We observe that $d_{M_{31}}$ and $d_{M_{22}}$ is $\hat{\alpha}_3(T)+\hat{\alpha}_1(T)$ and $2\hat{\alpha}_2(T)$. Then we can write the integrals corresponding to them as the integrals associated to $P_{31}, P_{22}$ by Lemma \ref{l6.6}.

$c_{M_{211}}^{M_{31}}d_{M_{31}}$ corresponds to
\begin{eqnarray*}
    (\hat{\alpha}_3(T)+\hat{\alpha}_1(T))\frac{c_{P_{211}}}{2}a_{P_{211}}\sum_{\gamma\in \{M_{t,211}^{\mathfrak{o}^0_{211}}\}}(n_{\gamma, M})^{-1}\int_K\int_{N_{211, \mathbb{A}}}\int_{A_{211, \infty}^+M(\gamma)_{211, \mathbb{Q}}\backslash M_{211, \mathbb{A}}}\\
    f(k^{-1}n^{-1}m^{-1}\gamma mnk)\int_{A_{31, \infty}^+\backslash A_{211, \infty}^+}1-\hat{\tau}_{P_{22}}(H_0(amnk))-\hat{\tau}_{P_{22}}(H_0(w_{(13)}amnk))da\ dm\ dn\ dk
\end{eqnarray*}
Which is the sum of
\begin{eqnarray*}\label{114}
    \hat{\alpha}_3(T)\frac{c_{P_{31}}}{2}a_{P_{31}}\sum_{\gamma\in \{M_{t,31}^{\mathfrak{o}_{211}^0}\}}(n_{\gamma, M})^{-1}\int_K\int_{N_{31, \mathbb{A}}}\int_{A_{211, \infty}^+M(\gamma)_{31, \mathbb{Q}}\backslash M_{31, \mathbb{A}}}\\
    f(k^{-1}n^{-1}m^{-1}\gamma mnk)\int_{A_{31,\infty}^+\backslash A_{211,\infty}^+}(1-\hat{\tau}_{P_{22}}(H_0(amnk))-\hat{\tau}_{P_{22}}(H_0(w_{(13)}amnk)))da\ dm\ dn\ dk
\end{eqnarray*} 
and
\begin{eqnarray*}\label{1141}
    \hat{\alpha}_1(T)\frac{c_{P_{13}}}{2}a_{P_{13}}\sum_{\gamma\in \{M_{t,13}^{\mathfrak{o}_{211}^0}\}}(n_{\gamma, M})^{-1}\int_K\int_{N_{13, \mathbb{A}}}\int_{A_{211, \infty}^+M(\gamma)_{13, \mathbb{Q}}\backslash M_{13, \mathbb{A}}}\\
    f(k^{-1}n^{-1}m^{-1}\gamma mnk)\int_{A_{13,\infty}^+\backslash A_{211,\infty}^+}(1-\hat{\tau}^{P_{211}}_{P_{31}}(H_0(amnk))-\hat{\tau}_{P_{31}}^{P_{211}}(H_0(w_{(13)}amnk)))da\ dm\ dn\ dk.
\end{eqnarray*} 
For the term (\ref{114}), we write \[w_{13}amnk\] as \[w_{13}aw_{13}^{-1}\cdot w_{13}mnk,\] where $w_{13}m\in M^1_{31}$, we can see the integral over $A_{31,\infty}^+\backslash A_{211,\infty}^+$ equals zero. Then this term equals zero. Others are similar.

The terms associated to $c_{M_{211}}^{M_{31}}d_{M_{31}}+c_{M_{211}}^{M_{22}}d_{M_{22}}$ in (\ref{convex}) are all zero.

Change the variable of integration on $N_{211, \mathbb{A}}$ of (\ref{112}), apply Lemma \ref{l2}, then (\ref{convex}) becomes 
\begin{eqnarray*}
    \sum_{P\in\mathfrak{P}_{211}}\sum_{s\in\Omega(\mathfrak{a}_{211}, \mathfrak{a})}\frac{<\lambda_0, s^{-1}T>^2}{\Pi_{\eta\in\Phi_P}<\lambda_0, s^{-1}\eta>}\frac{c_{P}}{4}\cdot a_P\sum_{\gamma\in \{M_{t}^{\mathfrak{o}_{211}^0}\}}\tilde{\tau}(\gamma, M)\\
    \int_{K}\int_{M(\gamma)_{\mathbb{A}}\backslash M_{\mathbb{A}}}\int_{N_{\mathbb{A}}}f(k^{-1}m^{-1}\gamma nmk)\rm{exp}(-<2\rho_{P}, H_0(m)>)dn\ dm\ dk. 
\end{eqnarray*}
This term equals 
\begin{eqnarray*}
    \sum_{P\in\mathfrak{P}_{211}}\sum_{s\in\Omega(\mathfrak{a}_{211}, \mathfrak{a})}\frac{<\lambda_0, s^{-1}T>^2}{\Pi_{\eta\in\Phi_P}<\lambda_0, s^{-1}\eta>}\frac{c_{P}}{4}\cdot a_{P}\sum_{\gamma\in \{M_{t}^{\mathfrak{o}_{211}^0}\}}(n_{\gamma, M})^{-1}\\\notag
    \int_{K}\int_{A_{\infty}^+M(\gamma)_{\mathbb{A}}\backslash M_{\mathbb{A}}}\int_{N_{\mathbb{A}}}f(k^{-1}m^{-1}\gamma nmk)\rm{exp}(-<2\rho_{P}, H_0(m)>)dn\ dm\ dk, 
\end{eqnarray*}
that is 
\begin{eqnarray}
    \sum_{P\in\mathfrak{P}_{211}}\sum_{s\in\Omega(\mathfrak{a}_{211}, \mathfrak{a})}\frac{<\lambda_0, s^{-1}T>^2}{\Pi_{\eta\in\Phi_P}<\lambda_0, s^{-1}\eta>}\frac{c_{P}}{4}\cdot a_{P}\sum_{\gamma\in M_{t}^{\mathfrak{o}_{211}^0}}\\\notag
    \int_{K}\int_{N_{\mathbb{A}}}\int_{A_{\infty}^+M_{\mathbb{Q}}\backslash M_{\mathbb{A}}}f(k^{-1}m^{-1}\gamma nmk)\rm{exp}(-<2\rho_{P}, H_0(m)>)dn\ dm\ dk. 
\end{eqnarray}
Recall $I_{\rm{ram}}^{\mathfrak{o}_{211}^2}(f, x, T)$ equals 
\begin{eqnarray*}
    &\sum_{\gamma\in G^{\mathfrak{o}_{211}^2}}f(x^{-1}\gamma x)\\
    &-\sum_{\delta\in M_{22, \mathbb{Q}}\backslash G_\mathbb{Q}}\sum_{\gamma\in M_{t,22}^{\mathfrak{o}_{211}^2}}f(x^{-1}\delta^{-1}\gamma\delta x)\hat{\tau}_{P_{22}}(H_0(\delta x)-T)\\
    &-\sum_{\delta\in M_{31, \mathbb{Q}}N(\gamma_s)_{31, \mathbb{Q}}\backslash G_\mathbb{Q}}\sum_{\gamma\in M_{n,31}^{\mathfrak{o}_{211}^2}}\sum_{v\in N(\gamma_s)_{31, \mathbb{Q}}}f(x^{-1}\delta^{-1}\gamma v\delta x)\hat{\tau}_{P_{31}}(H_0(\delta x)-T)\\
    &-\sum_{\delta\in M_{13, \mathbb{Q}}N(\gamma_s)_{13, \mathbb{Q}}\backslash G_\mathbb{Q}}\sum_{\gamma\in M_{n,13}^{\mathfrak{o}_{211}^2}}\sum_{v\in N(\gamma_s)_{31, \mathbb{Q}}}f(x^{-1}\delta^{-1}\gamma v\delta x)\hat{\tau}_{P_{13}}(H_0(\delta x)-T)\\
    &+\sum_{P\in\mathfrak{P}_{211}}\sum_{\delta\in M_{\mathbb{Q}}N(\gamma_s)_{\mathbb{Q}}\backslash G_\mathbb{Q}}\sum_{\gamma\in M_n^{\mathfrak{o}_{211}^2}}\sum_{v\in N(\gamma_s)_{\mathbb{Q}}}f(x^{-1}\delta^{-1}\gamma v\delta x)\hat{\tau}_{P}(H_0(\delta x)-T). 
\end{eqnarray*}
The integral of $I_{\rm{ram}}^{\mathfrak{o}_{211}^2}(f, x, T)$ over $Z_\infty^+G_\mathbb{Q}\backslash G_\mathbb{A}$ equals the sum of
\begin{eqnarray}
    &+\sum_{P\in\mathfrak{P}_{211}}\frac{c_{P}}{2}\cdot a_{P}\frac{<\lambda_0, T>^2}{\Pi_{\eta\in\Phi_P}<\lambda_0, \eta>}\sum_{\gamma\in \{M_{n}^{\mathfrak{o}_{211}^2}\}}(n_{\gamma, M})^{-1}\notag\\\label{1114}
    &\int_{K}\int_{N_{\mathbb{A}}}\int_{A_{\infty}^+M(\gamma)_{\mathbb{Q}}\backslash M_{211, \mathbb{A}}}f(k^{-1}m^{-1}\gamma nmk)\rm{exp}(-<2\rho_{P}, H_0(m)>)dn\ dm\ dk
\end{eqnarray}
and 
\begin{eqnarray}
    &+\hat{\alpha}_3(T)c_{P_{31}}a_{P_{31}}\int_K\int_{A_{31, \infty}^+M_{31, \mathbb{Q}}\backslash M_{31, \mathbb{A}}} \sum_{\gamma\in M_{n, 31}^{\mathfrak{o}_{211}^2}}\int_{N(\gamma)_{1, \mathbb{A}}}\\\notag
    &f(k^{-1}m^{-1}\gamma nmk)\rm{exp}(-<2\rho_{P_{31}}, H_0(m)>)dn\ dm\ dk\label{1115}\\
    &+\hat{\alpha}_1(T)c_{P_{13}}a_{P_{13}}\int_K\int_{A_{13, \infty}^+M_{13, \mathbb{Q}}\backslash M_{13, \mathbb{A}}} \sum_{\gamma\in M_{n, 13}^{\mathfrak{o}_{211}^2}}\int_{N(\gamma)_{1, \mathbb{A}}}\\\notag
    &f(k^{-1}m^{-1}\gamma nmk)\rm{exp}(-<2\rho_{P_{13}}, H_0(m)>)dn\ dm\ dk\label{1116}\\
    &+\hat{\alpha}_2(T)c_{P_{22}}a_{P_{22}}\int_K\int_{A_{22, \infty}^+M_{22, \mathbb{Q}}\backslash M_{22, \mathbb{A}}} \sum_{\gamma\in M_{n,22}^{\mathfrak{o}_{211}^2}}\int_{N(\gamma)_{22, \mathbb{A}}}\\\notag
    &f(k^{-1}m^{-1}\gamma nmk)\rm{exp}(-<2\rho_{P_{22}}, H_0(m)>)dn\ dm\ dk\label{1117}\\\label{o}
    &+c_{P_{22}}a_{P_{22}}\int_K\int_{A_{22, \infty}^+M_{22, \mathbb{Q}}\backslash M_{22, \mathbb{A}}} \sum_{\gamma\in M_{t,22}^{\mathfrak{o}_{211}^2}}\int_{N_{22, \mathbb{A}}}\\\notag
    &f(k^{-1}m^{-1}\gamma m^{-1}n{-1}k)\hat{\alpha}_2(H_0(w_{(13)(24)}n))dn\ dm\ dk,
\end{eqnarray}
the term (\ref{o}) is obtained by Lemma \ref{l82}.
\subsection{The third parabolic term}
This integral of $-\sum_{P\in\mathfrak{P}_{211}}K_{P}''(f, x, T)$ is 
\begin{eqnarray*}
    -\frac{1}{24(\pi i)^2}\sum_{P\in\mathfrak{P}_{211}}\sum_{\alpha, \beta\in I_{P}}\int_{i\mathfrak{a}_{G}\backslash i\mathfrak{a}_{P}}\int_{Z_\infty^+G_\mathbb{Q}\backslash G_\mathbb{A}}E''^{T}_{P}(\Phi_\alpha, \lambda, x)\overline{E''^{T}_{P}(\Phi_\beta, \lambda, x)}dx\ d\lambda. 
\end{eqnarray*}
\begin{lemma}\label{lemma11.1}
    For $\alpha, \beta\in I_{P_{211}}$ and $\lambda$ a nonzero imaginary number in $i\mathfrak{a}_G\backslash i\mathfrak{a}$, the integral \[\int_{Z_\infty^+G_\mathbb{Q}\backslash G_\mathbb{A}}E''^{T}_{P}(\Phi_\alpha, \lambda, x)\overline{E''^{T}_{P}(\Phi_\beta, \lambda, x)}dx\] is
    \begin{eqnarray}
        &\frac{a_{P_{211}}}{2}\sum_{P\in\mathfrak{P}_{211}}\sum_{s\in \Omega(\mathfrak{a_{211}, \mathfrak{a}})}\frac{<\lambda_0, s^{-1}T>^2}{\Pi_{\eta\in\Phi_P}<\lambda_0, s^{-1}\eta>}(\Phi_\alpha, \Phi_\beta)\label{1119}\\
        &+\frac{a_{P_{211}}}{2}\sum_{P\in\mathfrak{P}_{211}}\sum_{t\in\Omega(\mathfrak{a}_{211}, \mathfrak{a})}(M_P(t^{-1}, t\lambda)D_\lambda M_P(t, \lambda)\Phi_\alpha, \Phi_\beta)\label{1120}\\
        &+\frac{a_{P_{211}}}{2}\sum_{P\in\mathfrak{P}_{211}}\sum_{s\neq t\in\Omega(\mathfrak{a}_{211}, \mathfrak{a})}\frac{\rm{exp}(<t\lambda-s\lambda, T>)(M_P(t, \lambda)\Phi_\alpha, M_P(s, \lambda)\Phi_\beta)}{\Pi_{\eta\in \Phi_P}<t\lambda-s\lambda, \eta>}. \label{1121}
    \end{eqnarray}
\end{lemma}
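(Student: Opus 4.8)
The plan is to follow the same strategy as for the analogous statements for $P_{31}$ and $P_{22}$, the only genuinely new feature being that $\mathfrak{a}_{211}$ has rank two modulo $\mathfrak{a}_G$, so the ``diagonal'' contributions will carry a pole of order two rather than a simple pole. First I would apply Langlands' formula for the inner product of truncated Eisenstein series (\cite{L1}): for $\lambda_1$ and $\overline{\lambda}_2$ distinct with suitably regular real parts, the integral $\int_{Z_\infty^+G_\mathbb{Q}\backslash G_\mathbb{A}}E''^{T}_{P_{211}}(\Phi_\alpha,\lambda_1,x)\overline{E''^{T}_{P_{211}}(\Phi_\beta,\lambda_2,x)}\,dx$ equals $a_{P_{211}}$ times the sum over $P_1\in\mathfrak{P}_{211}$ and over pairs $s,t\in\Omega(\mathfrak{a}_{211},\mathfrak{a}_1)$ of
\[
\frac{\rm{exp}(<s\lambda_1+t\overline{\lambda}_2,T>)}{\Pi_{\eta\in\Phi_{P_1}}<s\lambda_1+t\overline{\lambda}_2,\eta>}\,(M_{P_{211}}(s,\lambda_1)\Phi_\alpha,M_{P_{211}}(t,\lambda_2)\Phi_\beta).
\]
This expression is meromorphic in $(\lambda_1,\lambda_2)$, and the quantity asked for is its value at $\lambda_1=\lambda_2=\lambda$, which I would obtain by letting $\zeta=\lambda_1-\lambda_2$ tend to $0$ inside $\mathfrak{a}_{G,\mathbb{C}}\backslash\mathfrak{a}_{211,\mathbb{C}}$.

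Next I would split the double sum according to whether $s=t$ or $s\neq t$. For $s\neq t$ the summand is already regular at $\zeta=0$: since $\lambda$ is a nonzero regular imaginary point one has $\overline{\lambda}_2\to-\lambda$, so $s\lambda_1+t\overline{\lambda}_2\to(s-t)\lambda$, which avoids every hyperplane $<\cdot,\eta>=0$, $\eta\in\Phi_{P_1}$. Substituting $\lambda_1=\lambda_2=\lambda$ and relabelling $s\leftrightarrow t$ turns these terms into exactly (\ref{1121}). The real work is in the diagonal terms: for each fixed $P_1$ the sum over the two elements of $\Omega(\mathfrak{a}_{211},\mathfrak{a}_1)$ is, up to the measure constant, the volume function of a $(G,M)$-family in the sense of \cite{A6}, with a pole of order $\rm{dim}(Z\backslash A_{211})=2$ along $\zeta=0$.

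To treat the diagonal I would Taylor-expand $(M_{P_{211}}(s,\lambda+\zeta)\Phi_\alpha,M_{P_{211}}(s,\lambda)\Phi_\beta)$ in $\zeta$ around $0$. Unitarity of $M_{P_{211}}(s,\lambda)$ on $i\mathfrak{a}$ (Langlands' theorem, \cite{L1}) gives $(M_{P_{211}}(s,\lambda)\Phi_\alpha,M_{P_{211}}(s,\lambda)\Phi_\beta)=(\Phi_\alpha,\Phi_\beta)$, and the functional equation $M_{P_{211}}(s,\lambda)^{-1}=M_{P_{211}}(s^{-1},s\lambda)$ lets me rewrite the first-order term in the form $(M_{P_{211}}(s^{-1},s\lambda)(D M_{P_{211}}(s,\lambda))\Phi_\alpha,\Phi_\beta)$, where $D$ is the directional derivative in $\zeta$. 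Taking the constant term of the Laurent expansion in $\zeta$ — this is the operator $D_\lambda$ occurring in the treatment of the ramified orbits, applied to $M_{P_{211}}(s,\lambda)$ — the zeroth-order piece $(\Phi_\alpha,\Phi_\beta)$ multiplied by the constant Laurent coefficient of $\rm{exp}(<s\zeta,T>)/\Pi_{\eta}<s\zeta,\eta>$ produces the degree-two polynomial $<\lambda_0,s^{-1}T>^2/\Pi_{\eta\in\Phi_{P_1}}<\lambda_0,s^{-1}\eta>$ (i.e. the convex-hull volume), and the first-order piece produces the $D_\lambda M_{P_{211}}$ term. Summing over $P_1$ and $s$ with the constant $a_{P_{211}}/2$ yields (\ref{1119}) and (\ref{1120}), and assembling the three groups of terms gives the claimed identity.

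The hard part will be this last step. For $P_{31}$ and $P_{22}$ the split component had rank one modulo $\mathfrak{a}_G$, so a single application of L'Hopital's rule sufficed; here the individual diagonal summands have a pole of order two, and one must verify both that the order-one poles cancel in the sum over $s\in\Omega(\mathfrak{a}_{211},\mathfrak{a}_1)$ and that the surviving finite part is organized precisely as $(\ref{1119})+(\ref{1120})$. Identifying that finite part with the $(G,M)$-family volume of \cite{A6}, together with checking that the higher-order Taylor coefficients of the intertwining operators do not contribute to the constant term, is the delicate point of the calculation.
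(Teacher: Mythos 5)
Your proposal takes essentially the same route as the paper: Langlands' inner-product formula for the truncated Eisenstein series at two distinct spectral parameters, splitting the double sum over $\Omega(\mathfrak{a}_{211},\mathfrak{a})$ into diagonal ($s=t$) and off-diagonal ($s\neq t$) parts, letting $\lambda_1-\lambda$ tend to $0$ along a regular direction $\lambda_0$, with your second-order Taylor/Laurent analysis (using unitarity and the functional equation of $M_{P_{211}}(s,\lambda)$) being precisely what the paper compresses into ``applying L'Hopital's rule twice,'' and the off-diagonal terms passing directly to the limit to give (\ref{1121}). The only slip is bookkeeping: the chamber factor $n(A_{211})^{-1}=\tfrac{1}{2}$ belongs already in the initial inner-product formula (the paper writes $\tfrac{a_{P_{211}}}{2}$ there), rather than being inserted only at the final assembly.
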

\begin{proof}
    Suppose that $\lambda_1, \overline{\lambda}$ are different complex numbers in $i\mathfrak{a}_G\backslash i\mathfrak{a}_{211}$, whose real parts are suitably regular. Then, 
    \begin{eqnarray*}
    &\int_{Z_\infty^+G_\mathbb{Q}\backslash G_\mathbb{A}}E''^{T}_{P_{211}}(\Phi_\alpha, \lambda_1, x)\overline{E''^{T}_{P_{211}}(\Phi_\beta, \lambda, x)}dx\\
    &=\frac{a_{P_{211}}}{2}\sum_{P\in \mathfrak{P}_{211}}\sum_{t\in\Omega(\mathfrak{a}_{211}, \mathfrak{a})}\sum_{s\in \Omega(\mathfrak{a}_{211}, a)}\frac{\rm{exp}(<t\lambda_1+s\overline{\lambda}, T>)}{\Pi_{\eta\in\Phi_{P}}<t\lambda_1+s\overline{\lambda}, \eta>}(M_P(t, \lambda_1)\Phi_\alpha, M_P(s, \lambda)\Phi_\beta). 
    \end{eqnarray*}
   This function is meromorphic in $\lambda_1, \lambda$. Set $\lambda_1-\lambda=a\lambda_0$, then we will let this term be the limit as $a$ approaches $0$.

    We decompose it into two cases: $t=s$ and $t\neq s$.

    We deal with the term of $t=s$ by applying L'Hopital's rule twice. The result is (\ref{1119}) and (\ref{1120}).

    When $t\neq s$, directly let $a$ approaches $0$, we can obtain (\ref{1121}). 
\end{proof}
The terms associated to other $P\in\mathfrak{P}_{211}$ are similar.

The term correspond to (\ref{1120}) is
\begin{eqnarray}\label{1122}
    &-\frac{a_{P_{211}}}{48(\pi i)^2}\sum_{\chi}\int_{i\mathfrak{a}_{G}\backslash \mathfrak{a}_{211}}\rm{tr}\{M_{P_{211}}((34), (34)\lambda)\cdot(D_\lambda M_{P_{211}}((34), \lambda))\cdot\pi_{P_{211}, \chi}(\lambda, f)\}d\lambda\\\notag
    &-\frac{a_{P_{211}}}{48(\pi i)^2}\sum_{\chi}\int_{i\mathfrak{a}_{G}\backslash \mathfrak{a}_{211}}\rm{tr}\{M_{P_{211}}((143), (134)\lambda)\cdot(D_\lambda M_{P_{211}}((134), \lambda))\cdot\pi_{P_{211}, \chi}(\lambda, f)\}d\lambda\\\notag
    &-\frac{a_{P_{211}}}{48(\pi i)^2}\sum_{\chi}\int_{i\mathfrak{a}_{G}\backslash \mathfrak{a}_{211}}\rm{tr}\{M_{P_{211}}((14)(23), (14)(23)\lambda)\cdot(D_\lambda M_{P_{211}}((14)(23), \lambda))\cdot\pi_{P_{211}, \chi}(\lambda, f)\}d\lambda. 
\end{eqnarray}
This term is finite.

(\ref{1119}) can be written as
\begin{eqnarray*}
    \frac{1}{2}\sum_{P_1\in\mathfrak{P}_{211}}\sum_{P\in\mathfrak{P}_{211}}a_{P}\sum_{s\in\Omega(\mathfrak{a}_{1}, \mathfrak{a})}\frac{<s\lambda_0, T>^2}{\Pi_{\eta\in\Phi_P}<s\lambda_0, \eta>}\int_{i\mathfrak{a}_{G}\backslash \mathfrak{a}}\rm{tr}\,\pi_{P}(\lambda, f)d\lambda. 
\end{eqnarray*}
We substitute this term into $\sum_{P\in\mathfrak{P}_{211}}K_{P}''(f, x, T)$, it equals
\begin{eqnarray*}
    \frac{1}{16(\pi i)^2}\sum_{P\in\mathfrak{P}_{211}}c_{P}a_P\sum_{s\in\Omega(\mathfrak{a}_{211}, \mathfrak{a})}\frac{<s\lambda_0, T>^2}{\Pi_{\eta\in\Phi_P}<s\lambda_0, \eta>}\int_{i\mathfrak{a}_{G}\backslash \mathfrak{a}}\int_{A_{\infty}^+M_{\mathbb{Q}}\backslash M_{\mathbb{A}}}\\
    P_{P}(\lambda, f, mk, mk)dm\ dk\ d\lambda, 
\end{eqnarray*}by the continity of $P_{P}$.

Then applying the Fourier inversion formula, we obtain
\begin{eqnarray*}
    \sum_{P\in\mathfrak{P}_{211}} \frac{c_{P}}{16(\pi i)^2}a_{P}\sum_{s\in\Omega(\mathfrak{a}_{211}, \mathfrak{a})}\frac{<s\lambda_0, T>^2}{\Pi_{\eta\in\Phi_P}<s\lambda_0, \eta>}\\
    \int_K\int_{A_{\infty}^+M_{\mathbb{Q}}\backslash M_{\mathbb{A}}}\sum_{\gamma\in M_{\mathbb{Q}}}\int_{N_{\mathbb{A}}}f(k^{-1}m^{-1}\gamma n mk)\rm{exp}(-<2\rho_{P}, H_0(m)>)dn\ dm\ dk. 
\end{eqnarray*}
The terms (\ref{112}), (\ref{1114}), and (\ref{1116}) can be canceled, but there is also something left, 
\begin{eqnarray}\label{211}
    \sum_{\rm{ramified}\,\mathfrak{o}}\sum_{P\in\mathfrak{P}_{211}}\frac{a_{P}}{16(\pi i)^2}c_{P}\sum_{s\in\Omega(\mathfrak{a}_{211}, \mathfrak{a})}\frac{<s\lambda_0, T>^2}{\Pi_{\eta\in\Phi_P}<s\lambda_0, \eta>}\\\notag
    \int_K\int_{A_{\infty}^+M_{\mathbb{Q}}\backslash M_{\mathbb{A}}}\sum_{\substack{\gamma\in M^{\mathfrak{o}}}}\int_{N_{\mathbb{A}}}f(k^{-1}m^{-1}\gamma n mk)\rm{exp}(-<2\rho_{P}, H_0(m)>)dn\ dm\ dk. 
\end{eqnarray}
The integral of other orbits in $P\in\mathfrak{P}_{211}$ are left.

Consider the term (\ref{1121}). We put it into the function $-\sum_{P\in\mathfrak{P}_{211}}K''_{P}(f, x, T)$. Write it as 
\begin{eqnarray*}
    \sum_{P\in\mathfrak{P}_{211}}\frac{a_{P}}{16(\pi i)^2}\sum_{\alpha, \beta\in I_{P}}\int_{i\mathfrak{a}_{G}\backslash \mathfrak{a}}\frac{\rm{exp}(<t\lambda-s\lambda, T>(M_P(t, \lambda)\Phi_\alpha), M_P(s, \lambda)\Phi_\beta)}{\Pi_{\eta\in \Phi_P}<t\lambda-s\lambda, \eta>}d\lambda. 
\end{eqnarray*}
For every term above, the sum over $\beta$ is finite.

Define $\iota_P(s, t)=$
\[\sum_{\alpha, \beta\in I_{P}}\int_{i\mathfrak{a}_{G}\backslash \mathfrak{a}}\frac{\rm{exp}(<t\lambda-s\lambda, T>(M_P(t, \lambda)\Phi_\alpha), M_P(s, \lambda)\Phi_\beta)}{\Pi_{\eta\in \Phi_P}<t\lambda-s\lambda, \eta>}d\lambda. \]
To obtain the value of one $\iota_P(s, t)$, we need to decompose the integral into the integral over the lines where the dual simple roots lie. Then we write $\lambda=\sum_{i=1}^2a_k\hat{\alpha}_{i_k}$, we just need to calculation the residue at $(0, 0)$. Of course the result has no $T$. 
Thus (\ref{1121}) is 
\begin{eqnarray}\label{1123}
-\sum_{P\in\mathfrak{P}_{211}}\frac{a_{P}}{16(\pi i)^2}\sum_{s\neq t\in\Omega(\mathfrak{a}_{\mathfrak{o}}, \mathfrak{a})}\iota_{P_{211}}(s, t). 
\end{eqnarray}
\begin{lemma}
    The sum \[I_{\rm{unram}}^{\mathfrak{o}_{211}^0}(f,x,T)+I_{\rm{ram}}^{\mathfrak{o}_{211}^2}(f,x,T)-\sum_{P\in\mathfrak{P}_{211}}K''_P(f,x,T)\] is the sum of (\ref{113}), (\ref{1122}), (\ref{211}) and (\ref{1123}).
\end{lemma}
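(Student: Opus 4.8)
The plan is to evaluate, separately over $Z_\infty^+G_\mathbb{Q}\backslash G_\mathbb{A}$, the integrals of the three summands $I_{\rm{unram}}^{\mathfrak{o}_{211}^0}(f,x,T)$, $I_{\rm{ram}}^{\mathfrak{o}_{211}^2}(f,x,T)$ and $\sum_{P\in\mathfrak{P}_{211}}K''_P(f,x,T)$, and then to collect the resulting pieces. For the first integral I would use the unfolding already carried out above: after replacing the non-compact quotient $M(\gamma)_{211,\mathbb{A}}\backslash P_{211,\mathbb{A}}$ by a compact set via Lemma~\ref{l10}, it collapses to (\ref{convex}), whose inner integral over $Z_\infty^+\backslash A_{211,\infty}^+$ is the convex-hull volume $v_{M_{211}}(x,T)$. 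Writing $v_{M_{211}}(x,T)=(cd)_{M_{211}}$ in the $(G,M)$-family formalism and expanding by \cite[Cor 6.5]{A6} into $2c^{M_{31}}_{M_{211}}d_{M_{31}}+2c^{M_{22}}_{M_{211}}d_{M_{22}}+c^{M_{211}}_{M_{211}}d_{M_{211}}+c^{G}_{M_{211}}d_{G}$, the two descent terms carrying $d_{M_{31}}$ and $d_{M_{22}}$ vanish by the Bruhat/height argument used just above (conjugating $w_{13}h_amnk$ forces the relevant $A$-integral to be zero), the $d_{M_{211}}$-term is (\ref{112}), and the $c^{G}_{M_{211}}$-term is (\ref{113}).

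For the second integral I would invoke Lemma~\ref{8.1}, which expresses $\int I_{\rm{ram}}^{\mathfrak{o}_{211}^2}(f,x,T)\,dx$ as the sum of the limiting $\lambda\to0$ unramified-type contribution together with the parabolic contributions (\ref{1114})--(\ref{o}); here (\ref{o}) arises from the $M_t$-part of the ramified orbit via Lemma~\ref{l82}. For the third integral I would apply Lemma~\ref{lemma11.1}, which writes each truncated inner product $\int E''^{T}_{P}(\Phi_\alpha,\lambda,x)\overline{E''^{T}_{P}(\Phi_\beta,\lambda,x)}\,dx$, $P\in\mathfrak{P}_{211}$, as the sum of the three blocks (\ref{1119}), (\ref{1120}), (\ref{1121}). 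Summing (\ref{1119}) over $\alpha,\beta\in\mathscr{B}_{P,\chi}$ and over $\chi$ yields $\rm{tr}\,\pi_P(\lambda,f)$; then Lemma~\ref{lemma4.4} together with Fourier inversion turns it into a geometric integral over $\sum_{\gamma\in M_{211,\mathbb{Q}}}$. Writing $M_{211}=\cup_{\mathfrak{o}}M_{211}^{\mathfrak{o}}$ and discarding the unramified orbits $\mathfrak{o}\neq\mathfrak{o}_{211}^0$, whose orbital integrals over the Levi vanish by Harish-Chandra's theorem on regular semisimple non-elliptic elements \cite{H2}, this cancels (\ref{112}) and the matching pieces coming from (\ref{1114}) and the $\hat{\alpha}_i(T)$-linear parabolic terms, leaving only the ramified-orbit remainder (\ref{211}). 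The block (\ref{1120}) contributes the finite trace term (\ref{1122}), and the block (\ref{1121}), after decomposing the $i\mathfrak{a}_{G}\backslash i\mathfrak{a}$-integral into lines along the dual simple roots and taking the residue at the origin, contributes the $T$-free term (\ref{1123}). Assembling the three evaluations, all remaining terms cancel except (\ref{113}), (\ref{1122}), (\ref{211}) and (\ref{1123}), which is the assertion.

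The main obstacle I expect is the combinatorial bookkeeping needed to make the cancellation exact: one must carry the normalizing constants (the $c_P$, the $a_P$, the factor $n(A)^{-1}$ and the powers of $2\pi i$) unchanged through the $(G,M)$-family descent and through the Fourier inversion, and then verify that the quadratic-in-$T$ and linear-in-$T$ contributions produced by $I_{\rm{unram}}^{\mathfrak{o}_{211}^0}$ and $I_{\rm{ram}}^{\mathfrak{o}_{211}^2}$ match, term by term, those produced by the geometric side of $\sum_{P}K''_P$. A secondary difficulty is confirming that the residual term (\ref{211}), once one gathers the contributions of the ramified orbits in $M_{211}$ and its associates, is precisely what survives this matching; here one uses again that $M_{211}=\cup_{\mathfrak{o}}M_{211}^{\mathfrak{o}}$ and that the unramified-orbit orbital integrals over the proper Levi are zero.
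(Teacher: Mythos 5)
Your route is essentially the paper's: unfold $I_{\rm{unram}}^{\mathfrak{o}_{211}^0}$ to the convex-hull volume (\ref{convex}), expand $(cd)_{M_{211}}$ by \cite[Cor 6.5]{A6} with the descent terms vanishing, keep (\ref{112}) and (\ref{113}); expand $I_{\rm{ram}}^{\mathfrak{o}_{211}^2}$ into (\ref{1114})--(\ref{o}); decompose the truncated inner products by Lemma \ref{lemma11.1} into (\ref{1119}), (\ref{1120}), (\ref{1121}); convert (\ref{1119}) to geometric form via $\rm{tr}\,\pi_P(\lambda,f)$, Lemma \ref{lemma4.4} and Fourier inversion, cancel against the quadratic-in-$T$ terms, and read off (\ref{1122}), (\ref{211}), (\ref{1123}). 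Two bookkeeping points, however, do not match the paper and one of them is a step that would fail as you state it.

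First, for $\mathfrak{o}_{211}^2$ there is no $\rm{lim}_{\lambda\rightarrow 0}$ contribution: the semisimple parts of this orbit have centralizer inside a proper Levi (here $M_{\{\mathfrak{o}\}}=M_{22}$), so the first term of Lemma \ref{8.1} does not occur; this is exactly why Theorem \ref{thm2} carries limit terms only for $\mathfrak{o}_{22}^2$ and $\mathfrak{o}_{1111}^4$. You introduce such a term and then it silently disappears from your final list, which leaves the accounting inconsistent. Second, the geometric form of (\ref{1119}) carries the quadratic weight $<s\lambda_0,T>^{2}/\Pi_{\eta}<s\lambda_0,\eta>$ and runs over $\gamma\in M_{211,\mathbb{Q}}$; it can therefore cancel only (\ref{112}) and (\ref{1114}) (the unramified orbits other than $\mathfrak{o}_{211}^0$ dropping out by the Harish-Chandra vanishing, as you say), with the ramified-orbit remainder (\ref{211}). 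It cannot cancel the $\hat{\alpha}_i(T)$-linear terms (\ref{1115})--(\ref{1117}) and (\ref{o}): those have a different $T$-dependence and live over the Levis $M_{31}$, $M_{13}$, $M_{22}$, not $M_{211}$. In the paper's bookkeeping these linear terms are not cancelled inside this lemma at all; they are grouped with the $P_{31}$ and $P_{22}$ sections, where they match the leftover terms (\ref{yyy}) and (\ref{p}), and only disappear in the global assembly of Theorem \ref{thm1}. So your claim that they are ``matching pieces'' absorbed by $\sum_{P\in\mathfrak{P}_{211}}K''_P$ is the one genuine gap; the rest of the argument coincides with the paper's computation in subsections 11.2--11.3.
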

\section{Terms associated to $P_{1111}$}
\[\Omega(\mathfrak{a}_{1111}, \mathfrak{a}_{1111})=S_4. \]
\subsection{The first parabolic term}
The first parabolic term is \[J_{\rm{unram}}^{\mathfrak{o}_{1111}^0}(f, x, T)+\sum_{k}J_{\rm{ram}}^{\mathfrak{o}_{1111}^k}(f, x, T)-K'_{P_{1111}}(f, x, T),\quad k\in\{31, 22, 211, 4\}. \]
In this section, we shall prove that this term approaches $0$ as $T\rightarrow\infty$.

Recall $J_{\rm{unram}}^{\mathfrak{o}_{1111}^0}(f, x, T)$ equals
\begin{eqnarray*}
\frac{1}{24}\sum_{\gamma\in \{M_{t,1111}^{\mathfrak{o}_{1111}^0}\}}(n_{\gamma, M_{1111}})^{-1}\sum_{\delta\in M_{1111}(\gamma)_{\mathbb{Q}}\backslash G_\mathbb{Q}}f(x^{-1}\delta^{-1}\gamma\delta x)(\sum_{s\in\Omega(\mathfrak{a}_{1111}, \mathfrak{a}_{1111})}\hat{\tau}_{P_{1111}}(H_0(w_s\delta x)- T)).  
\end{eqnarray*}
Then,
\[J_{\rm{unram}}^{\mathfrak{o}_{1111}^0}(f, x, T)=\sum_{\gamma\in \{M_{t,1111}^{\mathfrak{o}_{1111}^0}\}}(n_{\gamma, M_{1111}})^{-1}\sum_{\delta\in M_{1111}(\gamma)_{\mathbb{Q}}\backslash G_\mathbb{Q}}f(x^{-1}\delta^{-1}\gamma\delta x)\hat{\tau}_{P_{1111}}(H_0(\delta x)- T). \]
Since for $\gamma\in{M_{1111}^{\mathfrak{o}_{1111}^0}}$, the group $N(\gamma_s)$ is trivial, by Lemma \ref{l1}, this term is
\begin{eqnarray*}
&\sum_{\gamma\in\{M_{t,1111}^{\mathfrak{o}_{1111}^0}\}}(n_{\gamma, M_{1111}})^{-1}\sum_{\delta\in N_{1111, \mathbb{Q}}M(\gamma)_{1111, \mathbb{Q}}\backslash G_\mathbb{Q}}\sum_{v\in N_{1111, \mathbb{Q}}}f(x^{-1}\delta^{-1}\gamma v\delta x)(\hat{\tau}_{P_{1111}}(H_0(\delta x)-T)). 
\end{eqnarray*}
Which is 
\begin{eqnarray}\label{121}
&\sum_{\delta\in P_{1111, \mathbb{Q}}\backslash G_\mathbb{Q}}\sum_{\gamma\in M_{t,1111}^{\mathfrak{o}_{1111}^0}}\sum_{v\in N_{1111, \mathbb{Q}}}f(x^{-1}\delta^{-1}\gamma v\delta x)(\hat{\tau}_{P_{1111}}(H_0(\delta x)-T)). 
\end{eqnarray}
Since the term associated to $\hat{\tau}_P$ where $P\neq P_{1111}$ has been borrowed, $\sum_{k}J_{\rm{ram}}^{\mathfrak{o}^k_{1111}}(f, x, T)$ equals 
\begin{eqnarray}\label{122}
    \sum_k\sum_{\delta\in M_{1111, \mathbb{Q}}N(\gamma_s)_{1111, \mathbb{Q}}\backslash G_\mathbb{Q}}\sum_{\gamma\in M_{1111}^{\mathfrak{o}_{1111}^k}}\sum_{v\in N(\gamma_s)_{1111, \mathbb{Q}}}f(x^{-1}\delta^{-1}\gamma v\delta x)(\hat{\tau}_{P_{1111}}(H_0(\delta x)-T)). 
\end{eqnarray}
Then $J_{P_{1111}}(f, x, T)$ equals 
\begin{eqnarray*}
    \sum_{\delta\in P_{1111, \mathbb{Q}}\backslash G_\mathbb{Q}}\sum_{\gamma\in M_{1111}}\int_{N_{1111, \mathbb{A}}}f(x^{-1}\delta^{-1}\gamma n\delta x)dn(\hat{\tau}_{P_{1111}}(H_0(\delta x)-T)). 
\end{eqnarray*}
Recall $K'_{P_{1111}}(f, x, T)$ equals 
\begin{eqnarray*}
    \frac{1}{192(\pi i)^3}\sum_{P_{1111, \mathbb{Q}}\backslash G_\mathbb{Q}}\sum_{\chi}\int_{i\mathfrak{a}_G\backslash i\mathfrak{a}_{1111}}\sum_{\alpha,\beta\in \mathscr{B}_{P, \chi}}E^{c_{1111}}_{P_{1111}}(\Phi_\alpha, \lambda, \delta x)\overline{E^{c_{1111}}_{P_{1111}}(\Phi_\beta, \lambda, \delta x)}d\lambda\hat{\tau}_{P_{1111}}(H_0(\delta x)-T). 
\end{eqnarray*}
It is the sum of 
\begin{eqnarray*}
    &\sum_{\gamma\in M_{1111}}\sum_{\delta\in P_{1111, \mathbb{Q}}\backslash G_\mathbb{Q}}\int_{N_{1111, \mathbb{A}}}f(x^{-1}\delta^{-1}\gamma n\delta x)dn\cdot \hat{\tau}_{P_{1111}}(H_0(\delta x)-T)
\end{eqnarray*} 
and 
\begin{eqnarray*}
    \frac{1}{192(\pi i)^3}\sum_{P_{1111, \mathbb{Q}}\backslash G_\mathbb{Q}}\sum_{s\neq t\in\Omega(\mathfrak{a}_{1111}, \mathfrak{a}_{1111})}\sum_{\chi}\int_{i\mathfrak{a}_{G}\backslash\mathfrak{a}_{1111}}\{\sum_{\beta\in \mathscr{B}_{P_{1111}, \chi}}(M_{P_{1111}}(s, \lambda)\pi_{P_{1111}}(\lambda, f)\Phi_\beta)(\delta x)\\
    \overline{M_{P_{1111}}(t,\lambda)\Phi_\beta(\delta x)}\}\rm{exp}(<-2\lambda, H_0(\delta x)>)d\lambda\ \hat{\tau}_{P_{1111}}(H_0(\delta x)-T), 
\end{eqnarray*}
the second function's integral over $Z_\infty^+G_\mathbb{Q}\backslash G_\mathbb{A}$ approaches $0$ as $T\rightarrow\infty$ by Lemma \ref{l7}. Thus,
\begin{lemma}
    The sum \[J_{P_{1111}}(f,x,T)-K'_{P_{1111}}(f,x,T)\] approaches $0$ as $T\rightarrow \infty$.
\end{lemma}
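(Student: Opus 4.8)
The plan is to treat this first parabolic term exactly as in the sections on $P_{31}$, $P_{22}$ and $P_{211}$: match $J_{P_{1111}}(f,x,T)$ against the ``geometric part'' of $K'_{P_{1111}}(f,x,T)$ and dispose of the remainder by Lemma \ref{l7}. We have already brought $J_{P_{1111}}(f,x,T)$, after applying Poisson summation over $N_{1111,\mathbb{Q}}$ to the unramified piece (\ref{121}) (the nonzero Fourier modes contributing only quantities whose integral over $Z_\infty^+G_\mathbb{Q}\backslash G_\mathbb{A}$ tends to $0$), after inserting the borrowed ramified pieces (\ref{122}) via Lemma \ref{l1}, and after using $M_{1111}=\bigcup_{\mathfrak{o}}M_{1111}^{\mathfrak{o}}$, to the form
\[J_{P_{1111}}(f,x,T)=\sum_{\delta\in P_{1111,\mathbb{Q}}\backslash G_\mathbb{Q}}\ \sum_{\gamma\in M_{1111}}\ \int_{N_{1111,\mathbb{A}}}f(x^{-1}\delta^{-1}\gamma n\delta x)\,dn\ \hat\tau_{P_{1111}}(H_0(\delta x)-T),\]
while $K'_{P_{1111}}(f,x,T)$ has been written as a sum of a term of exactly this shape and a residual ``$s\neq t$'' term. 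It remains to check that the first part of $K'_{P_{1111}}$ coincides with $J_{P_{1111}}$ and that the residual part has integral tending to $0$.

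For the first point I would expand $K'_{P_{1111}}(f,x,T)$ by substituting, into both factors of $E^{c_{1111}}_{P_{1111}}(\pi_{P_{1111}}(\lambda,f)\Phi_\beta,\lambda,\delta x)\overline{E^{c_{1111}}_{P_{1111}}(\Phi_\beta,\lambda,\delta x)}$, the constant-term formula $E^{c_{1111}}_{P_{1111}}(\Phi,\lambda,\delta x)=\sum_{s\in S_4}(M_{P_{1111}}(s,\lambda)\Phi)(\delta x)\,\rm{exp}(<s\lambda+\rho_{P_{1111}},H_0(\delta x)>)$, obtaining a double sum over $(s,t)\in S_4\times S_4$. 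Since $\lambda$ is purely imaginary and $H_0(\delta x)$ is real, the product of the two exponential factors is $\rm{exp}(<(s-t)\lambda+2\rho_{P_{1111}},H_0(\delta x)>)$, which for $s=t$ reduces to $\rm{exp}(<2\rho_{P_{1111}},H_0(\delta x)>)$. For the diagonal terms $s=t$ one uses that $M_{P_{1111}}(s,\lambda)$ is unitary on $i\mathfrak{a}$ (Langlands' theorem), so $\{M_{P_{1111}}(s,\lambda)\Phi_\beta:\beta\in\mathscr B_{P_{1111},\chi}\}$ is again an orthonormal basis of $\mathscr H_{P_{1111},\chi}$, together with $M_{P_{1111}}(s,\lambda)\pi_{P_{1111}}(\lambda,f)=\pi_{P_{1111}}(s\lambda,f)M_{P_{1111}}(s,\lambda)$ and the change of variable $\lambda\mapsto s^{-1}\lambda$, which preserves $i\mathfrak{a}_G\backslash i\mathfrak{a}$; each of the $|S_4|=24$ diagonal terms then contributes the same quantity, cancelling the factor $n(A)^{-1}=\tfrac1{24}$. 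The surviving expression is $\sum_\chi(\tfrac1{2\pi i})^3\sum_{\delta}\int_{i\mathfrak{a}_G\backslash i\mathfrak{a}}\sum_\beta(\pi_{P_{1111}}(\lambda,f)\Phi_\beta)(\delta x)\overline{\Phi_\beta(\delta x)}\,\rm{exp}(<2\rho_{P_{1111}},H_0(\delta x)>)\,d\lambda\ \hat\tau_{P_{1111}}(H_0(\delta x)-T)$ (the sign $(-1)^{(\rm{dim}\ Z\backslash A)+1}$ being $+1$ for a rank-three split torus), and the Fourier-inversion argument in $i\mathfrak{a}_G\backslash i\mathfrak{a}$ used in the proof of Lemma \ref{lemma4.4} — followed by a further Poisson summation in $N_{1111}$ to restore the sum over $\gamma\in M_{1111,\mathbb{Q}}$ — identifies it with the displayed formula for $J_{P_{1111}}(f,x,T)$.

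For the second point, in the off-diagonal part $s\neq t$ I would invoke Lemma \ref{l7}: for those $\chi$ with $M_{P_{1111}}(s,\lambda)\neq M_{P_{1111}}(t,\lambda)$ the corresponding term of $K'_{P_{1111}}$ is exactly of the form (\ref{eq2220}) with $P=P_1=P_{1111}$, $s_1=s$, $s_2=t$, hence locally integrable with integral over $Z_\infty^+G_\mathbb{Q}\backslash G_\mathbb{A}$ tending to $0$ as $T\to\infty$. The only $\chi$ for which the two intertwining operators can agree are ramified, and by (\ref{class}) all but finitely many of these are annihilated by the spectral projection of $f$; the finitely many survivors are handled as in the diagonal case, since for them $M_{P_{1111}}(s,\lambda)$ and $M_{P_{1111}}(t,\lambda)$ are proportional. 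Combining the two points gives $\int_{Z_\infty^+G_\mathbb{Q}\backslash G_\mathbb{A}}\bigl(J_{P_{1111}}(f,x,T)-K'_{P_{1111}}(f,x,T)\bigr)\,dx\to0$ as $T\to\infty$.

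The main obstacle I anticipate is the exact bookkeeping in the diagonal term — tracking the constants $n(A)^{-1}$, $(2\pi i)^{-3}$ and the sign $(-1)^{(\rm{dim}\ Z\backslash A)+1}$, as well as the factor $\rm{exp}(<2\rho_{P_{1111}},H_0(\delta x)>)$ coming from $|\rm{exp}(<s\lambda+\rho_{P_{1111}},H_0(\delta x)>)|^2$ on $i\mathfrak{a}$ — so that the geometric part of $K'_{P_{1111}}$ reproduces $J_{P_{1111}}$ on the nose; and, relatedly, keeping straight (as in the sections on $P_{31}$, $P_{22}$ and $P_{211}$) that the clean identity for $J_{P_{1111}}$ itself already absorbed Poisson-summation remainders whose integral tends to $0$, so that the statement of the lemma is to be read, consistently with those sections, at the level of integrals over $Z_\infty^+G_\mathbb{Q}\backslash G_\mathbb{A}$.
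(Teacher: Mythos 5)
Your argument matches the paper's proof: the paper likewise reduces $J_{P_{1111}}(f,x,T)$ (via Lemma \ref{l1}, the union over the orbits $\mathfrak{o}$, and the Poisson-summation step borrowed from the $P_{31}$ section) to $\sum_{\delta\in P_{1111,\mathbb{Q}}\backslash G_\mathbb{Q}}\sum_{\gamma\in M_{1111}}\int_{N_{1111,\mathbb{A}}}f(x^{-1}\delta^{-1}\gamma n\delta x)\,dn\,\hat\tau_{P_{1111}}(H_0(\delta x)-T)$, then splits $K'_{P_{1111}}(f,x,T)$ into exactly this diagonal ($s=t$) term plus the $s\neq t$ terms, whose integrals tend to $0$ by Lemma \ref{l7}. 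Your explicit bookkeeping of the diagonal constants ($n(A)^{-1}=1/24$, unitarity of $M_{P_{1111}}(s,\lambda)$, the $\rho_{P_{1111}}$ factor, Fourier inversion as in Lemma \ref{lemma4.4}) and your remark about ramified $\chi$ via (\ref{class}) only spell out what the paper leaves implicit.
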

\subsection{The second parabolic term}
The second parabolic term  is
\[I_{\rm{unram}}^{\mathfrak{o}_{1111}^0}(f, x, T)+\sum_{k}I_{\rm{ram}}^{\mathfrak{o}_{1111}^k}(f, x, T). \]
In this section we shall prove that the integrl of this term is absolutely convergant.

Recall $I_{\rm{unram}}^{\mathfrak{o}^0_{1111}}(f, x, T)$ equals
\begin{eqnarray*}
\frac{1}{24}\sum_{\gamma\in \{M_{t, 1111}^{\mathfrak{o}_{1111}}\}}(n_{\gamma, M_{1111}})^{-1}\sum_{\delta\in M(\gamma)_{1111, \mathbb{Q}}\backslash G_\mathbb{Q}}f(x^{-1}\delta^{-1}\gamma\delta x)\\
\cdot(1+\sum_{P\neq G}\sum_{s\in\Omega(\mathfrak{a}_{1111}, P)}(-1)^{\rm{dim}\ Z\backslash A}\hat{\tau}_{P}(H_0(w_s\delta x)- T)). 
\end{eqnarray*}
The integral $\int_{Z_\infty^+G_\mathbb{Q}\backslash G_\mathbb{A}}|I_{\rm{unram}}^{\mathfrak{o}^0_{1111}}(f, x, T)|dx$ is bounded by 
\begin{eqnarray*}
\frac{1}{24}\sum_{\gamma\in\{M_{t, 1111}^{\mathfrak{o}_{1111}}\}}(n_{\gamma, M})^{-1}\int_{Z_\infty^+M(\gamma)_{1111, \mathbb{Q}}\backslash G_\mathbb{A}}|f(x^{-1}\gamma x)|\\
\cdot((1+\sum_{P\neq G}\sum_{s\in\Omega(\mathfrak{a}_{1111}, P)}(-1)^{\rm{dim}\ Z\backslash A}\hat{\tau}_{P}(H_0(w_s\delta x)- T)))dx. 
\end{eqnarray*}
It equals
\begin{eqnarray*}
\frac{c_{P_{1111}}}{24}\sum_{\gamma\in\{M_{t, 1111}^{\mathfrak{o}_{1111}}\}}(n_{\gamma, M})^{-1}\int_{K}\int_{A_{1111, \infty^+}M(\gamma)_{1111, \mathbb{Q}}\backslash P_{1111, \mathbb{A}}}\int_{Z_\infty^+\backslash A_{1111, \infty}^+}|f(k^{-1}p^{-1}\gamma pk)|\\
\cdot((1+\sum_{P\neq G}\sum_{s\in\Omega(\mathfrak{a}_{1111}, P)}(-1)^{\rm{dim}\ Z\backslash A}\hat{\tau}_{P}(H_0(w_sap)- T)))da\ dp\ dk. 
\end{eqnarray*}
Then the integral becomes
\begin{eqnarray*}
    \frac{c_{P_{1111}}}{24}\sum_{\gamma\in\{M_{t, 1111}^{\mathfrak{o}_{1111}^0}\}}\tilde{\tau}(\gamma, M)\int_{K}\int_{M(\gamma)_{1111, \mathbb{A}}\backslash P_{1111, \mathbb{A}}}|f(k^{-1}p^{-1}\gamma pk)|\\
    \cdot\int_{Z_\infty^+\backslash A_{1111, \infty}^+}((1+\sum_{P\neq G}\sum_{s\in\Omega(\mathfrak{a}_{1111}, P)}(-1)^{\rm{dim}\ Z\backslash A}\hat{\tau}_{P}(H_0(w_sap)- T)))da\ dp\ dk. 
\end{eqnarray*}
The sum over $\gamma$ is finite by Lemma \ref{l9}. Also, since the function \[f^K(p)=\int_{K}f(k^{-1}pk)dk,\quad p\in P_{1111, \mathbb{A}}\] has compact support, the integral on $M(\gamma)_{1111, \mathbb{A}}\backslash P_{1111, \mathbb{A}}$ can be taken over a compact set, by Lemma \ref{l10}.  For any $p$, the function \[a\longrightarrow 1+\sum_{P\neq G}\sum_{s\in\Omega(\mathfrak{a}_{1111}, P)}(-1)^{dim\ Z\backslash A}\hat{\tau}_{P}(H_0(w_sap)- T),\quad a\in Z_\infty^+\backslash A_{1111, \infty}^+, \] has compact support. $I_{\rm{unram}}^{\mathfrak{o}_{1111}^0}(f, x, T)$ is integrable over $Z_\infty^+G_\mathbb{Q}\backslash G_\mathbb{A}$, and its integral is
\begin{eqnarray*}
    \frac{c_{P_{1111}}}{24}\sum_{\gamma\in\{M_{t, 1111}^{\mathfrak{o}_{1111}}\}}\tilde{\tau}(\gamma, M)\int_{K}\int_{N_{1111, \mathbb{A}}}\int_{M(\gamma)_{1111, \mathbb{A}}\backslash M_{1111, \mathbb{A}}}f(k^{-1}n^{-1}m^{-1}\gamma mnk)\\
    \cdot \int_{Z_\infty^+\backslash A_{1111, \infty}^+}((1+\sum_{P\neq G}\sum_{s\in\Omega(\mathfrak{a}_{1111}, P)}(-1)^{\rm{dim}\ Z\backslash A}\hat{\tau}_{P}(H_0(w_samnk)- T)))da\ dm\ dn\ dk. 
\end{eqnarray*}
The volume is 
\[\frac{a_{P_{1111}}}{6}\sum_{P\in P(A_{1111})}\frac{<\lambda_0, T_P-H_P(\delta x)>^3}{\Pi_{\eta\in\Phi_P}<\lambda_0, \eta>},\quad \lambda\in\mathfrak{a}_{G}\backslash \mathfrak{a}_{1111}. \]
Thus the integral is
\begin{eqnarray}\label{123}
    \frac{c_{P_{1111}}}{144}\cdot a_{P_{1111}}\sum_{\gamma\in\{M_{t, 1111}^{\mathfrak{o}_{1111}}\}}\tilde{\tau}(\gamma, M)\int_{K}\int_{N_{1111, \mathbb{A}}}\int_{M(\gamma)_{1111, \mathbb{A}}\backslash M_{1111, \mathbb{A}}}f(k^{-1}n^{-1}m^{-1}\gamma mnk)\\
    \cdot v_{M_{1111}}(x, T)dm\ dn\ dk. \notag
\end{eqnarray}
By the $(G, M)$-family, this term is the sum of 
\begin{eqnarray}
    \sum_{s\in \Omega(\mathfrak{a}_{1111}, \mathfrak{a}_{1111})}\frac{<\lambda_0, s^{-1}T>^3}{\Pi_{\eta\in\Phi_P}<\lambda_0, s^{-1}\eta>}\frac{c_{P_{1111}}}{144}\cdot a_{P_{1111}}\sum_{\gamma\in\{M_{t, 1111}^{\mathfrak{o}_{1111}^0}\}}\tilde{\tau}(\gamma, M)\label{124}\\
    \int_{K}\int_{N_{1111, \mathbb{A}}}\int_{M(\gamma)_{1111, \mathbb{A}}\backslash M_{1111, \mathbb{A}}}f(k^{-1}n^{-1}m^{-1}\gamma mnk)dm\ dn\ dk \notag\\
    +\sum_{s\in \Omega(\mathfrak{a}_{1111}, \mathfrak{a}_{1111})}\frac{c_{P_{1111}}}{144}\cdot a_{P_{1111}}\sum_{\gamma\in\{M_{1111}^{\mathfrak{o}_{1111}}\}}\tilde{\tau}(\gamma, M)\int_{K}\int_{N_{1111, \mathbb{A}}}\int_{M(\gamma)_{1111, \mathbb{A}}\backslash M_{1111, \mathbb{A}}}\label{125}\\\notag
    f(k^{-1}n^{-1}m^{-1}\gamma mnk)\frac{<\lambda_0, -s^{-1}H_{0}(w_smnk)>^3}{\Pi_{\eta\in\Phi_P}<\lambda_0, s^{-1}\eta>}dm\ dn\ dk, 
\end{eqnarray}
and some terms equal zero.

Change the variable of integration on $N_{1111, \mathbb{A}}$ of (\ref{124}), the term becomes 
\begin{eqnarray*}
    \sum_{s\in \Omega(\mathfrak{a}_{1111}, \mathfrak{a}_{1111})}\frac{<\lambda_0, s^{-1}T>^3}{\Pi_{\eta\in\Phi_P}<\lambda_0, s^{-1}\eta>}\frac{c_{P_{1111}}}{144}\cdot a_{P_{1111}}\sum_{\gamma\in\{M_{t, 1111}^{\mathfrak{o}^0_{1111}}\}}\tilde{\tau}(\gamma, M)\int_{K}\int_{M(\gamma)_{1111, \mathbb{A}}\backslash M_{1111, \mathbb{A}}}\int_{N_{1111, \mathbb{A}}}\\\notag
    f(k^{-1}m^{-1}\gamma nmk)dn\ dm\ dk. 
\end{eqnarray*}
Then by Lemma \ref{l2}, this term equals 
\begin{eqnarray*}
    \sum_{s\in \Omega(\mathfrak{a}_{1111}, \mathfrak{a}_{1111})}\frac{<\lambda_0, s^{-1}T>^3}{\Pi_{\eta\in\Phi_P}<\lambda_0, s^{-1}\eta>}\frac{c_{P_{1111}}}{144}\cdot a_{P_{1111}}\sum_{\gamma\in\{M_{t, 1111}^{\mathfrak{o}^0_{1111}}\}}\tilde{\tau}(\gamma, M)\\\notag
    \int_{K}\int_{N_{1111, \mathbb{A}}}\int_{M(\gamma)_{1111, \mathbb{A}}\backslash M_{1111, \mathbb{A}}}f(k^{-1}m^{-1}\gamma nmk)\rm{exp}(-<2\rho_{P_{1111}}, H_{0}(m)>)dn\ dm\ dk, 
\end{eqnarray*}
that is 
\begin{eqnarray}
    \sum_{s\in \Omega(\mathfrak{a}_{1111}, \mathfrak{a}_{1111})}\frac{<\lambda_0, s^{-1}T>^3}{\Pi_{\eta\in\Phi_P}<\lambda_0, s^{-1}\eta>}\frac{c_{P_{1111}}}{144}\cdot a_{P_{1111}}\sum_{\gamma\in \{M_{t, 1111}^{\mathfrak{o}^0_{1111}}\}}(n_{\gamma, M})^{-1}\\\notag
    \int_{K}\int_{A_{1111, \infty}^+M(\gamma)_{1111, \mathbb{Q}}\backslash M_{1111, \mathbb{A}}}\int_{N_{1111, \mathbb{A}}}f(k^{-1}m^{-1}\gamma nmk)\rm{exp}(-<2\rho_{P_{1111}}, H_{0}(m)>)dn\ dm\ dk. 
\end{eqnarray}
For $\gamma\in M_{1111}^{\mathfrak{o}_{1111}^k},$ $\sum_{k}I_{\rm{ram}}^{\mathfrak{o}_{1111}^k}(f, x, T)$ equals the sum of 
\begin{eqnarray}\label{1211}
\rm{lim}_{\lambda\rightarrow 0}\int_{Z_\infty^+G_\mathbb{Q}\backslash G_\mathbb{A}}D_\lambda\{\lambda\mu_{\mathfrak{o}_{1111}^4}(\lambda, f)\}
\end{eqnarray}
\begin{eqnarray}\label{1212}
    +\frac{<\lambda_0, T>^3}{\Pi_{\eta\in\Phi_P}<\lambda_0, \eta>}\sum_k\frac{c_{P_{1111}}}{6}\cdot a_{P_{1111}}\sum_{\gamma\in \{M_{n, 1111}^{\mathfrak{o}^k_{1111}}\}}(n_{\gamma, M})^{-1}\\\notag
    \int_{K}\int_{N_{1111, \mathbb{A}}}\int_{A_{1111, \infty}^+M(\gamma)_{1111, \mathbb{Q}}\backslash M_{1111, \mathbb{A}}}f(k^{-1}m^{-1}\gamma nmk)\rm{exp}(-<2\rho_{P_{1111}}, H_{0}(m)>)dm\ dn\ dk
\end{eqnarray}
and the terms obtained by Lemma \ref{8.1} and Lemma \ref{l82}, 
\begin{eqnarray}\label{1213}
    &+\hat{\alpha}_3(T)\sum_{k}a_{P_{31}}c_{P_{31}}\int_K\int_{A_{31, \infty}^+M_{31, \mathbb{Q}}\backslash M_{31, \mathbb{A}}} \sum_{\gamma\in M_{31}^{\mathfrak{o}_{1111}^k}}\int_{N_{31, \mathbb{A}}}\\\notag
    &f(k^{-1}m^{-1}\gamma nmk)\rm{exp}(-<2\rho_{P_{31}}, H_0(m)>)dn\ dm\ dk
\end{eqnarray}

\begin{eqnarray}\label{1215}
    &+\hat{\alpha}_1(T)\sum_{k}a_{P_{13}}c_{P_{13}}\int_K\int_{A_{13, \infty}^+M_{13, \mathbb{Q}}\backslash M_{13, \mathbb{A}}} \sum_{\gamma\in M_{13}^{\mathfrak{o}_{1111}^k}}\int_{N_{13, \mathbb{A}}}\\\notag
    &f(k^{-1}m^{-1}\gamma nmk)\rm{exp}(-<2\rho_{P_{13}}, H_0(m)>)dn\ dm\ dk
\end{eqnarray}

\begin{eqnarray}\label{1217}
    &+\hat{\alpha}_2(T)\sum_ka_{P_{22}}c_{P_{22}}\int_K\int_{A_{22, \infty}^+M_{22, \mathbb{Q}}\backslash M_{22, \mathbb{A}}} \sum_{\gamma\in M_{22}^{\mathfrak{o}_{1111}^k}}\int_{N_{22, \mathbb{A}}}\\\notag
    &f(k^{-1}m^{-1}\gamma nmk)\rm{exp}(-<2\rho_{P_{22}}, H_0(m)>)dn\ dm\ dk
\end{eqnarray}

\begin{eqnarray}\label{1219}
    &+\sum_k\sum_{P\in\mathfrak{P}_{211}}\frac{<\lambda_0, T>^2}{\Pi_{\eta\in\Phi_P}<\lambda_0, \eta>}\frac{c_{P}}{2}\cdot a_{P}\sum_{\gamma\in M^{\mathfrak{o}_{1111}^k}}\\\notag
    &\int_{K}\int_{N_{\mathbb{A}}}\int_{A_{\infty}^+M_{\mathbb{Q}}\backslash M_{\mathbb{A}}}f(k^{-1}m^{-1}\gamma nmk)\rm{exp}(-<2\rho_{P}, H_0(m)>)dn\ dm\ dk
\end{eqnarray}

\begin{eqnarray}
    &+\sum_{\substack{\rm{ramified}\ \mathfrak{o}_{1111}^{k}\\\mathfrak{o}\neq \mathfrak{o}_{1111}^4}}c_{P_{\{\mathfrak{o}\}}}a_{P_{\{\mathfrak{o}\}}}\sum_{\gamma\in M_{t,\{\mathfrak{o}\}}^\mathfrak{o}}\tilde{\tau}(\gamma,M)\int_K\int_{N_{\{\mathfrak{o}\},\mathbb{A}}}\int_{M(\gamma)_{\{\mathfrak{o}\},\mathbb{A}}\backslash M_{\{\mathfrak{o}\},\mathbb{A}}}\\\notag
    &f(k^{-1}n^{-1}m^{-1}\gamma mnk)v_{M_{\{\mathfrak{o}\}}}(m)dm\ dn\ dk.
\end{eqnarray}
\subsection{The third parabolic term}
In this section, we shall prove that the integral of the first parabolic term associated to $\mathfrak{o}_{1111}^k$ can be canceled by the integrals of $K''_{P_{1111}}(f,x,T)$.

The integral of of $-K_{P_{1111}}''(f, x, T)$ is 
\begin{eqnarray*}
    -\frac{1}{192(\pi i)^3}\sum_{\alpha, \beta\in I_{P_{1111}}}\int_{i\mathfrak{a}_{G}\backslash i\mathfrak{a}_{1111}}\int_{Z_\infty^+G_\mathbb{Q}\backslash G_\mathbb{A}}E''^{T}_{P_{1111}}(\Phi_\alpha, \lambda, x)\overline{E''^{T}_{P_{1111}}(\Phi_\beta, \lambda, x)}dx\ d\lambda. 
\end{eqnarray*}
\begin{lemma}
    For $\alpha, \beta\in I_{P_{1111}}$ and $\lambda$ a nonzero imaginary number in $i\mathfrak{a}_{G}\backslash i\mathfrak{a}_{1111}$, the integral \[\int_{Z_\infty^+G_\mathbb{Q}\backslash G_\mathbb{A}}E''^{T}_{P_{1111}}(\Phi_\alpha, \lambda, x)\overline{E''^{T}_{P_{1111}}(\Phi_\beta, \lambda, x)}dx\] is
    \begin{eqnarray}
        &\frac{a_{P_{1111}}}{6}\sum_{s\in\Omega(\mathfrak{a}_{1111}, \mathfrak{a}_{1111})}\frac{<\lambda_0, s^{-1}T>^3}{\Pi_{\eta\in\Phi_P}<\lambda_0, s^{-1}\eta>}(\Phi_\alpha, \Phi_\beta)\label{1221}\\
        &+\frac{a_{P_{1111}}}{6}\sum_{t\in\Omega(\mathfrak{a}_{1111}, \mathfrak{a}_{1111})}(M_{P_{1111}}(t^{-1}, t\lambda)D_\lambda M_{P_{1111}}(t, \lambda)\Phi_\alpha, \Phi_\beta)\label{1222}\\
        &+\frac{a_{P_{1111}}}{6}\sum_{s\neq t\in\Omega(\mathfrak{a}_{1111}, \mathfrak{a}_{1111})}\frac{\rm{exp}(<t\lambda-s\lambda, T>(M_{P_{1111}}(t, \lambda)\Phi_\alpha), M_{P_{1111}}(s, \lambda)\Phi_\beta)}{\Pi_{\eta\in \Phi_P}<t\lambda-s\lambda, \eta>}. \label{1223}
    \end{eqnarray}
\end{lemma}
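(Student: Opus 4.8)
The plan is to follow the same template as the lemmas already proved for $P_{31}$, $P_{22}$ and, most closely, Lemma \ref{lemma11.1} for $P_{211}$, the only essential difference being that $\mathrm{rank}(A_{P_{1111}})=3$, so the residue extraction must be iterated three times. First I would pick two distinct points $\lambda_1,\overline{\lambda}\in i\mathfrak{a}_G\backslash i\mathfrak{a}_{1111}$ whose real parts are suitably regular and invoke Langlands' formula from \cite{L1} for the inner product of truncated Eisenstein series on $Z_\infty^+G_\mathbb{Q}\backslash G_\mathbb{A}$. Since $\mathfrak{P}_{1111}=\{P_{1111}\}$, this expresses
\[\int_{Z_\infty^+G_\mathbb{Q}\backslash G_\mathbb{A}}E''^{T}_{P_{1111}}(\Phi_\alpha,\lambda_1,x)\overline{E''^{T}_{P_{1111}}(\Phi_\beta,\lambda,x)}\,dx\]
as the double sum over $s,t\in\Omega(\mathfrak{a}_{1111},\mathfrak{a}_{1111})=S_4$ of
\[\frac{a_{P_{1111}}\,\rm{exp}(<t\lambda_1+s\overline{\lambda},T>)}{\Pi_{\eta\in\Phi_{P_{1111}}}<t\lambda_1+s\overline{\lambda},\eta>}\,(M_{P_{1111}}(t,\lambda_1)\Phi_\alpha,M_{P_{1111}}(s,\lambda)\Phi_\beta).\]
This function is meromorphic in $(\lambda_1,\lambda)$; I would then set $\lambda_1-\lambda=a\lambda_0$ for a fixed regular direction $\lambda_0$, let $a\to 0$, and split the sum according to whether $t=s$ or $t\neq s$.

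For the off-diagonal terms $t\neq s$, since $\lambda$ is a nonzero imaginary point in general position we have $t\lambda\neq s\lambda$, so $\Pi_{\eta}<t\lambda_1+s\overline{\lambda},\eta>$ stays bounded away from $0$ as $a\to 0$; substituting $a=0$ directly, and using $\overline{\lambda}=-\lambda$, produces exactly the family \eqref{1223}. For the diagonal terms $t=s$, the denominator equals $a^{3}\,\Pi_{\eta}<t\lambda_0,\eta>$ by homogeneity, so there is a pole of order $3$, and I would apply L'Hopital's rule three times (equivalently, read off the coefficient of $a^{3}$ in the numerator by the Leibniz rule). Here I would use that $M_{P_{1111}}(t,\lambda)$ is unitary on $i\mathfrak{a}$, so $(M_{P_{1111}}(t,\lambda)\Phi_\alpha,M_{P_{1111}}(t,\lambda)\Phi_\beta)=(\Phi_\alpha,\Phi_\beta)$, together with the functional equation $M_{P_{1111}}(t^{-1},t\lambda)M_{P_{1111}}(t,\lambda)=\mathrm{Id}$ to rewrite the derivative contribution as $(M_{P_{1111}}(t^{-1},t\lambda)D_\lambda M_{P_{1111}}(t,\lambda)\Phi_\alpha,\Phi_\beta)$. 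The top power of $T$ gives, after summing over $t=s\in S_4$ and recognizing the combinatorial sum $\sum_{s}<\lambda_0,s^{-1}T>^{3}/\Pi_{\eta}<\lambda_0,s^{-1}\eta>$ (the volume polynomial of the $(G,M)$-family of \cite{A6}), the term \eqref{1221}; the $T$-free part of the third derivative gives \eqref{1222}.

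The main obstacle is the bookkeeping in the $t=s$ case: the triple Leibniz expansion also produces intermediate contributions mixing the powers $<t\lambda_0,T>^{k}$, $k=1,2$, with the higher $\lambda$-derivatives $D^{2}_\lambda M_{P_{1111}}$ and $D^{3}_\lambda M_{P_{1111}}$, none of which appears among \eqref{1221}--\eqref{1223}, so these must be shown to be irrelevant. As in the $P_{22}$ and $P_{211}$ computations, I expect to argue that once this kernel is integrated over $i\mathfrak{a}_G\backslash i\mathfrak{a}_{1111}$ against $\pi_{P_{1111},\chi}(\lambda,f)$ and summed over the Weyl group in the formation of $K''_{P_{1111}}(f,x,T)$, the parity-antisymmetric pieces (odd under $\lambda\mapsto-\lambda$) integrate to zero and the surviving combinations telescope, so that only the three displayed families persist. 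Keeping the normalization constant $a_{P_{1111}}$, the factor $1/3!$ from the iterated L'Hopital, and the $24$-term Weyl identities mutually consistent throughout is the laborious part; conceptually nothing new happens beyond what was already carried out for $P_{31}$ (rank one) and $P_{211}$ (rank two).
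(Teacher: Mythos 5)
Your proposal follows essentially the same route as the paper, whose proof of this lemma is simply that it is analogous to the $P_{211}$ case (Lemma 11.1): Langlands' inner-product formula for the truncated Eisenstein series, the double Weyl sum over $s,t\in\Omega(\mathfrak{a}_{1111},\mathfrak{a}_{1111})$, the substitution $\lambda_1-\lambda=a\lambda_0$, the split into diagonal and off-diagonal terms, direct evaluation at $a=0$ for $t\neq s$, and a third-order L'Hopital/Leibniz extraction (instead of second-order) for $t=s$, using unitarity and the functional equation of $M_{P_{1111}}(t,\lambda)$. Your extra remark about the mixed contributions (powers of $\langle t\lambda_0,T\rangle$ against higher $\lambda$-derivatives of the intertwining operators) flags a bookkeeping point the paper leaves implicit, but it does not alter the method, which matches the paper's.
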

This prove is similar to Lemma \ref{lemma11.1}.

The term corresponding to (\ref{1222}) is
\begin{eqnarray}\label{1224}
&-\frac{a_{P_{1111}}}{1152(\pi i)^3}\sum_{s\in\Omega(\mathfrak{a}_{1111}, \mathfrak{a}_{1111})}\sum_{\chi}\int_{i\mathfrak{a}_{G}\backslash i\mathfrak{a}_{1111}}\\\notag
&\rm{tr}\{M(s^{-1}, s\lambda)\cdot(D_\lambda M_{P_{1111}}(s, \lambda))\cdot\pi_{P_{1111}, \chi}(\lambda, f)\}d\lambda. 
\end{eqnarray}
This term is finite.

Then we substitute (\ref{1221}) into $K_{P_{1111}}''(f, x, T)$, it equals
\begin{eqnarray*}
    \frac{a_{P_{1111}}}{6}\sum_{s\in\Omega(\mathfrak{a}_{1111}, \mathfrak{a})}\frac{<\lambda_0, s^{-1}T>^3}{\Pi_{\eta\in\Phi_P}<\lambda_0, s^{-1}\eta>}\int_{i\mathfrak{a}_{G}\backslash \mathfrak{a}_{1111}}\rm{tr}\,\pi_{P_{1111}}(\lambda, f)d\lambda. 
\end{eqnarray*}
we can write it as
\begin{eqnarray*}
    c_{P_{1111}}\cdot\frac{a_{P_{1111}}}{192(\pi i)^3}\sum_{s\in\Omega(\mathfrak{a}_{1111}, \mathfrak{a})}\frac{<\lambda_0, s^{-1}T>^3}{\Pi_{\eta\in\Phi_P}<\lambda_0, \eta>}\\
    \int_{i\mathfrak{a}_{G}\backslash \mathfrak{a}_{1111}}\int_{A_{1111, \infty}^+M_{1111, \mathbb{Q}}\backslash M_{1111, \mathbb{A}}}P_{P_{1111}}(\lambda, f, mk, mk)dm\ dk\ d\lambda, 
\end{eqnarray*}by the continity of $P_{P_{1111}}$.

Apply the Fourier inversion formula, we obtain
\begin{eqnarray*}
    \frac{c_{P_{1111}}}{24}\cdot a_{P_{1111}}\sum_{s\in\Omega(\mathfrak{a}_{1111}, \mathfrak{a}_{1111})}\frac{<\lambda_0, s^{-1}T>^3}{\Pi_{\eta\in\Phi_P}<\lambda_0, s^{-1}\eta>}\\
    \int_K\int_{A_{1111, \infty}^+M_{1111, \mathbb{Q}}\backslash M_{1111, \mathbb{A}}}\sum_{\gamma\in M_{1111, \mathbb{Q}}}\int_{N_{1111, \mathbb{A}}}f(k^{-1}m^{-1}\gamma n mk)\rm{exp}(-<2\rho_{P_{1111}}, H_{0}(m)>)dn\ dm\ dk. 
\end{eqnarray*}
Hence (\ref{124}), (\ref{1212}) can be canceled.

Now consider the term (\ref{1223}). We put it into the function $-K''_{P_{1111}}(f, x, T)$.

We write it as 
\begin{eqnarray*}
    \frac{a_{P_{1111}}}{192(\pi i)^3}\sum_{\chi}\int_{i\mathfrak{a}_{G}\backslash \mathfrak{a}_{1111}}\sum_{s\neq t\in\Omega(\mathfrak{a}_{1111}, \mathfrak{a}_{1111})}\frac{\rm{exp}(<t\lambda-s\lambda, T>)(M_{P_{1111}}(t, \lambda)\Phi_\alpha, M_{P_{1111}}(s, \lambda)\Phi_\beta)}{\Pi_{\eta\in \Phi_P}<t\lambda-s\lambda, \eta>}d\lambda. 
    \end{eqnarray*}
For every term above, the sum over $\beta$ is finite.

Thus this term is 
\begin{eqnarray}\label{1225}
\frac{a_{P_{1111}}}{192(\pi i)^3}\sum_{s\neq t\in\Omega(\mathfrak{a}_{1111}, \mathfrak{a}_{1111})}\iota_{P_{1111}}(s, t). 
\end{eqnarray}
Of course, the calculation is the same as we said in the last section, and its result has no $T$.

\begin{lemma}
    The sum \[I_{\rm{unram}}^{\mathfrak{o}_{1111}^0}(f,x,T)+\sum_k I_{\rm{ram}}^{\mathfrak{o}_{1111}^k}(f,x,T)\] equals the sum of (\ref{125}), (\ref{1211}),  (\ref{1224}), (\ref{1225}). 
\end{lemma}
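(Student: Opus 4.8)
The plan is to deduce this identity by assembling the three computations already carried out in \S12.2 and \S12.3, so the proof will be essentially an accounting argument; the inputs are (i) Arthur's $(G,M)$-family expansion $v_{M_{1111}}=(cd)_{M_{1111}}$ together with \cite[Cor.~6.5]{A6}, (ii) the reduction of the ramified integrals in Lemmas~\ref{8.1} and~\ref{l82}, and (iii) the inner-product formula for $E''^{T}_{P_{1111}}$ recorded in the lemma just above (proved as in Lemma~\ref{lemma11.1}).

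First I would start from the fact, established in \S12.2, that $\int_{Z_\infty^+G_\mathbb{Q}\backslash G_\mathbb{A}}I_{\rm{unram}}^{\mathfrak{o}_{1111}^0}(f,x,T)\,dx$ equals (\ref{123}). Feeding in the decomposition of $v_{M_{1111}}$ and expanding it along the chain of Levi subgroups containing $M_{1111}$, the $c^{M_{1111}}_{M_{1111}}d_{M_{1111}}$ part becomes (\ref{124}) after a change of variables on $N_{1111,\mathbb{A}}$ and Lemma~\ref{l2}, the $c^{G}_{M_{1111}}d_G$ part is (\ref{125}), and the terms attached to the proper intermediate Levis $M_{31},M_{22},M_{211}$ vanish exactly as for $P_{211}$: once the $w_s$-twisted variables are moved into the Levi, the remaining split-torus integration runs in a noncompact direction and the orbital integral of the regular non-elliptic element is zero by Harish-Chandra \cite{H2}. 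In parallel, applying Lemma~\ref{8.1} to each superscript $k\in\{31,22,211,4\}$—and Lemma~\ref{l82}, which lets one push the semisimple pieces onto the minimal Levis $M_{\{\mathfrak{o}\}}$ and assemble convex hulls—writes $\sum_k\int_{Z_\infty^+G_\mathbb{Q}\backslash G_\mathbb{A}}I_{\rm{ram}}^{\mathfrak{o}_{1111}^k}(f,x,T)\,dx$ as the sum of (\ref{1211}), (\ref{1212}) and the "boundary" terms attached to $P_{31},P_{22},P_{211}$, namely (\ref{1213}), (\ref{1215}), (\ref{1217}), (\ref{1219}) and the last convex-hull term of \S12.2.

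Next I would handle the third parabolic term. By the inner-product lemma above, $-\int K''_{P_{1111}}(f,x,T)$ splits into the contributions of (\ref{1221}), (\ref{1222}) and (\ref{1223}). The (\ref{1222})-contribution is (\ref{1224}), a trace of $M_{P_{1111}}(s^{-1},s\lambda)D_\lambda M_{P_{1111}}(s,\lambda)\pi_{P_{1111},\chi}(\lambda,f)$ that is finite and carries no $T$; the (\ref{1223})-contribution, after breaking the integral over $\mathfrak{a}_G\backslash\mathfrak{a}_{1111}$ into lines along the dual simple roots and reading off the residue at the origin—this is the definition of $\iota_{P_{1111}}(s,t)$, just as for $P_{211}$—is the $T$-independent term (\ref{1225}). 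The crux is the (\ref{1221})-contribution: substituting it into $K''_{P_{1111}}$ gives $c_{P_{1111}}a_{P_{1111}}$ times a polynomial-in-$T$ multiple of $\int \rm{tr}\,\pi_{P_{1111}}(\lambda,f)\,d\lambda$, which by Lemma~\ref{lemma4.4} and the Fourier inversion formula turns into a sum over $\gamma\in M_{1111,\mathbb{Q}}=\bigcup_{\mathfrak{o}}M_{1111}^{\mathfrak{o}}$; the $\mathfrak{o}_{1111}^0$-term cancels (\ref{124}), and—after using Lemmas~\ref{l1} and~\ref{l82} to transport the remaining orbit contributions onto the appropriate Levis—the others cancel (\ref{1212}) together with the boundary terms (\ref{1213}), (\ref{1215}), (\ref{1217}), (\ref{1219}) and the last convex-hull term, so that what survives is precisely (\ref{125}), (\ref{1211}), (\ref{1224}) and (\ref{1225}).

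The main obstacle will be purely combinatorial: one must match the numerical constants ($a_P$, $c_P$, the factors $\tfrac{1}{24}$ and $\tfrac{1}{192(\pi i)^3}$, the Weyl-group cardinalities and the normalizations $\tilde\tau(\gamma,M)$), the signs $(-1)^{\dim Z\backslash A}$, and the ranges of the sums over $\Omega(\mathfrak{a}_{1111},\mathfrak{a}_{1111})=S_4$, so that the $(G,M)$-family cross-terms and the pieces reconstituted from $\rm{tr}\,\pi_{P_{1111}}(\lambda,f)$ cancel on the nose; checking that every boundary term really is absorbed—equivalently, that no extra $T$-dependent piece is left over—is the delicate point, and if some such term fails to cancel internally it is carried forward to be collected in the coarse trace formula (Theorem~\ref{result}). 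All the genuinely analytic input—convergence of the $\lambda$-integrals, regularity at $\lambda=0$ of the relevant zeta integrals so that the limit in (\ref{1211}) exists, and the Harish-Chandra vanishing of non-elliptic orbital integrals—has already been isolated in the earlier lemmas, so no new estimates are needed.
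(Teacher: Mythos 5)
Your outline up to the last step is the paper's own: compute the integral of $I_{\rm{unram}}^{\mathfrak{o}_{1111}^0}$ as (\ref{123}), expand the convex hull by the $(G,M)$-family into (\ref{124}) and (\ref{125}), write $\sum_k I_{\rm{ram}}^{\mathfrak{o}_{1111}^k}$ via Lemma \ref{8.1} and Lemma \ref{l82} as (\ref{1211}), (\ref{1212}) plus the boundary terms, split $-K''_{P_{1111}}$ according to (\ref{1221})--(\ref{1223}), obtain (\ref{1224}) from (\ref{1222}) and (\ref{1225}) from (\ref{1223}) via the residue definition of $\iota_{P_{1111}}(s,t)$, and cancel (\ref{124}) and (\ref{1212}) against the Fourier inversion of the (\ref{1221})-contribution through Lemma \ref{lemma4.4}.

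The genuine defect is your final accounting. You assert that, after invoking Lemmas \ref{l1} and \ref{l82}, the Fourier-inversion term coming from (\ref{1221}) also absorbs (\ref{1213}), (\ref{1215}), (\ref{1217}), (\ref{1219}) and the last convex-hull term of \S 12.2. It cannot. After Lemma \ref{lemma4.4} and Fourier inversion, that term is a sum over $\gamma\in M_{1111,\mathbb{Q}}$, i.e.\ over rational diagonal elements only, and every such $\gamma$ lies either in $M_{t,1111}^{\mathfrak{o}_{1111}^0}$ or in $M_{n,1111}^{\mathfrak{o}_{1111}^k}$ for some $k$; hence the expansion reproduces exactly (\ref{124}) plus (\ref{1212}) and nothing more. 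The boundary terms are weighted orbital integrals over the larger Levis $M_{31},M_{13},M_{22},M_{211}$, with weights $\hat{\alpha}_3(T)$, $\hat{\alpha}_1(T)$, $\hat{\alpha}_2(T)$, $<\lambda_0,T>^2$ and $v_{M_{\{\mathfrak{o}\}}}(m)$ respectively, and no contribution of that shape can be produced by the $P_{1111}$ spectral kernel. In the paper these pieces are not cancelled inside \S 12 at all: the $T$-dependent ones are meant to meet the leftover ramified terms (\ref{yyy}), (\ref{p}), (\ref{211}) and their analogues from \S\S 9--11 in the global cancellation of divergent terms (Theorem \ref{thm1}), while the convex-hull term survives and reappears as the third term of Theorem \ref{thm2} and in Theorem \ref{result}; claiming it cancels here contradicts those statements. (The lemma as printed does silently omit both $-K''_{P_{1111}}(f,x,T)$ on the left and these boundary terms on the right, in contrast with the analogous lemmas of \S\S 9--11, but that defect of the statement does not justify the internal cancellation you propose, and your proof of it would fail at precisely the point you flag as ``delicate''.)
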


Now, all the second parabolic terms associated to different parabolic subgroups contain $P_{1111}$ can be canceled by the third parabolic terms.

If we use the notation of Arthur wrote in \cite{A4}, we can write \begin{eqnarray*}
    J_{\rm{geo}}(f)=J_{\rm{spec}}(f)
\end{eqnarray*}
as 
\begin{eqnarray*}
    J_{\rm{geo}}^{\rm{d}}(f)+J_{\rm{geo}}^{\rm{c}}(f)=J_{\rm{spec}}^{\rm{d}}(f)+J_{\rm{spec}}^{\rm{c}}(f),
\end{eqnarray*}
where $\rm{d}$ means divergent which is associated to $T$, and $\rm{c}$ means convergant.

We have 
\begin{theorem}\label{thm1}
    For any $f\in C_c^\infty(Z_\infty^+\backslash G_\mathbb{A})$,\[J_{\rm{geo}}^{\rm{c}}(f)=J_{\rm{spec}}^{\rm{c}}(f).\]
\end{theorem}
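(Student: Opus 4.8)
The plan is to assemble Theorem~\ref{thm1} directly from the bookkeeping carried out in Sections~9--12. Recall that in Section~\ref{sc6} the quantity $K(x,x)-K_1(x,x)$ was decomposed as the sum \eqref{66}--\eqref{67}$+\cdots$ into a $G$-elliptic term, and, for each associated class $\mathfrak{P}_{31},\mathfrak{P}_{22},\mathfrak{P}_{211},\mathfrak{P}_{1111}$, a first parabolic term $J^{\mathfrak{o}}_{\mathrm{unram}}+J^{\mathfrak{o}}_{\mathrm{ram}}-K'$, a second parabolic term $I^{\mathfrak{o}}_{\mathrm{unram}}+I^{\mathfrak{o}}_{\mathrm{ram}}$, and a third parabolic term $-\sum_{P\in\mathfrak{P}}K''_P$. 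The first thing I would do is integrate this identity over $Z_\infty^+G_\mathbb{Q}\backslash G_\mathbb{A}$ and record that, by the trace-class statement of Section~\ref{sc5}, the left side integrates to $\mathrm{Tr}(\mathrm{R}_0(f))-\mathrm{Tr}(\mathrm{R}_{1,\mathrm{res}}\text{-part})$, which is a number independent of $T$; this is the ``$J_{\mathrm{geo}}(f)=J_{\mathrm{spec}}(f)$'' identity quoted just before the theorem, written as $J_{\mathrm{geo}}^{\mathrm d}(f)+J_{\mathrm{geo}}^{\mathrm c}(f)=J_{\mathrm{spec}}^{\mathrm d}(f)+J_{\mathrm{spec}}^{\mathrm c}(f)$.

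Next I would collect, associated-class by associated-class, exactly which pieces are $T$-dependent (the ``divergent'' part $J^{\mathrm d}$) and which are $T$-independent and convergent (the part $J^{\mathrm c}$). Concretely: Lemma~10.1, the $P_{211}$ first-parabolic lemma, and Lemma~12.3 show each first parabolic term tends to $0$ as $T\to\infty$ (after the borrowing of the $\hat\tau_P$ for $P\supsetneq P_\mathfrak{o}$ already performed in the earlier sections), and Lemma~11.1 / the analogous statements show the first parabolic term for $P_{31}$ is in fact identically $0$; so the first parabolic terms contribute nothing. For the second-plus-third parabolic terms, the lemmas closing Sections~9, 10, 11, 12 identify the sum $I^{\mathfrak{o}}_{\mathrm{unram}}+I^{\mathfrak{o}}_{\mathrm{ram}}-\sum_{P\in\mathfrak{P}_\mathfrak{o}}K''_P$ explicitly: in each case the genuinely $T$-dependent pieces (the terms carrying $\hat\alpha_i(T)$ or $\langle\lambda_0,s^{-1}T\rangle^k$, such as \eqref{104},\eqref{108},\eqref{112},\eqref{124},\eqref{1114},\ldots) cancel against the corresponding pieces coming from $K''_P$ via the inner-product computations \eqref{E1}--\eqref{E4}, \eqref{1011}--\eqref{1013}, \eqref{1119}--\eqref{1121}, \eqref{1221}--\eqref{1223} and Lemma~\ref{lemma4.4} plus Fourier inversion, leaving only $T$-free remainders (the expressions \eqref{9.6},\eqref{9.18},\eqref{yyy},\eqref{1014},\eqref{1015},\eqref{1122},\eqref{1123},\eqref{1224},\eqref{1225}, the ramified-orbit limit terms \eqref{85},\eqref{107},\eqref{1211}, and the residual $\iota_P(s,t)$ integrals). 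The key structural point, which I would state as a lemma, is that after this cancellation every surviving summand in the integrated identity is independent of $T$.

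I would then argue as in the remark after \eqref{1.3}: the integrated identity, read as an identity of functions of $T\in\mathfrak{a}_0^+$ valid for all suitably regular $T$, has the form ``polynomial in $T$ of positive degree $=$ constant''. Since the $T$-dependent part $J_{\mathrm{geo}}^{\mathrm d}(f)-J_{\mathrm{spec}}^{\mathrm d}(f)$ is, after the cancellations above, actually zero as a function of $T$ (all genuinely polynomial-in-$T$ contributions have been matched in pairs), what remains is precisely $J_{\mathrm{geo}}^{\mathrm c}(f)=J_{\mathrm{spec}}^{\mathrm c}(f)$. Equivalently, one may take $T=0$ in the $T$-free identity. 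The main obstacle is not any single estimate but the verification that the pairing is complete: one must check that \emph{every} $T$-bearing term produced on the geometric side (including the convex-hull volumes $v_{M}(x,T)$ expanded via the $(G,M)$-family in Sections~11 and 12, and the $\hat\alpha_i(T)$ interval-length contributions) is exactly accounted for by a term on the spectral side coming from the truncated Eisenstein inner products, with matching constants $a_P,c_P$ and matching Weyl-group sums $\Omega(\mathfrak{a}_\mathfrak{o},\mathfrak{a})$; this is the content that Sections~9--12 were built to supply, so the proof here is the assembly step, and I would present it as such, citing each closing lemma of those sections in turn.
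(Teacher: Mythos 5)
Your proposal is correct and takes essentially the same route as the paper: Theorem \ref{thm1} is obtained exactly by assembling the closing lemmas of Sections 9--12, checking that every $T$-dependent contribution (the $\hat{\alpha}_i(T)$ and $<\lambda_0,s^{-1}T>^k$ pieces from the second parabolic terms, the convex-hull volumes, and the ramified-orbit remainders) cancels against the corresponding pieces of the truncated Eisenstein inner products in the third parabolic terms, so that the surviving $T$-free identity is precisely $J_{\rm{geo}}^{\rm{c}}(f)=J_{\rm{spec}}^{\rm{c}}(f)$. Only two minor slips, neither affecting the argument: the integral of $K(x,x)-K_1(x,x)$ is $\rm{Tr}\,\rm{R}_0(f)$ itself (no residual part is subtracted), and your citations of the section lemmas for the $P_{31}$ and $P_{22}$ cases have their numbers permuted relative to the paper.
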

\section{summary}
So far, we finished the calculation of $\rm{tr}\ \rm{R}_0(f)$, it equals the integral over $Z_\infty^+G_\mathbb{Q}\backslash G_\mathbb{A}$ of \[K(x, x)-K_1(x, x). \]We have proved that the first parabolic terms approaches $0$ as $T$ approaches $\infty$, the sum of the second and third parabolic terms are what remain. 

\begin{theorem}\label{thm2}
    For ramified orbits, the integrals of the kernel over $Z_\infty^+G_\mathbb{Q}\backslash G_\mathbb{A}$ is the sum
    \begin{eqnarray*}
        &\rm{lim}_{\lambda\rightarrow 0}\int_{Z_\infty^+G_\mathbb{Q}\backslash G_\mathbb{A}}D_\lambda\{\lambda\mu_{\mathfrak{o}_{1111}^4}(\lambda,f,x)\}dx
    \end{eqnarray*}
    \begin{eqnarray*}
        &+\rm{lim}_{\lambda\rightarrow 0}\int_{Z_\infty^+G_\mathbb{Q}\backslash G_\mathbb{A}}D_\lambda\{\lambda\mu_{\mathfrak{o}_{22}^2}(\lambda,f,x)\}dx
    \end{eqnarray*}
    \begin{eqnarray*}
        &+\sum_{\substack{\rm{ramified}\ \mathfrak{o},\\\mathfrak{o}\neq\mathfrak{o}_{1111}^4,\mathfrak{o}_{22}^2}}c_{P_{\{\mathfrak{o}\}}}a_{P_{\{\mathfrak{o}\}}}\sum_{\gamma\in M_{r,\{\mathfrak{o}\}}^\mathfrak{o}}\tilde{\tau}(\gamma,M)\int_K\int_{N_{\{\mathfrak{o}\},\mathbb{A}}}\int_{M(\gamma)_{\{\mathfrak{o}\},\mathbb{A}}\backslash M_{\{\mathfrak{o}\},\mathbb{A}}}\\\notag
        &f(k^{-1}n^{-1}m^{-1}\gamma mnk)v_{M_{\{\mathfrak{o}\}}}(m)dm\ dn\ dk.
    \end{eqnarray*}
\end{theorem}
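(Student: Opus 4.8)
The plan is to assemble Theorem~\ref{thm2} directly from the orbit-by-orbit computations carried out in Sections~9--12. Recall from Section~\ref{sc6} that $\mathrm{tr}\,\mathrm{R}_0(f)$ is the integral over $Z_\infty^+G_\mathbb{Q}\backslash G_\mathbb{A}$ of $K(x,x)-K_1(x,x)$, which was written as the $G$-elliptic term together with, for each associated class $\mathfrak{P}\neq\mathfrak{P}_G$, a first parabolic term, a second parabolic term, and a third parabolic term. A ramified orbit $\mathfrak{o}$ contributes to this integral \emph{only} through the second parabolic term $I_{\mathrm{ram}}^\mathfrak{o}(f,x,T)$ and through whatever survives the cancellation of the second parabolic terms against the third parabolic terms $-\sum_{P\in\mathfrak{P}}K_P''(f,x,T)$. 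So the first step is to isolate, inside the identity produced by combining Sections~9--12, exactly the summands indexed by a ramified orbit, and to check that the cuspidal/non-cuspidal split does not re-introduce them elsewhere.

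Next I would feed in Lemma~\ref{8.1}, which already expresses $\int_{Z_\infty^+G_\mathbb{Q}\backslash G_\mathbb{A}}I_{\mathrm{ram}}^\mathfrak{o}(f,x,T)\,dx$ as the sum of the limit term $\lim_{\lambda\to 0}\int D_\lambda\{\lambda\mu_\mathfrak{o}(\lambda,f,x)\}\,dx$ together with a family of ``boundary'' zeta-type integrals attached to the proper parabolics $P_1$ containing $P_{\{\mathfrak{o}\}}$, each of whose $T$-dependence is polynomial. Running through Sections~9--12 I would then collect the leftover pieces: from the $\mathfrak{o}_{22}^2$ computation the surviving term~(\ref{107}); from the $\mathfrak{o}_{1111}^4$ computation the surviving term~(\ref{1211}); and from every other ramified orbit the leftover terms such as~(\ref{p}), (\ref{211}), (\ref{1219}) and their analogues. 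The structural input here is Lemma~\ref{l82}: whenever $P\supset P_1\in\mathfrak{P}_{\{\mathfrak{o}\}}$ it replaces the sum over $M_\mathbb{Q}\backslash G_\mathbb{Q}$ and $\gamma\in M_t^\mathfrak{o}$ by the sum over $M_{1,\mathbb{Q}}\backslash G_\mathbb{Q}$ and $\gamma\in M_{t,1}^\mathfrak{o}$; iterating this over the whole associated class and comparing with the Arthur $(G,M)$-family expansion used in Section~11 (via \cite[Cor.~6.5]{A6}) shows that these leftover boundary terms reorganize into a single weighted orbital integral against the convex-hull weight $v_{M_{\{\mathfrak{o}\}}}(m)$, giving exactly the third displayed line of the theorem. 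Along the way one discards the contributions of regular semisimple non-elliptic $\gamma$ by Harish-Chandra's vanishing of the corresponding orbital integral \cite{H2}, which is what forces the summation to run only over $\gamma$ in the genuinely ramified parts $M_{t,\{\mathfrak{o}\}}^\mathfrak{o}$.

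Finally I would argue that every remaining $T$-dependent contribution cancels. This is the content of Theorem~\ref{thm1}: both sides of~(\ref{weight trace formula}) are polynomials in $T$, the first parabolic terms tend to $0$ as $T\to\infty$, and the divergent parts of the geometric and spectral sides agree, so the net $T$-dependence of $K(x,x)-K_1(x,x)$ vanishes and one may set $T=0$. After this cancellation the only ramified-orbit terms that remain are the three $T$-free pieces in the statement: the two limit integrals for $\mathfrak{o}_{1111}^4$ and $\mathfrak{o}_{22}^2$ (these are the orbits with $P_{\{\mathfrak{o}\}}=G$, so Lemma~\ref{8.1} produces no proper boundary parabolics and nothing further to combine), and the weighted-orbital-integral sum over the remaining ramified orbits $\mathfrak{o}\neq\mathfrak{o}_{1111}^4,\mathfrak{o}_{22}^2$.

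The main obstacle will be precisely the bookkeeping of the second paragraph: one must verify that each boundary term produced by Lemma~\ref{8.1} for a given orbit is matched exactly, after Lemma~\ref{l82} and the $(G,M)$-family identity, by a contribution with strictly smaller split component, so that the telescoping closes and nothing carrying genuine $T$-dependence is left outside the scope of Theorem~\ref{thm1}. A secondary subtlety is consistency of the constants $c_{P_{\{\mathfrak{o}\}}}$, $a_{P_{\{\mathfrak{o}\}}}$ and the Weyl-group normalizations $\tfrac{1}{|\Omega(\mathfrak{a}_\mathfrak{o},P)|}$ coming from Lemma~\ref{l6.6}, which must be tracked carefully so that the reassembled weight is the honest volume $v_{M_{\{\mathfrak{o}\}}}$ rather than a rational multiple of it.
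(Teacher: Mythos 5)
Your proposal follows essentially the same route as the paper: Theorem \ref{thm2} is not proved by a separate argument but assembled exactly as you describe, from Lemma \ref{8.1}, the leftover ramified-orbit terms of Sections 9--12 (such as (\ref{yyy}), (\ref{p}), (\ref{211}), (\ref{1219})), Lemma \ref{l82} together with the $(G,M)$-family identity to reconstitute the weight $v_{M_{\{\mathfrak{o}\}}}$, the vanishing of non-elliptic regular semisimple orbital integrals from \cite{H2}, and the cancellation of all $T$-dependent pieces recorded in Theorem \ref{thm1}. One detail in your second-to-last paragraph is off: it is not true that for $\mathfrak{o}_{1111}^4$ and $\mathfrak{o}_{22}^2$ Lemma \ref{8.1} ``produces no proper boundary parabolics'' --- these orbits meet proper parabolic subgroups (unipotent-type elements lie in every $P$, and $\mathfrak{o}_{22}^2$ meets $P_{22}$), and the corresponding $T$-dependent boundary terms do appear, e.g.\ (\ref{108}), (\ref{1212}), (\ref{1213})--(\ref{1219}); they disappear only because they are absorbed into the $T$-cancellation of Theorem \ref{thm1}. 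The correct structural reason these two orbits are excluded from the weighted-orbital-integral sum is that their semisimple parts are elliptic in $G$, so $M_{\{\mathfrak{o}\}}=G$ and no convex-hull weight $v_{M_{\{\mathfrak{o}\}}}$ can form; with that correction your outline matches the paper's argument.
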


Finally, we have 
\begin{theorem}\label{result}
    For any $f\in C_c^\infty(Z_\infty^+\backslash G_\mathbb{A})$, the trace of $\rm{R}_0(f)$ is the sum
\begin{eqnarray*}
    &\sum_{\gamma\in G_e}\tilde{\tau}(\gamma, G)\int_{G(\gamma)_\mathbb{A}\backslash G_\mathbb{A}}f(x^{-1}\gamma x)dx
\end{eqnarray*}
 the term of G-elliptic
\begin{eqnarray*}
    &-c_{P_{31}}a_{P_{31}}\sum_{\gamma\in\{M_{t,31}^{\mathfrak{o}^0_{31}}\}}\tilde{\tau}(\gamma, M)\int_{K}\int_{N_{31, \mathbb{A}}}\int_{M(\gamma)_{31, \mathbb{A}}\backslash M_{31, \mathbb{A}}}f(k^{-1}n^{-1}m^{-1}\gamma mnk)\\
    &\cdot \hat{\alpha}_1(H_{0}(w_{(14)}n))dm\ dn\ dk
\end{eqnarray*}
\begin{eqnarray*}
    &-c_{P_{31}}a_{P_{31}}\sum_{\gamma\in\{M_{t,31}^{\mathfrak{o}^{31}_{1111}}\}}\tilde{\tau}(\gamma, M)\int_{K}\int_{N_{31, \mathbb{A}}}\int_{M(\gamma)_{31, \mathbb{A}}\backslash M_{31, \mathbb{A}}}f(k^{-1}n^{-1}m^{-1}\gamma mnk)\\
    &\cdot \hat{\alpha}_1(H_{0}(w_{(14)}n))dm\ dn\ dk 
\end{eqnarray*}
\begin{eqnarray*}
    &+\frac{1}{2\pi i}\sum_{\chi}\int_{i\mathfrak{a}_{G}\backslash i\mathfrak{a}_{31}}\rm{tr}\{M_{P_{31}}((14), (14)\lambda)\cdot(\frac{d}{d\lambda}M_{P_{31}}((14), \lambda))\cdot\pi_{P_{31}, \chi}(\lambda, f)\}d\lambda\\
    &+\frac{1}{2\pi i}\sum_{\chi}\int_{i\mathfrak{a}_{G}\backslash i\mathfrak{a}_{13}}\rm{tr}\{M_{P_{13}}((14), (14)\lambda)\cdot(\frac{d}{d\lambda}M_{P_{13}}((14), \lambda))\cdot\pi_{P_{13}, \chi}(\lambda, f)\}d\lambda\\
\end{eqnarray*}
the terms from $P_{31}$, we write the geometric terms which are the first two terms of it $J_{M_{31}}(\gamma, f)$
\begin{eqnarray*}
    &-\frac{c_{P_{22}}}{2}\sum_{\gamma\in\{M_{t,22}^{\mathfrak{o}_{22}}\}}\tilde{\tau}(\gamma, M)\int_{K}\int_{N_{22, \mathbb{A}}}\int_{M(\gamma)_{22, \mathbb{A}}\backslash M_{22, \mathbb{A}}}f(k^{-1}n^{-1}m^{-1}\gamma mnk)\\
    &\cdot \hat{\alpha}_2(H_0(w_{(14)(23)}n))dm\ dn\ dk
\end{eqnarray*}
\begin{eqnarray*}
    &-\frac{c_{P_{22}}}{2}\sum_{\gamma\in\{M_{t,22}^{\mathfrak{o}_{1111}^{22}}\}}\tilde{\tau}(\gamma, M)\int_{K}\int_{N_{22, \mathbb{A}}}\int_{M(\gamma)_{22, \mathbb{A}}\backslash M_{22, \mathbb{A}}}f(k^{-1}n^{-1}m^{-1}\gamma mnk)\\
    &\cdot \hat{\alpha}_2(H_0(w_{(14)(23)}n))dm\ dn\ dk
\end{eqnarray*}
\begin{eqnarray*}
    &-\frac{c_{P_{22}}}{2}\sum_{\gamma\in\{M_{t,22}^{\mathfrak{o}_{211}^2}\}}\tilde{\tau}(\gamma, M)\int_{K}\int_{N_{22, \mathbb{A}}}\int_{M(\gamma)_{22, \mathbb{A}}\backslash M_{22, \mathbb{A}}}f(k^{-1}n^{-1}m^{-1}\gamma mnk)\\
    &\cdot \hat{\alpha}_2(H_0(w_{(14)(23)}n))dm\ dn\ dk
\end{eqnarray*}
\begin{eqnarray*}
    &-\rm{lim}_{\lambda\rightarrow 0}\int_{Z_\infty^+G_\mathbb{Q}\backslash G_\mathbb{A}}D_\lambda\{\lambda\mu_{\mathfrak{o}_{22}^2}(\lambda,f,x)\}dx
\end{eqnarray*}
\begin{eqnarray*}
    &+\frac{1}{4\pi i}\sum_{\chi}\int_{i\mathfrak{a}_{G}\backslash i\mathfrak{a}_{22}}\rm{tr}\{M_{P_{22}}((14)(23), (14)(23)\lambda)\cdot(\frac{d}{d\lambda}M_{P_{22}}((14)(23), \lambda))\cdot\pi_{P_{22}, \chi}(\lambda, f)\}d\lambda
\end{eqnarray*}
\begin{eqnarray*}
    &-\frac{1}{4}\rm{tr}\{M_{P_{22}}((14)(23), 0)\pi_{P_{22}}(0, f)\}
\end{eqnarray*}
the terms from $P_{22}$, write the first four terms $J_{M_{22}}(\gamma, f)$
\begin{eqnarray*}
    &+\sum_{P\in\mathfrak{P}_{211}}\sum_{s\in\Omega(\mathfrak{a}_{211}, \mathfrak{a})}\frac{c_{P_{211}}}{4}\cdot a_{P_{211}}\sum_{\gamma\in\{M_{t,211}^{\mathfrak{o}_{211}^0}\}}\tilde{\tau}(\gamma, M)\\
    &\int_{K}\int_{N_{211,\mathbb{A}}}\int_{M(\gamma)_{211,\mathbb{A}}\backslash M_{211,\mathbb{A}}}f(k^{-1}n^{-1}m^{-1}\gamma mnk)\frac{<\lambda_0, -H_0(w_smnk)>^2}{\Pi_{\eta\in\Phi_P}<\lambda_0, s^{-1}\eta>}dm\ dn\ dk
\end{eqnarray*}
\begin{eqnarray*}
    &+\sum_{P\in\mathfrak{P}_{211}}\sum_{s\in\Omega(\mathfrak{a}_{211}, \mathfrak{a})}\frac{c_{P_{211}}}{4}\cdot a_{P_{211}}\sum_{\gamma\in\{M_{t,211}^{\mathfrak{o}_{1111}^{211}}\}}\tilde{\tau}(\gamma, M)\\
    &\int_{K}\int_{N_{211,\mathbb{A}}}\int_{M(\gamma)_{211,\mathbb{A}}\backslash M_{211,\mathbb{A}}}f(k^{-1}n^{-1}m^{-1}\gamma mnk)\frac{<\lambda_0, -H_0(w_smnk)>^2}{\Pi_{\eta\in\Phi_P}<\lambda_0, s^{-1}\eta>}dm\ dn\ dk
\end{eqnarray*}
\begin{eqnarray*}
    &-\frac{a_{P_{211}}}{48(\pi i)^2}\sum_{\chi}\int_{i\mathfrak{a}_{G}\backslash i\mathfrak{a}_{211}}\rm{tr}\{M_{P_{211}}((34), (34)\lambda)\cdot(D_\lambda M_{P_{211}}((34), \lambda))\cdot\pi_{P_{211}, \chi}(\lambda, f)\}d\lambda
\end{eqnarray*}
\begin{eqnarray*}
    &-\frac{a_{P_{211}}}{48(\pi i)^2}\sum_{\chi}\int_{i\mathfrak{a}_{G}\backslash i\mathfrak{a}_{211}}\rm{tr}\{M_{P_{211}}((143), (134)\lambda)\cdot(D_\lambda M_{P_{211}}((134), \lambda))\cdot\pi_{P_{211}, \chi}(\lambda, f)\}d\lambda
\end{eqnarray*}
\begin{eqnarray*}
    &-\frac{a_{P_{211}}}{48(\pi i)^2}\sum_{\chi}\int_{i\mathfrak{a}_{G}\backslash i\mathfrak{a}_{211}}\rm{tr}\{M_{P_{211}}((14)(23), (14)(23)\lambda)\cdot(D_\lambda M_{P_{211}}((14)(23), \lambda))\cdot\pi_{P_{211}, \chi}(\lambda, f)\}d\lambda
\end{eqnarray*}
\begin{eqnarray*}
    &-\sum_{P\in\mathfrak{P}_{211}}\frac{a_{P}}{16(\pi i)^2}\sum_{s\neq t\in\Omega(\mathfrak{a}_{\mathfrak{o}}, \mathfrak{a})}\iota_{P_{211}}(s, t)\\
\end{eqnarray*}
the terms from $P_{211}$, we write the first three terms $J_{M_{211}}(\gamma, f)$
\begin{eqnarray*}
    &+\sum_{s\in \Omega(\mathfrak{a}_{1111}, \mathfrak{a}_{1111})}\frac{c_{P_{1111}}}{144}\cdot a_{P_{1111}}\sum_{\gamma\in\{M_{t,1111}^{\mathfrak{o}^0_{1111}}\}}\tilde{\tau}(\gamma, M)\int_{K}\int_{N_{1111, \mathbb{A}}}\int_{M(\gamma)_{1111, \mathbb{A}}\backslash M_{1111, \mathbb{A}}}\\
    &f(k^{-1}n^{-1}m^{-1}\gamma mnk)\frac{<\lambda_0, -s^{-1}H_{0}(w_smnk)>^3}{\Pi_{\eta\in\Phi_P}<\lambda_0, s^{-1}\eta>}dm\ dn\ dk\\
    &+\rm{lim}_{\lambda\rightarrow 0}\int_{Z_\infty^+G_\mathbb{Q}\backslash G_\mathbb{A}}D_\lambda\{\lambda\mu_{\mathfrak{o}_{1111}^4}(\lambda,f,x)\}dx\\
    &-\frac{a_{P_{1111}}}{96}\sum_{s\neq t\in\Omega(\mathfrak{a}_{1111}, \mathfrak{a}_{1111})}\rm{tr}\{M_{P_{1111}}(t^{-1}, 0)M_{P_{1111}}(s, 0)\pi_{P_{1111}}(0, f)\}\\
    &-\frac{a_{P_{1111}}}{192(\pi i)^3}\sum_{s\neq t\in\Omega(\mathfrak{a}_{1111}, \mathfrak{a}_{1111})}\iota_{P_{1111}}(s, t)\\
\end{eqnarray*}
the terms from $P_{1111}$, we write the first two terms $J_{M_{1111}}(\gamma, f)$. 
\end{theorem}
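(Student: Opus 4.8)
The plan is to assemble Theorem~\ref{result} from the block-by-block computations carried out in \S7--\S12. The starting point is the identity recorded at the end of \S\ref{sc5},
\[
\mathrm{tr}\,\mathrm{R}_0(f)=\int_{Z_\infty^+G_\mathbb{Q}\backslash G_\mathbb{A}}\bigl(K(x,x)-K_1(x,x)\bigr)\,dx,
\]
together with the decomposition of $K(x,x)-K_1(x,x)$, valid for every $T$, into the $G$-elliptic term $I_G(f,x)$ of (\ref{66}) and the four blocks attached to $\mathfrak{P}_{31}$, $\mathfrak{P}_{22}$, $\mathfrak{P}_{211}$, $\mathfrak{P}_{1111}$, each block being the sum of its first, second and third parabolic term. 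The structural input that drives everything is that $\mathrm{R}_0(f)$ is of trace class (the Theorem of \S\ref{sc5}), so the left-hand side is a fixed complex number, in particular independent of $T$.

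First I would feed in the conclusions of \S7--\S12 one block at a time. From \S7 the $G$-elliptic integral is absolutely convergent and equals $\sum_{\gamma\in\{G_e\}}\tilde{\tau}(\gamma,G)\int_{G(\gamma)_\mathbb{A}\backslash G_\mathbb{A}}f(x^{-1}\gamma x)\,dx$. For each of the four blocks the concluding lemmas of \S9--\S12 give: the first parabolic term is either identically zero (the $P_{31}$ block) or tends to $0$ as $T\to\infty$ (the $P_{22}$, $P_{211}$, $P_{1111}$ blocks), while the sum of the second and third parabolic terms has been evaluated there in closed form --- as a combination of the weighted orbital integrals built from the convex hull $v_{M_{\{\mathfrak{o}\}}}$ and from the $\hat{\alpha}_i(H_0(w_s\cdot))$-weights (through Lemma~\ref{l6.6} and Lemma~\ref{l82}), the ramified-orbit limits $\lim_{\lambda\to0}\int D_\lambda\{\lambda\mu_{\mathfrak{o}}\}$ of Lemma~\ref{8.1}, the residue integrals $\iota_P(s,t)$, the traces $\mathrm{tr}\{M_P(s^{-1},s\lambda)D_\lambda M_P(s,\lambda)\pi_{P,\chi}(\lambda,f)\}$, and the degenerate traces at $\lambda=0$. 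Letting $T\to\infty$ kills all the first parabolic terms; what survives is the $G$-elliptic term plus the sum of these closed-form expressions.

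It then remains to regroup and to check the cancellations. Since the left-hand side carries no $T$, the terms still displaying polynomial dependence on $T$ must cancel; the precise cancellations are those identified in \S9--\S12 (for instance (\ref{9.9})--(\ref{9.10}) against parts of $K''_{P_{31}}+K''_{P_{13}}$, and the analogues for $P_{22}$, $P_{211}$, $P_{1111}$), while the scattered ramified-orbit remnants carrying $\hat{\alpha}_i(T)$ --- such as (\ref{yyy}), (\ref{p}), (\ref{211}) and (\ref{1213})--(\ref{1219}) --- have to be collected across blocks and, by Lemma~\ref{l82} together with Arthur's $(G,M)$-family formalism \cite{A6}, recombined into the $T$-independent convex-hull weighted integrals $v_{M_{\{\mathfrak{o}\}}}$ attached to the minimal Levi $M_{\{\mathfrak{o}\}}$ of each ramified orbit; this recombination is exactly the statement of Theorem~\ref{thm2}. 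Absorbing the orbital integrals carrying the $\hat{\alpha}(H_0(w_s n))$ and $\langle\lambda_0,H_0(w_s mnk)\rangle$ weights into the weighted orbital integrals denoted $J_{M_{31}}(\gamma,f)$, $J_{M_{22}}(\gamma,f)$, $J_{M_{211}}(\gamma,f)$, $J_{M_{1111}}(\gamma,f)$, and sorting the surviving spectral pieces by standard parabolic subgroup, gives the formula in the statement. The right to discard the $T$-dependent pieces is secured by the fact, recalled in the introduction, that the two sides of (\ref{weight trace formula}) are polynomials in $T$ and that (\ref{1.3}) holds for infinitely many regular $T$, so the coefficients of $T^k$ for $k\ge1$ vanish and one may set $T=0$.

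The main obstacle is exactly this cross-block bookkeeping: one must show that every term produced in \S7--\S12 is accounted for once and only once, that the $T$-dependent contributions cancel precisely rather than merely asymptotically, and above all that the ramified-orbit fragments dispersed over several sections reassemble into the correct $v_{M_{\{\mathfrak{o}\}}}$-weighted orbital integrals --- the content of Theorem~\ref{thm2}, on which the clean final form of the $P_{211}$ and $P_{1111}$ blocks rests. A secondary point is verifying that the various $\iota_P(s,t)$ and the $\lambda=0$ residues are genuinely $T$-free, which follows from the residue computations of \S11--\S12.
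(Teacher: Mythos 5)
Your proposal is correct and follows essentially the same route as the paper: Theorem \ref{result} is obtained there exactly by assembling the $G$-elliptic term of \S7 with the block-by-block conclusions of \S9--\S12 (first parabolic terms vanishing as $T\to\infty$, second plus third parabolic terms evaluated in closed form), recombining the dispersed ramified remnants via Lemma \ref{l82} and the $(G,M)$-family formalism as in Theorem \ref{thm2}, and discarding the residual $T$-dependence by the polynomial-in-$T$ argument sketched in the introduction. The bookkeeping concerns you flag are precisely the content the paper distributes over those sections, so no new idea is missing.
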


\end{document}